\newtheorem{thm}{Theorem}
\newtheorem{lm}[thm]{Lemma}
\newtheorem{cor}[thm]{Corollary}
\newtheorem{prop}[thm]{Proposition}
\theoremstyle{definition}
\newtheorem{remark}[thm]{Remark}
\newtheorem{defn}[thm]{Definition}
\numberwithin{thm}{section}
\newcommand{\Rmnum}[1]{uppercase\expandafter{\romannumeral #1}}
\newcommand{\vir}{\dagger}
\newcommand{\virst}{\!\!\text{\normalfont\dagger}}
\newcommand{\conv}{\star}
\newcommand{\fconv}{\diamond}
\newcommand{\contour}{\beta}
\newcommand*{\given}{\;%
	\ifnum\currentgrouptype=16\middle|\else%
	\iftoggle{WithinBracMacro}{\middle|}{|}%
	\fi%
	\;}%
\DeclareMathOperator{\pf}{Pf }
\def\ii{{\rm i}}
\def\dd{{\rm d}}
\let\oldast\ast
\renewcommand{\ast}{\!\oldast\!}
\numberwithin{equation}{section}
\DeclareMathOperator*{\res}{Res}
\DeclareMathOperator{\diag}{diag}
\DeclareMathOperator{\Pf}{Pf} 
\DeclareMathOperator{\id}{id} 
\def\be{\begin{equation}}
\def\ee{\end{equation}}
\title{Pfaffian Transition Probability and Correlation Kernel of TASEP in Half-space}
\author{Jan de Gier}
\address{School of Mathematics and Statistics, University of Melbourne, Victoria 3010, Australia}
\email{\href{mailto:jdgier@unimelb.edu.au}{jdgier@unimelb.edu.au}}
\author{William Mead}
\address{School of Mathematics and Statistics, University of Melbourne, Victoria 3010, Australia}
\email{\href{mailto:wmead@student.unimelb.edu.au}{wmead@student.unimelb.edu.au}}
\author{Daniel Remenik}
\address{
  Departamento de Ingenier\'ia Matem\'atica and Centro de Modelamiento Matem\'atico (IRL-CNRS 2807)\\
  Universidad de Chile\\
  Av. Beauchef 851, Torre Norte, Piso 5\\
  Santiago\\
  Chile} \email{dremenik@dim.uchile.cl}
\author{Michael Wheeler}
\address{School of Mathematics and Statistics, University of Melbourne, Victoria 3010, Australia}
\email{\href{mailto:wheelerm@unimelb.edu.au}{wheelerm@unimelb.edu.au}}
\begin{document}

\begin{abstract}
    We present the transition probability for the asymmetric simple exclusion process on the half-space for general initial conditions and particle insertion at the boundary. In the limit of total asymmetry, where particles only jump to the right, we show that the transition probability reduces to a single Pfaffian for a large class of initial conditions. We further derive a Fredholm Pfaffian formula for the multipoint distribution of the totally asymmetric model conditioned on the number of particles in the system using a connection with a Pfaffian point process supported on Gelfand--Tsetlin patterns.
\end{abstract}

\maketitle 


\setcounter{tocdepth}{1}
\tableofcontents

\section{Introduction}

\subsection{Background}
The asymmetric simple exclusion process (ASEP) is an integrable continuous-time Markov chain central to the KPZ universality class. It is a specialization of the stochastic six-vertex model which can be regarded as a discrete time stochastic process.

Literature on the ASEP is extensive. Its integrability is utilized in \cite{GwaSpohn1992,Kim1995} to study the spectral gap of the transition matrix as well as universal scaling behaviour by solving Bethe ansatz equations for a finite lattice with periodic boundary conditions, see also \cite{Golinelli_2004,Lee_2006,baik_fluctuations_2018}. Similar results for the more intricate case of a finite lattice with open boundaries are given in  
\cite{GierEssler2005,Gier_2006,Gier_2008,Simon_2009,PhysRevLett.107.010602, PhysRevLett.109.170601, Lazarescu_2014,Prolhac2016, Prolhac_2024}.

On the infinite line it is possible to make significant further progress because one does not need to solve Bethe equations. The transition probability for the totally asymmetric simple exclusion process (TASEP) is given in determinant form by \cite{schutz_exact_1997} using coordinate Bethe ansatz, and in terms of multiple contour integrals for the general ASEP by \cite{tracy_integral_2008}. The fact that the asymptotic behaviour of related stochastic particle models can be described using random matrix distribution functions was established in \cite{johansson_shape_2000}.

The determinant form in \cite{schutz_exact_1997} is not immediately amenable for asymptotic analysis, but can be rewritten using a determinantal point process supported on Gelfand--Tsetlin patterns  \cite{sasamoto_spatial_2005,borodin_fluctuation_2007}. The correlation kernel for this point process is then expressed in terms of the solution of a biorthogonalization [problem] that can be solved for general boundary conditions, leading to the KPZ fixed point \cite{matetski_kpz_2021}. In a further development, a multi-time,  formula has been solved in \cite{JohanssonRahman,Liu-multi-point} for some special initial conditions.

For the ASEP on the infinite half-line, the transition probability is given in the case of reflective boundary conditions by \cite{tracy_bose_2013} and for general open boundaries for the discrete stochastic six-vertex model and continuous-time ASEP in half-space in \cite{garbali_symmetric_2025} in the case of empty initial conditions.

Half-space and other generalizations of \cite{johansson_shape_2000} have been studied in \cite{baik_algebraic_2001,baik_asymptotics_2001,sasamoto_fluctuations_2004,baik_pfaffian_2018}, while half-space Airy line ensembles have recently been analyzed in \cite{dimitrov_half-space_2025}. A structural analysis of Macdonald processes is developed in \cite{barraquand_half-space_2020}, including integral formulas for moments and Laplace transforms of Macdonald measures and various specializations. Relationships between several half-space models and related distributions using shift invariance are established in \cite{he_shift_2025}. Formulas and KPZ scaling analyses for the expectation of the height function and integrated current were obtained for special choices of open boundary conditions for both ASEP and stochastic six-vertex model in \cite{barraquand_stochastic_2018} (off-diagonal) and \cite{barraquand_markov_2024} (Liggett's condition).  

A KPZ scaling analysis for the boundary height function for the case of general boundary conditions is given in \cite{he_boundary_2024}. A  probability distribution of the totally asymmetric simple exclusion process (TASEP) in half-space for generic initial data, with its KPZ scaling analysis and subsequent description of the half-space KPZ fixed point, is obtained in \cite{zhang_tasep_2024}.

In this paper we provide the half-space ASEP transition probability for arbitrary initial conditions and without absorption. We then turn our attention to the TASEP for which the transition probability takes a Pfaffian form. This Pfaffian can be related to a Pfaffian point process supported on Gelfand--Tsetlin patterns \cite{borodin_eynardmehta_2005,betea_free_2018}, which allows us to establish the  joint distribution conditioned on particle number. The relationship with the unconditional  distribution of \cite{zhang_tasep_2024} is intriguing and at present unclear.

\subsection{Main results}
In Section~\ref{sec:ASEP} we define the asymmetric simple exclusion process (ASEP) in half-space with open boundary conditions and finitely many particles at positions $1\leq x_N < \cdots<x_1$ obeying the following dynamics. A particle at site $x\geq1$ jumps to $x+1$ at rate $1$ and to $x-1$ (assuming $x\geq2$) at rate $q\geq0$, with jumps being permitted only if the target site is empty. Additionally, at rate $\alpha\geq0$ new particles enter the system at site $1$ whenever this site is empty and exit the system through site one at rate $\gamma\geq0$.

Our first main result is an explicit multiple integral formula for the transition probability of the half-space ASEP with general initial conditions and $\gamma=0$. 

\begin{thm}[Theorem~\ref{thm:ASEP transition prob} in the main text]
\label{th:transprob_intro}
	Let $N\geq M\geq 0$ be fixed integers and let $x=(x_1,\dots,x_N)$ and $y=(y_1,\dots,y_M)$ be ordered coordinates of particles in the half-space ASEP. The transition probability between $y$ and $x$ for the half-space ASEP under the specialization $\gamma=0$ is given by
	\begin{multline}
		\mathbb{P}_t(y\to x) = V_{N-M} \alpha^{N-M} \mathrm{e}^{-\alpha t} \oint_{\mathcal{C}_1} \frac{\dd w_1}{2\pi\ii} \cdots \oint_{\mathcal{C}_N} \frac{\dd w_N}{2\pi\ii} \prod_{1\leq i<j\leq N}\left[\frac{w_j-w_i}{qw_j-w_i}\frac{1-qw_iw_j}{1- w_iw_j}\right] \\
		\times\prod_{i=1}^N \left[\frac{1-q w_i^2}{w_i(q+\alpha-1-\alpha w_i)(1-qw_i)} \left(\frac{1-w_i}{1-qw_i}\right)^{x_i-1} \exp(\frac{(1-q)^2 w_i t}{(1-w_i)(1-qw_i)})\right] \\
		\times \sum_{\sigma\in\mathcal{B}_N} \sigma\left(\prod_{1\leq i<j\leq N}
		\left[\frac{w_i-qw_j}{w_i-w_j}\frac{1-w_iw_j}{1- qw_iw_j}\right] 	\prod_{i=1}^M \left[ \frac{1-q-\alpha+\alpha w_i}{1-qw_i^2}\frac{(1-q)w_i}{1-w_i}\left(\frac{1-qw_i}{1-w_i}\right)^{y_i-1}\right]
		\right),
	\end{multline}
	where the contours surround singularities at $w_i=1$ and are defined according to Definition \ref{defn:ASEP nested contours}, and where $\mathcal{B}_N$ is the group of signed permutations of size $\{1,\dotsc,N\}$ (see Section \ref{sec:int formula} and \eqref{eq:sigma action} for more details). The overall normalization is given by the following $q$-dependent factor
    \begin{equation}
        V_k = \dfrac{q^{\binom{k}{2}}(1-q)^k}{\prod_{i=1}^k\left[\left(1-q^i\right)\left(1+q^{i-1}\right)\right]}.
    \end{equation}
\end{thm}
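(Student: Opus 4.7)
The plan is to prove the formula by coordinate Bethe ansatz, verifying that the right-hand side solves the forward Kolmogorov equation for half-space ASEP with the initial condition $\mathbb{P}_0(y\to x)=\delta_{x,y}$ (when $N=M$) or vanishing (when $N>M$). The integrand for each fixed $\sigma\in\mathcal{B}_N$ factorises into a product of one-particle plane waves in the spectral variables $w_i$, dressed by two classes of amplitudes: bulk $S$-matrix factors of the form $\frac{w_i-qw_j}{w_i-w_j}\cdot\frac{1-w_iw_j}{1-qw_iw_j}$ that mediate two-body exclusion, and a boundary $K$-matrix amplitude $\frac{1-q-\alpha+\alpha w_i}{1-qw_i^2}$ that mediates reflection at the wall. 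The symmetry group $\mathcal{B}_N$ is the Weyl group of type $B$, generated by bulk transpositions $w_i\leftrightarrow w_j$ and the wall reflection $w_i\to 1/w_i$; the quoted amplitudes are precisely those demanded by the $A_N$ and boundary Yang--Baxter equations.

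The verification then splits into four checks. \emph{(i) Free evolution:} the time factor $\exp\!\left(\frac{(1-q)^2w_it}{(1-w_i)(1-qw_i)}\right)$ combined with the spatial factor $\left(\frac{1-w_i}{1-qw_i}\right)^{x_i-1}$ is an eigenfunction of the single-particle bulk generator, so their $N$-fold product solves the free $N$-particle equation away from collisions, and this survives the contour integrations. \emph{(ii) Bulk two-body condition:} when $x_{i+1}=x_i-1$, grouping the $\sigma$-sum by the $(i,i+1)$ transposition turns the above $S$-matrix ratio into the exact cancellation required by the ASEP exclusion constraint. \emph{(iii) Wall condition:} the rightmost particle at $x=1$ is subject to no-exit ($\gamma=0$) and injection at rate $\alpha$ into the empty sector, which is the one-particle boundary condition solved by the $K$-amplitude above; again this reduces to pairing $\sigma$ with its wall-reflection partner. \emph{(iv) Particle creation for $N>M$:} the $N-M$ additional contour integrations with prefactor $V_{N-M}\alpha^{N-M}\mathrm{e}^{-\alpha t}$ should encode the injection events. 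The key mechanism is the simple pole at $w_i=(q+\alpha-1)/\alpha$ in the factor $\frac{1}{q+\alpha-1-\alpha w_i}$: differentiating in $t$ and picking up that residue is intended to reproduce the boundary injection term of the generator, while the constant $V_{N-M}$ absorbs the combinatorial overcounting inherent in symmetrising the integrand over the additional spectral variables.

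The initial condition at $t=0$, $N=M$ should follow from contour deformation: after expanding the $\mathcal{B}_N$-sum one evaluates residues at $w_i=1$ and applies a $BC$-type Cauchy identity (equivalent to an Izergin--Korepin-style determinant reduction) to collapse the sum-integral to $\delta_{x,y}$. For $N>M$ at $t=0$, the extra contour integrals must be shown to vanish; this is expected because removing the $t$-dependent exponential leaves the extra integrands without poles inside the contours.

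The hard part will be step (iv): matching the residue structure of the $N-M$ extra integrations to the Duhamel expansion of the master equation around the $M$-particle formula, and pinning down $V_{N-M}$ exactly. The simultaneous interplay between the boundary $K$-amplitude, the reflection part of $\mathcal{B}_N$, and the injection pole is where most of the algebra will concentrate; in particular, ensuring that the combinatorial factor produced by the $B$-type symmetrisation agrees with the product $q^{\binom{N-M}{2}}(1-q)^{N-M}/\prod_i\bigl[(1-q^i)(1+q^{i-1})\bigr]$ looks like the most delicate identity to extract.
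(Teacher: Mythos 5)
Your overall plan (verify a Kolmogorov equation plus the $t=0$ orthogonality, with bulk $S$-matrix and boundary $K$-matrix amplitudes and a $BC$-type symmetrization fixing the constant) is in the right family, but two of your mechanisms would fail as stated. First, you propose to verify the \emph{forward} equation in $x$, resolving the exclusion condition at $x_{i+1}=x_i-1$ and the wall condition at $x_N=1$ by pairing $\sigma$ with its transposition or reflection partner. But in this formula the $\mathcal{B}_N$-sum acts only on the factors $\varphi_{y_i}(w_i)$ carrying the \emph{initial} coordinates; the $x$-dependent plane waves $\bigl(\tfrac{1-w_i}{1-qw_i}\bigr)^{x_i-1}$ sit outside the $\sigma$-sum, and the nested contours prevent you from symmetrizing them in. So the standard Bethe-ansatz cancellations you invoke are simply not available in the $x$ variable. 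The paper instead verifies the \emph{backward} equation: the symmetrized function $\mathcal{F}_y(w)$ is shown to be an eigenfunction of the generator acting on $y$ (Lemma \ref{lm: F-eigenvector property}), with the two-body condition imposed at $y_{i-1}=y_i+1$ and the boundary condition at $y_M=1$, and then the time exponential is expanded via Duhamel. Second, your step (iv) attributes particle creation to the residue at $w_i=(q+\alpha-1)/\alpha$; this contradicts the contour prescription (Definition \ref{defn:ASEP nested contours} explicitly excludes that point), so no such residue is ever picked up. In the backward-equation route the change of particle number enters algebraically through the term $\alpha\hspace{0.05em}\mathcal{F}_{y\cup\{1\}}$, which is controlled by the evaluation $\mathcal{F}_{(1)}(w)=\alpha^{N-1}\sum_k\frac{(1-q)^2w_k}{(1-w_k)(1-qw_k)}$ (Lemma \ref{lm:F-1 evaluation}) together with the $BC_N$ symmetrization identity of Proposition \ref{prop:BCempty sum fac}; it is the ratio $V_{N-M-1}/V_{N-M}=\sum_{j}(q^j+q^{-j})$ that pins down $V_k$, not a residue computation.

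Your treatment of the initial condition is also too optimistic. For $N>M$ (and for signed permutations with sign flips) the integrand at $t=0$ is not pole-free inside the contours; the vanishing is established in the paper by a delicate analysis of the dynamic poles $w_k=w_p^{-1}$ crossed when deforming the nested contours, showing that all iterated residues contribute factors $(1-w)^b$ with $b\geq0$ and hence integrate to zero (Section \ref{sec:initial cond proof}). For $N=M$ the remaining sum over $S_N\setminus\{\id\}$ does not collapse by a Cauchy identity; it vanishes by a nontrivial identity matched to Tracy--Widom's full-line argument (Appendix \ref{sec:TW-match}), with only the identity permutation producing $\delta_{x,y}$. Without replacing your forward-equation and injection-pole mechanisms by arguments of this type, the proof does not go through.
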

The case of empty initial conditions, presented in \cite{garbali_symmetric_2025}, is recovered from \eqref{eq:ASEP transition prob} as the special case $M=0$ or $y=\emptyset$. In particular, this special case has a completely factorized integrand. 

\begin{remark}
As discussed further in Section~\ref{sec:ASEP}, Theorem~\ref{th:transprob_intro} was originally derived within the context of the stochastic six vertex model on half space as defined in \cite{garbali_symmetric_2025}, using a related Cauchy identity. In this paper we provide an alternative direct proof, presented in Sections~\ref{sec:ASEP} and \ref{se:ASEPproof}. 
\end{remark}

We turn now to the totally asymmetric (TASEP) limit $\gamma=q=0$, for which the transition probability takes a Pfaffian form akin to Sch\"utz's determinantal formula \cite{schutz_exact_1997} for the full-space case. This is our second main result and we state here the simplest version, for empty initial conditions and even number of particles.

\begin{thm}[See Theorems~\ref{thm: transition prob pfaffian kernel} and \ref{thm: transition prob pfaffian kernel initial cond} in the main text]
	Let $N$ be a fixed even integer and let $x=(x_1,\dots,x_N)$ be the ordered coordinates of half-space TASEP particles. Then the half-line open TASEP has transition probability from an empty state to $x$ is given by the Pfaffian
	\begin{equation}
		\mathbb{P}_t(\emptyset\to x) = (-1)^{\binom{N}{2}} \mathrm{e}^{-\alpha t} \Pf \left[\mathsf{Q}\right],
	\end{equation}
    where $\mathsf{Q}$ is a skew-symmetric $N\times N$-dimensional matrix whose entries are given by
    \begin{align*}
        [\mathsf{Q}]_{i,j} := & Q_{i,j}(x_{N-i+1},x_{N-j+1}), 
    \end{align*}
    for all $1\leq i,j\leq N$ and
	\begin{equation}
        \label{eq:Qintro}
		Q_{i,j}(x,y) = \alpha^2 \oint_\contour \frac{\dd u}{2\pi\ii} \oint_\contour \frac{\dd w}{2\pi\ii} \frac{u-w}{1-u-w} \frac{w^{i-x}\mathrm{e}^{t(w-1)}}{(w-\alpha)(w-1)^i} \frac{u^{j-y}\mathrm{e}^{t(u-1)}}{(u-\alpha)(u-1)^j},
	\end{equation}
    where the contour $\beta$ surrounds the points $1,0,\alpha$ and $1-\alpha$.
 \end{thm}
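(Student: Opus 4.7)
My plan is to deduce the Pfaffian formula as a specialization of Theorem~\ref{th:transprob_intro} to $q=0$ and $y=\emptyset$, followed by a hyperoctahedral Pfaffian summation identity. I would first observe that setting $M=0$ removes the $y$-dependent inner product in the $\sigma$-sum, and setting $q=0$ reduces the outer single-variable factors to $\prod_i\frac{(1-w_i)^{x_i-1}\exp(w_it/(1-w_i))}{w_i(\alpha-1-\alpha w_i)}$, the outer pair factor to $\prod_{i<j}\frac{w_i-w_j}{w_i(1-w_iw_j)}$, and the $\sigma$-summand to $\prod_{i<j}\frac{w_i(1-w_iw_j)}{w_i-w_j}$. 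The inner and outer pair factors are reciprocal, so the identity permutation contributes only the single-variable integrand, and all nontrivial contributions come from genuine sign flips or transpositions in $\mathcal{B}_N$.

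The key step would then be to apply a Pfaffian summation identity over $\mathcal{B}_N$, of the type used in~\cite{betea_free_2018} for Pfaffian Schur processes: for even $N$ and appropriate single-variable weights $\omega_i$, one has schematically
\begin{equation}
\sum_{\sigma\in\mathcal{B}_N}\sigma\!\left(\prod_{i=1}^N \omega_i(w_i)\prod_{i<j}\frac{w_i(1-w_iw_j)}{w_i-w_j}\right)=\Pf\!\left[\Omega_{i,j}(w_i,w_j)\right]_{i,j=1}^N,
\end{equation}
where $\Omega_{i,j}(w,u)$ is the $\mathbb{Z}_2\times\mathbb{Z}_2$ antisymmetrization of the pair kernel $\omega_i(w)\omega_j(u)\frac{w(1-wu)}{w-u}$ under the inversions $w\mapsto w^{-1}$, $u\mapsto u^{-1}$ that define the action of $\mathcal{B}_N$. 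Substituting this into the $N$-fold contour integral and using Fubini, the transition probability becomes a Pfaffian of double contour integrals. I would then perform the change of variables $w\mapsto 1-w$ (which sends the integrand's singularity at $w=1$ to $w=0$), together with the relabelling $i\mapsto N-i+1$ that reverses the ordering of the $x_i$, to match each entry of the Pfaffian to $Q_{i,j}(x_{N-i+1},x_{N-j+1})$ in~\eqref{eq:Qintro}; the antisymmetric kernel $\frac{u-w}{1-u-w}$ should arise directly from the $\mathbb{Z}_2\times\mathbb{Z}_2$ antisymmetrization.

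The main obstacle I would anticipate is the behaviour of the prefactor $V_N=q^{\binom{N}{2}}(1-q)^N/\prod_{i=1}^N(1-q^i)(1+q^{i-1})$ as $q\to 0$: it vanishes like $q^{\binom{N}{2}}$, so the limit $q=0$ cannot be taken naively inside the integrand. I would need to isolate a compensating divergence of the same order inside the $\sigma$-sum, produced as the poles at $w_i=qw_j$ coalesce when $q\to 0$ along the nested contours $\mathcal{C}_i$. Only after this residue extraction does the finite Pfaffian emerge, and this argument should also fix the sign $(-1)^{\binom{N}{2}}$ as well as collapse the nested contours to the single contour $\beta$ surrounding $\{0,1,\alpha,1-\alpha\}$. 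The odd-$N$ case and the extension to general initial data in Theorem~\ref{thm: transition prob pfaffian kernel initial cond} would follow, respectively, by the standard auxiliary-row-and-column augmentation for Pfaffians of odd rank and by incorporating the $y$-dependent factors into the pair kernel $\Omega_{i,j}$.
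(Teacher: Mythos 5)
There is a genuine gap: the mechanism you propose for producing the Pfaffian is not the one available in this formula, and the step you flag as the "main obstacle" is a red herring. For empty initial data the hyperoctahedral sum is trivial: by Proposition \ref{prop:BCempty sum fac}, $V_N\sum_{\sigma\in\mathcal{B}_N}\sigma\bigl(\prod_{i<j}\tfrac{w_i-qw_j}{w_i-w_j}\tfrac{1-w_iw_j}{1-qw_iw_j}\bigr)=1$ identically in $q$, so $\mathcal{F}_\emptyset\equiv\alpha^N$ and the $q\to0$ limit of the integrand is completely factorized and regular --- there is no $q^{\binom{N}{2}}$ divergence to compensate, no residue extraction at $w_i=qw_j$, and (even in the general-IC case) the compensation is purely algebraic, coming from the explicit powers of $q$ generated by $w_{-k}=1/(qw_k)$ when the sign sums are performed variable by variable, not from coalescing poles on the nested contours. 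Consequently the Pfaffian cannot arise from a $\mathcal{B}_N$ summation identity of Pfaffian-Schur-process type: the $x_i$-dependent single-variable factors sit \emph{outside} the $\sigma$-sum (they are functions of the unpermuted $w_i$), the nested contours $\mathcal{C}_1,\dotsc,\mathcal{C}_N$ are not invariant under the $\mathcal{B}_N$ action, and at $q=0$ the inversion $w\mapsto 1/(qw)$ defining that action degenerates, so the schematic identity you invoke cannot even be set up on this integrand.

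What is actually needed, and what your proposal omits, is the following chain: precisely at $q=0$ all nested contours can be deformed to a single contour around $1$; one then symmetrizes the integrand over $S_N$, writing the $x$-dependent factors as $\tfrac1{N!}\det\bigl[(1-w_i)^{x_j-1}/w_i^{N-j+1}\bigr]$ while the surviving pair product $\prod_{i<j}\tfrac{w_i-w_j}{1-w_iw_j}$ is recognized as Stembridge's Pfaffian (Lemma \ref{lm: stembridge pfaffian}); the substitution $w_i\mapsto 1-w_i^{-1}$ (not $w\mapsto 1-w$, which does not produce $\mathrm{e}^{t(w-1)}$ or the factor $\tfrac{u-w}{1-u-w}$) converts the Stembridge kernel $\tfrac{a-b}{1-ab}$ into $\tfrac{u-w}{1-u-w}$ and the weights into $w^{i-x}\mathrm{e}^{t(w-1)}/((w-\alpha)(w-1)^i)$; and finally de Bruijn's integration formula (Lemma \ref{lm:de bruijn formula}) is what couples the determinant columns (carrying the indices $i$) to the Pfaffian kernel and collapses the $N$-fold integral into the single Pfaffian of double integrals $Q_{i,j}$, with the sign $(-1)^{\binom{N}{2}}$ coming from the subsequent simultaneous reversal of rows and columns. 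Without Stembridge plus de Bruijn (or a genuinely valid substitute coupling the index-dependent $x$-weights to an antisymmetric kernel), the plan as written cannot reach the stated formula; the extension to odd $N$ and to general $y$ likewise relies on the augmented de Bruijn formula and on Lemma \ref{lm:F inital coord q to 0}, not on absorbing the $y$-dependence into a pair kernel $\Omega_{i,j}$.
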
   
The extension to odd $N$ is straightforward and also provided in Theorem~\ref{thm: transition prob pfaffian kernel}. More generally, a Pfaffian expression can be obtained for arbitrary initial conditions, as long as the leftmost initial particle is not too close to site $1$. For fixed integers $N\geq M\geq 0$, let $x$ be the ordered coordinates $1\leq x_N<\ldots<x_1$ of TASEP particles at time $t$ and let $y$ denote the ordered coordinates $1\leq y_M<\ldots<y_1$ of particles at time $t=0$, then the transition probability $\mathbb{P}_t(y\to x)$ is given in Theorem~\ref{thm: transition prob pfaffian kernel initial cond} as a Pfaffian for the case\footnote{The restriction to $y_M > N-M+1$ is of a technical nature. For a physical interpretation see Remark \ref{rmk:initial condition restriction}.} $y_M > N-M+1$. 

Following ideas of \cite{sasamoto_spatial_2005, borodin_fluctuation_2007} for the full-line case, and adjusting these to the Pfaffian case using the framework developed in \cite{borodin_eynardmehta_2005} and \cite{rains_correlation_2000}, the above Pfaffian expression for the transition probability can be written in terms of a Pfaffian point processes supported on Gelfand--Tsetlin patterns, see Propositions \ref{prop:GT pattern empty} and \ref{prop:GT pattern initial} in the main text. 

Using the formulation as a Pfaffian point process, correlation kernels can be computed via a result from \cite{borodin_eynardmehta_2005} given in Proposition \ref{prop:cond pf L is PPP}. This leads us to our third main result: a conditional  distribution of half-space TASEP, which we present here for the simplest case of empty initial conditions. A Fredholm Pfaffian expression for the conditional  distribution for more general initial conditions is given in Theorem~\ref{thm:cond prob Fredholm pf initial} in Section~\ref{se:GeneralIC}.

\begin{thm}[Theorem~\ref{thm:cond prob Fredholm pf empty} in the main text]
\label{thm:cond prob Fredholm pf empty intro}
    Let $N>0$ be a fixed even integer. Let $\Psi(x,y)=Q_{1,1}(x,y)$ with $Q$ defined in \eqref{eq:Qintro}, and let $\Phi_0(x),\dots,\Phi_{N-1}(x)$ be a family of polynomials where $\Phi_k$ is of degree $k$. Assume that these polynomials satisfy the following skew-biorthogonality relations:
    \begin{align}
        \begin{split}
            \sum_{x,y=1}^\infty \Phi_{2i}(x) \Psi(x,y) \Phi_{2j}(y) & = 0, \\
            \sum_{x,y=1}^\infty \Phi_{2i+1}(x) \Psi(x,y) \Phi_{2j+1}(y) & = 0, \\
            \sum_{x,y=1}^\infty \Phi_{2i}(x) \Psi(x,y) \Phi_{2j+1}(y) & = -\delta_{i,j},
        \end{split}
    \end{align}
    for all $0\leq i,j< N/2$. Furthermore, for a fixed integer $m\geq 0$, let $\{p_1,\dots,p_m\}\subseteq\{1,\dots,N\}$ be indices denoting particle labels such that $p_1<p_2<\cdots<p_m$. Then the half-line TASEP with empty initial conditions has conditional joint distribution given by a Fredholm Pfaffian of the form
    \begin{equation}
        \mathbb{P}\left[\bigcap_{k=1}^m \left\{X_{t}(p_k) > a_k\right\} \given \abs{X_t} = N\right] = \pf\left(J - \bar\chi_a K\bar\chi_a\right)_{\ell^2(\{p_1,\dots,p_m\}\times \mathbb{N})},
    \end{equation}
    where $\bar\chi_a(p,x) = \mathbbm{1}_{x\leq a_p}$ and $a_1,\dotsc,a_k\geq0$. The correlation kernel $K$ is given explicitly in terms of the $\Phi_{i}(x)$ polynomials in Section~\ref{se:EmptyIC}.
\end{thm}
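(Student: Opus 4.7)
The plan is to pass from the Pfaffian transition probability to a Pfaffian point process on Gelfand--Tsetlin patterns, sum out the unobserved rows to obtain a Pfaffian point process on the $N$ top-row positions, and then apply the Eynard--Mehta-type identity of \cite{borodin_eynardmehta_2005} (quoted here as Proposition~\ref{prop:cond pf L is PPP}) together with the skew-biorthogonality hypothesis to write its correlation kernel in closed form. Once the kernel is in hand, the joint upper-tail probability $\mathbb{P}\bigl[\bigcap_k\{X_t(p_k)>a_k\}\bigr]$ is, tautologically, a Fredholm Pfaffian of the stated shape.

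Concretely, I would first invoke Proposition~\ref{prop:GT pattern empty} to reinterpret the Pfaffian transition probability of Theorem~\ref{thm: transition prob pfaffian kernel} as the top-row marginal of a Pfaffian measure on Gelfand--Tsetlin patterns whose underlying pairing kernel is $\Psi = Q_{1,1}$. Conditioning on the event $\{|X_t|=N\}$ amounts to dividing the transition probability by the partition function $\sum_x \mathbb{P}_t(\emptyset\to x)$, a positive scalar which does not depend on the observed configuration and can be absorbed into the normalisation of the reference family. After this normalisation, the conditional joint law of $(X_t(1),\dots,X_t(N))$ is a Pfaffian point process on $\mathbb{N}$ of Eynard--Mehta type, with reference functions $\Phi_0,\dots,\Phi_{N-1}$ and skew-symmetric pairing $\Psi$.

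The main computational step is then to extract the correlation kernel via Proposition~\ref{prop:cond pf L is PPP}, which expresses it in terms of the inverse of the $N\times N$ skew-symmetric Gram-type matrix $\mathsf{M}$ with entries
\begin{equation}
    \mathsf{M}_{i,j} = \sum_{x,y=1}^{\infty}\Phi_i(x)\,\Psi(x,y)\,\Phi_j(y).
\end{equation}
The three skew-biorthogonality identities in the hypothesis are precisely the assertion that, in the basis $(\Phi_0,\Phi_1,\dots,\Phi_{N-1})$, $\mathsf{M}$ is block-diagonal with each diagonal $2\times 2$ block equal to a canonical symplectic form; its inverse is therefore immediate, and the correlation kernel collapses to an explicit bilinear expression in $\Psi$ and the $\Phi_i$, which is the matrix-valued kernel $K$ displayed in Section~\ref{se:EmptyIC}.

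The last step is to translate the joint upper-tail event into the Fredholm Pfaffian $\pf(J-\bar\chi_a K\bar\chi_a)$ on $\ell^2(\{p_1,\dots,p_m\}\times\mathbb{N})$; this is the standard inclusion--exclusion identity valid for any Pfaffian point process with a matrix-valued correlation kernel. The principal obstacle lies not in any one of these steps individually but in the analytic bookkeeping connecting them: one must verify that the measure on Gelfand--Tsetlin patterns supplied by Proposition~\ref{prop:GT pattern empty} fits the precise product structure required by the Eynard--Mehta framework of Proposition~\ref{prop:cond pf L is PPP}, that a skew-biorthogonal polynomial family $\{\Phi_k\}$ of exact degrees $0,1,\dots,N-1$ genuinely exists for the given $\Psi$, and that the double sums $\sum_{x,y=1}^{\infty}$ together with the Fredholm Pfaffian expansion converge absolutely. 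Once these analytic points are settled, the theorem follows by unpacking definitions.
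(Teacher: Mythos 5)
Your outline follows the paper's own route essentially step for step: Proposition~\ref{prop:GT pattern empty} gives the Gelfand--Tsetlin Pfaffian measure, Theorem~\ref{thm:L-ensemble empty} together with Proposition~\ref{prop:cond pf L is PPP} turns it into a conditional Pfaffian $L$-ensemble with kernel $J+(J_{\mathcal{X}}+L)^{-1}\big|_{\mathcal{X}\times\mathcal{X}}$, the conditioning constant is identified with $\mathbb{P}(|X_t|=N)$, and the skew-biorthogonality hypothesis trivializes the inverse of the relevant Gram matrix, giving the Fredholm Pfaffian \eqref{eq:cond joint dist TASEP empty}. The one compression worth flagging is that the Borodin--Rains formula does not directly produce the Gram matrix of the $\Phi_k$'s: the block inversion of $J_{\mathcal{X}}+L$ (Lemma~\ref{lm:2x2inverse}) yields the matrix $\mathcal{N}$ built from the measure's own functions $\phi_{[k,N]}(\vir_k,\cdot)$, whose invertibility comes from $\mathbb{P}(|X_t|=N)>0$ (Corollary~\ref{cor:boundary current schutz pf}), and passing to the $\Phi_k$ basis---legitimate precisely because $\deg\Phi_k=k$, and implemented in the paper via the skew-Borel decomposition---is the step hidden inside your ``analytic bookkeeping''.
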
  

This result, and its extension to more general initial data, provides a Fredholm Pfaffian for the TASEP joint distribution conditioned on there being $N$ particles in the system at time $t$. Our approach is constructive starting from the ASEP transition probability, and the conditioning arises naturally in our setting from the Pfaffian point process formalism. Via a completely different approach, which essentially amounts to guessing a formula and then checking that it satisfies the Kolmogorov equations of the system, X. Zhang \cite{zhang_tasep_2024} has recently obtained a Fredholm Pfaffian expression for the \textsl{unconditional} joint distribution of the TASEP height function, and is able to extract the KPZ scaling limit from it. It is of significant interest to understand how this and other approaches are related.

The skew-biorthogonalization problem in Theorem~\ref{thm:cond prob Fredholm pf empty intro} seems to be more challenging than the full-line case, even for empty initial data. We suspect that one reason for this is related to the fact that the joint distribution is conditioned on the number of particles in the system at time $t$.

\subsection{Acknowledgments}
We warmly thank Guillaume Barraquand, Nick Beaton, Peter J. Forrester, Alexandr Garbali, Cengiz Gazi, Jimmy He, Axel Saenz and Tomohiro Sasamoto for helpful discussions. JdG and MW acknowledge support from the Australian Research Council. WM was supported by an Australian Government Research Training Program Scholarship.
DR was supported by Centro de Modelamiento Matem\'{a}tico Basal Funds FB210005 from ANID-Chile and by Fondecyt Grant 1241974. Part of this research was performed while the authors were visiting the Institute for Pure and Applied Mathematics (IPAM), which is supported by the National Science Foundation (Grant No. DMS-1925919).

\section{Half-space ASEP}
\label{sec:ASEP}

\subsection{Model outline}

We consider the \emph{asymmetric simple exclusion process} (ASEP) in half-space $\mathbb{N}\coloneqq\mathbb{Z}_{>0}$ with open boundary conditions, and with finitely many particles.
This is a continuous time Markov process with state space given by 
\[\mathbb{W}= \left\{S\subset \mathbb{N}: S \text{ is finite}\right\}\]
(note that $\mathbb{W}$ is countable).
We index these half-space configurations by $x=(x_1,\dots,x_N)\in\mathbb{W}$ satisfying $x_1> \cdots>x_N\geq 1$, and we think of the state $X_t = (X_t(1),\dots,X_t(N))$ of the process at time $t$ as the positions (in decreasing order) of $N$ particles present at that time $t$.

The dynamics of the process are as follows: a particle at site $x\geq1$ jumps to $x+1$ at rate $1$ and to $x-1$ (assuming $x\geq2$) at rate $q\geq0$, with jumps being permitted only if the target site is empty and, additionally, at rate $\alpha\geq0$ new particles enter the system at site $1$ whenever this site is empty and exit the system through site one at rate $\gamma\geq0$ (see Figure \ref{fig:ASEP dynamics figure}).
More precisely, the evolution of the system can be separated into \emph{bulk dynamics}, with transition rates
\begin{align*}
	x \mapsto (x_1,\dots,x_i+1,\dots,x_N) & \quad \text{ at rate $1$} \quad \text{if } x_{i-1}>x_i +1, \\
	x \mapsto (x_1,\dots,x_i-1,\dots,x_N) & \quad \text{ at rate $q$} \quad \text{if } x_{i+1}<x_i -1\quad(x_i\geq2),
\end{align*}
and \emph{boundary dynamics}, with transition rates
\begin{align*}
(x_1,\dots,x_N) \mapsto (x_1,\dots,x_N,1) & \quad \text{ at rate $\alpha$} \quad \text{if } N=0 \text{ or } x_{N}>1, \\
(x_1,\dots,x_N) \mapsto (x_1,\dots,x_{N-1}) & \quad \text{ at rate $\gamma$} \quad \text{if } x_{N}=1.
\end{align*} 
In this text we consider ASEP with generic open boundary conditions at the origin.
Note that the number of particles $\abs{X_t}$ is a random variable which evolves with $t$.

\begin{figure}
	\begin{center}
		\begin{tikzpicture}[scale=0.8]
			\foreach \x in {1,...,8}
			{\draw[line width=2pt] (\x-1,2) -- (\x-0.5,2);}

			\draw[black,fill=red] (2.25,2.4) circle (0.3);
			\draw[black,fill=red] (4.25,2.4) circle (0.3);
			\draw[black,fill=red] (5.25,2.4) circle (0.3);
			\draw[gray,fill=pink] (-0.65,2.7) circle (0.3);
			\draw[line width=1pt,->] (-0.35,3) arc (100:0:0.5);
			\node[above] at (0.15,2.8) {$\alpha$};
			\draw[line width=1pt,->] (2.05,2.7) arc (45:165:0.4);
			\node[above] at (1.75,2.8) {$q$};
			\draw[line width=1pt,->] (2.45,2.7) arc (135:15:0.4);
			\node[above] at (2.75,2.8) {$1$};
			\draw[line width=1pt,->] (4.05,2.7) arc (45:165:0.4);
			\node[above] at (3.75,2.8) {$q$};
			\draw[line width=1pt,->] (5.45,2.7) arc (135:15:0.4);
			\node[above] at (5.75,2.8) {$1$};

			\foreach \x in {1,...,8}
			{\draw[line width=2pt] (\x-1,0) -- (\x-0.5,0);
				\node[below] at (\x-0.75,0) {$\x$};}
			\draw[black,fill=red] (0.25,0.4) circle (0.3);
			\draw[black,fill=red] (2.25,0.4) circle (0.3);
			\draw[black,fill=red] (3.25,0.4) circle (0.3);
			\draw[black,fill=red] (4.25,0.4) circle (0.3);
			\draw[black,fill=red] (6.25,0.4) circle (0.3);
			\draw[line width=1pt,->] (0.05,0.7) arc (45:165:0.4);
			\node[above] at (-0.25,0.8) {$\gamma$};
			\draw[line width=1pt,->] (0.45,0.7) arc (135:15:0.4);
			\node[above] at (0.75,0.8) {$1$};
			\draw[line width=1pt,->] (2.05,0.7) arc (45:165:0.4);
			\node[above] at (1.75,0.8) {$q$};
			\draw[line width=1pt,->] (4.45,0.7) arc (135:15:0.4);
			\node[above] at (4.75,0.8) {$1$};
			\draw[line width=1pt,->] (6.05,0.7) arc (45:165:0.4);
			\node[above] at (5.75,0.8) {$q$};
			\draw[line width=1pt,->] (6.45,0.7) arc (135:15:0.4);
			\node[above] at (6.75,0.8) {$1$};
		\end{tikzpicture}
	\end{center}
	\caption{Dynamics of the ASEP in half-space $\mathbb{N}$ with generic open boundaries.}
    \label{fig:ASEP dynamics figure}
\end{figure}
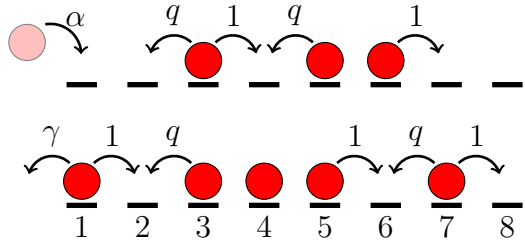
The ASEP Markov generator $\mathscr{L}$ acts on functions $f:\mathbb{W}\longrightarrow\mathbb{C}$ as
\begin{multline}
    \label{eq:ASEP generator}
    \mathscr{L}f(x) = \alpha(1-\eta_1)\left(f(x\cup\{1\})- f(x)\right) + \gamma\hspace{0.05em}\eta_1\left(f(x\setminus\{1\})-f(x)\right) \\
    +\sum_{s=1}^\infty 
     \eta_s(1-\eta_{s+1})(f(x^{s,s+1})-f(x))+\sum_{s=2}^\infty q\hspace{0.05em}\eta_s(1-\eta_{s-1})(f(x^{s,s-1})-f(x)),
\end{multline}
where $x^{s,s\pm1}\in\mathbb{W}$ is the configuration obtained by the interchange of occupations at sites $s$ and $s\pm1$, and where we have used the notation $\eta_s$ as the occupation of $x$ at site $s\in\mathbb{N}$:
\[\eta_s = \begin{cases}
    1 & \text{if }x_j=s\text{ for some } j, \\
    0 & \text{otherwise}.
\end{cases}\]
Let us define an inner product with an orthogonal basis indexed by half-space configurations $\mathbb{W}$ by $\bra{y}\ket{x} = \delta_{y,x}$.
With this inner product, the \emph{transition matrix elements} of the Markov generator $\mathscr{L}$ are expressed as $\bra{y}\mathscr{L}\ket{x}$. That is, when $y\neq x$ the matrix element $\bra{y}\mathscr{L}\ket{x}$ is equal to the possible single-particle transition rates to go from state $y$ to state $x$ (if there is no possible single particle transition from $y$ to $x$ then the matrix element is zero), while the diagonal terms are equally defined as $\bra{y}\mathscr{L}\ket{y} = -\sum_{x'\neq y} \bra{y}\mathscr{L}\ket{x'}$.

With this notation, the action of $\mathscr{L}$ on $f:\mathbb{W}\longrightarrow\mathbb{C}$ can be expressed as
\[\mathscr{L}f(y)=\sum_{z}\bra{x}\mathscr{L}\ket{z}f(z).\]

\subsubsection{ASEP transition probability}

In this section we present the transition probability for the half-space ASEP with generic initial conditions. We denote the transition probability by 
\begin{equation}
    \label{eq:trans prob notation defn}
    \mathbb{P}_t(y\to x) := \mathbb{P}\left(X_t = x|X_0=y\right).
\end{equation}
The transition probabilities $\mathbb{P}_t(y\to x)$ can be formulated as the solution of the \emph{master equation}, or \emph{Kolmogorov forward equation}
\begin{equation}
    \label{eq:ASEP evolution equation matrix forward}
    \frac{\dd}{\dd t} \mathbb{P}_t(y\to x) = \sum_{x'} \mathbb{P}_t(y\to x')\bra{x'}\mathscr{L}\ket{x},
\end{equation}
subject to the initial condition 
\[\mathbb{P}_0(y\to x) = \delta_{y,x}.\]
Explicit integral formula expressions for transition probabilities have proven to be fruitful for obtaining exact asymptotic results for the ASEP under various boundary and initial conditions conditions. Notably for the full-space $\mathbb{Z}$ (i.e. no boundary), solutions of the master equation were obtained for TASEP ($q=0$) \cite{schutz_exact_1997} and for generic bulk rates \cite{tracy_integral_2008} using coordinate Bethe ansatz techniques. In half-space, explicit expressions were similarly obtained for closed boundary conditions $(\alpha=\gamma=0)$ \cite{tracy_bose_2013}. For upper-triangular boundaries with empty initial conditions ($\gamma=0$ and $y=\emptyset$) expressions were obtained using a reduction of the stochastic six-vertex model \cite{garbali_symmetric_2025}.

The transition probability \eqref{eq:trans prob notation defn} is equivalently formulated as the unique\footnote{Since our state space $\mathbb{W}$ is countable, the solution of the backward Kolmogorov equation is unique whenever the process is not explosive (see e.g. Theorem 2.33 in \cite{liggett_continuous_2010}); non-explosivity holds in our case because the sequence of jump times is clearly dominated stochastically from below by a sequence of independent exponential random variables $\tau_k$ with parameters $|X_0|+k$, which satisfy $\sum_{k\geq1}\tau_k=\infty$ almost surely.} solution of the \emph{Kolmogorov backward equation}
\begin{equation}
    \label{eq:ASEP evolution equation matrix backward}
    \frac{\dd}{\dd t} \mathbb{P}_t(y\to x) = \sum_{y'} \bra{y}\mathscr{L}\ket{y'} \mathbb{P}_t(y'\to x),
\end{equation}
with the same initial condition.
Here we will employ \eqref{eq:ASEP evolution equation matrix backward} rather than \eqref{eq:ASEP evolution equation matrix forward} to derive a formula for $\mathbb{P}_t(y\to x)$.

\subsection{Integral formula for the transition probability}\label{sec:int formula}
For a fixed integer $N>0$ we denote the group of \emph{signed permutations} of $\{1,\dots,N\}$ by $\mathcal{B}_N=S_N\ltimes\{\pm1\}^N$. This group is of order $2^N N!$ and is sometimes known as the \emph{hyperoctahedral group}. The group $\mathcal{B}_N$ acts on functions of some alphabet $(w_1,\dots,w_N)$ of length $N$ by permuting the variables as
\begin{equation}\label{eq:sigma action}
    \sigma(f(w_1,\dots,w_N)) = f\left(w_{\sigma(1)},\dots,w_{\sigma(N)}\right),
\end{equation}
where the negative indices are defined by $w_{-k} := 1/(qw_k)$ for any $1\leq k\leq N$. Wherever it is convenient, we will use either $\sigma_i$ or $\sigma(i)$ to denote the image a signed permutation. The symmetric group of permutations, $S_N$, is contained within $\mathcal{B}_N$ as a subgroup.

In order to state the transition probability as a complex contour integral we must explicitly outline the domain of integration.
\begin{defn}
    \label{defn:ASEP nested contours}
    For a fixed integer $N$, let $\mathcal{C}_1,\dots,\mathcal{C}_N$ be a collection of positively oriented closed complex integration contours satisfying:
    \begin{itemize}
        \item For all $1\leq i\leq N$, each contour $\mathcal{C}_i$ encloses the point 1 and does not enclose the points $0,q^{-1}$ and $(1-q-\alpha)/\alpha$.
        \item For all $1\leq i<j\leq N$, each $\mathcal{C}_i$ is completely contained within the interior of each $\mathcal{C}_j$. Additionally, $q\mathcal{C}_j$ lies completely outside the interior of $\mathcal{C}_i$, where $q\mathcal{C}_j$ denotes the image of $\mathcal{C}_j$ under multiplication by $q$. This also implies that no part of $\mathcal{C}_j$ is contained within the interior of $q^{-1}\mathcal{C}_i$.
        \item For all $1\leq k,\ell\leq N$, the contours $\mathcal{C}_k$ and $q^{-1}\mathcal{C}_\ell^{-1}$ do not intersect and have completely disjoint interiors. The contour $\mathcal{C}_\ell^{-1}$ is the contour which traverses the points which are the reciprocal of those on $\mathcal{C}_\ell$, and whose orientation is fixed so that $q^{-1}\mathcal{C}_\ell^{-1}$ encloses the point $q^{-1}$.
    \end{itemize}
\end{defn}
\begin{figure}
    \centering
    \[
    \begin{tikzpicture}[scale=1.2]
        \draw[lightgray,line width=1pt] (-1,0) -- (8.5,0);
        \draw[lightgray,line width=1pt] (0,-2.2) -- (0,2.2);
        \node[below] at (-0.5,0) {$\frac{1-q-\alpha}{
        \alpha}$};
        \draw[black,fill=black] (-0.5,0) circle (0.07);
        \node[below] at (8,0) {$q^{-1}$};
        \draw[black,fill=black] (8,0) circle (0.07);
        \node[below] at (5,0) {$1$};
        \draw[black,fill=black] (5,0) circle (0.07);
        \node[below] at (1.5,0) {$q$};
        \draw[black,fill=black] (1.5,0) circle (0.07);
        \draw[line width=1.5pt,->] (5,-0.67) arc (-90:325:0.67);
        \draw[line width=1.5pt,->] (4.8,-1.2) arc (-90:305:1.2);
        \draw[line width=1.5pt,->] (4.5,-1.77) arc (-90:295:1.77);
        \draw[line width=0.8pt,dashed] (1.5,-0.45) arc (-90:270:0.45);
        \draw[line width=0.8pt,dashed] (1.35,-0.85) arc (-90:270:0.85);
        \draw[line width=0.8pt,dotted] (7.85,1.82) arc (120:240:2.13);
        \node[above] at (4.2,0.2) {$\mathcal{C}_1$};
        \node[above] at (3.8,0.8) {$\mathcal{C}_2$};
        \node[above] at (3.2,1.3) {$\mathcal{C}_3$};
        \node at (-0.5,1.8) {$\mathbb{C}$};
    \end{tikzpicture}
    \]
    \caption{Examples of nested complex contours $\mathcal{C}_1,\mathcal{C}_2,\mathcal{C}_3$ satisfying the restrictions of Definition \ref{defn:ASEP nested contours} for some $0<q<1$. The contours $q\mathcal{C}_1,q\mathcal{C}_2$ depicted with dashed lines while part of the contour $q^{-1}\mathcal{C}^{-1}_3$ is depicted with a dotted line. This example depicts parameters satisfying $\alpha>1-q$, however, this is not a requirement.
    }
    \label{fig:ASEP nested contours}
\end{figure}
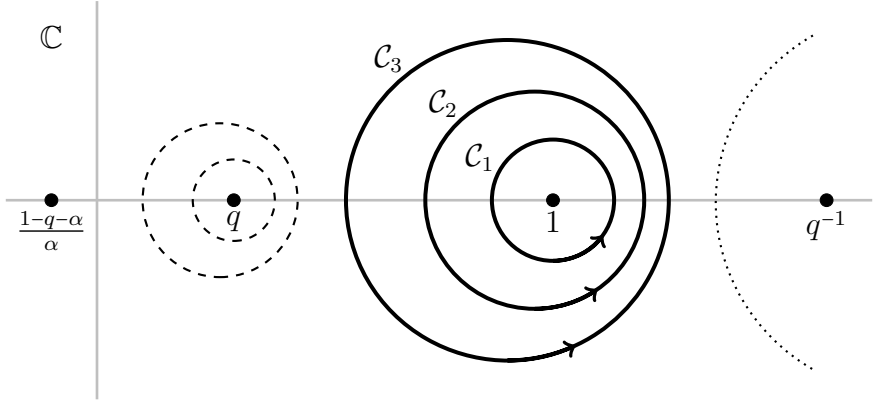
A depiction of nested contours which satisfy the the restrictions of Definition \ref{defn:ASEP nested contours} is given in Figure \ref{fig:ASEP nested contours}.

We now present the main result of this section: an explicit multiple integral formula for the transition probability of the half-space ASEP.
\begin{thm}
	\label{thm:ASEP transition prob}
	Let $N\geq M\geq 0$ be fixed integers and let $x=(x_1,\dots,x_N)$ and $y=(y_1,\dots,y_M)$ be ordered coordinates of particles in the half-space ASEP. The transition probability between $y$ and $x$ for the half-space ASEP under the specialization $\gamma=0$ is given by
	\begin{multline}
		\label{eq:ASEP transition prob}
		\mathbb{P}_t(y\to x) = V_{N-M} \alpha^{N-M} \mathrm{e}^{-\alpha t} \oint_{\mathcal{C}_1} \frac{\dd w_1}{2\pi\ii} \cdots \oint_{\mathcal{C}_N} \frac{\dd w_N}{2\pi\ii} \prod_{1\leq i<j\leq N}\left[\frac{w_j-w_i}{qw_j-w_i}\frac{1-qw_iw_j}{1- w_iw_j}\right] \\
		\times\prod_{i=1}^N \left[\frac{1-q w_i^2}{w_i(q+\alpha-1-\alpha w_i)(1-qw_i)} \left(\frac{1-w_i}{1-qw_i}\right)^{x_i-1} \exp(\frac{(1-q)^2 w_i t}{(1-w_i)(1-qw_i)})\right] \\
		\times \sum_{\sigma\in\mathcal{B}_N} \sigma\left(\prod_{1\leq i<j\leq N}
		\left[\frac{w_i-qw_j}{w_i-w_j}\frac{1-w_iw_j}{1- qw_iw_j}\right] 	\prod_{i=1}^M \left[ \frac{1-q-\alpha+\alpha w_i}{1-qw_i^2}\frac{(1-q)w_i}{1-w_i}\left(\frac{1-qw_i}{1-w_i}\right)^{y_i-1}\right]
		\right),
	\end{multline}
	where the contours surround singularities at $w_i=1$ and are defined according to Definition \ref{defn:ASEP nested contours}. The overall normalization is given by the following $q$-dependent factor
    \begin{equation}
        \label{eq:V-normalization}
        V_k = \dfrac{q^{\binom{k}{2}}(1-q)^k}{\prod_{i=1}^k\left[\left(1-q^i\right)\left(1+q^{i-1}\right)\right]}.
    \end{equation}
\end{thm}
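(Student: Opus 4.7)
The plan is a direct coordinate-Bethe-ansatz verification. Denote the right-hand side of \eqref{eq:ASEP transition prob} by $u_t(y;x)$; I will check that it satisfies both the backward Kolmogorov equation \eqref{eq:ASEP evolution equation matrix backward} in the $y$-variables and the initial condition $u_0(y;x)=\delta_{y,x}$. By the uniqueness statement in the footnote to \eqref{eq:ASEP evolution equation matrix backward}, this identifies $u_t(y;x)$ with $\mathbb{P}_t(y\to x)$.

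The Kolmogorov equation is verified by decomposing $\mathscr{L}$ into free hopping, bulk exclusion, and boundary terms. For the free part, the single-particle plane wave $\psi_w(y)=\left(\frac{1-qw}{1-w}\right)^{y-1}$ diagonalizes the free hopping operator with eigenvalue $\varepsilon(w)=\frac{(1-q)^2 w}{(1-w)(1-qw)}$, so the factor $\exp(\varepsilon(w_i)t)$ in the integrand reproduces $\partial_t$ when the particles are treated as non-interacting; this mirrors the full-space argument of \cite{tracy_integral_2008}. The bulk exclusion correction at a collision $y_i-y_{i+1}=1$ is absorbed by the two-body factor $\frac{w_i-qw_j}{w_i-w_j}\cdot\frac{1-w_iw_j}{1-qw_iw_j}$, whose behaviour under an adjacent $S_N$-transposition inside $\mathcal{B}_N$ supplies exactly the $S$-matrix weight required to cancel the residual term. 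When the leftmost particle sits at the wall, the backward equation couples the $M$- and $(M+1)$-particle formulas through the creation term $\alpha(1-\eta_1)[u_t(y\cup\{1\};x)-u_t(y;x)]$; here the sign-flip generators $w_k\mapsto 1/(qw_k)$ of $\mathcal{B}_N$, combined with the wall factor $\frac{1-q-\alpha+\alpha w}{1-qw^2}\cdot\frac{(1-q)w}{1-w}$, act as a reflection tuned to balance this creation term, while the overall prefactor $V_{N-M}\alpha^{N-M}\mathrm{e}^{-\alpha t}$ supplies the Poisson cost of the $N-M$ extra entries together with the corresponding hyperoctahedral normalization.

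For the initial condition I would split into the cases $M=N$ and $M<N$. In the first case the prefactor is $1$ and the integrand mirrors the full-space Tracy-Widom structure, so iterated residues at $w_i=1$ along the nested contours of Definition~\ref{defn:ASEP nested contours} collapse the $\mathcal{B}_N$-sum to the identity permutation and produce $\delta_{y,x}$. When $M<N$ the exponential factor is nonzero at $t=0$, so the multiple integral itself must vanish for generic $x$; this follows by exploiting the analyticity of the integrand at $0$, $q^{-1}$ and $(1-q-\alpha)/\alpha$, which are precisely the points that the contours in Definition~\ref{defn:ASEP nested contours} exclude. The main obstacle is the wall verification: the reflection identity genuinely links transition probabilities with different particle numbers, and pinning down the exact normalization $V_{N-M}$---whose denominator $\prod_{i=1}^{N-M}(1-q^i)(1+q^{i-1})$ carries the order $2^{N-M}(N-M)!$ of a hyperoctahedral subgroup---requires a $\mathcal{B}_N$-summation identity that does not reduce to the Tracy-Widom $S_N$ manipulation and must be handled as a $q$-deformed combinatorial calculation in its own right.
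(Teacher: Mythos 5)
Your overall skeleton is the same as the paper's: write the right-hand side as a $w$-integral against the $\mathcal{B}_N$-symmetrized function $\mathcal{F}_y$, check the backward Kolmogorov equation in the $y$-variables, check the $t=0$ orthogonality, and invoke uniqueness. The problem is that both pillars are only asserted, and the assertions are inaccurate at exactly the points where the real work lies. On the generator side, your description of the boundary coupling is backwards: the creation term $\alpha(1-\eta_1)\bigl[u_t(y\cup\{1\};x)-u_t(y;x)\bigr]$ is active precisely when site $1$ is \emph{empty}, so it enters the free-evolution equation for every configuration with $y_M>1$ and must already be balanced there; the paper does this via the identity $\mathcal{F}_{(1)}(w)=\alpha^{N-1}\sum_k\frac{(1-q)^2w_k}{(1-w_k)(1-qw_k)}$ (Lemma \ref{lm:F-1 evaluation}) together with the $BC_N$ symmetrization identity fixing $V_k$ (Proposition \ref{prop:BCempty sum fac}). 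When the leftmost particle \emph{is} at the wall the required relation is instead the boundary scattering condition $qf(\dots,0)+\alpha f(\dots,1,1)=(q+\alpha)f(\dots,1)$, equivalent to $\tfrac{q}{q+\alpha}\mathcal{F}_{(0)}+\tfrac{\alpha}{q+\alpha}\mathcal{F}_{(1,1)}=\mathcal{F}_{(1)}$, whose proof in the paper is a genuinely delicate argument (polynomiality of degree $\le2$ after clearing $(1-w_i)(1-qw_i)$, residue pinning at $w=1,q^{-1}$, the $w=0$ recursion, and a Liouville step). You explicitly concede that this $\mathcal{B}_N$-summation identity is not supplied, so the hardest part of the theorem is missing from your proposal.

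The initial-condition argument also does not go through as described. For $M=N$ the iterated residues at $w_i=1$ do \emph{not} collapse the sum to the identity permutation: the non-identity $S_N$ terms are individually nonzero after integration, and only their total vanishes, which the paper establishes by matching with the Tracy--Widom cancellation identity (claim (iii) of the proof of Lemma \ref{lm: orthogonality} and Appendix \ref{sec:TW-match}). The terms involving sign flips require a separate and nontrivial argument: when de-nesting the contours one crosses dynamical poles $w_k=w_p^{-1}$, and one must show that all such residues produce integrands analytic at $1$ (claim (i)). Finally, for $M<N$ the vanishing is not a consequence of analyticity at $0$, $q^{-1}$ and $(1-q-\alpha)/\alpha$; it comes from the fact that for the variable $w_k$ with $\sigma_N=k$ there is no $\varphi$-factor, so the only candidate poles $w_k=qw_{\sigma(i)}$ lie outside $\mathcal{C}_k$ by the nesting, making the $w_k$-integrand analytic inside its contour (claim (ii)). As it stands, your proposal correctly identifies the strategy but leaves the boundary identity unproven and mis-states the mechanism behind the orthogonality, so it does not constitute a proof.
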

The proof of this result is based on studying the following function, which encapsulates many of the essential algebraic properties of the half-space ASEP:
\begin{equation}
	\label{eq:F initial coord function}
	\mathcal{F}_y(w_1,\dots,w_N) := V_{N-M} \alpha^{N-M}\sum_{\sigma\in\mathcal{B}_N} \sigma\left(\prod_{1\leq i<j\leq N}
	\left[\frac{w_i-qw_j}{w_i-w_j}\frac{1-w_iw_j}{1- qw_iw_j}\right] 	\prod_{i=1}^M \varphi_{y_i}(w_i)\right),
\end{equation}
where
\[\varphi_y(w) := \frac{1-q-\alpha+\alpha w}{1-qw^2}\frac{(1-q)w}{1-w}\left(\frac{1-qw}{1-w}\right)^{y-1}.\]
For consistency with what follows, we define $\mathcal{F}_y(w)=0$ whenever $M>N$. 
Our formula \eqref{eq:ASEP transition prob} for the transition probability may then be expressed in terms of $\mathcal{F}_y$:
\begin{multline}
	\label{eq:ASEP transition prob with F}
	\mathbb{P}_t(y\to x) = \mathrm{e}^{-\alpha t} \oint_{\mathcal{C}_1} \frac{\dd w_1}{2\pi\ii} \cdots \oint_{\mathcal{C}_N} \frac{\dd w_N}{2\pi\ii} \prod_{1\leq i<j\leq N}\left[\frac{w_j-w_i}{qw_j-w_i}\frac{1-qw_iw_j}{1- w_iw_j}\right] \\
	\times\prod_{i=1}^N \left[\frac{1-q w_i^2}{w_i(q+\alpha-1-\alpha w_i)(1-qw_i)} \left(\frac{1-w_i}{1-qw_i}\right)^{x_i-1} \exp(\frac{(1-q)^2 w_i t}{(1-w_i)(1-qw_i)})\right] \mathcal{F}_y(w_1,\dots,w_N).
\end{multline}
The case of empty initial conditions, presented in \cite{garbali_symmetric_2025}, is recovered from \eqref{eq:ASEP transition prob with F} as the special case $M=0$ or $y=\emptyset$, following from the fact that 
\[\mathcal{F}_{\emptyset}(w_1,\dots,w_N)=\alpha^N,\]
which is proven in Section \ref{sec:F-preliminary results}.
Note, in particular, that this special case has a completely factorized integrand.

The next two results encapsulate the main identities needed in the proof of Theorem  \ref{thm:ASEP transition prob}, 
which follow from properties of the function \eqref{eq:F initial coord function}.
Their proofs are very technical, and we defer them to Section \ref{se:ASEPproof}.

The first of the two results shows that the function $\mathcal{F}_y(w_1,\dotsc,w_N)$ from \eqref{eq:F initial coord function} is an eigenvector of the ASEP Markov generator, and is equivalent to diagonalizing the generator using coordinate Bethe ansatz on the Kolmogorov backward equation \eqref{eq:ASEP evolution equation matrix backward}. 
The proof of this result appears in Section \ref{sec:F-eigenvector proof}.

\begin{lm}
	\label{lm: F-eigenvector property}
	For fixed integers $0\leq M\leq N$, let $y=(y_1,\dots,y_M)$ be a half-space particle configuration and let $(w_1,\dots,w_N)\in\mathbb{C}^N$ be a fixed alphabet. Then the function \eqref{eq:F initial coord function} has the following eigenvector relation with the half-space ASEP Markov generator:
	\begin{equation}
		\label{eq: F-eigenvector property}
		\sum_{y'} \bra{y}\mathscr{L}\ket{y'} \mathcal{F}_{y'}(w_1,\dots,w_N) = \left(-\alpha + \sum_{i=1}^N\frac{(1-q)^2 w_i}{(1-w_i)(1-qw_i)}\right)\mathcal{F}_y(w_1,\dots,w_N).
	\end{equation}
\end{lm}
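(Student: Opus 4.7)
The identity in the lemma is the coordinate Bethe ansatz eigenvector equation for the backward generator acting on the wavefunction $\mathcal{F}_y$. My plan is to split $\mathscr{L}$ into its three active pieces under $\gamma=0$ (bulk right-hops, bulk left-hops, and boundary creation at site $1$) and verify the identity on each $\sigma$-summand of $\mathcal{F}_y$ separately. The ingredients are: a single-variable eigenfunction identity for $\varphi_y$; two local boundary-cancellation identities coming from the $R$- and $K$-matrix factors in the scattering kernel; and a Pieri-type identity for the creation term that couples the $M$- and $(M+1)$-particle wavefunctions.

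The starting computation is one-variable: a direct check gives
\[
\varphi_{y+1}(w) + q\,\varphi_{y-1}(w) - (1+q)\varphi_y(w) = E(w)\,\varphi_y(w), \qquad y\geq 2,
\]
with $E(w) = (1-q)^2 w/[(1-w)(1-qw)]$, and the symmetry $E(1/(qw))=E(w)$ makes the target eigenvalue $-\alpha+\sum_{i=1}^N E(w_i)$ $\mathcal{B}_N$-invariant. Applied termwise inside each $\sigma$-summand this produces $\sum_{i=1}^M E(w_{\sigma(i)})$ from the unconstrained bulk. The two constraint corrections are the standard Bethe-ansatz pairings: pairing $\sigma$ with $\sigma\circ s_i$ (adjacent transposition in $S_N \subset \mathcal{B}_N$) cancels the spurious right-hop arising when $y_i=y_{i+1}+1$ via the $R$-matrix factor $\tfrac{w_i-qw_j}{w_i-w_j}$, and pairing $\sigma$ with an appropriate sign flip in $\mathcal{B}_N$ cancels the missing left-hop arising when $y_M=1$ via the $K$-matrix factor $\tfrac{1-w_iw_j}{1-qw_iw_j}$, converting the would-be $q\varphi_0$ term into a reflected $\varphi_1(1/(qw))$ contribution that closes Sklyanin's reflection equation. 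These cancellations follow the classical half-space ASEP Bethe-ansatz pattern.

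The main technical obstacle is the boundary-creation contribution $\alpha[\mathcal{F}_{y\cup\{1\}}-\mathcal{F}_y]$, which couples wavefunctions with different particle counts while $N$ is held fixed. Using $\sum_i E(w_i) = \sum_i E(w_{\sigma(i)})$, what remains after the two cancellations above is to establish
\[
\alpha\,\bigl[\mathcal{F}_{y\cup\{1\}}(w) - \mathcal{F}_y(w)\bigr] = \sum_{\sigma\in\mathcal{B}_N}\sigma(G_y(w))\Bigl(\sum_{i=M+1}^N E(w_{\sigma(i)}) - \alpha\Bigr),
\]
where $G_y$ denotes the pre-symmetrization summand in \eqref{eq:F initial coord function}. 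This is a Pieri-type identity: the extra $\varphi_1$-factor appearing in $\mathcal{F}_{y\cup\{1\}}$ must, after full $\mathcal{B}_N$-symmetrization, manufacture the missing $N-M$ eigenvalue terms through its interaction with the $R$- and $K$-factors tied to the ``unbound'' Bethe variables. The explicit normalization ratio $V_{N-M-1}/V_{N-M}=(1-q^{N-M})(1+q^{N-M-1})/[q^{N-M-1}(1-q)]$ is what makes this counting consistent, and I expect the verification to reduce to a residue/partial-fraction identity over the $N-M$ unbound variables -- this is the main technical step where the deferred arguments of Section \ref{sec:F-eigenvector proof} will do the heavy lifting.
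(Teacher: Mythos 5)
Your skeleton is the same as the paper's: the one-variable relation $q\varphi_{y-1}(w)+\varphi_{y+1}(w)-(1+q)\varphi_y(w)=E(w)\varphi_y(w)$ handling the unconstrained bulk, the two-body cancellation by pairing $\sigma$ with $\sigma s_i$ (this is exactly the paper's bulk scattering step, where the combined summand becomes antisymmetric under swapping the two variables), and a final identity forcing the injection term to manufacture the missing $\sum_{i=M+1}^N E(w_i)$. The problem is that the two hard pillars are asserted rather than proved. The ``Pieri-type'' identity you isolate --- equivalently, that $\tfrac{V_{N-M-1}}{V_{N-M}}\varphi_1(w_{M+1})-\sum_{k=M+1}^N E(w_k)$ vanishes after symmetrizing over $w_{M+1},\dots,w_N$, i.e.\ the evaluation $\mathcal{F}_{(1)}(w_1,\dots,w_N)=\alpha^{N-1}\sum_k E(w_k)$ --- is precisely the main content of the lemma, and ``I expect the verification to reduce to a residue/partial-fraction identity'' (deferring to the paper's own Section \ref{sec:F-eigenvector proof}) is not an argument. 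In the paper this step needs a meromorphy statement for $\mathcal{F}_y$, a degree-$\leq 2$ polynomiality bound, and uniqueness of the polynomial from its residues at $w_k=1$ and $w_k=q^{-1}$ together with the $w_k=0$ recursion, with the residue evaluations resting on the $BC_N$ Hall--Littlewood constant-term identity that fixes the constants $V_k$; none of that machinery appears in your outline, and the ``counting consistency'' of $V_{N-M-1}/V_{N-M}$ is exactly what this identity establishes rather than something that can be invoked.

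Your boundary step also does not go through as stated. With $\gamma=0$ and $y_M=1$, the forbidden left hop and the forbidden injection must be handled together: the constraint to verify is $q\hspace{0.1em}f(\dots,0)+\alpha f(\dots,1,1)=(q+\alpha)f(\dots,1)$, whose middle term involves $M+1$ particles. The sign-flip pairing that converts $q\varphi_0$ into a reflected $\varphi_1(1/(qw))$ contribution disposes only of the case $M=N$, where $\mathcal{F}_{(\dots,1,1)}$ vanishes by convention; for $M<N$ the statement reduces to $\tfrac{q}{q+\alpha}\mathcal{F}_{(0)}+\tfrac{\alpha}{q+\alpha}\mathcal{F}_{(1,1)}=\mathcal{F}_{(1)}$, which in the paper requires the alternative partially-symmetrized expressions for $\mathcal{F}_{(0)}$ and $\mathcal{F}_{(1,1)}$ and a second Liouville-type analyticity argument. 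Splitting the generator so that the missing left hop and the injection are treated by separate mechanisms hides this coupling, and as written your proposal has no mechanism to absorb the $\alpha\mathcal{F}_{(\dots,1,1)}$ term at all.
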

The second intermediate result needed in the proof of Theorem \ref{thm:ASEP transition prob} corresponds the initial condition of \eqref{eq:ASEP transition prob}. This initial condition is equivalent to a homogeneous version of the spatial orthogonality property proposed as Conjecture 5.5 in \cite{garbali_symmetric_2025} at the level of the six-vertex model. We now present it for the half-space ASEP. The proof of this result appears in Section \ref{sec:initial cond proof}.

\begin{lm}
	For fixed integers $N\geq M\geq 0$ let $x=(x_1,\dots,x_N)$ and $y=(y_1,\dots,y_M)$ be half-space configurations. The following orthogonality holds:
	\label{lm: orthogonality}
	\begin{multline}
		\label{eq:orthog t=0}
		\oint_{\mathcal{C}_1} \frac{\dd w_1}{2\pi\ii} \cdots \oint_{\mathcal{C}_N} \frac{\dd w_N}{2\pi\ii} \prod_{1\leq i<j\leq N}\left[\frac{w_j-w_i}{qw_j-w_i}\frac{1-qw_iw_j}{1- w_iw_j}\right] \\
		\times \prod_{i=1}^N\left[\frac{1-q w_i^2}{w_i(q+\alpha-1-\alpha w_i)(1-qw_i)} \left(\frac{1-w_i}{1-qw_i}\right)^{x_i-1}\right] \mathcal{F}_y(w_1,\dots,w_N) = \delta_{x,y}.
	\end{multline}
\end{lm}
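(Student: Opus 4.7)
The plan is to verify the identity by direct contour analysis, exploiting a factored structure that emerges once $\varphi_{y_i}(w_i)$ is combined with the $x$-dependent weight.

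I would first observe the single-variable simplification
\begin{equation}
    \frac{1-qw^2}{w(q+\alpha-1-\alpha w)(1-qw)}\left(\frac{1-w}{1-qw}\right)^{x-1}\varphi_y(w) = \frac{-(1-q)}{(1-qw)(1-w)}\left(\frac{1-w}{1-qw}\right)^{x-y},
\end{equation}
together with the cancellation $(w_j-w_i)/(qw_j-w_i) = (w_i-w_j)/(w_i-qw_j)$, which causes the outer Cauchy kernel and the identity-permutation term of the inner Cauchy kernel in $\mathcal{F}_y$ to telescope to $1$.

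For the diagonal case $N=M$ and the identity permutation in $\mathcal{B}_N$, the integrand then decouples into $N$ independent single-variable integrals, each of which evaluates to $\delta_{x_i,y_i}$ via the M\"obius substitution $z_i=(1-w_i)/(1-qw_i)$ that sends $w_i=1$ to $z_i=0$ (and reverses orientation, cancelling the overall minus sign). I would then argue that the non-identity elements of $\mathcal{B}_N$ contribute nothing. For the off-diagonal case $N>M$, I would proceed by induction on $N-M$, with the base case $M=0$ reducing to the identity $V_N\sum_{\sigma\in\mathcal{B}_N}\sigma\bigl(\prod_{i<j}[\frac{w_i-qw_j}{w_i-w_j}\frac{1-w_iw_j}{1-qw_iw_j}]\bigr)=1$ (equivalently, $\mathcal{F}_\emptyset=\alpha^N$, established in Section~\ref{sec:F-preliminary results}), combined with a residue argument over the resulting fully factored integrand.

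The trickiest step is showing the vanishing of the non-identity elements of $\mathcal{B}_N$ in the diagonal case. The nested contour prescription of Definition~\ref{defn:ASEP nested contours} excludes the spurious poles at $w_i=qw_j$ and $w_i=1/(qw_j)$, but the poles at $w_iw_j=1$ arising from the inner Cauchy kernel are more delicate. I expect one must group permutations by cycle type and exploit the $\mathcal{B}_N$-symmetry of the Hall--Littlewood-type factor in $\mathcal{F}_y$, in the spirit of a $BC$-type analogue of the Tracy--Widom residue-picking technique for the full-space ASEP.
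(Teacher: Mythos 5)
There is a genuine gap: the proposal correctly handles only the easy part of the argument (the identity permutation with $N=M$, where the integrand indeed factorizes and each single-variable integral gives $\delta_{x_i,y_i}$), but leaves unproven precisely the steps that constitute the actual content of the lemma. First, your statement that ``the non-identity elements of $\mathcal{B}_N$ contribute nothing'' is false if meant term by term: for $N=M$ the integrals $\mathcal{I}_{x,y}(\sigma)$ associated to non-identity \emph{plain} permutations $\sigma\in S_N\setminus\{\id\}$ are in general nonzero after integration, and only their \emph{sum} vanishes. This collective cancellation is not obtained by grouping by cycle type or by a symmetry of the Hall--Littlewood factor; in the paper it is obtained by the change of variables $w_i\mapsto(1-\xi_i)/(1-q\xi_i)$ (possible because for plain permutations the $1-w_iw_j$ factors cancel between the outer kernel and the $\sigma$-permuted inner kernel, so all contours can be shrunk to a common small circle around $1$), after which the sum is matched, via a relabelling of integration variables by the mirror-image permutation, with the identity $\sum_{\sigma\in S_N\setminus\{\id\}}\mathcal{I}^{\mathsf{TW}}_{x,y}(\sigma)=0$ proved by Tracy--Widom for full-line ASEP. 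Your proposal only says you ``expect'' a $BC$-type analogue of such a technique to exist, which is a conjecture, not an argument.

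Second, the genuinely new half-space difficulty --- the vanishing of every term with at least one sign flip, $\sigma\in\mathcal{B}_N\setminus S_N$ --- is exactly where the poles at $w_iw_j=1$ that you flag as ``delicate'' must be controlled, and the proposal offers no mechanism for this. The paper's proof isolates the leftmost sign flip $\sigma_\ell=-k$, deforms the $w_k$ contour inward past the outer contours, and shows that every chain of residues picked up at $w_k=w_{p_1}^{-1},\,w_{p_1}=w_{p_2}^{-1},\dots$ produces a factor $(1-w_{p_n})^{b_1+\cdots+b_n}$ with all $b_i\geq0$ (using $x_i,y_j\geq1$ and $y_\ell>y_m$ for $\ell<m$), so the innermost integral around $w_{p_n}=1$ vanishes; the shrunk $w_k$ integral then vanishes because $\psi_{x_k}(w_k)\varphi_{y_\ell}(1/(qw_k))$ is analytic at $w_k=1$. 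Nothing in your outline substitutes for this bookkeeping. Finally, for $N>M$ no induction on $N-M$ is needed (and it is unclear how your inductive step would reduce $M$): for plain permutations one simply notes that the variable $w_k$ with $k=\sigma_N$ carries no $\varphi$ factor, so its integrand is analytic inside $\mathcal{C}_k$ and that single integral already vanishes, while sign-flipped terms vanish by the argument above.
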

We can now use these two results to prove Theorem \ref{thm:ASEP transition prob}.
\begin{proof}[Proof of Theorem \ref{thm:ASEP transition prob}]
    Consider the multiple contour integral expression on the right hand side of \eqref{eq:ASEP transition prob with F}, which we want to show is equal to the half-space ASEP transition probability. This integral expression has the following factor in the integrand:
    \begin{equation}
        \label{eq:ASEP transition proof 1}
        g_t(y)=\mathrm{e}^{-\alpha t}\prod_{i=1}^N\exp(\frac{(1-q)^2w_it}{(1-w_i)(1-qw_i)}) \mathcal{F}_y(w_1,\dots,w_N).
    \end{equation}
    Expanding the exponential and using Lemma \ref{lm: F-eigenvector property} shows that
    \begin{equation}
        \label{eq:ASEP transition proof 2}
        g_t(y)=\sum_{m=0}^\infty \frac{t^m}{m!}\sum_{y^{(1)},\dots,y^{(m)}}\bra{y}\mathscr{L}\ket{y^{(1)}}\bra{y^{(1)}}\mathscr{L}\ket{y^{(2)}}\cdots\bra{{y^{(m-1)}}}\mathscr{L}\ket{y^{(m)}}\mathcal{F}_{y^{(m)}}(w_1,\dots,w_N)
    \end{equation}
    whence it follows that $g_t$ satisfies
    \[\frac{\dd}{\dd t}g_t(y)=\sum_{y'}\bra{y}\mathscr{L}\ket{y'}g_t(y').\]
    Letting $p_t(y,x)$ denote the right hand side of \eqref{eq:ASEP transition prob with F}, this shows that
    \[\frac{\dd}{\dd t}p_t(y,x)=\sum_{y'}\bra{y}\mathscr{L}\ket{y'}p_t(y',x).\]
    Hence $p_t(y,x)$ solves the Kolmogorov backward equation \eqref{eq:ASEP evolution equation matrix backward}.
    Lemma \ref{lm: orthogonality} yields the initial condition $p_0(y,x)=\delta_{x,y}$, finishing the proof.
\end{proof}

\begin{remark}
    The proof of Theorem \ref{thm:ASEP transition prob} is similar in character to the  proofs of integral formulas of symmetric functions indexed by skew partitions defined via vertex models. These integral formulas are typically derived using a Cauchy identity and orthogonality of appropriate families of symmetric functions.  For higher-spin vertex models in full-space, see for example \cite{borodin_higher_2018,borodin_coloured_2022,aggarwal_colored_2023}. In fact, the integral formula for the transition probability \eqref{eq:ASEP transition prob with F} may be derived by the taking the ASEP limit of the partition function $G_{\nu/\mu}$ from \cite{garbali_symmetric_2025}, where it was shown to reduce to a solution of Kolmogorov forward equation of the half-space open ASEP \eqref{eq:ASEP evolution equation matrix forward}. The proof of Theorem \ref{thm:ASEP transition prob} is equivalent to the aforementioned vertex model approach adapted to the half-space function $G_{\nu/\mu}$, whereby Lemma \ref{lm: F-eigenvector property} may be derived from its Cauchy identity and Lemma \ref{lm: orthogonality} is equivalent to the orthogonality of the dual family $F_\nu$.
\end{remark}

\section{Half-space TASEP Pfaffian formulas}

\subsection{Sch\"{u}tz-type Pfaffian transition probability}
\label{subsec:schutz pf}

This section will be devoted to explicit Pfaffian expressions for the transition probabilities of half-space TASEP, that is, the totally asymmetric version of half-space ASEP where particles only jump to the right.
The TASEP transition probability is obtained by taking the limit $q\to0$ within the integral formula of Theorem \ref{thm:ASEP transition prob}. We will present the general expression for this result under both empty and general deterministic initial conditions.
\begin{defn}
    \label{defn:TASEP countours}
    Let us denote by $\beta$ a positively oriented closed complex integration contour which is sufficiently large so that it surrounds the points $0,1,\alpha$ and $1-\alpha$.
\end{defn}
\subsubsection{Empty initial conditions}
\begin{thm}
	\label{thm: transition prob pfaffian kernel}
	Let $N$ be a fixed integer and let $x=(x_1,\dots,x_N)$ be the ordered coordinates of half-space TASEP particles. Then the half-line open TASEP has transition probability from an empty state to $x$ given by the following: when $N$ is even
	\begin{equation}
		\label{eq:half-line TASEP transition prob}
		\mathbb{P}_t(\emptyset\to x) = (-1)^{\binom{N}{2}} \mathrm{e}^{-\alpha t} \Pf \left[\mathsf{Q}\right],
	\end{equation}
	and when $N$ is odd the underlying matrix is augmented with an addition row and column
	\begin{equation}
		\mathbb{P}_t(\emptyset\to x) = (-1)^{\binom{N}{2}} \mathrm{e}^{-\alpha t} \pf\left[\begin{array}{cc}
			\mathsf{Q} & \mathsf{p} \\[5pt]
			-\mathsf{p}^T & 0
		\end{array}\right].
	\end{equation}
    Here, $\mathsf{Q}$ is a skew-symmetric $N\times N$-dimensional matrix and $\mathsf{p}$ is a column vector of length $N$ whose entries are given by
    \begin{align*}
        [\mathsf{Q}]_{i,j} := & Q_{i,j}(x_{N-i+1},x_{N-j+1}), \\ 
        [\mathsf{p}]_i := & p_{i}(x_{N-i+1}),
    \end{align*}
    for all $1\leq i,j\leq N$. We define the integral kernels
	\begin{equation}
		\label{eq:Q-kernel}
		Q_{i,j}(x,y) = \alpha^2 \oint_\contour \frac{\dd u}{2\pi\ii} \oint_\contour \frac{\dd w}{2\pi\ii} \frac{u-w}{1-u-w} \frac{w^{i-x}\mathrm{e}^{t(w-1)}}{(w-\alpha)(w-1)^i} \frac{u^{j-y}\mathrm{e}^{t(u-1)}}{(u-\alpha)(u-1)^j},
	\end{equation}
	and 
	\begin{equation}
		\label{eq:p-kernel}
		p_i(x) = \oint_\contour \frac{\dd w}{2\pi\ii} \frac{w^{i-x}\mathrm{e}^{t(w-1)}}{(w-\alpha)(w-1)^i},
	\end{equation}
	whose contours surround poles at $w,v=1,0,\alpha,1-\alpha$ whilst omitting all other singularities of the integrand.
\end{thm}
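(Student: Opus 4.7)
The plan is to deduce the result from Theorem~\ref{thm:ASEP transition prob} by specializing to $M = 0$, taking the totally asymmetric limit $q \to 0$, and converting the resulting factorized integral into Pfaffian form via the Schur Pfaffian identity combined with an elementary Pfaffian integration lemma. Setting $M = 0$ (so $y = \emptyset$) and using $\mathcal{F}_\emptyset(w_1,\dots,w_N) = \alpha^N$ collapses the sum over signed permutations, and at $q = 0$ the integrand becomes completely factorized, with interaction of the Cauchy type $\prod_{i<j}(w_j - w_i)/[-w_i(1 - w_iw_j)]$. Pulling out the prefactor $\prod_{i<j}(-w_i)^{-1} = (-1)^{\binom{N}{2}}\prod_i w_i^{-(N-i)}$ brings the interaction into the Schur form $\prod_{i<j}(w_j-w_i)/(1-w_iw_j)$.

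Next I would apply the M\"obius substitution $u_i = 1/(1-w_i)$ (so $w_i = (u_i-1)/u_i$ and $dw_i = du_i/u_i^2$). A direct computation using $\mathrm{e}^{w_i t/(1-w_i)} = \mathrm{e}^{t(u_i - 1)}$, $(1-w_i)^{x_i-1} = u_i^{1-x_i}$, and the rational identities for the remaining factors converts each single-variable integrand into $-u_i^{N-i+1-x_i}\mathrm{e}^{t(u_i - 1)}/[(u_i - 1)^{N-i+1}(u_i - \alpha)]\,du_i$; the Cauchy factor transforms as $(w_j - w_i)/(1 - w_iw_j) = (u_i - u_j)/(1 - u_i - u_j)$; and the original contour around $w_i = 1$ transforms, after the orientation reversal produced by the substitution, into a positively oriented contour enclosing all remaining finite poles in $\{0, 1, \alpha, 1-\alpha\}$, i.e.\ the contour $\beta$ of Definition~\ref{defn:TASEP countours}.

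For even $N$, I would then invoke the Schur Pfaffian identity $\prod_{i<j}(u_i - u_j)/(1 - u_i - u_j) = \pf\!\left[\frac{u_i - u_j}{1 - u_i - u_j}\right]$, which is the Pfaffian analogue of Cauchy's determinant formula and converts the interaction into a Pfaffian of a skew-symmetric kernel. The elementary Pfaffian integration lemma, obtained by expanding the Pfaffian over perfect matchings and interchanging sum and integral via Fubini, turns the $N$-fold contour integral into a Pfaffian whose entries are $2$-variable contour integrals. Comparing these entries with the kernel $Q_{i,j}$ and reversing the index order by means of the permutation matrix $P$ of reversal---using $\pf(PMP^T) = \det(P)\,\pf(M) = (-1)^{\lfloor N/2 \rfloor}\pf(M)$ together with $\pf(cM) = c^{N/2}\pf(M)$---collapses all accumulated $\pm 1$ factors into the single sign $(-1)^{\binom{N}{2}}$ appearing in~\eqref{eq:half-line TASEP transition prob}. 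For odd $N$ I would augment the Schur Pfaffian identity with a ghost variable $u_{N+1}$ sent to infinity; since $(u_i - u_{N+1})/(1 - u_i - u_{N+1}) \to 1$ as $u_{N+1} \to \infty$, the augmented Pfaffian acquires an extra row and column of ones, and the Pfaffian integration lemma consequently produces the extra single contour integrals $p_i(x_{N-i+1})$ of~\eqref{eq:p-kernel}.

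The main obstacle is the careful bookkeeping of all the $\pm 1$ factors (coming from the prefactor $(-1)^{\binom{N}{2}}$, the $N$ minus signs contributed by the Jacobians of the M\"obius substitution, the index-reversal permutation, and the scalar rescaling inside the Pfaffian) so that they combine correctly into $(-1)^{\binom{N}{2}}$. A secondary technical point is that the Pfaffian kernel $(u-v)/(1-u-v)$ has a diagonal singularity at $u + v = 1$; applying the Pfaffian integration lemma requires verifying that the two contours inherited from the nested-contour prescription of Definition~\ref{defn:ASEP nested contours}, after the M\"obius substitution, sit compatibly with respect to this diagonal pole while both still enclose the full singular set $\{0, 1, \alpha, 1-\alpha\}$ prescribed in Definition~\ref{defn:TASEP countours}.
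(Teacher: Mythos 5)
Your route is essentially the paper's own: the paper proves the general-initial-condition Theorem~\ref{thm: transition prob pfaffian kernel initial cond} by taking $q\to0$ in \eqref{eq:ASEP transition prob with F}, deforming the contours to coincide, changing variables $w\mapsto1-w^{-1}$, and recognizing the $y=\emptyset$ Pfaffian as Stembridge's identity (Lemma~\ref{lm: stembridge pfaffian}) before integrating with de Bruijn's formula (Lemma~\ref{lm:de bruijn formula}), which is exactly your plan. Your ``elementary Pfaffian integration lemma'' (perfect-matching expansion of the unsymmetrized integrand) is just de Bruijn's formula in disguise, and your ghost variable $u_{N+1}\to\infty$ reproduces the odd Stembridge/de Bruijn cases used for the $\mathsf{p}$ column, so the proposal is correct and matches the paper's argument in substance.
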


Theorem \ref{thm: transition prob pfaffian kernel} is a special case of Theorem \ref{thm: transition prob pfaffian kernel initial cond} which appears later in the text. As such, we only present the proof of the later result. The skew-symmetric nature of the Pfaffian kernel is assured by the following property of \eqref{eq:Q-kernel}
\begin{equation}
	\label{eq:Q-kernel interchange symmetry}
	Q_{k,\ell}(x,y) = - Q_{\ell,k} (y,x).
\end{equation}
In particular, this implies that the function $Q_{k,k}$ is anti-symmetric.
\begin{remark}
	We note here the resemblance of the transition probability \eqref{eq:half-line TASEP transition prob} with its Pfaffian kernel to Sch\"{u}tz's solution of the full-line TASEP \cite{schutz_exact_1997} presented with a determinantal kernel.
\end{remark}
\begin{remark}
	The $w$ and $u$ contours of \eqref{eq:Q-kernel} can be deformed to coincide with each other despite the apparent singularity due to the factor of $1-u-w$ appearing in the denominator of the integrand. Without loss of generality, assume we perform the $w$ integration first. Then the pole at $w=1-u$ can be understood to be a removable singularity since the remaining $u $-integrand has vanishing residue at infinity.
\end{remark}
\begin{remark}
	Our overall probability measure for the transition probability of Theorem \ref{thm: transition prob pfaffian kernel} is over all numbers of particles in the system. That is, for all $t\geq 0$ we have the normalization
	\begin{equation}
		\sum_{N=0}^\infty \sum_{1\leq x_N<\cdots < x_1} \mathbb{P}_t(\emptyset\to (x_1,\dots,x_N)) = 1.
	\end{equation}
\end{remark}

\subsubsection{General initial conditions}

\begin{thm}
	\label{thm: transition prob pfaffian kernel initial cond}
	For fixed integers $N\geq M\geq 0$, let $x=(x_1,\dots,x_N)$ be the ordered coordinates of TASEP particles at time $t$ and let $y=(y_1,\dots,y_M)$ denote the ordered initial coordinates of the particles. In the case $y_M > N-M+1$, the half-line open TASEP has transition probability from an empty state to $x$ given by the following: when $N+M$ is even
	\begin{equation}
		\label{eq:half-line TASEP transition prob initial cond even}
		\mathbb{P}_t(y\to x) = (-1)^{\binom{N}{2}}\mathrm{e}^{-\alpha t} \pf\left[\begin{array}{cc}
			\mathsf{Q} & \mathsf{U} \\[5pt]
			-\mathsf{U}^T & 0
		\end{array}\right],
	\end{equation}
	when $N+M$ is odd
	\begin{equation}
		\label{eq:half-line TASEP transition prob initial cond odd}
		\mathbb{P}_t(y\to x) = (-1)^{\binom{N}{2}}\mathrm{e}^{-\alpha t} 
		\pf\left[\begin{array}{ccc}
			\mathsf{Q} & \mathsf{p} & \mathsf{U}\\[5pt]
			-\mathsf{p}^T & 0 & 0 \\[5pt]
		 	-\mathsf{U}^T & 0 & 0
		\end{array}\right].
	\end{equation}
    Here, $\mathsf{Q}$ is a skew-symmetric $N\times N$-dimensional matrix and $\mathsf{p}$ is a column vector of length $N$ which are the same as those in Theorem \ref{thm: transition prob pfaffian kernel}, while $\mathsf{U}$ is a $N\times M$-dimensional matrix whose entries are given by
    \begin{align*}
        [\mathsf{Q}]_{i,j} := & Q_{i,j}(x_{N-i+1},x_{N-j+1}), \\ 
        [\mathsf{p}]_i := & p_{i}(x_{N-i+1}), \\
        [\mathsf{U}]_{i,k} := & U_{i-k}(x_{N-i+1}-y_{M-k+1}),
    \end{align*}
    for all $1\leq i,j\leq N$ and $1\leq k\leq M$. The functions $Q_{i,j}$ and $p_i$ are defined by \eqref{eq:Q-kernel} and \eqref{eq:p-kernel} respectively while we define the integral kernel
	\begin{align}
		\label{eq:R-kernel}
		U_{k}(z) := \oint_\gamma \frac{\dd w}{2\pi\ii} \frac{(w-1)^{N-M-k}\mathrm{e}^{t(w-1)}}{w^{z-k+N-M+1}},
	\end{align}
    where the contour $\gamma$ encloses the (possible) pole at the origin.
\end{thm}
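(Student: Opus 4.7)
The natural plan is to take the totally asymmetric limit $q \to 0$ in the ASEP transition probability formula of Theorem~\ref{thm:ASEP transition prob} and then recognize the resulting multiple integral as a Pfaffian. Setting $q=0$ immediately simplifies most factors in the integrand: the exponential becomes $\exp(w_i t/(1-w_i))$, the $x_i$-dependent rational factor reduces to $(1-w_i)^{x_i-1}$, and the scalar $V_{N-M}\alpha^{N-M}$ contributes a finite constant once the compensating $q \to 0$ behaviour from the signed permutation sum is accounted for. Since $V_k \sim q^{\binom{k}{2}}$ as $q \to 0$, for $N-M \geq 2$ the prefactor vanishes, and the integrand must therefore develop an opposing divergence coming from the sum over $\mathcal{B}_N$.

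\textbf{Limit of the signed permutation sum.} The main subtle step is to decompose the sum $\sum_{\sigma \in \mathcal{B}_N}$ according to the sign vector $\epsilon \in \{\pm 1\}^N$ of $\mathcal{B}_N = S_N \ltimes \{\pm 1\}^N$, and to handle the reflected coordinates $w_{-k} = 1/(qw_k)$ by changing integration variables $w_i \mapsto 1/(qw_i)$ on each flipped contour. The nested prescription of Definition~\ref{defn:ASEP nested contours} guarantees that this change of variables sends $\mathcal{C}_i$ to a well-defined reciprocal contour $q^{-1}\mathcal{C}_i^{-1}$ with the correct nesting. After the further substitution $w_i \mapsto 1-w_i$ to align with the contour $\beta$ of Definition~\ref{defn:TASEP countours}, the symmetrized integrand should be recognized as a Pfaffian antisymmetrization: the $\{\pm 1\}^N$ part provides the Pfaffian pairing via the factor $1/(1-w_iw_j)$ (becoming $1/(1-u-w)$ after the variable change, as in \eqref{eq:Q-kernel}), while the $S_N$ part organizes the $N!$ orderings inside the Pfaffian expansion. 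Matching the combinatorial prefactor $|\mathcal{B}_N| = 2^N N!$ against $V_{N-M}$, and tracking the sign $(-1)^{\binom{N}{2}}$, yields the stated formula.

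\textbf{Initial data and parity.} The factors $\varphi_{y_i}(w_i)$ in the ASEP formula carry the $y$-dependence; after the limit and Pfaffian rearrangement they contribute $M$ additional rows and columns of the Pfaffian matrix, producing the block $\mathsf{U}$ with entries $U_{i-k}(x_{N-i+1}-y_{M-k+1})$. The restriction $y_M > N-M+1$ is imposed precisely to ensure the integrand of each $U_k(z)$ in \eqref{eq:R-kernel} has a pole only at $w=0$ inside the contour $\gamma$, so that the single-contour integrals are well-defined and reduce cleanly. For odd $N+M$ the skew-symmetric matrix has odd dimension; one augments it with the row and column built from $\mathsf{p}$, whose entries $p_i(x_{N-i+1})$ arise as single-variable residues of an ``unpaired'' coordinate in the Pfaffian expansion.

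\textbf{Main obstacle.} The hardest step is the second one: executing the $q \to 0$ limit of the $\mathcal{B}_N$ sum and matching the output against the Pfaffian. The bookkeeping of the compensating divergence between $V_{N-M} \sim q^{\binom{N-M}{2}}$ and the reflected sector of $\mathcal{B}_N$, together with the identification of the kernel $(u-w)/(1-u-w)$ in \eqref{eq:Q-kernel} arising from the $q=0$ limit of the Bethe-type factor $(1-qw_iw_j)/(1-w_iw_j)$, is the technically delicate point. A cleaner alternative would be to verify directly that the proposed Pfaffian satisfies the Kolmogorov backward equation for TASEP together with the initial condition $p_0(y,x)=\delta_{x,y}$, using a Pfaffian analogue of Lemma~\ref{lm: F-eigenvector property}; this circumvents the limit but requires establishing the corresponding eigenfunction identity directly at the Pfaffian level.
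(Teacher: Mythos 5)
Your overall route---sending $q\to0$ in Theorem \ref{thm:ASEP transition prob} and recognizing Pfaffian structure in the limit---is the same as the paper's, but as written the proposal leaves the two load-bearing steps unestablished, and one of them is a genuine gap. The paper isolates the limit of the signed-permutation sum as Lemma \ref{lm:F inital coord q to 0}: after splitting $\mathcal{B}_N$ into signs and $S_N$ and absorbing $q^{\binom{N-M}{2}}$, the hypothesis $y_M>N-M+1$ (equivalently $y_i>N-i+1$ for all $i\leq M$) is what makes the factors $q^{y_i-N+i-1}$ kill the reflected $\varphi$-terms so the $q\to0$ limit can be taken termwise; the surviving sum is then identified via Stembridge's Pfaffian (Lemma \ref{lm: stembridge pfaffian}) together with the block identity of Proposition \ref{prop:block pf eval}. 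Your stated reason for the restriction---that it guarantees the $U_k$ integrand has its only pole at the origin---is not correct: the $U_k$ integrals are well defined regardless, and the restriction plays its role entirely inside the limit of the $\mathcal{B}_N$ sum (cf.\ Remark \ref{rmk:initial condition restriction}).

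The more serious gap is the passage from the resulting $N$-fold contour integral of (single-variable factors) times (a Pfaffian in the $w_i$'s) to a \emph{single} Pfaffian whose entries are the pairwise double integrals $Q_{i,j}$ and single integrals $p_i$, $U_k$. ``Matching the combinatorial prefactor $2^N N!$ against $V_{N-M}$'' cannot produce this. What is needed is: (i) deform all contours to coincide, which is possible only at $q=0$; (ii) symmetrize the integrand so that the $x$-dependence becomes a determinant multiplying the Pfaffian from Lemma \ref{lm:F inital coord q to 0}; and (iii) apply de Bruijn's integration formula---in its generalized form, Lemma \ref{lm:gen de bruijn formula}, because of the extra rectangular block of $g_{k,y_k}$ columns that becomes $\mathsf{U}$ (the plain version, Lemma \ref{lm:de bruijn formula}, suffices only for $M=0$). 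It is this integration identity, applied after the change of variables $w\mapsto 1-w^{-1}$, that turns the Stembridge kernel $S(w_i,w_j)=(w_i-w_j)/(1-w_iw_j)$ into the coupling $(u-w)/(1-u-w)$ of \eqref{eq:Q-kernel}; the sign-flip sector of $\mathcal{B}_N$ alone does not pair up integration variables into double integrals. Your suggested alternative---verifying the Kolmogorov equations directly at the Pfaffian level---is not carried out and would require a Pfaffian analogue of Lemma \ref{lm: F-eigenvector property} that is not established anywhere, so it does not close the gap either.
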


\begin{remark}
    \label{rmk:initial condition restriction}
    The restriction $y_M>N-M+1$ on the initial conditions is technical in nature, and arises in the proof of the limit stated in Lemma \ref{lm:F inital coord q to 0} below. While this limit could, in principle, be computed without the restriction, the relatively simple Pfaffian structure obtained in the lemma appears not to persist beyond this case, so we do not attempt a generalization here.
    On the other hand, and despite arising in a purely technical manner, the restriction to $y_M>N-M+1$ admits a clear physical interpretation: it is precisely the assumption ensuring that  
    \begin{equation}
        \label{eq:initial condition restriction property}
        \mathbb{P}\left(\abs{X_t} = N\given X_0=y\right) = \mathbb{P}\left(\abs{X_t} = N-M\given X_0=\emptyset\right).
    \end{equation}
    In other words, given $M$ particles initially in the system at positions $y$, the probability that exactly $N-M$ additional particles enter the system is independent of the specific configuration $y$ provided that it satisfies the constraint $y_M>N-M+1$. 
\end{remark}

Before proving Theorem \ref{thm: transition prob pfaffian kernel initial cond}, let us show how it recovers Sch\"{u}tz's classical determinantal formula for the full-space TASEP.

\begin{cor}[Sch\"{u}tz's formula]
	\label{cor:schutz TASEP det}
	For fixed integer $N$, let $x=(x_1,\dots,x_N)$ and $y=(y_1,\dots,y_N)$ be fixed configurations of the full-space TASEP. The transition probability for the full-space TASEP is given by the following determinantal expression
	\begin{equation}
		\label{eq:schutz TASEP det}
		\lim_{\alpha\to0} \mathbb{P}_t(y\to x) = \det\left[U_{i-j}(x_{N-i+1}-y_{N-j+1})\right]_{1\leq i,j\leq N}.
	\end{equation}
	The expression \eqref{eq:schutz TASEP det} exactly coincides with Sch\"{u}tz's determinantal expression for the full-space TASEP transition probability from \cite{schutz_exact_1997}.
\end{cor}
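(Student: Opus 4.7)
The plan is to take the $\alpha \to 0$ limit of the Pfaffian formula in Theorem~\ref{thm: transition prob pfaffian kernel initial cond}. Since the corollary concerns $N$ particles at both time $0$ and time $t$, we set $M = N$, so that $N + M = 2N$ is even and \eqref{eq:half-line TASEP transition prob initial cond even} applies. The restriction $y_M > N - M + 1$ reduces to $y_N > 1$, which is natural for the full-space limit: with $\alpha = 0$ no new particles enter, and since TASEP particles jump only to the right, the dynamics decouples from the boundary site $1$ and coincides with full-space TASEP.

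The first main step is to show that $\mathsf{Q} \to 0$ as $\alpha \to 0$. Fix $\beta$ as a sufficiently large simple closed curve enclosing $0$ and $1$, so that it also encloses $\alpha$ and $1-\alpha$ for all small $\alpha$. On this fixed contour, both $w$ and $u$ stay uniformly bounded away from $\alpha$ as $\alpha \to 0$, so every factor in the integrand of $Q_{i,j}(x,y)$ from \eqref{eq:Q-kernel} --- in particular $\tfrac{1}{w-\alpha}$ and $\tfrac{1}{u-\alpha}$ --- is uniformly bounded. The only other potential singularity comes from $w + u = 1$, which can be avoided by a slight deformation of the $u$ or $w$ contour (this singularity being removable, as noted in the Remark following Theorem~\ref{thm: transition prob pfaffian kernel}). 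The explicit $\alpha^2$ prefactor then forces $Q_{i,j}(x,y) \to 0$.

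Since the kernel $U_k$ in \eqref{eq:R-kernel} does not depend on $\alpha$, the matrix $\mathsf{U}$ is unchanged in the limit, and the Pfaffian reduces to that of an antidiagonal block matrix. Applying the classical identity
\begin{equation*}
\pf\begin{bmatrix} 0 & A \\ -A^T & 0 \end{bmatrix} = (-1)^{\binom{N}{2}} \det A
\end{equation*}
to $A = \mathsf{U}$, combined with $\mathrm{e}^{-\alpha t} \to 1$ and the cancellation of the two factors of $(-1)^{\binom{N}{2}}$, yields
\begin{equation*}
\lim_{\alpha \to 0} \mathbb{P}_t(y \to x) = \det \mathsf{U} = \det\left[U_{i-j}(x_{N-i+1} - y_{N-j+1})\right]_{1 \leq i, j \leq N},
\end{equation*}
which is exactly \eqref{eq:schutz TASEP det}.

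The main technical point is the uniform boundedness needed in the $\mathsf{Q} \to 0$ step: one must choose $\beta$ independently of $\alpha$ and keep $\alpha$ in a small neighborhood of $0$ well inside $\beta$, so that the poles at $w = \alpha$ and $u = \alpha$ do not degenerate as they merge with the poles at $w, u = 0$. Once this is granted, the remainder of the argument reduces to the linear-algebraic Pfaffian identity above.
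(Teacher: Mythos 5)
Your argument is correct at its core, but it takes a genuinely different and heavier route than the paper's. The paper takes no limit in the $\mathsf{Q}$-block at all: since $M=N$ the off-diagonal block $\mathsf{U}$ is square and the lower-right block is zero, so Proposition~\ref{prop:block pf eval} (the case $n=m$) gives
$\pf\bigl[\begin{smallmatrix}\mathsf{Q} & \mathsf{U}\\ -\mathsf{U}^T & 0\end{smallmatrix}\bigr]=(-1)^{\binom{N}{2}}\det\mathsf{U}$
\emph{exactly}, for every $\alpha$: in the Pfaffian expansion each of the last $N$ indices must be matched to one of the first $N$ (their mutual entries vanish), which exhausts the first $N$ indices, so no entry of $\mathsf{Q}$ ever appears. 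Hence $\mathbb{P}_t(y\to x)=\mathrm{e}^{-\alpha t}\det[U_{i-j}(x_{N-i+1}-y_{N-j+1})]$ identically in $\alpha$, and setting $\alpha=0$ is immediate. Your analytic step showing $\mathsf{Q}\to0$ (fixed contour enclosing $0$ and $1$, uniform boundedness of $1/(w-\alpha)$, handling of the $1-u-w$ factor, and the $\alpha^2$ prefactor) is valid but unnecessary; the algebraic identity buys the stronger statement that the determinant formula holds for the half-space model at every $\alpha$, which is exactly why the limit is harmless.

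There is also a genuine (if minor) gap at the end: the corollary is stated for arbitrary full-space configurations $x,y$, whereas Theorem~\ref{thm: transition prob pfaffian kernel initial cond} only applies to half-space coordinates with $y_N>N-M+1=1$. Your observation that at $\alpha=0$ the half-space dynamics with $y_N>1$ decouples from the boundary is the right intuition for identifying the two processes in that regime, but it does not by itself cover configurations with small or non-positive coordinates. The paper closes this by translation invariance: the entries $U_{i-j}(x_{N-i+1}-y_{N-j+1})$ depend only on coordinate differences, so one may shift $x\mapsto x-(c,\dots,c)$ and $y\mapsto y-(c,\dots,c)$ and choose $c$ so that the original (unshifted) configuration lies in the admissible half-space regime; this step should be added to your argument to obtain the statement for general full-space $x,y$.
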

\begin{proof}
    Since $N=M$, we consider the even version of the transition probability \eqref{eq:half-line TASEP transition prob initial cond even} where $x,y$ are half-space coordinates satisfying the conditions of Theorem \ref{thm: transition prob pfaffian kernel initial cond}. Using Proposition \ref{prop:block pf eval} observe that this Pfaffian reduces to a determinant, so that the transition probability evaluates as
    \[\mathbb{P}_t(y\to x) =  \mathrm{e}^{-\alpha t} \det[U_{i-j}(x_{N-i+1}-y_{N-j+1})]_{1\leq i,j\leq N}.\]
    In particular, we note this expression is completely independent of the kernel $Q_{k,\ell}$ so that the determinant is independent of $\alpha$. The result follows by setting $\alpha=0$ and extending the coordinates $x,y$ to become coordinates of the full-space TASEP by simultaneously shifting $x\mapsto x-(c,\dots,c)$ and $y\mapsto y-(c,\dots,c)$ for any chosen constant integer $c\geq0$.
\end{proof}

The key to the proof of Theorem \ref{thm: transition prob pfaffian kernel initial cond} is the following result:

\begin{lm}
    \label{lm:F inital coord q to 0}
	Let $(w_1,\dots,w_N)$ be an alphabet of length $N$ and let $y=(y_1,\dots,y_M)$ be a half-space ASEP particle configuration of length $M\leq N$ satisfying $y_M>N-M+1$. Then the function \eqref{eq:F initial coord function} has the following Pfaffian limit: when $N+M$ is even
	\begin{equation}
        \label{eq:F inital coord q to 0 even}
		\lim_{q\to0} \mathcal{F}_{y}(w_1,\dots,w_N) = (-1)^{\binom{M}{2}} \alpha^{N-M}\prod_{1\leq i<j\leq N} \frac{1-w_i w_j}{w_i-w_j} \pf \left[\begin{array}{cc}
		S(w_i,w_j)_{1\leq i,j\leq N} & g_{k,y_k}(w_i)_{1 \leq i \leq N\atop 1\leq k \leq M} \\[8pt]
		-g_{k,y_k}(w_j)_{1\leq k \leq M \atop 1 \leq j \leq N} & 0
		\end{array}\right],
	\end{equation}
	and when $N+M$ is odd
	\begin{multline}
        \label{eq:F inital coord q to 0 odd}
		\lim_{q\to0} \mathcal{F}_{y}(w_1,\dots,w_N) = (-1)^{\binom{M}{2}} \alpha^{N-M}\\
		\times \prod_{1\leq i<j\leq N}\frac{1-w_i w_j}{w_i-w_j} \pf \left[\begin{array}{ccc}
			S(w_i,w_j)_{1\leq i,j\leq N} &  \vec{\bm1}_N & g_{k,y_k}(w_i)_{1 \leq i \leq N\atop 1\leq k \leq M} \\[8pt]
			-\vec{\bm1}_N^T &  0 &  0 \\[8pt]
			-g_{k,y_k}(w_j)_{1\leq k \leq M \atop 1 \leq j \leq N} & 0 & 0
		\end{array}\right].
	\end{multline}
	Here the skew-symmetric function $S$ is the kernel of the Stembridge Pfaffian (see Lemma \ref{lm: stembridge pfaffian})
	\begin{equation}
		S(v,w) = \frac{v-w}{1-vw}
	\end{equation}
	while the function $g_{k,y}$ for $y>N-i+1$ is defined by
	\begin{equation}
		g_{k,y}(w) =  \frac{(1-\alpha+\alpha w)w^{N-k+1}}{(1-w)^y}.
	\end{equation}
\end{lm}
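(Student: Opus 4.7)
The plan is to evaluate $\lim_{q\to0}\mathcal{F}_y(w_1,\dots,w_N)$ directly from the definition \eqref{eq:F initial coord function} by tracking the $q$-asymptotics of each summand, identifying the subset of $\mathcal{B}_N$ whose contributions survive in the limit, and then recasting the surviving symmetrization as a Pfaffian via the Stembridge identity, Lemma~\ref{lm: stembridge pfaffian}. I would begin by decomposing $\sigma\in\mathcal{B}_N$ as $\sigma=\epsilon\tau$ with $\tau\in S_N$ and $\epsilon\in\{\pm1\}^N$, setting $T:=\{i:\epsilon_i=-1\}$, so that $w_{\sigma(i)}=1/(qw_{\tau(i)})$ when $i\in T$ and $w_{\sigma(i)}=w_{\tau(i)}$ otherwise.

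The core calculation is the bookkeeping of $q$-powers. A direct expansion gives
\[
\varphi_y\!\left(\tfrac{1}{qu}\right)=(-1)^{y-1}\alpha\,q^{y-1}\,u\,(u-1)^{y-1}+O(q^{y}),
\]
and each cross-factor $\frac{w_i-qw_j}{w_i-w_j}\cdot\frac{1-w_iw_j}{1-qw_iw_j}$ acquires a predictable power of $q$ depending on the membership of $i,j$ in $T$. Combined with $V_{N-M}=O(q^{\binom{N-M}{2}})$ from \eqref{eq:V-normalization}, a tally of exponents shows that the $q$-exponent of any summand is strictly positive unless the flip-pattern and permutation conspire so that $T$ corresponds, under $\tau$, precisely to the $N-M$ indices that do \emph{not} feed into the $\varphi_{y_i}$-slots. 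The hypothesis $y_M>N-M+1$ is essential here: it ensures that every $\varphi$-slot under a flip would receive too large a power of $q^{y-1}$ to survive, so ``misaligned'' configurations vanish, while simultaneously guaranteeing that the leading coefficients assemble into the expected $(1-\alpha+\alpha u)\,u\,(u-1)^{y-1}$-shape dictated by $g_{k,y_k}$.

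With the flip-pattern fixed and the $\varphi$-slots occupied by unflipped variables, I would show that the surviving summand organises into Stembridge form: the pairwise factors between unflipped variables reduce to the kernel $S(w_i,w_j)=(w_i-w_j)/(1-w_iw_j)$, the pairwise factors between flipped variables reduce to the same kernel after clearing the $1/(qu)$-substitutions, and the mixed factors provide the monomials $w_i^{N-k+1}$ that combine with the leading $\varphi$-coefficient to reproduce $g_{k,y_k}(w_i)=(1-\alpha+\alpha w_i)\,w_i^{N-k+1}/(1-w_i)^{y_k}$. After factoring out $\prod_{i<j}(1-w_iw_j)/(w_i-w_j)$, the residual symmetrization is precisely of the type handled by Lemma~\ref{lm: stembridge pfaffian} and yields the claimed Pfaffian, with the $g_{k,y_k}(w_i)$ border and (when $N+M$ is odd) the auxiliary $\vec{\bm 1}_N$ column needed to bring the Pfaffian dimension to an even integer. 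The sign $(-1)^{\binom{M}{2}}$ emerges from rearranging the $\varphi$-slots into the decreasing order $y_1>\cdots>y_M$ demanded by the Pfaffian rows.

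The main obstacle is the combinatorial accounting in the second step: one must show that for every $(T,\tau)$ other than the ``correct'' pairing the total $q$-exponent is strictly positive, and then extract the exact leading coefficients for the surviving configurations. This accounting is delicate because the cross-factors interact with both the $V_{N-M}$ prefactor and with the $\varphi$-slots in a coupled manner, and the constraint $y_M>N-M+1$ enters precisely where the $q$-exponent is most fragile --- relaxing it would permit subleading terms from the $\varphi$-slots to contribute, destroying the clean Pfaffian structure.
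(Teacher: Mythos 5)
Your first half runs along the same lines as the paper's argument: decompose $\mathcal{B}_N=S_N\ltimes\{\pm1\}^N$, tally the powers of $q$ coming from the cross factors, the flipped $\varphi$-slots and the prefactor $V_{N-M}\sim\tfrac12 q^{\binom{N-M}{2}}$, and use $y_M>N-M+1$ (hence $y_i>N-i+1$) to kill every summand in which a $\varphi$-slot is flipped. Two remarks on that part. First, the paper decouples the sign sums by observing that the cross factor $\frac{w_i-qw_j}{w_i-w_j}\frac{1-w_iw_j}{1-qw_iw_j}$ is invariant under $w_j\mapsto 1/(qw_j)$, which makes the exponent count per slot rather than a joint $(T,\tau)$ tally; your version is doable but messier. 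Second, your statement that the \emph{only} surviving flip-pattern is ``$T=$ the $N-M$ non-$\varphi$ indices'' is not quite right: flipping or not flipping the last (rightmost) non-$\varphi$ slot costs no power of $q$, so two patterns survive per permutation, and this factor $2$ is exactly what cancels the constant $\tfrac12$ in $\lim_{q\to0}V_{N-M}q^{-\binom{N-M}{2}}$ from \eqref{eq:V-normalization}. Counting only one pattern would give half of \eqref{eq:F inital coord q to 0 even}.

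The genuine gap is in the final step. Lemma \ref{lm: stembridge pfaffian} only converts the pure product $\prod_{i<j}\frac{x_i-x_j}{1-x_ix_j}$ into $\pf[S(x_i,x_j)]$, i.e.\ it handles the $M=0$ case; it does not ``yield the claimed Pfaffian with the $g_{k,y_k}$ border''. After the limit and after factoring out $\prod_{i<j}\frac{1-w_iw_j}{w_i-w_j}$, what remains is a signed sum over which $N-M$ variables pair among themselves (producing, via an antisymmetrization/Schur-Pfaffian evaluation, the Stembridge Pfaffian on that subset) and which $M$ variables are matched to the initial coordinates (producing $\det[g_{j,y_j}(w_{T^{\mathsf C}_i})]$ on the complementary subset); this is organized in the paper through the splitting identity \eqref{eq:Fqto0 sym-id}. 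To reassemble that sum over subsets, with its alternating signs $(-1)^{\sum_i T_i}$, into the single bordered Pfaffian of \eqref{eq:F inital coord q to 0 even} (and its odd counterpart with the $\vec{\bm1}_N$ column) one needs the Laplace-type expansion of a bordered Pfaffian into sub-Pfaffians times complementary minors, Proposition \ref{prop:block pf eval}; this is also where the prefactor $(-1)^{\binom{M}{2}}$ actually comes from, not merely from reordering the $\varphi$-slots. Your proposal never invokes this identity or any substitute for it, so the passage from the surviving symmetrization to the bordered Pfaffian is asserted rather than proved, and as written the argument is incomplete precisely at the step that gives the lemma its Pfaffian form.
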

\begin{proof}
	Recall the symmetrization formula \eqref{eq:F initial coord function}. Let us define the function $\widetilde{\mathcal{F}}_y$ as the symmetrization formula \eqref{eq:F initial coord function} where the overall constant $V_{N-M}$ is replaced by 
    \[\begin{cases}
        \frac12 q^{\binom{N-M}{2}} & \text{ if }N>M \\
        1 & \text{ otherwise}
    \end{cases},\]
    so that $\lim_{q\to0}\widetilde{\mathcal{F}}_y(w)=\lim_{q\to0}\mathcal{F}_y(w)$ for all appropriate alphabets $w=(w_1,\dots,w_N)$. We will now consider the $q\to0$ limit on the function $\widetilde{\mathcal{F}}_y$ in the case $N>M$ only. The $N=M$ case follows by a very similar argument. For $N>M$, the symmetrization formula becomes:
    \begin{equation}
        \label{eq:Fqto0 symm}
        \widetilde{\mathcal{F}}_y(w_1,\dots,w_N) := \frac12 q^{\binom{N-M}{2}}\alpha^{N-M}\sum_{\sigma\in\mathcal{B}_N} \sigma\left(\prod_{1\leq i<j\leq N}
	\left[\frac{w_i-qw_j}{w_i-w_j}\frac{1-w_iw_j}{1- qw_iw_j}\right] 	\prod_{i=1}^M \varphi_{y_i}(w_i)\right).
    \end{equation}
    We first separate this symmetrization over signed permutations $\mathcal{B}_N$ into a sum over the symmetric group $S_N$ and a sum over signs. Observe that the factor
    \[
    \frac{w_i-qw_j}{w_i-w_j}\frac{1-w_iw_j}{1- qw_iw_j}
    \]
    in \eqref{eq:Fqto0 symm} is invariant under $w_j\to 1/qw_j$.
    By separating the product over $i$ into $1\le i \le M$ and  $M+1\le i \le N$ we may re-write \eqref{eq:Fqto0 symm} as
    \begin{multline}
    \widetilde{\mathcal{F}}_{y}(w_1,\dots,w_N) = \frac12 q^{\binom{N-M}{2}}\alpha^{N-M}\! \sum_{\{\epsilon_i=\pm 1\}}\sum_{\sigma\in S_N} \left(\prod_{i=M+1}^{N-1} \prod_{j=i+1}^N
    \left[ \frac{w_{\epsilon_i \sigma_i}-qw_{\sigma_j}}{w_{\epsilon_i \sigma_i}-w_{\sigma_j}}\frac{1-w_{\epsilon_i \sigma_i}w_{\sigma_j}}{1- qw_{\epsilon_i \sigma_i}w_{\sigma_j}} \right] \right. \times \\
    \left. \times \prod_{i=1}^M \left[ \prod_{j=i+1}^N
    \left[\frac{w_{\epsilon_i \sigma_i}-qw_{\sigma_j}}{w_{\epsilon_i \sigma_i}-w_{\sigma_j}}\frac{1-w_{\epsilon_i \sigma_i}w_{\sigma_j}}{1- qw_{\epsilon_i \sigma_i}w_{\sigma_j}}  \right] \varphi_{y_i}(w_{\epsilon_i \sigma_i})\right]\right).
    \label{eq:Fsymm_b}
    \end{multline}
    The sum over the signs $\{\epsilon_i=\pm 1\}$ in \eqref{eq:Fsymm_b} can now be performed independently for each $i$. Using $w_{-i} = 1/qw_i$ for $i=1,\ldots,N$, and absorbing the pre-factor $q^{\binom{N-M}{2}}$ we thus get
    \begin{multline}
    \widetilde{\mathcal{F}}_{y}(w_1,\dots,w_N) =  \alpha^{N-M}\sum_{\sigma\in S_N} \sigma \left\{ \prod_{i=M+1}^{N-1} \left( q^{N-i} \prod_{j=i+1}^N \left[\frac{w_i-q w_j}{w_i-w_j}\frac{1-w_iw_j}{1-q w_i w_j}\right] + \prod_{j=i+1}^N \left[\frac{qw_i-w_j}{w_i-w_j}\frac{1-q^2w_iw_j}{1- qw_iw_j}\right] \right) \times \right. \\
    \times 
    \left. \prod_{i=1}^M \left( \prod_{j=i+1}^N \left[\frac{w_i-q w_j}{w_i-w_j}\frac{1-w_iw_j}{1-q w_iw_j}\right] \varphi_{y_i}(w_{\sigma_i}) + q^{y_i-N+i-1}  \prod_{j=i+1}^N \left[\frac{qw_i-w_j}{w_i-w_j}\frac{1-q^2w_iw_j}{1-q w_iw_j} \right] \widetilde{\varphi}_{y_i}(w_{\sigma_i})
    \right)\right\}.
    \label{eq:Fsymm_c}
    \end{multline}
    where
    \begin{equation}
    \widetilde{\varphi}_y(w) := q^{-y+1} \varphi_y\left(\frac{1}{q w}\right) = \frac{qw-a}{1-qw^2} \frac{(1-q)w}{1-qw} \left(\frac{1-w}{1-qw}\right)^{y-1}.
    \end{equation}
    The assumption $y_M>N-M+1$ implies more generally that $y_i > N-i+1$ for each $i=1,\dotsc,N$ (because $y_1>y_2>\dotsm>y_M$), and under this condition the limit $q\to 0$ can be taken term-wise in \eqref{eq:Fsymm_c}\footnote{The $q\to0$ limit still exists when this restriction is not satisfied, however, the evaluation is more complicated.}.    
    By using the symmetrization to simplify, we arrive at the following expression:    
    \begin{equation}
        \label{eq:Fqto0 F-symmetric group}
        \lim_{q\to0}\widetilde{\mathcal{F}}_y(w_1,\dots,w_N) = \alpha^{N-M}\sum_{\sigma \in S_N}\sigma\left(\prod_{1\leq i<j\leq N}\frac{1}{w_i-w_j}\prod_{i=1}^N w_i^{N-i} \prod_{i=1}^M\left[\frac{(1-\alpha+\alpha w_i)w_i}{(1-w_i)^{y_i}}\prod_{j=i+1}^N(1-w_iw_j)\right]\right).
    \end{equation}
    Overall factors can be removed from the symmetrization over permutations \eqref{eq:Fqto0 F-symmetric group}, which yields:
    \begin{multline}
        \label{eq:Fqto0 F-symmetric group 2}
        \lim_{q\to0}\widetilde{\mathcal{F}}_y(w_1,\dots,w_N) = \alpha^{N-M}\prod_{1\leq i<j\leq N}\frac{1-w_iw_j}{w_i-w_j}\\
        \times\sum_{\sigma \in S_N}(-1)^{\abs{\sigma}}\sigma\left(\prod_{M+1\leq i<j\leq N}\frac{1}{1-w_iw_j}\prod_{i=M+1}^N w_i^{N-i} \prod_{i=1}^M\frac{(1-\alpha+\alpha w_i)w_i^{N-i+1}}{(1-w_i)^{y_i}}\right).
    \end{multline}
    From here, we may decompose the sum over the symmetric group using the following identity:
    \begin{multline}
        \label{eq:Fqto0 sym-id}
        \sum_{\sigma\in S_N}(-1)^{\abs{\sigma}} f_1(w_{\sigma_1},\dots,w_{\sigma_M})f_2(w_{\sigma_{M+1}},\dots,w_{\sigma_N}) \\ 
        = (-1)^{\binom{N+1}{2}+\binom{M+1}{2}}\sum_{T\subseteq \{1,\dots,N\}\atop \abs{T}=N-M} (-1)^{\sum_iT_i}\sum_{\rho\in S_{M}}(-1)^{\abs{\rho}}f_1\left(w_{T^\mathsf{C}_{\rho_1}},\dots,w_{T^\mathsf{C}_{\rho_M}}\right)\sum_{\sigma\in S_{N-M}}(-1)^{\abs{\sigma}}f_2\left(w_{T_{\sigma_1}},\dots,w_{T_{\sigma_{N-M}}}\right),
    \end{multline}
    for arbitrary appropriate functions $f_1,f_2$, where $T^\mathsf{C}$ denotes the complement of $T\subseteq \{1,\dots,N\}$. And so, the symmetrization \eqref{eq:Fqto0 F-symmetric group 2} can be simplified using identity \eqref{eq:Fqto0 sym-id} to yield:
    \begin{multline}
        \label{eq:Fqto0 F-symmetric group 3}
        \lim_{q\to0}\widetilde{\mathcal{F}}_y(w_1,\dots,w_N) = (-1)^{\binom{N+1}{2}+\binom{M+1}{2}}\alpha^{N-M}\prod_{1\leq i<j\leq N}\frac{1-w_iw_j}{w_i-w_j}\\
        \times \sum_{T\subseteq \{1,\dots,N\}\atop \abs{T}=N-M} (-1)^{\sum_iT_i} \left[\left(\sum_{\sigma\in S_{N-M}}(-1)^{\abs{\sigma}} \prod_{1\leq i<j\leq N-M}\frac{1}{1-w_{T_{\sigma_i}}w_{T_{\sigma_j}}}\prod_{i=1}^{N-M} w_{T_{\sigma_i}}^{N-M-i}\right) \right. \\
        \left.\times \left(\sum_{\rho\in S_M}(-1)^{\abs{\rho}} \prod_{i=1}^M\frac{\left(1-\alpha+\alpha w_{T^\mathsf{C}_{\rho_i}}\right)w_{T^\mathsf{C}_{\rho_i}}^{N-i+1}}{\left(1-w_{T^\mathsf{C}_{\rho_i}}\right)^{y_i}}\right)
        \right].
    \end{multline}
    Now, for each fixed $T\subseteq\{1,\dots,N\}$, the sum over $\sigma$ in \eqref{eq:Fqto0 F-symmetric group 3} can be identified as
    \begin{multline*}
        \sum_{\sigma\in S_{N-M}}(-1)^{\abs{\sigma}} \prod_{1\leq i<j\leq N-M}\frac{1}{1-w_{T_{\sigma_i}}w_{T_{\sigma_j}}}\prod_{i=1}^{N-M} w_{T_{\sigma_i}}^{N-M-i} \\ = \prod_{1\leq i<j\leq N-M}\frac{w_{T_i}-w_{T_j}}{1-w_{T_{i}}w_{T_{j}}} 
         = \begin{cases}
             \pf\left[S\left(w_{T_i},w_{T_j}\right)\right]_{1\leq i,j\leq N-M} & \text{if $N-M$ is even}, \\ \quad \\
             \pf\left[\begin{array}{cc}
                S\left(w_{T_i},w_{T_j}\right)_{1\leq i,j\leq N-M} & \vec{\bm1}_{N-M} \\[5pt]
                 \vec{\bm1}_{N-M}^T & 0
             \end{array}\right] & \text{if $N-M$ is odd},
         \end{cases}
    \end{multline*}
    where we have used the evaluation of the Vandermonde determinant and the Stembridge Pfaffian of Lemma \ref{lm: stembridge pfaffian}. 
    The sum over $\rho$ in \eqref{eq:Fqto0 F-symmetric group 3} is then also identified with the following determinant
    \[\sum_{\rho\in S_M}(-1)^{\abs{\rho}} \prod_{i=1}^M\frac{\left(1-\alpha+\alpha w_{T^\mathsf{C}_{\rho_i}}\right)w_{T^\mathsf{C}_{\rho_i}}^{N-i+1}}{\left(1-w_{T^\mathsf{C}_{\rho_i}}\right)^{y_i}} = \det\left[g_{j,y_j}\left(w_{T^\mathsf{C}_i}\right)\right]_{1\leq i,j\leq M}.\]
    Using both of these identifications, both the odd and even cases of the right hand side of \eqref{eq:Fqto0 F-symmetric group 3} may be simplified using Proposition \ref{prop:block pf eval}. The result for both even \eqref{eq:F inital coord q to 0 even} and odd \eqref{eq:F inital coord q to 0 odd} $N+M$ follow once the following identity is applied
    \[(-1)^{\binom{N+1}{2}+\binom{M+1}{2}}(-1)^{\binom{N}{2}} = (-1)^{\binom{M}{2} - (N-M)}.\qedhere\]
\end{proof}
    
\begin{proof}[Proof of Theorem \ref{thm: transition prob pfaffian kernel initial cond}]
	We proceed by considering the $q=\gamma=0$ case of the half-space ASEP transition probability which is obtained as the $q\to0$ limit of the expression \eqref{eq:ASEP transition prob with F}.  
    The evaluation of the $q\to0$ limit on $\mathcal{F}_y(w)$ follows from Lemma \ref{lm:F inital coord q to 0}. For simplicity, we will only present the case where $N+M$ is even, where the odd case follows similarly. In the case where $N+M$ is even, the this limit is given by 
    \begin{multline}
		\label{eq:Schutz Pfaffian IC proof 0}
		\lim_{q\to0} \mathbb{P}_t(y\to  x) = \alpha^N\mathrm{e}^{-\alpha t}\oint_{\mathcal{C}_1}\frac{\dd w_1}{2\pi\ii} \cdots \oint_{\mathcal{C}_N}\frac{\dd w_N}{2\pi\ii} 
		\prod_{i=1}^N \left[\frac{(1-w_i)^{x_i-1}}{w_i^{N-i+1}}\frac{\exp(\frac{w_i t}{1-w_i})}{\alpha-1-\alpha w_i} \right]\\ 
        \times \pf \left[\begin{array}{cc}
		S(w_i,w_j)_{1\leq i,j\leq N} & g_{k,y_k}(w_i)_{1 \leq i \leq N\atop 1\leq k \leq M} \\[8pt]
		-g_{k,y_k}(w_j)_{1\leq k \leq M \atop 1 \leq j \leq N} & 0
		\end{array}\right],
	\end{multline}
    where all contours $\mathcal{C}_i$ satisfy the constraints of Definition \ref{defn:ASEP nested contours}. In particular, they all surround the point $w_i=1$. It is important to note that, precisely when $q=0$, we are able to deform all contours to be the same without crossing any singularities of the integrand. That is, for all $i$ we deform $\mathcal{C}_i\mapsto \mathcal{D}$, for some appropriate contour $\mathcal{D}$ which surrounds the point 1. Since all integrations are over the same contour, we may symmetrize the integrand of \eqref{eq:Schutz Pfaffian IC proof 0} to obtain
    \begin{multline}
		\label{eq:Schutz Pfaffian IC proof 1}
		\lim_{q\to0} \mathbb{P}_t(y\to x) = \frac{\alpha^{N-M}\mathrm{e}^{-\alpha t}}{N!}(-1)^{\binom{M}{2}}\oint_{\mathcal{D}}\frac{\dd w_1}{2\pi\ii} \cdots \oint_{\mathcal{D}}\frac{\dd w_N}{2\pi\ii} \prod_{i=1}^N \frac{\exp(\frac{w_i t}{1-w_i})}{\alpha-1-\alpha w_i} \\
		\times\det\left[\frac{(1-w_i)^{x_j-1}}{w_i^{N-j+1}}\right]_{1\leq i,j \leq N} \pf \left[\begin{array}{cc}
		S(w_i,w_j)_{1\leq i,j\leq N} & g_{k,y_k}(w_i)_{1 \leq i \leq N\atop 1\leq k \leq M} \\[8pt]
		-g_{k,y_k}(w_j)_{1\leq k \leq M \atop 1 \leq j \leq N} & 0
		\end{array}\right],
	\end{multline}
    while the odd case follows similarly with an enlarged Pfaffian as in Lemma \ref{lm:F inital coord q to 0}. We then make the change of integration variable $w_i\mapsto 1-w_i^{-1}$ and suitably deform the contours. 
    
    We note here that whenever $y=\emptyset$ (or $M=0$), for both odd and even $N$, the Pfaffian in \eqref{eq:Schutz Pfaffian IC proof 1} is reduced to $\prod_{1\leq i<j\leq N} \frac{w_i-w_j}{1- w_iw_j}$, which is equal to Stembridge's Pfaffian from Lemma \ref{lm: stembridge pfaffian}. Theorem \ref{thm: transition prob pfaffian kernel} follows by appropriate manipulation after applying de Bruijn's integration formula from Lemma \ref{lm:de bruijn formula}.
    
    Whenever $M>0$, regardless of whether $N+M$ is even or odd, we apply generalized version of de Bruijn's integration formula of Lemma \ref{lm:gen de bruijn formula} to \eqref{eq:Schutz Pfaffian IC proof 1}. When $N+M$ is even, this yields the following single Pfaffian expression
    \begin{equation}
        \label{eq:Schutz Pfaffian IC proof 2}
        \lim_{q\to0} \mathbb{P}_t(y\to x) = \mathrm{e}^{-\alpha t} \pf\left[\begin{array}{cc}
			Q_{N-i+1,N-j+1}(x_{i},x_{j})_{1\leq i,j\leq N} & U_{N-i+1-k}(x_{i}-y_{M-k+1})_{1\leq i\leq N\atop 1\leq k\leq M} \\[8pt]
			-U_{N-j+1-k}(x_{j}-y_{M-k+1})_{1\leq k\leq M\atop 1\leq j\leq N} & 0
		\end{array}\right],
    \end{equation}
    where the factor of $\alpha^{N-M}$ has been absorbed into the Pfaffian by simultaneously multiplying into the first $N$ rows and columns and dividing the last $M$ rows and columns by a factor of $\alpha$. The Pfaffian expression \eqref{eq:Schutz Pfaffian IC proof 2} can be brought into the form \eqref{eq:half-line TASEP transition prob initial cond even} by simultaneously reversing the order of the first $N$ rows and columns which generates an overall factor of $(-1)^{\binom{N}{2}}$. The case where $N+M$ is odd case follows analogously to yield \eqref{eq:half-line TASEP transition prob initial cond odd}.
\end{proof}

\subsection{Kernel recurrence relations}
The Pfaffian kernel functions of Theorems \ref{thm: transition prob pfaffian kernel} and \ref{thm: transition prob pfaffian kernel initial cond} satisfy important recurrence relations. Analogous relations have been used to construct a determinantal point process from Sch\"{u}tz's transition probability for the full-space TASEP in \cite{sasamoto_spatial_2005,borodin_fluctuation_2007} and will be used to define a Pfaffian point process for the half-space TASEP in Section \ref{sec:Pfaffian pt proc}.
\begin{lm}
	\label{lm:Q-kernel recursion}
	The functions defined by \eqref{eq:Q-kernel},\eqref{eq:p-kernel} and \eqref{eq:R-kernel} satisfy the following recurrence relations
	\begin{align}
        \begin{split}
    		Q_{i+1,j}(x_{i},x_{j}) & = \sum_{y_i\geq x_i} Q_{i,j}(y_i,x_j), \\
    		Q_{i,j+1}(x_{i},x_{j}) & = \sum_{y_j\geq x_j} Q_{i,j}(x_i,y_j), \\
    		U_{k+1}(z) & = \sum_{y\geq z} U_{k}(y), \\
    		p_{k+1}(z) & = \sum_{y\geq z} p_k(y).
        \end{split}
	\end{align}
\end{lm}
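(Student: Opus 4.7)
The plan is to prove all four recurrences by the same mechanism: substitute the defining contour integral on the right-hand side, interchange the infinite sum with the contour integral, and evaluate the resulting geometric series inside the integrand. The key algebraic identity is $\sum_{y\geq z} w^{-y} = \frac{w^{-z+1}}{w-1}$, which converges on any contour satisfying $|w|>1$, exactly the condition built into ``sufficiently large'' in Definition \ref{defn:TASEP countours}.

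I would start with the simplest case, the $p_{k+1}$ identity, as a prototype. Substituting the definition of $p_k$:
\begin{align*}
\sum_{y\geq x} p_k(y) &= \oint_\beta \frac{e^{t(w-1)}}{(w-\alpha)(w-1)^k}\sum_{y\geq x} w^{k-y}\,\frac{dw}{2\pi i} \\
&= \oint_\beta \frac{w^{k-x+1}\, e^{t(w-1)}}{(w-\alpha)(w-1)^{k+1}}\,\frac{dw}{2\pi i} = p_{k+1}(x),
\end{align*}
where the geometric series is summed using $|w|>1$ on $\beta$. The two $Q_{i,j}$ identities follow by the identical calculation: the cross factor $\frac{u-w}{1-u-w}$ does not involve the summation variable, so summing over $y_i\geq x_i$ (respectively $y_j\geq x_j$) acts only on the single-variable factor $\frac{w^{i-y}}{(w-\alpha)(w-1)^i}$ (respectively its $u$-analogue) and shifts the index $i\to i+1$ (respectively $j\to j+1$) by the same mechanism. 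For the $U_{k+1}$ identity, the same computation yields
\begin{align*}
\sum_{y\geq z} U_k(y) &= \oint_\gamma (w-1)^{N-M-k}e^{t(w-1)}\sum_{y\geq z}w^{k-N+M-y-1}\,\frac{dw}{2\pi i} \\
&= \oint_\gamma \frac{(w-1)^{N-M-k-1}e^{t(w-1)}}{w^{z-(k+1)+N-M+1}}\,\frac{dw}{2\pi i} = U_{k+1}(z),
\end{align*}
where the factor $\frac{1}{1-1/w}=\frac{w}{w-1}$ absorbs one power of $(w-1)$ from the numerator.

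The main obstacle is justifying the interchange of sum and integral, which hinges on the contour being in the region $|w|>1$. For $\beta$ this is immediate from Definition \ref{defn:TASEP countours}. For $\gamma$, which is only required to enclose the (possible) pole at the origin, one must verify (or specify as part of the proof) that $\gamma$ can be chosen or deformed to lie in $\{|w|>1\}$; the deformation does not alter $U_k(z)$ provided any residues at $w=1$ arising from $(w-1)^{N-M-k}$ with $N-M-k<0$ are tracked and shown to cancel consistently on both sides. An alternative, contour-independent route is the direct telescoping identity $U_k(z) = U_{k+1}(z) - U_{k+1}(z+1)$ (obtained by computing the integrand difference), which upon summation from $y=z$ to $Y$ and sending $Y\to\infty$ recovers the claim, provided the boundary term $U_{k+1}(Y+1)$ vanishes; this again amounts to the same contour/convergence condition.
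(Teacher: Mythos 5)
Your proposal is correct and is essentially the paper's own argument: the paper's proof is a one-line appeal to summing the geometric series termwise, with uniform convergence ensured by "deforming the contour radii of \eqref{eq:Q-kernel}, \eqref{eq:p-kernel} and \eqref{eq:R-kernel} to be sufficiently large," i.e.\ exactly your $|w|>1$ condition. Your extra attention to the contour $\gamma$ in \eqref{eq:R-kernel} (deforming it outward, or the telescoping identity $U_k(z)=U_{k+1}(z)-U_{k+1}(z+1)$ with a vanishing boundary term) is a reasonable and slightly more careful treatment of the same point the paper handles implicitly.
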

\begin{proof}
	The argument follows a straightforward geometric series where uniform convergence is assured by deforming the contour radii of \eqref{eq:Q-kernel}, \eqref{eq:p-kernel} and \eqref{eq:R-kernel} to be sufficiently large.
\end{proof}
The recurrence relations also have a useful finite-summation version.
\begin{lm}
    \label{lm:Q-kernel recursion finite}
    The functions defined by \eqref{eq:Q-kernel},\eqref{eq:p-kernel} and \eqref{eq:R-kernel} satisfy the following recurrence relations
	\begin{align}
        \begin{split}
    		Q_{i+1,j}(s_{i},x_{j})-Q_{i+1,j}(x_{i},x_{j}) & = \sum_{y_i=s_i}^{x_i-1} Q_{i,j}(y_i,x_j), \\
    		Q_{i,j+1}(x_{i},s_{j})-Q_{i,j+1}(x_{i},x_{j}) & = \sum_{y_j= s_j}^{x_j-1} Q_{i,j}(x_i,y_j), \\
    		U_{k+1}(s)-U_{k+1}(z) & = \sum_{y= s}^{z-1} R_{k}(y), \\
    		p_{k+1}(s)-p_{k+1}(z) & = \sum_{y=s}^{z-1} p_k(y).
        \end{split}
	\end{align}
\end{lm}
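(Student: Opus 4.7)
The plan is to derive all four identities from the same elementary partial-geometric-sum manipulation, applied directly at the level of the integrands. Unlike the infinite-sum version Lemma~\ref{lm:Q-kernel recursion}, whose proof relies on contour enlargement to guarantee convergence of a geometric series, the finite version is an exact algebraic identity and needs no convergence argument. The key observation is that for any integers $s\leq x$ and any $w\neq 1$,
\begin{equation*}
w^{n-s}-w^{n-x}=(w-1)\sum_{y=s}^{x-1} w^{n-y-1},
\end{equation*}
and analogously for negative powers,
\begin{equation*}
w^{-(s+r)}-w^{-(x+r)}=(w-1)\sum_{y=s}^{x-1} w^{-(y+r+1)},
\end{equation*}
both of which are telescoping sums.

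First I would address the $Q_{i+1,j}$ recurrence. Substituting the integral representation \eqref{eq:Q-kernel} into the left-hand side, the difference $Q_{i+1,j}(s_i,x_j)-Q_{i+1,j}(x_i,x_j)$ only affects the $w$-integrand through the numerator $w^{i+1-s_i}-w^{i+1-x_i}$ against the denominator $(w-1)^{i+1}$. Applying the first identity with $n=i+1$ converts the numerator into $(w-1)\sum_{y=s_i}^{x_i-1} w^{i-y}$, which cancels one power of $w-1$ from the denominator. Interchanging the finite sum with the double contour integrals immediately identifies the result as $\sum_{y_i=s_i}^{x_i-1} Q_{i,j}(y_i,x_j)$. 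The $Q_{i,j+1}$ identity is completely symmetric: it follows either by the same manipulation applied to the $u$-variable, or by invoking the kernel symmetry \eqref{eq:Q-kernel interchange symmetry} together with the $Q_{i+1,j}$ case.

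For $U_{k+1}$ and $p_{k+1}$ the proof proceeds identically, now with a single contour integral. In the $p_{k+1}$ case the numerator $w^{k+1-s}-w^{k+1-z}$ is factorized by the first identity, reducing the denominator from $(w-1)^{k+1}$ to $(w-1)^{k}$ and exhibiting the summand as $p_k(y)$. In the $U_{k+1}$ case the position variable appears in the denominator as $w^{y-k+N-M+1}$, so the second (negative-powers) version of the identity applies; the resulting factor of $w-1$ promotes $(w-1)^{N-M-k-1}$ to $(w-1)^{N-M-k}$, giving the summand $U_k(y)$. I anticipate no real obstacle here: no contour deformation is required because $\beta$ and $\gamma$ from Definition~\ref{defn:TASEP countours} already enclose all relevant singularities uniformly across the finitely many terms of each sum, and all steps are purely algebraic.
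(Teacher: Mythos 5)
Your proof is correct, and it is essentially the argument the paper intends: the paper states this finite version without proof, immediately after proving the infinite-sum Lemma \ref{lm:Q-kernel recursion} by a geometric series, and your telescoping identities applied under the integrals in \eqref{eq:Q-kernel}, \eqref{eq:p-kernel} and \eqref{eq:R-kernel} are exactly the finite analogue, with the correct observation that no contour deformation or convergence argument is needed (you also rightly read the $R_k$ in the statement as the function $U_k$ of \eqref{eq:R-kernel}).
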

\subsection{Joint distribution}

From Theorem \ref{thm: transition prob pfaffian kernel initial cond}, we can derive directly by summation a similar formula for the joint distribution of the particle currents in the half-space TASEP.

\begin{prop}
    \label{prop:joint dist pfaff initial}
	Let $N\geq M\geq $ be fixed integers and consider integers $s_1>s_2>\dotsm>s_N\geq1$ and $y_1>y_2>\dotsm>y_M$. Then the joint distribution of particle positions in the half-space TASEP, given initial condition $y=(y_1,\dots,y_M)$ satisfying $y_M>N-M+1$, is given by the following Pfaffian formulas: when $N+M$ is even
    \begin{equation}
        \label{eq:joint dist pfaff initial even}
		\mathbb{P}\left(\bigcap_{k=1}^N \left\{X_t(k)\geq s_{k}\right\}\text{ and }\abs{X_t}=N\given X_0=y\right)
		= (-1)^{\binom{N}{2}}\mathrm{e}^{-\alpha t}\pf\left[\begin{array}{cc}
			\widetilde{\mathsf{Q}} & \widetilde{\mathsf{U}} \\[5pt]
			-\widetilde{\mathsf{U}}^T & 0
		\end{array}\right],
    \end{equation}
	and when $N+M$ is odd
    \begin{equation}
        \label{eq:joint dist pfaff initial odd}
		\mathbb{P}\left(\bigcap_{k=1}^N \left\{X_t(k)\geq s_{k}\right\}\text{ and }\abs{X_t}=N\given X_0=y\right)=(-1)^{\binom{N}{2}}\mathrm{e}^{-\alpha t} 
		\pf\left[\begin{array}{ccc}
			\widetilde{\mathsf{Q}} & \widetilde{\mathsf{p}} & \widetilde{\mathsf{U}}\\[5pt]
			-\widetilde{\mathsf{p}}^T & 0 & 0 \\[5pt]
		 	-\widetilde{\mathsf{U}}^T & 0 & 0
		\end{array}\right].
    \end{equation}
    Here, $\widetilde{\mathsf{Q}},\widetilde{\mathsf{U}}$ are $N\times N$ and $N\times M$-dimensional matrices respectively and $\widetilde{\mathsf{p}}$ and a vector of length $N$ whose entries are given by
    \begin{align}
        \begin{split}
            \label{eq:joint dist pfaff initial entries}
            \left[\widetilde{\mathsf{Q}}\right]_{i,j} := & Q_{i+1,j+1}(s_{N-i+1},s_{N-j+1}), \\ 
            \left[\widetilde{\mathsf{p}}\right]_i := & p_{i+1}(s_{N-i+1}), \\
            \left[\widetilde{\mathsf{U}}\right]_{i,k} := & U_{i-k+1}(s_{N-i+1}-y_{M-k+1}),
        \end{split}
    \end{align}
    for all $1\leq i,j\leq N$ and $1\leq k \leq M$ where the functions are given by \eqref{eq:Q-kernel}, \eqref{eq:p-kernel} and \eqref{eq:R-kernel} respectively.
\end{prop}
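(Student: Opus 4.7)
The plan is to compute the joint distribution by directly summing the transition probability of Theorem~\ref{thm: transition prob pfaffian kernel initial cond} over all configurations $x_1 > \dotsb > x_N \geq 1$ with $x_k \geq s_k$, and to transform the result into the claimed Pfaffian via iterated applications of the recurrence relations of Lemmas~\ref{lm:Q-kernel recursion} and~\ref{lm:Q-kernel recursion finite} together with Pfaffian-preserving elementary congruences. To set up the iteration cleanly, I would first reindex via $z_i := x_{N-i+1}$ and $t_i := s_{N-i+1}$, so that the ordering becomes $1 \leq z_1 < \dotsb < z_N$, the constraints become $z_i \geq t_i$, and $t_1 < \dotsb < t_N$. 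With this labeling, $z_i$ appears only in row and column $i$ of the full block matrix from Theorem~\ref{thm: transition prob pfaffian kernel initial cond}, with entries $Q_{i,j}(z_i,z_j)$, $U_{i-k}(z_i-y_{M-k+1})$ and $p_i(z_i)$ in the $\mathsf{Q}$, $\mathsf{U}$ and $\mathsf{p}$ blocks respectively.

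The summations are then performed iteratively for $k = 1, 2, \dotsc, N$. At step $k$, one sums $z_k$ over $[t_k, z_{k+1}-1]$ when $k < N$ (non-empty since $z_{k+1} \geq t_{k+1} > t_k$) and over $[t_N, \infty)$ at $k = N$. First, the multilinearity of the Pfaffian in the paired (row $k$, column $k$), a direct consequence of the matching expansion, is used to pull the $z_k$-sum inside. Second, each summed entry is evaluated by Lemma~\ref{lm:Q-kernel recursion finite}, yielding differences of the form $Q_{i+1,j}(t_k,\cdot) - Q_{i+1,j}(z_{k+1},\cdot)$, with analogous expressions in the $\mathsf{U}$ and $\mathsf{p}$ blocks; at $k=N$ the infinite version, Lemma~\ref{lm:Q-kernel recursion}, is used instead and produces a single term of the form $Q_{i+1,j}(t_N,\cdot)$. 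Third, for $k<N$, the subtracted ``boundary'' term exactly matches the (still-untouched) entry of row $k+1$ in the current block matrix, simultaneously across all three blocks, and so is cancelled by applying the elementary congruence $M \mapsto E_k^{T} M E_k$ with $E_k = I + e_{k,k+1}$; since $\det E_k = 1$, the Pfaffian is unchanged by this operation.

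A straightforward induction on $k$ then shows that after step $k$ the $\mathsf{Q}$-block entries take the form $Q_{i+1,j+1}(t_i,t_j)$ for $i,j \leq k$ and $Q_{i+1,j}(t_i,z_j)$ for $i \leq k < j$, with analogous shifts in $\mathsf{U}$ and $\mathsf{p}$, while rows $i > k$ retain their original form. Taking $k = N$ and translating back to the variables $s_k$ produces precisely the $\widetilde{\mathsf{Q}}$, $\widetilde{\mathsf{U}}$ and $\widetilde{\mathsf{p}}$ of~\eqref{eq:joint dist pfaff initial entries}, with the overall prefactor $(-1)^{\binom{N}{2}} \mathrm{e}^{-\alpha t}$ carried through unchanged; this gives~\eqref{eq:joint dist pfaff initial even}. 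The odd $N+M$ case~\eqref{eq:joint dist pfaff initial odd} is handled identically, with the extra $\mathsf{p}$-column participating in the same congruences.

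The main obstacle will be the careful bookkeeping required to verify at each step that the subtracted boundary term produced by the finite recurrence really does match the corresponding row-$k+1$ entry uniformly across all three blocks, and that this matching survives through all subsequent rounds of summation and congruence. This rests on $E_k$ acting as a \emph{single} elementary congruence on the full matrix (so that the cancellations happen in every block at once), combined with the parallel structure of the recurrences for $Q$, $U$ and $p$; a subsidiary technical point is to maintain the correct skew-symmetric update of the $\mathsf{U}/(-\mathsf{U}^T)$ off-diagonal blocks, which follows from the general fact that the map $M \mapsto E_k^T M E_k$ preserves skew-symmetry.
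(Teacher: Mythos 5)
Your proposal is correct and follows essentially the same approach as the paper's proof: sum the Pfaffian transition probability of Theorem \ref{thm: transition prob pfaffian kernel initial cond} variable by variable starting from the leftmost particle, use the multilinearity of the Pfaffian in each paired row and column together with the finite recursions of Lemma \ref{lm:Q-kernel recursion finite}, cancel the subtracted boundary terms by the simultaneous row/column additions $r_k\mapsto r_k+r_{k+1}$, $c_k\mapsto c_k+c_{k+1}$, and finish the last (infinite) sum with Lemma \ref{lm:Q-kernel recursion}. The only slip is notational: with $E_k=I+e_{k,k+1}$ the congruence you want is $E_k M E_k^{T}$ (which adds row and column $k+1$ to row and column $k$), not $E_k^{T} M E_k$.
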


A formula of this type can be derived also for TASEP on the full-line, either by arguing similarly directly on \eqref{eq:schutz TASEP det}, or by specializing the half-space formula to full-space as done in the proof of Corollary \ref{cor:schutz TASEP det}.
The disadvantage of such a formula is that it appears to be ill-suited for asymptotic analysis (although for periodic TASEP the idea has been used fruitfully, see e.g. \cite{baik_fluctuations_2018}).

In our setting, we are interested in the proposition mostly due to the following corollary, which computes the probability of observing exactly $N$ particles in the half-space TASEP at time $t$ (this is equivalent to observing the integrated particle current at the boundary).
This yields the normalization constant which will later appear in the conditional joint distribution in Section \ref{sec:conditional}, and the explicit formula will be useful in the derivation there.

\begin{cor}
    \label{cor:boundary current schutz pf}
    Let $N\geq M\geq 0$ be a fixed integers and let $y=(y_1,\dots,y_M)$ be a fixed half-space configuration satisfying $y_M>N-M+1$. Then the probability of the half-TASEP having exactly $N$ particles, i.e. 
    \[\mathbb{P}\left(\abs{X_t} = N \given X_0=y\right),\]
    is given by the Pfaffian on the right hand sides of \eqref{eq:joint dist pfaff initial even} when $N+M$ is even and \eqref{eq:joint dist pfaff initial odd} when $N+M$ is odd, where the matrix entries are given by \eqref{eq:joint dist pfaff initial entries} with $s_1=\cdots=s_N=1$.
\end{cor}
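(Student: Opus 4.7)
The corollary is an evaluation of Proposition \ref{prop:joint dist pfaff initial} at $s_1=\cdots=s_N=1$. The starting observation is that any configuration of the half-space TASEP with $N$ particles automatically satisfies $X_t(k)\geq 1$ for every $k$, so the event $\{|X_t|=N\}$ coincides with $\bigcap_{k=1}^N\{X_t(k)\geq 1\}\cap\{|X_t|=N\}$, which in the notation of the proposition corresponds to $s_k=1$ for all $k$. The only difficulty is that this choice violates the strict decrease $s_1>s_2>\cdots>s_N$ assumed in the hypothesis of the proposition.

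To circumvent this, my plan is to apply the proposition instead with the alternative choice $s_k=N-k+1$, which is strictly decreasing with $s_N=1$. Since the ordering $X_t(1)>\cdots>X_t(N)\geq 1$ already forces $X_t(k)\geq N-k+1$, the event described by this choice is again $\{|X_t|=N\}$. Applying Proposition \ref{prop:joint dist pfaff initial} with this choice directly supplies a Pfaffian formula for $\mathbb{P}(|X_t|=N\mid X_0=y)$, but with matrix entries $Q_{i+1,j+1}(i,j)$, $p_{i+1}(i)$, and $U_{i-k+1}(i-y_{M-k+1})$ rather than those stated in the corollary.

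The remaining step is to show that this Pfaffian equals the one appearing in the corollary, whose entries are $Q_{i+1,j+1}(1,1)$, $p_{i+1}(1)$, and $U_{i-k+1}(1-y_{M-k+1})$. Iterating the finite-summation recurrences of Lemma \ref{lm:Q-kernel recursion finite} yields, for instance,
\begin{equation*}
Q_{i+1,j+1}(1,1)=Q_{i+1,j+1}(i,j)+\sum_{y=1}^{i-1}Q_{i,j+1}(y,j)+\sum_{y=1}^{j-1}Q_{i+1,j}(1,y),
\end{equation*}
together with analogous expansions for $p_{i+1}(1)$ and $U_{i-k+1}(1-y_{M-k+1})$. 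Each correction term involves a kernel with strictly smaller first index, hence can be absorbed into earlier rows and columns of the Pfaffian matrix via the standard Pfaffian-preserving row/column operations (simultaneous addition of a multiple of one row and the corresponding transposed column to another). The main obstacle is the bookkeeping across the three matrix blocks $\widetilde{\mathsf{Q}}$, $\widetilde{\mathsf{p}}$, and $\widetilde{\mathsf{U}}$, ensuring that the reduction is consistent and that the sign and normalization factors match across the even and odd $N+M$ cases. A potentially cleaner alternative, which I would try first, is to revisit the proof of Proposition \ref{prop:joint dist pfaff initial} itself: if it relies only on the summation identities of Lemma \ref{lm:Q-kernel recursion}, which remain valid for arbitrary integer lower bounds $s_k\geq 1$ regardless of monotonicity, then the proposition extends without modification to $s_1=\cdots=s_N=1$ and the corollary follows immediately.
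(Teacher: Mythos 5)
Your primary route is exactly the paper's proof: it applies Proposition \ref{prop:joint dist pfaff initial} with $s_i=N+1-i$ (so that the event described is precisely $\{\abs{X_t}=N\}$) and then lowers the $s_i$'s down to $1$ using the single-step instances of Lemma \ref{lm:Q-kernel recursion finite}, e.g. $Q_{i,j}(i,j)=Q_{i,j}(i-1,j)-Q_{i-1,j}(i-1,j)$, together with simultaneous Pfaffian-preserving row and column operations, omitting the same bookkeeping you flag. The only refinement worth making is that the reduction is cleaner if performed one unit at a time in the proper order, so that each correction term is literally a row of the current matrix, rather than through your batch identity, whose summed correction terms are not single rows and so cannot be absorbed by one row/column operation as stated.
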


\begin{proof}[Proof of Theorem \ref{prop:joint dist pfaff initial}]
	The joint distribution may be calculated from the transition probability as
    \begin{equation}
        \label{eq:joint dist pf proof 1}
        \mathbb{P}\left(\bigcap_{k=1}^N \left\{X_t(k)\geq s_{k}\right\}\text{ and }\abs{X_t}=N\given X_0=y\right) =  \sum_{x_1=s_1}^\infty \sum_{x_2=s_2}^{x_1-1}\cdots \sum_{x_N=s_N}^{x_{N-1}-1}\mathbb{P}_t(y\to x),
    \end{equation}
    where the transition probabilities are given by the Pfaffian expressions of Theorem \ref{thm: transition prob pfaffian kernel} when $y=\emptyset$ and Theorem \ref{thm: transition prob pfaffian kernel initial cond} when $y$ is non-empty. 
    Note that all these sums are over non-empty ranges thanks to our assumption on the $s_i$'s. 
    Let us consider the particular case of the Sch\"{u}tz-type Pfaffian transition probability where $N+M$ is odd \eqref{eq:half-line TASEP transition prob initial cond odd} since the even case is simpler. We evaluate the sum over $x_{N}$ in \eqref{eq:joint dist pf proof 1} first, where we note that the $x_N$-dependence only appears in the first row and column of the underlying matrix \eqref{eq:half-line TASEP transition prob initial cond odd}. The first row can be written as
    \begin{equation}
        \label{eq:joint dist pf proof 2}
        \left[\begin{array}{cccccccc}
        0 & Q_{1,2}(x_{N},x_{N-1}) & \cdots & Q_{1,N}(x_{N},x_1) & p_1(x_N) & U_0(x_{N}-y_{M}) & \cdots & U_{1-M}(x_{N}-y_1)  
    \end{array}\right],
    \end{equation}
    where the sum over $x_N$ can be evaluated on the individual rows and columns of the Pfaffian simultaneously using the multi-linearity of the Pfaffian. And so, the sum over $x_N$ is evaluated readily using the recurrence relation of Lemma \ref{lm:Q-kernel recursion finite}. This leads the the first row becoming
    \begin{multline}
        \label{eq:joint dist pf proof 3}
        \left[\begin{array}{cccccccc}
        0 & Q_{2,2}(s_{N},x_{N-1}) & \cdots & Q_{2,N}(s_{N},x_1) & p_2(s_N) & U_1(s_{N}-y_{M}) & \cdots & U_{2-M}(s_{N}-y_1)  
    \end{array}\right] \\ 
    - \left[\begin{array}{cccccccc}
        0 & Q_{2,2}(x_{N-1},x_{N-1}) & \cdots & Q_{2,N}(x_{N-1},x_1) & p_2(x_{N-1}) & U_1(x_{N-1}-y_{M}) & \cdots & U_{2-M}(x_{N-1}-y_1)  
    \end{array}\right],
    \end{multline}
    while the first column is modified simultaneously in the analogous way. Observe that, beyond the first entry, the second term in \eqref{eq:joint dist pf proof 3} is the equal to the second-row of the subsequent matrix after the $x_N$-summation has taken place. So, we apply the simultaneous row and column operations, $r_1\mapsto r_1+r_2$ and $c_1\mapsto c_1+c_2$, which do not affect the value of the Pfaffian. These operations leave the first row of the resulting matrix as the first term in \eqref{eq:joint dist pf proof 3}.

    We will now perform the subsequent summations over $x_{N-1},\dots,x_2$ along the same lines. That is, for each $1<k<N$ we evaluate the sum over $x_{N-k+1}$ in \eqref{eq:joint dist pf proof 1} by applying the recurrences of Lemma \ref{lm:Q-kernel recursion finite} simultaneously to the $k$-th row and column of the matrix. This is then followed by the simultaneous row and column operations $r_{k}\mapsto r_{k}+r_{k+1}$ and $c_{k}\mapsto c_{k}+c_{k+1}$. 
    
    Finally, after performing this procedure for all in order for all $k$, we may perform the summation over $x_1$ in \eqref{eq:joint dist pf proof 1} using Lemma \ref{lm:Q-kernel recursion}. This process results in the joint distribution given as the Pfaffian over the matrices \eqref{eq:joint dist pfaff initial odd} or \eqref{eq:joint dist pfaff initial even} depending on whether $N+M$ is odd or even respectively.
\end{proof}

\begin{proof}[Proof of Corollary \ref{cor:boundary current schutz pf}]
   Setting $s_i=N+1-i$ for $1\leq i \leq N$ in Theorem \ref{prop:joint dist pfaff initial} we get immediately that $\mathbb{P}\left(\abs{X_t} = N \given X_0=y\right)$ is given by the claimed formulas with that choice of $s_i$'s.
   Now from Lemma \ref{lm:Q-kernel recursion finite} we obtain the recursions $Q_{i,j}(i,j)=Q_{i,j}(i-1,j)-Q_{i-1,j}(i-1,j)$, $Q_{i,j}(i,j)=Q_{i,j}(i,j-1)-Q_{i,j-1}(i,j-1)$, $U_{i}(s)=U_{i}(s-1)-U_{i-1}(s-1)$ and $p_{i}(s)=p_{i}(s-1)-p_{i-1}(s-1)$, so applying repeatedly row and column operations to the Pfaffians in the formulas, in a similar way as done in the previous proof, we can successively lower the values of the $s_i$'s to obtain the same formula evaluated at $s_1=\dotsm=s_N=1$; we omit the details.
\end{proof}

\section{Pfaffian point processes}
\label{sec:Pfaffian pt proc}

\subsection{Overview}
Here we provide an overview of the essential definitions and properties of Pfaffian point processes.
\begin{defn}[Pfaffian point process]
	Let $\mathcal{Z}$ be a set of points and let $\mu:2^\mathcal{Z}\to\mathbb{C}$ be a measure on the powerset $2^\mathcal{Z}$. This measure is called a \emph{Pfaffian point process} if there exists a matrix valued kernel $K:\mathcal{Z}\times \mathcal{Z}\to\mathbb{C}^{2\times 2}$ which is skew-symmetric, i.e. $K_{ij}(x,y)=-K_{ji}(y,x)$, given by
	\[K(x,y) = \left[\begin{array}{cc}
		K_{11}(x,y) & K_{12}(x,y) \\
		K_{21}(x,y) & K_{22}(x,y)
		\end{array}\right],\]
    such that correlation functions $\rho(z_1,\dotsc,z_m)\coloneqq\mu(\{A\in2^\mathcal{Z}\!:z_1,\dotsc,z_m\in A\})$ have the form  
	\begin{equation}
		\label{eq:PPP general kernel}
		\rho(z_1,\dots,z_m) = \pf\left[\begin{array}{cc}
			K_{11}(z_i,z_j) & K_{12}(z_i,z_j) \\
			K_{21}(z_i,z_j) & K_{22}(z_i,z_j)
		\end{array}\right]_{1\leq i,j\leq m}.
	\end{equation}
	for all finite subsets $\{z_1,\dots,z_m\}\subseteq\mathcal{Z}$.
    The kernel $K$ is known as the \emph{correlation kernel} of the process.
\end{defn}

If $\mu$ is a Pfaffian point process on $\mathcal{Z}$ and $\mathcal{B}\subseteq\mathcal{Z}$ then the gap probability corresponding to seeing no points in $\mathcal{B}$ is given as a Fredholm Pfaffian:
\begin{equation}\label{eq:pfaffian gap probab}
	\sum_{X\subseteq\mathcal{Z}\setminus\mathcal{B}} \mu(X) = \pf(J-K)_{\ell^2(\mathcal{B})}.
\end{equation}
This follows e.g. from Theorem 8.2 in \cite{rains_correlation_2000}.
The Fredholm Pfaffian of a $2\times2$ matrix kernel $K$ acting on a space $L^2(\mathcal{X},\lambda)$ can be defined through its series expansion
\[\pf(J+K)_{L^2(\mathcal{X})}=1+\sum_{n\geq1}\frac1{n!}\int_{\mathcal{X}^n}\dd\lambda^{\otimes n}(x_1,\dotsc,x_n)\,\pf\big[K(x_i,x_j)\big]_{i,j=1}^n;\]
see Section 8 of \cite{rains_correlation_2000} or Appendix B of \cite{ortmann_pfaffian_2017} for more details on and properties of Fredholm Pfaffians.
The kernel $J$ here is defined as
	\[J(x,y) = \left[\begin{array}{cc}
		0 & 1\\
		-1 & 0
	\end{array}\right]\delta_{x,y},\]
for $x,y\in\mathcal{Z}$. 

\begin{defn}[Pfaffian $L$-ensemble]
	Let $\mathcal{Z}$ be a set of points, and let $L:\mathcal{Z}\times\mathcal{Z}\to \mathbb{C}^{2\times2}$ be a skew-symmetric $2\times2$-matrix valued function. A measure $\mu$ on $2^\mathcal{Z}$ is a \emph{Pfaffian $L$-ensemble} on $\mathbb{Z}$ if its correlation function $\rho$ is given by
	\begin{equation}
		\label{eq:pf L-ensemble}
		\rho(X) = \frac{\Pf L_X}{\Pf(J + L)},\quad X \subset \mathcal{Z}.
	\end{equation}
	Here $L_X$ denotes the $(2|X|)\times(2|X|)$ matrix $[L(x,x')]_{x,x'\in X}$, while $\Pf(J + L)$ denotes the Fredholm Pfaffian of $L$ on $\ell^2(\mathcal{Z})$ (which coincides with the Pfaffian of the matrix $J + L$ when $\mathcal{Z}$ is finite).
\end{defn}

\begin{defn}[Conditional Pfaffian $L$-ensemble]
	\label{defn:cond pf L-ensemble}
	Let $\mathcal{Z}$ be a set of points with subset $\mathcal{D}\subseteq\mathcal{Z}$ and complement $\mathcal{D}^\mathrm{C} = \mathcal{Z}\setminus\mathcal{D}$, and let $L:\mathcal{Z}\times\mathcal{Z}\to \mathbb{C}^{2\times2}$ be a skew-symmetric $2\times2$-matrix valued function. A measure $\mu$ on the powerset of the subset $2^\mathcal{D}$ is a \emph{conditional Pfaffian $L$-ensemble} on $\mathcal{D}$ within $\mathcal{Z}$ if its correlation function $\rho$ is given by
	\begin{equation}
		\label{eq:conditonal pf L-ensemble}
		\rho(Y) = \frac{\pf L_{Y\cup \mathcal{D}^\mathrm{C}}}{\pf\left(J_\mathcal{D}+L\right)},
	\end{equation}
	where $J_\mathcal{D}$ denotes the kernel $J$ restricted to $\mathcal{D}$, i.e.
	\[J_\mathcal{D}(x,y)=J(x,y)\mathbbm{1}_{x,y\in\mathcal{D}}.\]
\end{defn}

Note that the conditional Pfaffian $L$-ensemble \eqref{eq:conditonal pf L-ensemble} reduces to the regular Pfaffian $L$-ensemble \eqref{eq:pf L-ensemble} when the chosen subset is the whole set of points $\mathcal{D}=\mathcal{Z}$.

The following proposition is due to Borodin and Rains, Proposition 1.7 in \cite{borodin_eynardmehta_2005}.

\begin{prop}
	\label{prop:cond pf L is PPP}
        Let $\mathcal{Z}$ be a finite set of points with subset $\mathcal{D}\subseteq\mathcal{Z}$ and let $\mu$ be a conditional Pfaffian $L$-ensemble on $\mathcal{D}$ within $\mathbb{Z}$ as in Definition \ref{defn:cond pf L-ensemble}. Then $\mu$ is a Pfaffian point process on $\mathcal{D}$ whose correlation kernel is given by
	\begin{equation}
		K = J_\mathcal{D} + \left(J_\mathcal{D}+L\right)^{-1}\Big|_{\mathcal{D}\times\mathcal{D}},
	\end{equation}
	where the second term denotes the inverse of the matrix $J_\mathcal{D}+L$ indexed by $\mathcal{Z}\times\mathcal{Z}$ restricted to $\mathcal{D}\times\mathcal{D}$.
\end{prop}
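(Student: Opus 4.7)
The strategy has two stages: first, reduce the conditional Pfaffian $L$-ensemble on $\mathcal{D}$ within $\mathcal{Z}$ to an unconditional Pfaffian $L$-ensemble on $\mathcal{D}$ by integrating out $\mathcal{D}^\mathrm{C}$ via a Schur complement for skew-symmetric matrices; then invoke the standard correlation-kernel identity for unconditional Pfaffian $L$-ensembles and check that the resulting kernel matches the one announced in the proposition.

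For the first stage, I would apply the block Pfaffian formula
\[
\pf\begin{pmatrix} A & B \\ -B^T & D \end{pmatrix} = \pf(D)\,\pf\!\bigl(A + B D^{-1} B^T\bigr)
\]
to both $\pf L_{Y\cup\mathcal{D}^\mathrm{C}}$ (split by $Y$ and $\mathcal{D}^\mathrm{C}$) and $\pf(J_\mathcal{D}+L)$ (split by $\mathcal{D}$ and $\mathcal{D}^\mathrm{C}$), assuming $L_{\mathcal{D}^\mathrm{C}}$ is invertible and otherwise appealing to a standard regularization. In both cases the lower-right block is $L_{\mathcal{D}^\mathrm{C}}$, and the Schur complements evaluate to $\hat L_Y$ and $J+\hat L$ respectively, where $\hat L := L_\mathcal{D} + L_{\mathcal{D},\mathcal{D}^\mathrm{C}}\,L_{\mathcal{D}^\mathrm{C}}^{-1}\,L_{\mathcal{D},\mathcal{D}^\mathrm{C}}^T$. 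The common factor $\pf(L_{\mathcal{D}^\mathrm{C}})$ cancels, exhibiting $\mu$ as an unconditional Pfaffian $L$-ensemble on $\mathcal{D}$ with matrix $\hat L$, i.e.\ $\mu(Y)=\pf \hat L_Y/\pf(J+\hat L)$. Moreover, the block matrix inversion formula identifies $(J_\mathcal{D}+L)^{-1}|_{\mathcal{D}\times\mathcal{D}} = (J+\hat L)^{-1}$, so the kernel announced in the proposition reduces to $K=J+(J+\hat L)^{-1}$ on $\mathcal{D}$.

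For the second stage, it suffices to prove the standard correlation-kernel formula for an unconditional Pfaffian $L$-ensemble: the inclusion probability equals $\pf[K(y,y')]_{y,y'\in Y}$. Using the elementary Pfaffian sum identity $\sum_{A\supseteq Y}\pf \hat L_A = \pf(\hat L+J_{\mathcal{D}\setminus Y})$ (from the perfect-matching expansion of the Pfaffian, in which points of $\mathcal{D}\setminus Y$ are paired either by a $J$-edge, contributing $1$, or by an $\hat L$-edge), this claim rewrites, with $A:=J+\hat L$, as
\[
\frac{\pf(A - J_Y)}{\pf A} = \pf\bigl[J_Y + (A^{-1})_Y\bigr].
\]
I would establish this identity by applying the block Pfaffian formula once more, splitting $A$ and $A - J_Y$ by $Y$ and $\mathcal{D}\setminus Y$; this reveals $(A^{-1})_Y^{-1} = A_Y + B\,A_{\mathcal{D}\setminus Y}^{-1}\,B^T$ and reduces the claim to a purely local skew-symmetric identity on $Y\times Y$. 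The main obstacle is exactly this final local identity: the additive shift by $J_Y$ in the kernel (rather than simply $(A^{-1})_Y$) is the precise correction produced by the $-J_Y$ perturbation in $\hat L_Y = A_Y - J_Y$, and combining them consistently — along with the sign bookkeeping intrinsic to Pfaffian manipulations — is the delicate point. The remaining pieces, namely the two Schur complement reductions, the identification of $K$ via block inversion, and the evaluation of $\sum_{A\supseteq Y}\pf \hat L_A$, are relatively routine once the correct block splittings are chosen.
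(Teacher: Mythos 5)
The paper offers no proof of this proposition to compare against: it is quoted from Borodin and Rains (Proposition 1.7 of \cite{borodin_eynardmehta_2005}). Your proposal essentially reconstructs that argument, and its structure is sound. Stage one is correct: since each point of $\mathcal{Z}$ carries a $2\times2$ block, reordering rows and columns point-by-point is an even permutation of indices, so the block splittings cost no sign; applying $\pf\left[\begin{smallmatrix} A & B\\ -B^T & D\end{smallmatrix}\right]=\pf(D)\,\pf(A+BD^{-1}B^T)$ to $L_{Y\cup\mathcal{D}^{\mathrm{c}}}$ and to $J_\mathcal{D}+L$ gives $\mu(Y)=\pf \hat L_Y/\pf(J+\hat L)$ with $\hat L=L_\mathcal{D}+L_{\mathcal{D},\mathcal{D}^{\mathrm{c}}}L_{\mathcal{D}^{\mathrm{c}}}^{-1}L_{\mathcal{D},\mathcal{D}^{\mathrm{c}}}^T$, and the same Schur complement (this is exactly Lemma \ref{lm:2x2inverse}) identifies $(J_\mathcal{D}+L)^{-1}\big|_{\mathcal{D}\times\mathcal{D}}=(J+\hat L)^{-1}$, so the announced kernel is indeed $J+(J+\hat L)^{-1}$. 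The invertibility assumptions are harmless because, for fixed finite $\mathcal{Z}$, both sides of the proposition are rational in the entries of $L$, so one may perturb $L_{\mathcal{D}^{\mathrm{c}}}$ (and later $A_{\mathcal{D}\setminus Y}$) and pass to the limit. Your summation identity $\sum_{A\supseteq Y}\pf\hat L_A=\pf(\hat L+J_{\mathcal{D}\setminus Y})$ is also correct, for exactly the matching-expansion reason you give.

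The only genuine gap is the final local identity, which you flag as the main obstacle but do not prove: with $A=J+\hat L$, $S$ the Schur complement of $A$ with respect to $\mathcal{D}\setminus Y$ (so $(A^{-1})\big|_{Y\times Y}=S^{-1}$ and the numerator reduces to $\pf(S-J_Y)$), one needs $\pf(S-J)/\pf(S)=\pf(J+S^{-1})$ for invertible skew-symmetric $S$. This is true and closes in a few lines, so your plan does go through. Since $S(J+S^{-1})S^T=SJS^T+S^T=SJS^T-S$,
\begin{equation}
\pf(J+S^{-1})=\frac{\pf\big(S(J+S^{-1})S^T\big)}{\det(S)}=\frac{\pf(SJS^T-S)}{\pf(S)^2},
\end{equation}
and $\det(SJS^T-S)=\det(S)\det(JS^T-I)=\det(S)\det\big(J(S^T+J)\big)=\det(S)\det(S-J)$, using $\det J=1$ and $(S^T+J)=(S-J)^T$. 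Hence $\pf(SJS^T-S)=\pm\pf(S)\pf(S-J)$; the sign is $+$, as one checks on $S=cJ$, where both $\pf(SJS^T-S)$ and $\pf(S)\pf(S-J)$ equal $(c^2-c)^{|Y|}$, and the sign is constant because invertible skew-symmetric matrices form a connected set on which both sides are rational. Substituting back gives $\pf(J+S^{-1})=\pf(S-J)/\pf(S)$, completing stage two. (Alternatively, once stage one has produced the unconditional ensemble with matrix $\hat L$, you could simply invoke the standard $L$-to-$K$ formula for Pfaffian $L$-ensembles from \cite{borodin_eynardmehta_2005} or \cite{rains_correlation_2000}.) With this lemma supplied, your argument is a complete and correct proof of the proposition.
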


\subsection{Half-space TASEP measure}
We will now proceed to develop the Sch\"{u}tz-type Pfaffian formulas for half-space TASEP from Section \ref{subsec:schutz pf} into a Pfaffian point process. Theorems \ref{thm: transition prob pfaffian kernel} and \ref{thm: transition prob pfaffian kernel initial cond} give the transition probability for both odd and even values of $N+M$. However, for simplicity, we will restrict our analysis to the even case henceforth.
\begin{figure}
    \centering
    \[\begin{tikzpicture}[scale=1.2]
        \draw[line width=1.2pt] (0,0) -- (3,3);
        \draw[line width=1.2pt] (-1,1) -- (1,3);
        \draw[line width=1.2pt] (-2,2) -- (-1,3);
        \draw[line width=1.2pt] (0,0) -- (-3,3);
        \draw[line width=1.2pt] (1,1) -- (-1,3);
        \draw[line width=1.2pt] (2,2) -- (1,3);
        \draw[black,fill=black] (0,0) circle (0.1);
        \node[below left] at (0,0) {$x_1 = z_1^1$};
        \draw[black,fill=black] (-1,1) circle (0.1);
        \node[below left] at (-1,1) {$x_2 = z_1^2$};
        \draw[black,fill=black] (-2,2) circle (0.1);
        \node[below left] at (-2,2) {$x_3 = z_1^3$};
        \draw[black,fill=black] (-3,3) circle (0.1);
        \node[below left] at (-3,3) {$x_4 = z_1^4$};
        \draw[black,fill=black] (1,1) circle (0.1);
        \node[right] at (1,1) {$z_2^2$};
        \draw[black,fill=black] (0,2) circle (0.1);
        \node[right] at (0,2) {$z_2^3$};
        \draw[black,fill=black] (-1,3) circle (0.1);
        \node[right] at (-1,3) {$z_2^4$};
        \draw[black,fill=black] (2,2) circle (0.1);
        \node[right] at (2,2) {$z_3^3$};
        \draw[black,fill=black] (1,3) circle (0.1);
        \node[right] at (1,3) {$z_3^4$};
        \draw[black,fill=black] (3,3) circle (0.1);
        \node[right] at (3,3) {$z_4^4$};
        \node[below,rotate=-45] at (-0.5,0.5) {$<$};
        \node[below,rotate=-45] at (-1.5,1.5) {$<$};
        \node[below,rotate=-45] at (-2.5,2.5) {$<$};
        \node[above,rotate=-45] at (0.5,1.5) {$<$};
        \node[above,rotate=-45] at (-0.5,2.5) {$<$};
        \node[above,rotate=-45] at (1.5,2.5) {$<$};
        \node[below,rotate=45] at (0.5,0.5) {$\leq$};
        \node[below,rotate=45] at (1.5,1.5) {$\leq$};
        \node[below,rotate=45] at (2.5,2.5) {$\leq$};
        \node[below,rotate=45] at (-0.5,1.5) {$\leq$};
        \node[below,rotate=45] at (0.5,2.5) {$\leq$};
        \node[below,rotate=45] at (-1.5,2.5) {$\leq$};
    \end{tikzpicture}\]
    \caption{The Gelfand--Tsetlin pattern $\mathsf{GT}_4(x)$ with left-edges fixed by $x=(x_1,x_2,x_3,x_4)$.}
    \label{fig:GT-pattern}
\end{figure}
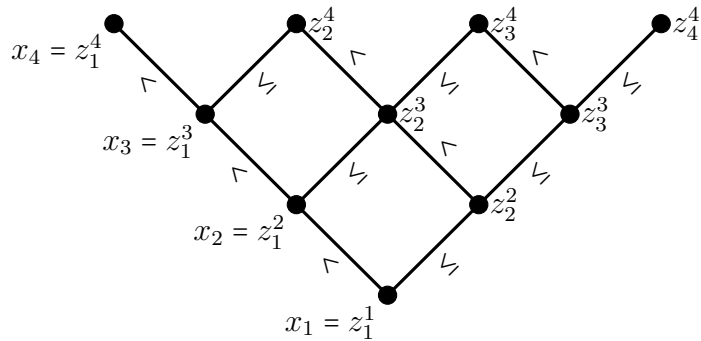

Let us denote the set of all \emph{integer valued triangular arrays} of size $N$ as
\begin{equation}
	\label{eq:triangular arrays lambda}
	\Lambda_N := \left\{\bm{z}=\left(z_i^k\right)\in \mathbb{Z}^{N(N+1)/2}\right\},
\end{equation}
where an individual entry on the triangular array is denoted by the coordinate $z_i^k\in\mathbb{Z}$ for $1\leq i\leq k\leq N$.
We must also define a more restricted subset of $\Lambda_N$: the \emph{Gelfand--Tsetlin patterns} (sometimes spelled Gelfand--Zetlin or Gelfand--Cetlin) which we define as
\begin{equation}
	\mathsf{GT}_N := \left\{\bm{z} = \left(z_i^k\right)\in \mathbb{Z}^{N(N+1)/2}: z_i^{k+1} < z_i^k \leq z_{i+1}^{k+1},\,1\leq i \leq k < N\right\}.
\end{equation}
We also let, for $x=(x_1,\dots,x_N)$ with $x_1>x_2>\dotsm>x_N$,
\begin{equation}
	\label{eq:triangular arrays and GT lambda x}
	\Lambda_N(x) := \{\bm{z}\in\Lambda_N\!:z_1^k=x_k\},\qquad \mathsf{GT}_N(x) := \{\bm{z}\in\mathsf{GT}_N\!:z_1^k=x_k\},
\end{equation}
which are the spaces of triangular arrays and Gelfand--Tsetlin patterns whose left-edges are fixed by the coordinates of $x$. An example of the Gelfand--Tsetlin pattern $\mathsf{GT}_N(x)$ appears in Figure \ref{fig:GT-pattern}.

To simplify the exposition, and in particular the presentation of the proofs, we treat separately the cases of empty and general initial conditions throughout most of the remainder of the paper.
\begin{prop}[Triangular array marginal for empty initial conditions]
	\label{prop:GT pattern empty}
	Let $N$ be an even integer and let $x=(x_1,\dots,x_N)$ be the ordered coordinates of TASEP particles. Then the half-line open TASEP transition probability from an empty state to $x$ can be written as a sum over Pfaffian kernels indexed by Gelfand--Tsetlin patterns with endpoints fixed by $x=(x_1,\dots,x_N)$:
	\begin{equation}
		\label{tasepmeasure}
		\mathbb{P}_t(\emptyset\to x) = (-1)^{\binom{N}{2}}\mathrm{e}^{-\alpha t} \sum_{z\in\mathsf{GT}_N(x)} \Pf\left[\Psi(z_i^N,z_j^N)\right]_{1\leq i,j\leq N},
	\end{equation}
	where we have defined
	\begin{equation}\label{eq:Psi-kernel}
		\Psi(x,y) := Q_{1,1}(x,y)
		= \alpha^2\oint_\contour \frac{\dd u}{2\pi\ii} \oint_\contour \frac{\dd w}{2\pi\ii} \frac{u-w}{1-u-w} \frac{w^{1-x}\mathrm{e}^{t(w-1)}}{(w-\alpha)(w-1)} \frac{u^{1-y}\mathrm{e}^{t(u-1)}}{(u-\alpha)(u-1)},
	\end{equation}
    where contour $\beta$, as in Definition \ref{defn:TASEP countours}, surrounds poles at $u,w=1,0,\alpha,1-\alpha$ and omits all other singularities of the integrand. The kernel $\Psi(x,y)$ is skew-symmetric under the interchange of $x$ and $y$ when $x,y\geq1$.
\end{prop}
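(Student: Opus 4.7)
The plan is to derive the Gelfand--Tsetlin pattern expansion directly from the Pfaffian transition probability in Theorem~\ref{thm: transition prob pfaffian kernel}. The key idea is to iteratively apply the recursion of Lemma~\ref{lm:Q-kernel recursion} to reduce $Q_{i,j}$ to the base kernel $\Psi=Q_{1,1}$, with the auxiliary summation variables eventually being interpreted as the entries of a GT pattern.

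First I would iterate Lemma~\ref{lm:Q-kernel recursion} on both arguments of $Q_{i,j}$. A straightforward induction gives
\begin{equation*}
Q_{i,j}(x,y) = \sum_{a,b\geq 1} c_i(x,a)\,\Psi(a,b)\,c_j(y,b),
\end{equation*}
where $c_i(x,a)$ is the number of weakly decreasing chains $a=a_1\geq a_2\geq\dots\geq a_{i-1}\geq x$ (with the convention $c_1(x,a)=\delta_{x,a}$ for the empty chain). Setting $C_{i,a}\coloneqq c_i(x_{N-i+1},a)$, we obtain the $N\times N$ matrix factorization $\mathsf{Q} = C\Psi C^\top$, where $C$ is viewed as an $N\times\infty$ matrix and $\Psi$ as an infinite-dimensional skew-symmetric kernel indexed by $\mathbb{Z}_{>0}$.

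Next I would invoke the Pfaffian Cauchy--Binet identity to expand
\begin{equation*}
\Pf\bigl[C\Psi C^\top\bigr] = \sum_{1\leq a_1<\dots<a_N}\det\bigl[c_i(x_{N-i+1},a_j)\bigr]_{i,j=1}^N\,\Pf\bigl[\Psi(a_i,a_j)\bigr]_{i,j=1}^N,
\end{equation*}
so that the identity reduces to the combinatorial statement
\begin{equation*}
\det\bigl[c_i(x_{N-i+1},a_j)\bigr]_{i,j=1}^N = |\{z\in\mathsf{GT}_N(x)\,:\,z^N_j=a_j\text{ for all }j\}|.
\end{equation*}
This is a Lindström--Gessel--Viennot-type identity: the chains counted by $c_i$ correspond to the ``northeast'' diagonals of a GT pattern, namely $D_i=(z^{N-i+1}_1,z^{N-i+2}_2,\dots,z^N_i)$ starting at $z^{N-i+1}_1=x_{N-i+1}$ and ending at $z^N_i=a_i$. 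The weak monotonicity within each chain matches the weak interlacing $z^k_j\leq z^{k+1}_{j+1}$, while the strict interlacing $z^{k+1}_j<z^k_j$ becomes the non-crossing condition that cancels all non-identity permutation contributions from the determinant. Substituting this identification back into the Pfaffian expansion and combining with Theorem~\ref{thm: transition prob pfaffian kernel} gives the claimed sum over $\mathsf{GT}_N(x)$, while the skew-symmetry of $\Psi$ on $\mathbb{Z}_{>0}\times\mathbb{Z}_{>0}$ is immediate from \eqref{eq:Q-kernel interchange symmetry} at $i=j=1$.

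The main obstacle is the LGV identification. The chains $D_i$ have different lengths and their natural sources are staggered rather than aligned on a common line, so the standard LGV setup has to be adapted by shifting each chain to a common starting time (or by inducting directly on the determinantal expansion). A delicate point is the $i=1$ row of the determinant, where $c_1(\cdot,\cdot)$ is a Kronecker delta rather than a genuine chain count; this automatically forces $a_1=x_N$ and effectively reduces the identity to an $(N-1)\times(N-1)$ LGV statement of standard type.
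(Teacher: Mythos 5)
Your route is correct, but it is genuinely different from the one in the paper. The paper keeps everything inside the Pfaffian: it applies the recursion of Lemma~\ref{lm:Q-kernel recursion} to one row and column at a time using multilinearity, which introduces the interior entries $z_i^k$ subject only to the weak constraints $z_{i+1}^{k+1}\geq z_i^k$, and then invokes the antisymmetrization statement Lemma~\ref{lm:GT sum restriction} so that the strict interlacing inequalities of $\mathsf{GT}_N(x)$ come for free from the antisymmetry of $\Pf\left[\Psi(z_i^N,z_j^N)\right]$ under permutations of $z_1^N,\dots,z_N^N$; no combinatorial counting is ever needed. You instead sum the recursion up front to get the factorization $\mathsf{Q}=C\Psi C^{\top}$, apply the Pfaffian Cauchy--Binet (minor summation) identity, and then identify $\det\bigl[c_i(x_{N-i+1},a_j)\bigr]$ as the number of Gelfand--Tsetlin patterns with left edge $x$ and bottom row $a$. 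Both steps are sound: the Cauchy--Binet step is, after symmetrizing the ordered sum, exactly the counting-measure case of de Bruijn's formula, Lemma~\ref{lm:de bruijn formula}, already in the appendix (absolute convergence follows by taking the contour radii large, as in the proof of Lemma~\ref{lm:Q-kernel recursion}), and the determinant-counts-patterns identity is true --- your diagonal decomposition is the classical bijection between GT patterns and nonintersecting weakly increasing chains, and the delta row indeed forces $a_1=x_N$ because any other placement of the delta produces a vanishing cofactor. The trade-off is that the burden you flag (the staggered-source LGV argument) is precisely what the paper's antisymmetry trick circumvents; if you want to avoid a bespoke LGV setup, you can obtain the same counting identity by summing the determinantal indicator \eqref{eq:GT det identity def} over the interior entries and composing the $\phi_k$-determinants via the Andr\'eief identity. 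In exchange, your argument makes the marginal structure of the measure on the bottom row $(z_1^N,\dots,z_N^N)$ explicit and isolates a self-contained combinatorial statement, which is a perfectly reasonable alternative; the skew-symmetry of $\Psi$ on $\mathbb{N}\times\mathbb{N}$ follows from \eqref{eq:Q-kernel interchange symmetry} exactly as you say.
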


The proof of this proposition will use the following result, which appears in \cite{borodin_fluctuation_2007}.

\begin{lm}
	\label{lm:GT sum restriction}
	Let $f$ be an anti-symmetric function in $N$ variables and let $x=(x_1,\dots,x_N)$ where $x_1> \cdots > x_N$ are integers. We have the following identity of sums over triangular arrays:
	\begin{equation}
		\sum_{\bm{z}\in\Lambda_N(x)\atop z_{i+1}^{k+1}\geq z_i^k} f\left(z_1^N,\dots,z_N^N\right) = \sum_{\bm{z} \in \mathsf{GT}_N (x)} f\left(z_1^N,\dots,z_N^N\right).
	\end{equation}
\end{lm}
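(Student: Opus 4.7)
Since $\mathsf{GT}_N(x)$ is contained in the set of weak triangular arrays $\{\bm{z}\in\Lambda_N(x) : z_{i+1}^{k+1}\geq z_i^k\}$, the identity amounts to showing
\[
\sum_{\bm{z} \text{ weak but not GT}} f(z_1^N,\dots,z_N^N) = 0.
\]
My plan is to build a fixed-point-free, sign-reversing involution $\iota$ on the set of ``bad'' arrays (weak but not GT) whose effect on the bottom row is a transposition of two entries; antisymmetry of $f$ then forces the signed sum to vanish term by term.

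To set up $\iota$, for each bad array $\bm{z}$ I would identify a canonical ``first violation'' by picking the lexicographically smallest pair $(k_0,i_0)$ (ordered first by $k_0$, then by $i_0$) for which $z_{i_0}^{k_0+1}\geq z_{i_0}^{k_0}$. Since $z_1^k = x_k$ with $x_1 > \cdots > x_N$, the first column cannot host a violation, so necessarily $i_0\geq 2$. By the minimality of $(k_0,i_0)$, the sub-array consisting of entries $z_i^j$ with $j\leq k_0$, together with $z_i^{k_0+1}$ for $i<i_0$, already satisfies the full GT inequalities. Combining the weak relation $z_{i_0}^{k_0+1}\geq z_{i_0-1}^{k_0}$ at $(i_0-1,k_0)$ with the strict relation $z_{i_0-1}^{k_0+1} < z_{i_0-1}^{k_0}$ yields the key chain
\[
z_{i_0-1}^{k_0+1} \,<\, z_{i_0-1}^{k_0} \,\leq\, z_{i_0}^{k_0+1},
\]
which identifies two entries in row $k_0+1$ that can be rearranged locally without breaking the weak interlacing above them. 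Starting from this seed I would propagate the violation diagonally downward via a sequence of local adjustments along the staircase $(i_0,k_0+1),(i_0+1,k_0+2),\ldots$, at each step exchanging two neighbouring entries in a manner that preserves the weak relation $z_{i+1}^{k+1}\geq z_i^k$ and keeps the canonical ``first violation'' site at $(k_0,i_0)$. The cumulative effect at the bottom row is the transposition of two entries, which is the output $\iota(\bm{z})$.

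The main obstacle will be verifying that the diagonal propagation is well-defined at every intermediate step (each local swap yields another weakly-interlacing array whose minimal violation still sits at $(k_0,i_0)$) and that the composition $\iota\circ\iota$ is the identity. This is a technical but standard case analysis, essentially a jeu-de-taquin-style sliding argument analogous to the one carried out for full-line TASEP in \cite{borodin_fluctuation_2007}; the half-space boundary condition $z_1^k = x_k$ actually simplifies matters by excluding violations in the first column, which rules out several otherwise problematic edge cases near the left boundary of the triangular array.
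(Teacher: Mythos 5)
Your proposal is a plan rather than a proof, and the one concrete move it does specify appears to fail. The ``seed'' you identify is the chain $z_{i_0-1}^{k_0+1} < z_{i_0-1}^{k_0} \leq z_{i_0}^{k_0+1}$, but note first that this chain never uses the actual violation $z_{i_0}^{k_0+1}\geq z_{i_0}^{k_0}$ (it holds for plenty of GT patterns too), and second that exchanging the two row-$(k_0{+}1)$ entries it singles out immediately destroys the weak interlacing you claim to preserve: after the swap the entry at position $(i_0,k_0{+}1)$ equals the old $z_{i_0-1}^{k_0+1}$, which is strictly less than $z_{i_0-1}^{k_0}$, violating $z_{i_0}^{k_0+1}\geq z_{i_0-1}^{k_0}$. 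Beyond the seed, the entire ``diagonal propagation'' is left undefined, and its two key properties --- that each intermediate array stays weakly interlacing with its minimal violation still at $(k_0,i_0)$, and that the composite map is an involution --- are exactly where such constructions typically break: a local swap can create or destroy violations at lexicographically comparable positions, moving the canonical violation and ruining involutivity. Your appeal to \cite{borodin_fluctuation_2007} does not close this gap, because that paper does not carry out a jeu-de-taquin sliding involution for this statement; so the ``standard case analysis'' you defer to does not exist in the cited source, and the present paper itself only cites the result rather than proving it.

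For comparison, the standard (and much softer) argument is an induction on $N$ that never needs a global involution. Writing the weak sum by summing the bottom row first, the partial sum $g(c_1,\dots,c_{N-1})=\sum_{z^N:\,z_1^N=x_N,\;z_{i+1}^N\geq c_i} f(z^N)$ is itself antisymmetric in $(c_1,\dots,c_{N-1})$: swapping $c_j$ and $c_{j+1}$ only exchanges the (independent) lower bounds on $z_{j+1}^N$ and $z_{j+2}^N$, so relabelling these two summation variables and using the antisymmetry of $f$ flips the sign. The induction hypothesis then replaces the sum over weak arrays of the first $N-1$ rows by a sum over GT patterns, whose last row $c$ is strictly increasing; and for strictly increasing $c$ the missing strict inequalities $z_i^N<c_i$ can be inserted for free, because the set of bottom rows violating them decomposes into regions symmetric under exchanging two of the variables $z_2^N,\dots,z_N^N$, on which the antisymmetric $f$ sums to zero. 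If you want to keep a bijective flavour, it is this last one-row cancellation (with a sorted upper row) that admits a clean pairing --- not the general ``weak but not GT'' set you start from.
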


\begin{proof}[Proof of Proposition \ref{prop:GT pattern empty}]
	The result will follow from an expansion of the Pfaffian from \eqref{eq:half-line TASEP transition prob} where we identify the coordinates $(x_1,\dots,x_N) = \left(z_1^1,\dots,z_1^N\right)$:
	\begin{multline}
		\label{eq:GT expansion proof 1}
		\pf \left[Q_{i,j}(z_1^{N-i+1},z_1^{N-j+1})\right] \\ = \pf \left(
		\begin{array}{ccccc}
			0 & Q_{1,2}\left(z_1^N,z_1^{N-1}\right) & \cdots & Q_{1,N-1}\left(z_1^N,z_1^{2}\right) & Q_{1,N}\left(z_1^N,z_1^{1}\right) \\
			Q_{2,1} (z_1^{N-1},z_1^{N}) & 0 & \cdots & Q_{2,N-1}\left(z_1^{N-1},z_1^{2}\right) & Q_{2,N}\left(z_1^{N-1},z_1^{1}\right) \\
			\vdots & & \ddots & & \vdots \\
			Q_{N-1,1}\left(z_1^{2},z_1^N\right) & Q_{N-1,2} \left(z_1^{2},z_1^{N-1}\right) & \cdots & 0 & Q_{N-1,N} \left(z_1^2,z_1^1\right) \\
			Q_{N,1} \left(z_1^1,z_1^N\right) & Q_{N,2} \left(z_1^1,z_1^{N-1}\right) & \cdots & Q_{N,N-1} \left(z_1^1,z_1^2\right) & 0
		\end{array}
		\right).
	\end{multline}
	Recall the property whereby Pfaffians are linear in expanding along rows and columns simultaneously. We may use the recurrence relations of Lemma \ref{lm:Q-kernel recursion} in the last row and column of \eqref{eq:GT expansion proof 1}. Expansion along this row and column yields
	\begin{multline*}
		\pf \left[Q_{i,j}(z_1^{N-i+1},z_1^{N-j+1})\right] \\ = \sum_{z_2^2\geq z_1^1} \pf \left(
		\begin{array}{ccccc}
			0 & Q_{1,2}\left(z_1^N,z_1^{N-1}\right) & \cdots & Q_{1,N-1}\left(z_1^N,z_1^{2}\right) & Q_{1,N-1}\left(z_1^N,z_2^{2}\right) \\
			Q_{2,1} (z_1^{N-1},z_1^{N}) & 0 & \cdots & Q_{2,N-1}\left(z_1^{N-1},z_1^{2}\right) & Q_{2,N-1}\left(z_1^{N-1},z_2^{2}\right) \\
			\vdots & & \ddots & & \vdots \\
			Q_{N-1,1}\left(z_1^{2},z_1^N\right) & Q_{N-1,2} \left(z_1^{2},z_1^{N-1}\right) & \cdots & 0 & Q_{N-1,N-1} \left(z_1^2,z_2^2\right) \\
			Q_{N-1,1} \left(z_2^2,z_1^N\right) & Q_{N-1,2} \left(z_2^2,z_1^{N-1}\right) & \cdots & Q_{N-1,N-1} \left(z_2^2,z_1^2\right) & 0
		\end{array}
		\right).
	\end{multline*}
	This step can be repeated so that it occurs $N-1$ times in total:
	\begin{multline*}
		\pf \left[Q_{i,j}(z_1^{N-i+1},z_1^{N-j+1})\right] \\ = \sum_{z_N^N\geq \cdots \geq z_2^2\geq z_1^1} \pf \left(
		\begin{array}{ccccc}
			0 & Q_{1,2}\left(z_1^N,z_1^{N-1}\right) & \cdots & Q_{1,N-1}\left(z_1^N,z_1^{2}\right) & Q_{1,1}\left(z_1^N,z_N^{N}\right) \\
			Q_{2,1} (z_1^{N-1},z_1^{N}) & 0 & \cdots & Q_{2,N-1}\left(z_1^{N-1},z_1^{2}\right) & Q_{2,1}\left(z_1^{N-1},z_N^{N}\right) \\
			\vdots & & \ddots & & \vdots \\
			Q_{N-1,1}\left(z_1^{2},z_1^N\right) & Q_{N-1,2} \left(z_1^{2},z_1^{N-1}\right) & \cdots & 0 & Q_{N-1,1} \left(z_1^2,z_N^N\right) \\
			Q_{1,1} \left(z_N^N,z_1^N\right) & Q_{1,2} \left(z_N^N,z_1^{N-1}\right) & \cdots & Q_{1,N-1} \left(z_N^N,z_1^2\right) & 0
		\end{array}
		\right).
	\end{multline*}
	A similar procedure can then be applied to each row and column so that the $j$-th row has the recurrence relations of Lemma \ref{lm:Q-kernel recursion} applied $j-1$ times. This yields 
	\begin{multline*}
		\pf \left[Q_{i,j}(z_1^{N-i+1},z_1^{N-j+1})\right] = \sum_{z_N^N\geq \cdots \geq z_2^2\geq z_1^1} \sum_{z_{N-1}^N\geq \cdots \geq z_2^3\geq z_1^2} \cdots \sum_{z_2^N\geq z_1^{N-1}} \\ \times \pf \left(
		\begin{array}{ccccc}
			0 & Q_{1,1}\left(z_1^N,z_2^{N}\right) & \cdots & Q_{1,1}\left(z_1^N,z_{N-1}^{N}\right) & Q_{1,1}\left(z_1^N,z_N^{N}\right) \\
			Q_{1,1} (z_2^{N},z_1^{N}) & 0 & \cdots & Q_{1,1}\left(z_2^{N},z_{N-1}^{N}\right) & Q_{1,1}\left(z_2^{N},z_n^{N}\right) \\
			\vdots & & \ddots & & \vdots \\
			Q_{1,1}\left(z_{N-1}^{N},z_1^N\right) & Q_{1,1} \left(z_{N-1}^{N},z_2^{N}\right) & \cdots & 0 & Q_{1,1} \left(z_{N-1}^N,z_N^N\right) \\
			Q_{1,1} \left(z_N^N,z_1^N\right) & Q_{1,1} \left(z_N^N,z_2^{N}\right) & \cdots & Q_{1,1} \left(z_N^N,z_{N-1}^N\right) & 0
		\end{array}
		\right),
	\end{multline*}
	which can be identified as a sum over integer valued triangular arrays with endpoints fixed by the coordinates $x_1 > \cdots > x_N$:
	\begin{equation}
		\label{eq:GT expansion proof 2}
		\pf \left[Q_{i,j}(z_1^{N-i+1},z_1^{N-j+1})\right]_{1\leq i.,j\leq N} = \sum_{\bm{z}\in\Lambda_N\atop z_{i+1}^{k+1}\geq z_i^k, z_1^j = x_j} \pf \left[\Psi(z_i^N,z_j^N)\right]_{1\leq i,j\leq N}.
	\end{equation}
	Recall the property whereby a Pfaffian is anti-symmetric under the simultaneous interchange of two rows and columns. Therefore the function defined by 
	\[\pf \left[\Psi(z_i^N,z_j^N)\right]_{1\leq i,j\leq N} \]
	is an anti-symmetric function under the permutation of the alphabet $z_1^N,\dots,z_N^N$. Then, using Lemma \ref{lm:GT sum restriction}, the sum on the right hand side of \eqref{eq:GT expansion proof 2} can be restricted to be of Gelfand--Tsetlin patterns which completes the argument.
\end{proof}

\begin{prop}[Triangular array marginal for general initial conditions]
	\label{prop:GT pattern initial}
	Let $0\leq M\leq N$ be integers such that $N+M$ is even and let $x=(x_1,\dots,x_N)$ and $y=(y_1,\dots,y_M)$ be ordered coordinates of TASEP particles satisfying $y_M>N-M+1$. Then the half-line open TASEP transition probability from $y$ to $x$ can be written as a sum over Pfaffian kernels indexed by Gelfand--Tsetlin patterns with endpoints fixed by $x=(x_1,\dots,x_N)$:
    \begin{equation}\label{eq:TASEP measure initial}
		\mathbb{P}_t(y\to x) = (-1)^{\binom{N}{2}}\mathrm{e}^{-\alpha t} \sum_{z\in\mathsf{GT}_N(x)} \pf\left[\begin{array}{cc}
			\Psi(z_i^N,z_j^N)_{1\leq i,j\leq N} & \Xi_{N-k}(z_i^N)_{1\leq i\leq N\atop 1\leq k\leq M} \\[8pt]
			-\Xi_{N-k}(z_j^N)_{1\leq k\leq M\atop 1\leq j\leq N} & 0
		\end{array}\right],
	\end{equation}
    where we define for each $1\leq k\leq M$ the function\footnote{The choice of indexing $(\Xi_{N-k})_{k=1,\dotsc,M}$ is made to better match notation in the literature, see Remark \ref{rem:Xi notation} below.}:
	\begin{equation}
        \label{eq:Xi func defn GT pattern}
		\Xi_{N-k}(z) = (-1)^k R_{k-M}(z-y_{k}) =  (-1)^k \oint_\gamma \frac{\dd w}{2\pi\ii} \frac{(w-1)^{N-k}\mathrm{e}^{t(w-1)}}{w^{z-y_{k}+N-k+1}},
	\end{equation}
    where the contour $\gamma$ encloses the origin.
\end{prop}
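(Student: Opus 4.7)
The plan is to mirror the row and column expansion argument used in the proof of Proposition~\ref{prop:GT pattern empty}, adapted to the larger block Pfaffian of Theorem~\ref{thm: transition prob pfaffian kernel initial cond}. Starting from the even-$(N+M)$ formula and identifying $x_\ell = z_1^\ell$ on the fixed left edge of a triangular array, we would apply, for each $i=1,\dots,N$, the first $Q$-recurrence of Lemma~\ref{lm:Q-kernel recursion} to row $i$ a total of $i-1$ times, paired simultaneously (via Pfaffian multilinearity on a row/column pair) with $i-1$ applications of the second $Q$-recurrence to column $i$. This reduces every $\mathsf{Q}$-entry to $Q_{1,1}(z_i^N, z_j^N) = \Psi(z_i^N, z_j^N)$, exactly as in the empty-initial-condition case, and generates an outer sum over triangular arrays $\bm z \in \Lambda_N(x)$ subject to the non-strict chain inequalities $z_{i+1}^{k+1} \geq z_i^k$.

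The new ingredient will be the treatment of the $\mathsf{U}$-block. The key observation is that the same row-$i$ multilinearity step simultaneously acts on the $\mathsf{U}$-entries in row $i$, via the $U$-recurrence of Lemma~\ref{lm:Q-kernel recursion} written in the reverse form $U_m(w) = \sum_{w' \geq w} U_{m-1}(w')$, which may be invoked with the very same summation variable as the paired $Q$-step since both amount to lowering the first argument by a non-negative increment. After the $i-1$ iterations for row $i$, the $(i,k)$-entry of $\mathsf{U}$ becomes $U_{1-k}(z_i^N - y_{M-k+1})$; the corresponding modifications of column $i$ of $-\mathsf{U}^T$ follow by skew-symmetry. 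The resulting inner Pfaffian depends only on $z_1^N,\dots,z_N^N$, and a simultaneous row and column swap of indices $i,j$ in the big matrix interchanges the roles of $z_i^N$ and $z_j^N$ consistently in both the $\Psi$-entries and the shifted $U$-entries of $\mathsf{U}$ and $-\mathsf{U}^T$; this yields anti-symmetry in these bottom-edge variables, so that Lemma~\ref{lm:GT sum restriction} applies to restrict the outer sum to $\bm z \in \mathsf{GT}_N(x)$.

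Finally, one would rewrite the $\mathsf{U}$-entries $U_{1-k}(z_i^N - y_{M-k+1})$ in the target form $\Xi_{N-k}(z_i^N)$. Using the identity $\Xi_{N-(M-k+1)}(z) = (-1)^{M-k+1} U_{1-k}(z - y_{M-k+1})$, this is accomplished by reversing the order of the $M$ columns of the $\mathsf{U}$-block (together with the matching $M$ rows of $-\mathsf{U}^T$ to preserve skew-symmetry), followed by simultaneous row/column scalings absorbing the $(-1)^{M-k+1}$ signs. I expect the main technical obstacle to be the sign bookkeeping in this final step: the Pfaffian sign produced by the $M$-fold row/column reversal and by the family of row/column scalings must be combined with the overall prefactor $(-1)^{\binom{N}{2}}$ inherited from Theorem~\ref{thm: transition prob pfaffian kernel initial cond} so as to reproduce the sign stated in the proposition. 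The odd-$(N+M)$ case will be handled in parallel, with the additional $\mathsf{p}$-column transforming through the $p$-recurrence of Lemma~\ref{lm:Q-kernel recursion} and the argument proceeding identically up to analogous sign tracking.
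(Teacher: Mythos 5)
Your proposal follows essentially the same route as the paper's own proof: the simultaneous row/column applications of the $Q$- and $U$-recurrences of Lemma~\ref{lm:Q-kernel recursion} reduce the Pfaffian of Theorem~\ref{thm: transition prob pfaffian kernel initial cond} to a sum over triangular arrays with entries $\Psi(z_i^N,z_j^N)$ and $U_{1-k}(z_i^N-y_{M-k+1})$, antisymmetry in the bottom-row variables together with Lemma~\ref{lm:GT sum restriction} restricts the sum to $\mathsf{GT}_N(x)$, and the paper then carries out exactly the reversal of the last $M$ rows and columns followed by the $(-1)^k$ scalings that you describe. The sign bookkeeping you defer is the paper's single remaining line: the reversal contributes $(-1)^{\binom{M}{2}}$ and the simultaneous scalings contribute $\prod_{k=1}^M(-1)^k$, which the paper states cancel, so the argument is complete at that point.
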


\begin{proof}
    The proof follows a similar argument to the proof of Proposition \ref{prop:GT pattern empty} while using the recurrence relations of Lemma \ref{lm:Q-kernel recursion} for the kernel $U_k$. This leads to a sum over Gelfand--Tsetlin patterns whose terms are given by
    \begin{equation}
    \label{eq:GT pattern initial proof 1}
    (-1)^{\binom{N}{2}}\mathrm{e}^{-\alpha t} \left[\begin{array}{cc}
			\Psi(z_i^N,z_j^N)_{1\leq i,j\leq N} & U_{1-k}(z_i^N-y_{M-k+1})_{1\leq i\leq N\atop 1\leq k\leq M} \\[8pt]
			-U_{1-k}(z_j^N-y_{M-k+1})_{1\leq k\leq M\atop 1\leq j\leq N} & 0
		\end{array}\right].
    \end{equation}
    Now, we may reverse the direction of last $M$ rows and columns so that an element of the upper-right block of \eqref{eq:GT pattern initial proof 1} has elements
    \[U_{k-M}\left(z_i^N-y_k\right),\]
    whose rows are represented by $1\leq i\leq N$ and columns by $1\leq k\leq M$. The process of performing these swaps contributes an overall factor of $(-1)^{\binom{M}{2}}$ to the Pfaffian expression. Following this, we multiply each of the last $M$ rows and columns by $(-1)^k$ for each $1\leq k\leq M$. This multiplication then exactly cancels the overall sign from which the result follows. 
\end{proof}

The goal of the rest of this section is to show that the Pfaffians appearing as summands in Propositions \ref{prop:GT pattern empty} and \ref{prop:GT pattern initial} constitute Pfaffian point processes supported on Gelfand--Tsetlin patterns, whose correlation kernels can be computed thanks to Proposition \ref{prop:cond pf L is PPP}. We now proceed in deriving them, following the analogous derivation for TASEP on the full line \cite{borodin_fluctuation_2007}, adjusting it to the Pfaffian case using the framework developed in \cite{borodin_eynardmehta_2005} and \cite{rains_correlation_2000}.

We are thinking of the summand $\Pf\left[\Psi(z_i^N,z_j^N)\right]_{1\leq i,j\leq N}$ in \eqref{tasepmeasure} and the analogous one in \eqref{eq:TASEP measure initial} as measures on Gelfand--Tsetlin patterns $\bm{z}$.
It will actually be more convenient to extend them to measures over the whole set of integer valued triangular arrays \eqref{eq:triangular arrays lambda}, by simply assigning zero weight to configurations outside $\mathsf{GT}_N$.
To this end we employ an identity due to \cite{borodin_fluctuation_2007}, for any triangular array $\bm{z}\in\Lambda_N$ satisfying $z_1^{k+1}<z_i^k$ for $k=1,\dotsc,N-1$, one has
\begin{equation}
	\label{eq:GT det identity}
	\mathbbm{1}_{\bm{z}\in \mathsf{GT}_N} = \prod_{k=2}^{N}\det[\mathbbm{1}_{z_i^{k-1}>z_j^{k}}]_{1\leq i,j\leq k},
\end{equation}
where $z_2^1,\dots,z_{N}^{N-1}$ play the role of \emph{virtual coordinates} (added due to technical considerations) which should be thought of as taking the value $\infty$, so that $\mathbbm{1}_{z_{k}^{k-1}>y}=1$ for any $y\in\mathbb{Z}$.
We will also add an additional virtual coordinate $z_1^0$ which does not appear in \eqref{eq:GT det identity} so that there are $N$ in total; doing that, the product on the right hand side can be extended up to $k=1$ straightforwardly.
To easily distinguish these virtual coordinates from the coordinates on the triangular array, we will henceforth denote them by $\vir_1,\dotsc,\vir_N$ (so that $\vir_k$ plays the role of $z_k^{k-1}$).

Note that the last row in each determinant in \eqref{eq:GT det identity} has all $1$'s. Subtracting it from each of the other rows flips the inequalities in each indicator function and (after accounting for the minus signs which result from this) leads to the following: letting\footnote{
Note that the only difference between the different $\phi_k$'s is that they are evaluated at different virtual variables. We could well have introduced a single function $\phi$, but keeping the dependence on $k$ makes some of the coming computations a bit more transparent.
\label{ft:phi i dependence}}
\begin{equation}
    \label{eq:phi with v}
	\phi_{k}(x,y) = \begin{dcases*}
		\mathbbm{1}_{x\leq y} & if $x\in\mathbb{Z}$,\\
		1 & if $x=\vir_k$,
	\end{dcases*}
\end{equation}
for $k=1,\dotsc,N$ and for $x\in\mathbb{Z}\cup\{\vir_k\}$ and $y\in\mathbb{Z}$ ($\vir_k$ should now be thought of as taking the value $0$), and assuming $z_1^{k+1}<z_1^k$ for $k=1,\dotsc,N-1$, we have
\begin{equation}
	\label{eq:GT det identity def}
	\mathbbm{1}_{\bm{z}\in \mathsf{GT}_N} = (-1)^{\binom{N}{2}} \prod_{k=1}^{N}\det[\phi_k(z_i^{k-1},z_j^{k})]_{1\leq i,j\leq k},
\end{equation}
with $z^{k-1}_{k}=\vir_k$ for $k=1,\dotsc,N$.
This version of the identity will turn out to be more convenient for us than \eqref{eq:GT det identity} (see Remark \ref{rem:phi flip}).

\begin{defn}
	\label{defn:W-measure}
	Given integers $N\geq1$ and $M\geq0$ such that $N+M$ is even, and given an ordered set of coordinates $y=(y_1,\dots,y_M)$ satisfying $y_M>N-M+1$ ($y=\emptyset$ if $M=0$), we define a measure on $\Lambda_N$ corresponding to half-space TASEP with initial condition $y$ as follows:
    \begin{equation}
        \label{eq:W-measure initial}
        \mathcal{W}_{N,M}(\bm{z}| y) = \prod_{k=1}^{N} \det[\begin{array}{c}
            \phi_k\!\left(z_i^{k-1},z_j^{k}\right)_{1\leq i\leq k-1\atop1\leq j\leq k} \\[8pt]
            \phi_k\!\left(\vir_{k},z_j^{k}\right)_{1\leq j\leq k} 
        \end{array}]\pf\left[\begin{array}{cc}
        \Psi(z_i^N,z_j^N)_{1\leq i,j\leq N} & \Xi_{N-k}(z_i^N)_{1\leq i\leq N\atop 1\leq k\leq M} \\[8pt]
        -\Xi_{N-k}(z_j^N)_{1\leq k\leq M\atop 1\leq j\leq N} & 0
    \end{array}\right].
    \end{equation}
    Due to \eqref{eq:GT det identity def}, the restriction of this measure to $\Lambda_N(x)$, with $x_1>x_2>\dotsm>x_N$, is supported on Gelfand--Tsetlin patterns.	
\end{defn}

Note that when $M=0$, which corresponds to half-space TASEP with empty initial condition, only the block $\Psi(z_i^N,z_j^N)_{1\leq i,j\leq N}$ remains in the Pfaffian on the right hand side of \eqref{eq:W-measure initial}.

\begin{remark}\label{rem:Xi notation}
    Just as in Corollary \ref{cor:schutz TASEP det}, the case $N=M$ recovers the known results for TASEP on the full line.
    In fact, the Pfaffian on the right hand side of \eqref{eq:W-measure initial} becomes $\det[\Xi_{N-k}(z_i^N;t)]_{1\leq i\leq N\atop 1\leq k\leq M}$ thanks to \eqref{eq:block pfaffian is det}, while the $\Xi_{N-k}$'s in this case become, modulo a sign change, the functions $\Psi^N_{N-k}$ appearing in \cite{borodin_fluctuation_2007}.
    In that setting, the family $(\Psi^N_k)_{k=0,\dotsc,N-1}$ can be thought of follows: $\Psi^N_k$ is the Charlier orthogonal polynomial of degree $k$, multiplied by the Charlier (i.e. Poisson) weight, and shifted by $y_{N-k}$.
    In our case (for general $N$), the family $(\Xi_k)_{k=N-M,\dotsc,N-1}$ can be interpreted similarly: $\Xi_k$ is the Charlier orthogonal polynomial of degree $N-M+k$, multiplied by the Charlier weight, and shifted by $y_{M-k}$.
\end{remark}

Using Propositions \ref{prop:GT pattern empty} and \ref{prop:GT pattern initial}, the transition probability of the half-space TASEP can be recovered as a marginal of the measure from Definition \ref{defn:W-measure} as:
\begin{equation}\label{eq:TASEP probab WN}
\begin{aligned}
    \mathbb{P}_t(y\to x) &= \mathrm{e}^{-\alpha t} \sum_{\bm{z}\in\Lambda_N(x)} \mathcal{W}_{N,M}(\bm{z}|y),
\end{aligned}
\end{equation}
for $x$ of size $N\geq1$ (with $x_1>x_2>\dotsm>x_N\geq1$) and $y$ of size $M\geq0$, and with $N+M$ being even.

\subsection{Half-space TASEP as a Pfaffian point process}
In this section, the measures defined in Definition \ref{defn:W-measure} are shown to be conditional Pfaffian $L$-ensembles (see Definition \ref{defn:cond pf L-ensemble}). From this, using Proposition \ref{prop:cond pf L is PPP}, we define a Pfaffian point process. We will first present the case of empty initial conditions using \eqref{eq:W-measure initial} with $y=\emptyset$ (or $M=0$) before presenting the case of more general initial conditions.

\subsubsection{$L$-ensemble for empty initial conditions}
For fixed $N$, consider the spaces
\begin{equation}
	\label{eq:L-ensemble spaces empty}
	\mathcal{X}=\{1,\dots,N\}\times\mathbb{N},\qquad\mathcal{V}=\{\vir_1,\dots,\vir_N\},\qquad\text{and}\qquad\widetilde{\mathcal{X}}=\mathcal{V}\cup\mathcal{X}.
\end{equation}
We think of $\widetilde{\mathcal{X}}$ as the state space of a point process; the $\vir_i$'s correspond to the virtual coordinates, while $(i,x)$ correspond to physical variables, representing a point with label $i$ at $x$. We will refer to a point $(i,x)\in \mathcal{X}$ as belonging to the $i$-th fiber of $\mathcal{X}$. 

We may identify a collection of points $A\subset \mathcal{X}$ as a triangular array, $A\in\Lambda_N$, if $A$ has exactly $i$ elements in its $i$-th fiber for all $1\leq i\leq N$. In this way, we may (and will) identify $\Lambda_N$ as a subset of $\mathcal{X}$. 
For example, the collection $A=\{(1,2),(2,2),(2,3)\}$ is in $\Lambda_2$. 

\begin{thm}
	\label{thm:L-ensemble empty}
	Let $N$ be a fixed even integer. Let $L:\widetilde{\mathcal{X}}\times\widetilde{\mathcal{X}}\to\mathbb{C}^{2\times 2}$ be defined as follows:
	\begin{equation}\label{eq:L-ensemble empty}
	\begin{split}
		L(\virst_i,\virst_j)&\,=\,\left[\begin{array}{cc}
			1 & 0 \\
			0 & 0
		\end{array}\right]\mathbbm{1}_{i+1=j}
		+\left[\begin{array}{cc}
			-1 & 0 \\
			0 & 0
		\end{array}\right]\mathbbm{1}_{j+1=i},\\
		L(\virst_i,(j,x_2))&\,=\,\left[\begin{array}{cc}
			0 & 0 \\
			\phi_{i}(\virst_i,x_2) & 0
		\end{array}\right]\mathbbm{1}_{i=j},\\
		L((i,x_1),\virst_j)&\,=\,-L(\virst_j,(i,x_1))^T,\\
		L((i,x_1),(j,x_2))&\,=\,\left[\begin{array}{cc}
			0 & 0 \\
			\phi_{i}(x_1,x_2) & 0
		\end{array}\right]\mathbbm{1}_{i+1=j<N}+\left[\begin{array}{cc}
			0 & \phi_{j}(x_2,x_1) \\
			0 & 0
		\end{array}\right]\mathbbm{1}_{j+1=i<N} +\left[\begin{array}{cc}
			0 & 0 \\
			0 & \Psi(x_1,x_2)
		\end{array}\right]\mathbbm{1}_{i=j=N},\hspace{-40pt}
	\end{split}
	\end{equation}
	for $1\leq i,j\leq N$ and $x_1,x_2\in\mathbb{N}$.
        Then for $A\subset \mathcal{X}$ we have
	\[\Pf\left[L\big|_{\mathcal{V}\cup A}\right]=\begin{dcases*}
		\mathcal{W}_{N,0}(A|\emptyset) & if $A\in\Lambda_N$,\\
		0 & otherwise.
	\end{dcases*}\]
\end{thm}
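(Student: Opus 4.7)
The plan is to compute $\pf[L|_{\mathcal{V} \cup A}]$ directly by unfolding the $2\times 2$ blocks into an antisymmetric matrix of size $2|\mathcal{V}\cup A|$ and expanding the Pfaffian as a signed sum over perfect matchings of the associated weighted graph. The vertices of this graph are the ``slots'' $p^{(1)},p^{(2)}$ attached to each $p\in\mathcal{V}\cup A$, and the weighted edges are exactly the nonzero entries of the unfolded $L$. First I would enumerate the edges, which fall into four types: (a) virtual-chain edges $\vir_k^{(1)}\text{--}\vir_{k+1}^{(1)}$ of weight $1$; (b) entry edges $\vir_k^{(2)}\text{--}(k,x)^{(1)}$ of weight $\phi_k(\vir_k,x)=1$; (c) transport edges $(k,x_1)^{(2)}\text{--}(k+1,x_2)^{(1)}$ for $k<N$ of weight $\mathbbm{1}_{x_1\leq x_2}$; (d) terminal edges $(N,x_1)^{(2)}\text{--}(N,x_2)^{(2)}$ of weight $\Psi(x_1,x_2)$.

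Next I would extract the combinatorial constraint coming from this sparsity. The slots $\vir_1^{(1)},\ldots,\vir_N^{(1)}$ form a simple path in the graph, which for $N$ even admits the unique perfect matching $\{\vir_{2i-1}^{(1)}\text{--}\vir_{2i}^{(1)}\}_{i=1,\ldots,N/2}$ of weight $1$. Writing $n_k:=|A\cap(\{k\}\times\mathbb{N})|$, every slot $(k,\cdot)^{(1)}$ must be paired either with $\vir_k^{(2)}$ or with some $(k-1,\cdot)^{(2)}$, giving the local balance $n_k=1+n_{k-1}$ with $n_0:=0$. Iterating forces $n_k=k$, i.e.\ $A\in\Lambda_N$; when this fails no perfect matching exists and $\pf[L|_{\mathcal{V}\cup A}]=0$, which disposes of the second assertion of the theorem.

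For $A\in\Lambda_N$, I would then factorize the signed matching sum by level. After fixing the virtual-chain matching, each remaining perfect matching is determined by (i) for each $k=1,\ldots,N$ a bijection $\pi_k:\{\vir_k\}\cup A_{k-1}\to A_k$ (matching slot $1$ of fiber $k$ to slot $2$ of fiber $k-1$ together with $\vir_k^{(2)}$), and (ii) a perfect matching $\mu$ of the slot $2$'s of fiber $N$ via the $\Psi$ edges. The contribution of such a matching is $\epsilon(\pi,\mu)\prod_k\prod_{z}\phi_k(z,\pi_k(z))\cdot\prod_{\{x,y\}\in\mu}\Psi(x,y)$, where $\epsilon(\pi,\mu)$ is the Pfaffian sign of the induced slot permutation. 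I would then apply a symmetric row/column reordering grouping the slots by layer, so that $L$ acquires a block form in which the $\vir^{(1)}$ block is decoupled from the rest. The remaining block splits further by level $k$ into off-diagonal ``transport'' blocks and a terminal $\Psi$-block, and $\epsilon(\pi,\mu)$ factors as the product of the signatures of the individual bijections $\pi_k$ and the Pfaffian sign of $\mu$. Summing over $\pi_k$ at level $k$ produces $\det[\phi_k(z_i^{k-1},z_j^k)]_{1\leq i,j\leq k}$ and summing over $\mu$ produces $\pf[\Psi(z_i^N,z_j^N)]_{1\leq i,j\leq N}$, exactly reproducing $\mathcal{W}_{N,0}(A|\emptyset)$ from \eqref{eq:W-measure initial}.

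The main obstacle is the sign bookkeeping: the signature of the reordering permutation, the Pfaffian sign of the unique virtual-chain matching, and the factored signs of the bijections $\pi_k$ together with the Pfaffian sign of $\mu$ must combine so that the product of determinants and the $\Psi$-Pfaffian appears with coefficient exactly $+1$, matching the definition in \eqref{eq:W-measure initial}. I would verify the signs in the smallest nontrivial case $N=2$ (with one point on fiber 1 and two on fiber 2) and then proceed by induction on $N$, peeling off one level at a time. As a global consistency check, summing the resulting expression over $A\in\Lambda_N(x)$ must reproduce the identity of Proposition~\ref{prop:GT pattern empty}, with the $(-1)^{\binom{N}{2}}$ prefactor there arising automatically from restricting $\prod_k\det[\phi_k]$ to Gelfand--Tsetlin patterns via \eqref{eq:GT det identity def}.
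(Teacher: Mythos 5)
Your proposal is correct in substance but runs the argument in the opposite direction to the paper, and by different means. The paper starts from the measure \eqref{eq:W-measure initial} (with $M=0$), rewrites the product of determinants times the $\Psi$-Pfaffian as one large block Pfaffian via \eqref{eq:block pfaffian is det}, adjoins the unit-Pfaffian matrix $\Lambda_N$ from \eqref{eq:U-mat defn}, and then performs explicit simultaneous row and column permutations, tracking signs with \eqref{eq:matrix shuffle pfaffian} (net factor $(-1)^{\binom{N}{2}+\sum_k\binom{k}{2}}=(-1)^{\binom{N}{3}}=1$ for $N$ even), until the matrix in \eqref{eq:measure proof 4} can be read off as $L\big|_{\mathcal{V}\cup A}$; the vanishing for $A\notin\Lambda_N$ is then obtained by exhibiting a zero row or two linearly dependent rows and inducting over fibers. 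You instead expand $\Pf\big[L\big|_{\mathcal{V}\cup A}\big]$ over perfect matchings of the unfolded skew-symmetric matrix. Your edge inventory is accurate, and your counting argument (the vertices $\virst_k^{(2)}$ and the fiber-$(k-1)$ slot-$2$ vertices can only be saturated by fiber-$k$ slot-$1$ vertices, forcing $n_k=1+n_{k-1}$) disposes of the vanishing case in one stroke, arguably more transparently than the paper's argument; the evenness of $N$ enters through the unique perfect matching of the $\virst^{(1)}$ path, which is the matching-expansion counterpart of $\Pf[\Lambda_N]=1$. Summing over the level bijections $\pi_k$ and the terminal matching $\mu$ then reproduces $\prod_k\det[\phi_k]\cdot\Pf[\Psi]$, i.e.\ \eqref{eq:W-measure initial}, provided the signs factor as you claim.

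The one soft spot is exactly the step you flag: the assertion that $\epsilon(\pi,\mu)$ splits into $\prod_k\sign(\pi_k)$ times the Pfaffian sign of $\mu$, up to a global constant equal to $+1$. This requires a lemma (e.g.\ comparing the signs of two matchings differing by a transposition within a single level, which pins down all relative signs, plus an evaluation of the constant on one reference configuration), not just an $N=2$ check; and your proposed induction ``on $N$, peeling off one level at a time'' changes the parity of $N$, so the intermediate statements are not instances of the theorem (for odd $N$ the $\virst^{(1)}$ path has no perfect matching and the whole Pfaffian vanishes identically). It is cleaner to prove the sign-factorization within fixed $N$ and then compute the global constant once; the paper's explicit swap counting, producing $(-1)^{\binom{N}{3}}=1$, is precisely this constant computation and shows that evenness of $N$ is used a second time there, something your outline should make explicit.
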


\begin{proof}
    The first part of the proof will consist of showing that for $A\in\Lambda_N$, the measure $\mathcal{W}_{N,0}(A|\emptyset)$ defined in \eqref{eq:W-measure initial} can be written as the Pfaffian of the matrix $L\big|_{\mathcal{V}\cup A}$.
	
    The measure $\mathcal{W}_{N,0}(\cdot|\emptyset)$ is defined over triangular arrays $\bm{z}=\left(z^k_i\right)_{1\leq i\leq k}\in\Lambda_N$.
    For $1\leq k\leq N$ let $\mathcal{Z}^{(k)}$ denote the set of points corresponding to the $k$-th row of the triangular array, i.e. $\{z_1^k,\dots,z_k^k\}$, and set $\mathcal{Z}^{(0)}=\emptyset$.
    Let also $\left(\mathcal{Z}^{(k)}\right)'$ and $\left(\mathcal{Z}^{(k)}\right)''$ denote identical copies of $\mathcal{Z}^{(k)}$ (this is just a notational device to help keep track of the two rows and columns associated to the $2\times2$ matrix entries of $L$).
    For $k=0,\dotsc,N-1$ we introduce kernels $V_k$ on $\left(\left(\mathcal{Z}^{(k)}\right)''\cup\{\vir_{k+1}\}\right)\times\left(\mathcal{Z}^{(k+1)}\right)'$  defined by 
    \[V_k\left((z_i^k)'',(z_j^{k+1})'\right) = \phi_{k+1}(z_i^k,z_j^{k+1}), \qquad V_k(\vir_{k+1},(z_j^{k+1})') = \phi_{k+1}(\vir_{k+1},z_j^{k+1}).\]
    We also regard $\Psi$ as being defined on $\left(\mathcal{Z}^{(N)}\right)''\times\left(\mathcal{Z}^{(N)}\right)''$.
    
    Using the Pfaffian identity \eqref{eq:block pfaffian is det}, we may write the measure \eqref{eq:W-measure initial} as
	\begin{equation}
		\label{eq:measure proof 1}
		\mathcal{W}_{N,0}(\bm{z}|\emptyset)=\pf\left[
		\begin{array}{cccccc}
			0 & V_0 & \cdots & 0 & 0 & 0 \\
			-V_0^T & 0 & \cdots & 0 & 0 & 0 \\
			\vdots & \vdots & \ddots & \vdots & \vdots & \vdots \\
			0 & 0 & \cdots & 0 & V_{N-1} & 0 \\
			0 & 0 & \cdots & -V_{N-1}^T & 0 & 0 \\
			0 & 0 & \cdots & 0 & 0 & {\Psi} \\
		\end{array}
		\right](\bm{\widetilde{z}},\bm{\widetilde{z}}),
	\end{equation}
	where the rows and the columns of the matrix are indexed by
	\[\{\vir_1\}\cup\left(\mathcal{Z}^{(1)}\right)' \cup \left(\left(\mathcal{Z}^{(1)}\right)'' \cup \{\vir_2\}\right) \cup\left(\mathcal{Z}^{(2)}\right)'\cup\cdots\cup \left(\left(\mathcal{Z}^{(N-1)}\right)''\cup\{\vir_{N}\}\right) \cup \left(\mathcal{Z}^{(N)}\right)' \cup \left(\mathcal{Z}^{(N)}\right)''\]
	and where $\bm{\widetilde{z}}$ denotes a version of $\bm z$ with duplicated $\mathcal{Z}^{(k)}$ variables for $k=1,\dotsc,N$ and with the virtual variables added, all in the order specified by the above space. The kernel $V_k$ can be split into its physical and virtual coordinate dependence as
	\[V_k = \left[
	\begin{array}{c}
		W_k \\
		e_k
	\end{array}
	\right],\]
	with $W_k$ a kernel on $(\mathcal{Z}^{(k)})''\times(\mathcal{Z}^{(k+1)})'$ and $e_k$ a kernel on $\{\vir_{k+1}\}\times(\mathcal{Z}^{(k+1)})'$, given by 
    \[W_k\left((z^k_i)'',(z^{k+1}_j)'\right)=\phi_{k+1}(z^k_i,z^{k+1}_j),\qquad e_k\left(\vir_{k+1},(z^{k+1}_j)'\right)=\phi_{k+1}(\vir_{k+1},z^{k+1}_j).\]
    We emphasize here that $W_k$ depends only on the physical coordinates of the triangular array.
    Note also that $W_0$ is an empty array.
    
    We aim to define a measure on the space of duplicated physical coordinates
    \[\mathcal{Z}= \left(\mathcal{Z}^{(1)}\right)'\cup \left(\mathcal{Z}^{(1)}\right)'' \cup \cdots \cup \left(\mathcal{Z}^{(N)}\right)' \cup \left(\mathcal{Z}^{(N)}\right)''.\]
    It will be convenient first to duplicate the alphabet of virtual coordinates so that there are two copies $\vir_k',\vir_k''$ of each $\vir_k$. For each $0\leq k\leq N-1$, henceforth we will regard $e_{k}$ as a kernel on $\{\vir_{k+1}''\}\times \left(\mathcal{Z}^{(k+1)}\right)'$.
    The first copy of the virtual coordinates will then be identified through the kernel on $\{\vir_1',\dots,\vir_N'\}\times \{\vir_1',\dots,\vir_N'\}$ defined by the matrix\footnote{In principle, we could have used any skew-symmetric $N\times N$ matrix whose Pfaffian is equal to 1.}
	\begin{equation}
		\label{eq:U-mat defn}
		\Lambda_{N}(\vir_i',\vir_j') = \delta_{i,j+1} - \delta_{i+1,j}.
	\end{equation}
    When $N$ is even, the Pfaffian of the matrix \eqref{eq:U-mat defn} satisfies
    $\pf\left[\Lambda_{N}(\vir_i',\vir_j')\right]_{1\leq i,j\leq N} = 1$, so that \eqref{eq:measure proof 1} can be expressed as
    \begin{equation}	
        \label{eq:measure proof 3}
        \mathcal{W}_{N,0}(\bm{z}|\emptyset) = \pf\left[
        \begin{array}{ccccccccc}
		\Lambda_{N} & 0 & 0 & \cdots & 0 & 0 & 0 \\
            0 & 0 & V_0 & \cdots & 0 & 0 & 0 \\
            0 & -V_0^T & 0 & \cdots & 0 & 0 & 0 \\
            \vdots & \vdots & \vdots & \ddots & \vdots & \vdots & \vdots \\
            0 & 0 & 0 & \cdots & 0 & V_{N-1} & 0 \\
            0 & 0 & 0 & \cdots & -V_{N-1}^T & 0 & 0 \\
            0 & 0 & 0 & \cdots & 0 & 0 & {\Psi} \\
        \end{array}
        \right](\widetilde{\bm z},\widetilde{\bm z}),
    \end{equation}
    where $\widetilde{\bm z}$ is now a version of $\bm{z}$ with duplicated physical and virtual variables.
    Now, recall the property by which a Pfaffian changes sign under the simultaneous interchange of two rows and columns. Performing row and column swaps shows that
    \begin{equation}
        \label{eq:measure proof 4}
        \mathcal{W}_{N,0}(\bm{z}|\emptyset) = \pf \left[
        \begin{array}{cc|ccccccc}
            \Lambda_{N} & 0 & 0 & 0 & 0 & \cdots & 0 & 0 & 0 \\
            0 & 0 & E_0 & 0 & E_1 & \cdots & 0 & E_{N-1} & 0 \\[1pt]
            \hline
            \rule{0pt}{13pt}
            0 & -E_0^T & 0 & 0 & 0 & \cdots & 0 & 0 & 0 \\
            0 & 0 & 0 & 0 & W_1 & \cdots & 0 & 0 & 0 \\
            0 & -E_1^T & 0 & -W_1^T & 0 & \cdots & 0 & 0 & 0 \\
            \vdots & \vdots & \vdots & \vdots & \vdots & \ddots & \vdots & \vdots & \vdots \\
            0 & 0 & 0 & 0 & 0 & \cdots & 0 & W_{N-1} & 0 \\
            0 & -E_{N-1}^T & 0 & 0 & 0 & \cdots & -W_{N-1}^T & 0 & 0 \\
            0 & 0 & 0 & 0 & 0 & \cdots & 0 & 0 & {\Psi} \\
        \end{array}
        \right](\widetilde{\bm z},\widetilde{\bm z}),
    \end{equation}
    where, for each $0\leq k\leq N-1$, the kernel $E_k$ over $\{\vir_1'',\dots,\vir_N''\}\times \left(\mathcal{Z}^{(k)}\right)'$ is defined by
    \[E_{k}\left(\vir_i'',(z_j^{k+1})'\right) = \begin{cases}
        e_{k}(\vir_{k+1},z^{k+1}_j) & \text{ if } i=k+1 \\
        0 & \text{otherwise}
    \end{cases},\]
    for each $1\leq i\leq N,1\leq j\leq k+1$.
    Note here that, since each $\vir_k''$ has to be swapped with all of the duplicated virtual variables of lower index, the number of simultaneous interchanges of rows and columns is even.
    
    The rows and columns of the matrix in \eqref{eq:measure proof 4} are both indexed by the space
    \[\Big(\mathcal{V}'\cup \mathcal{V}''\Big)\cup \Big(\left(\mathcal{Z}^{(1)}\right)'\cup \left(\mathcal{Z}^{(1)}\right)'' \cup \cdots \cup\left(\mathcal{Z}^{(N)}\right)'\cup \left(\mathcal{Z}^{(N)}\right)''\Big)\]
    (and the duplicated variables in $\widetilde{\bm z}$ are expressed now according to this ordering).
    Now we may permute rows and columns simultaneously so that pairs of duplicate variables appear next to each other. This corresponds to applying the permutation appearing in \eqref{eq:matrix shuffle pfaffian} separately to the blocks $\mathcal{V}'\cup \mathcal{V}''$ and $\left(\mathcal{Z}^{(k)}\right)'\cup \left(\mathcal{Z}^{(k)}\right)''$, $k=1,\dotsc,N$, so using that identity we get, after reordering, an extra factor of $(-1)^{\binom{N}{2}+\sum_{k=1}^N\binom{k}{2}}=(-1)^{\binom{N}{3}}$, which equals $1$ since $N$ is even. 
    If we now identify $\bm z$ with a set $A\in\mathcal{X}$, we may regard the resulting identity as the Pfaffian of the minor of a suitable $2\times2$ matrix kernel $L$ over the variables $\mathcal{V}\cup A$.
    The kernel $L$ as given in  \eqref{eq:L-ensemble empty} can now be identified by reading from the entries of \eqref{eq:measure proof 4}.
    This shows that
    \[\Pf\left[L\big|_{\mathcal{V}\cup A}\right]=\mathcal{W}_{N,0}(\bm{z}|\emptyset)\]
    whenever $A\in\Lambda_N$.
    
    It remains to extend the identity to $A\in2^\mathcal{X}\setminus\Lambda_N$, in which case we need to show that the left hand side vanishes. 
    Fix such an $A$	and suppose first that there is no point in $A$ in the $i$-th fiber of $\mathcal{X}$.
    Then the $(2i)$-th row of $L\big|_{\mathcal{V}\cup A}$ (corresponding to the second row coming from the $2\times2$ blocks indexed by row $\vir_i$ in \eqref{eq:L-ensemble empty}) is clearly just zero, which means that the Pfaffian of the matrix has to vanish.
    So in order for the Pfaffian not to vanish there necessarily have to be points in all of the $N$ fibers of $\mathcal{X}$.

    Now suppose that $A$ has at least two points in the first fiber of $\mathcal{X}$, say $(1,a)$ and $(1,b)$, and consider rows $2N+1$ and $2N+3$ of $L\big|_{\mathcal{V}\cup A}$.
    They correspond to the first rows coming from the blocks indexed respectively by rows $(1,a)$ and $(1,b)$.
    Each of these two blocks vanishes except at the second coordinate coming from the $\vir_1$ column, so the two rows are linearly dependent and thus the Pfaffian again vanishes.
    So there has to be exactly one point in the first fiber of $\mathcal{X}$.
    The same argument allows one to show inductively that there have to be $k$ points in the $k$-th fiber.
\end{proof}

Now, we define the following measure on $\mathcal{X}=\{1,\dots,N\}\times\mathbb{N}$:
\begin{equation}
    \label{eq:normalized measure empty}
    \mu_N(A) = \frac{\pf\left[L_{\mathcal{V}\cup A}\right]}{\pf\left[J_\mathcal{X}+L\right]}.
\end{equation}
The following result follows from the last theorem and Proposition \ref{prop:cond pf L is PPP}:

\begin{cor}[Pfaffian point process for empty initial configuration]\label{cor:cond prob kernel empty}
    The measure $\mu_N$ given in \eqref{eq:normalized measure empty} defines a Pfaffian point process with correlation kernel given by
    \begin{equation}\label{eq:cond prob kernel empty}
        K = J_\mathcal{X} + \left(J_\mathcal{X}+L\right)^{-1}\Big|_{\mathcal{X}\times\mathcal{X}}.
    \end{equation}
    Moreover, $\mu_N$ is supported on configurations in $\Lambda_N$, and is proportional to $\mathcal{W}_{N,0}(\cdot|\emptyset)$ (defined in \eqref{eq:W-measure initial} with $M=0$) there.
\end{cor}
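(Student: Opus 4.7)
The proof is essentially a direct synthesis of Theorem \ref{thm:L-ensemble empty} and Proposition \ref{prop:cond pf L is PPP}, so the plan is short and the work is mainly in identifying pieces and handling one technical issue.

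First, I would pin down the support and the identification with $\mathcal{W}_{N,0}$. This is immediate from Theorem \ref{thm:L-ensemble empty}: the numerator $\pf[L_{\mathcal{V}\cup A}]$ vanishes unless $A\in\Lambda_N$, in which case it equals $\mathcal{W}_{N,0}(A|\emptyset)$. Consequently $\mu_N$ is supported on $\Lambda_N$ and is proportional there to $\mathcal{W}_{N,0}(\cdot|\emptyset)$, which is the second claim of the corollary.

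Second, I would verify that $\mu_N$ is an honest probability measure by computing the normalization. The Fredholm Pfaffian expansion gives
\[\pf(J_{\mathcal{X}}+L) = \sum_{A\subseteq\mathcal{X}, |A|<\infty} \pf[L_{\mathcal{V}\cup A}] = \sum_{\bm z\in\Lambda_N} \mathcal{W}_{N,0}(\bm z|\emptyset),\]
and by \eqref{eq:TASEP probab WN} with $M=0$ the right-hand side equals $e^{\alpha t}\,\mathbb{P}(|X_t|=N\mid X_0=\emptyset)$. This quantity is strictly positive and finite, so the denominator in \eqref{eq:normalized measure empty} makes sense and the measure sums to one.

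Third, I would apply Proposition \ref{prop:cond pf L is PPP} directly: by the very form of \eqref{eq:normalized measure empty}, $\mu_N$ is a conditional Pfaffian $L$-ensemble on $\mathcal{D}=\mathcal{X}$ within $\mathcal{Z}=\widetilde{\mathcal{X}}$ in the sense of Definition \ref{defn:cond pf L-ensemble}, and the proposition then delivers the correlation kernel $K = J_{\mathcal{X}} + (J_{\mathcal{X}}+L)^{-1}\big|_{\mathcal{X}\times\mathcal{X}}$.

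The main obstacle is that Proposition \ref{prop:cond pf L is PPP}, as quoted, is stated for \emph{finite} $\mathcal{Z}$, whereas $\widetilde{\mathcal{X}}=\mathcal{V}\cup(\{1,\dots,N\}\times\mathbb{N})$ is countably infinite. I would resolve this by the standard truncation argument: replace $\mathcal{X}$ by $\mathcal{X}_R=\{1,\dots,N\}\times\{1,\dots,R\}$, apply the proposition in the finite setting to obtain a correlation kernel $K_R$, and then take $R\to\infty$. Absolute summability of $\mathcal{W}_{N,0}(\cdot|\emptyset)$, established in the normalization step above, guarantees that both the Fredholm Pfaffian series and the restricted-inverse expression for $K_R$ converge to the claimed objects. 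No further technicalities intervene, since $L$ has only finitely many nonzero $2\times2$ blocks per row in the $(i,x_1)$ coordinates with bounded index differences.
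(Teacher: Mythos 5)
Your route is the same as the paper's: read off the support and the proportionality to $\mathcal{W}_{N,0}(\cdot|\emptyset)$ from Theorem \ref{thm:L-ensemble empty}, apply Proposition \ref{prop:cond pf L is PPP} on a finite truncation $\{1,\dots,N\}\times\{1,\dots,R\}$, and pass to the limit $R\to\infty$. Your normalization step, identifying $\Pf(J_\mathcal{X}+L)$ with $\mathrm{e}^{\alpha t}\,\mathbb{P}(|X_t|=N\mid X_0=\emptyset)$ via \eqref{eq:TASEP probab WN}, is a reasonable addition (and consistent with Corollary \ref{cor:boundary current schutz pf}), though strictly speaking it only shows that the signed sum defining the denominator in \eqref{eq:normalized measure empty} is finite and nonzero, not that $\mathcal{W}_{N,0}$ is absolutely summable.

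The weak point is your justification of the final limit. The structural claim that $L$ has ``only finitely many nonzero $2\times2$ blocks per row'' is false: the block structure is banded only in the fiber indices ($|i-j|\leq1$, plus the virtual columns), but each nonzero block is an infinite matrix in the spatial variables, e.g.\ $\phi_i(x_1,x_2)=\mathbbm{1}_{x_1\leq x_2}$ and the $\Psi$ block have infinitely many nonzero entries in every row. Moreover, absolute summability of the measure (even if established) would control the convergence of the correlation functions, i.e.\ the left-hand side of the truncated identity, but it gives no control over the restricted inverse $(J_{\mathcal{X}_R}+L)^{-1}\big|_{\mathcal{X}_R\times\mathcal{X}_R}$ appearing in $K_R$. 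The paper closes this step differently: for fixed points $x_1,\dots,x_n$ the relevant entries of the restricted inverse agree with those of $(J_{\mathcal{X}_m}+L)^{-1}$ once $m>\max\{x_1,\dots,x_n\}$, and then operator-norm convergence $J_{\mathcal{X}_m}\to J_\mathcal{X}$ together with boundedness of $L$ yields convergence of the inverses, hence $K_m\to K=J_\mathcal{X}+(J_\mathcal{X}+L)^{-1}\big|_{\mathcal{X}\times\mathcal{X}}$. You need an argument of this kind (or some other uniform control of the inverses) in place of the row-finiteness claim to make your last step valid.
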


\begin{proof}
    Fix $m\in\mathbb{N}$ and let $\mu_N^m$ denote the restriction of the measure $\mu_N$ to $\mathcal{X}_m=\{1,\dotsc,N\}\times\{1,\dotsc,m\}$.
    Proposition \ref{prop:cond pf L is PPP} implies that  $\mu^m_N$ defines a Pfaffian point process with correlation kernel given by
    \[K_m= J_{\mathcal{X}_m} + \left(J_{\mathcal{X}_m}+L\right)^{-1}\Big|_{\mathcal{X}_m\times\mathcal{X}_m}.\]
    This means that given any collection of points $x_1,\dotsc,x_n\in\mathcal{X}_m$,
    \begin{equation}\label{eq:muNM corr}
        \mu_N^m(\{A\subset\mathcal{X}_m\!:x_1,\dotsc,x_n\in A\})=\Pf\big[K_m(x_i,x_j)\big]_{i,j=1}^n.
    \end{equation}
    Now we take $m\to\infty$ on both sides of the identity to get
    \[\mu_N(\{A\subset\mathcal{X}\!:x_1,\dotsc,x_n\in A\})=\Pf\big[K(x_i,x_j)\big]_{i,j=1}^n\]
    with $K$ as in the statement of the result, which means precisely that the point process $\mu_N$ is Pfaffian, with correlation kernel $K$.
    That the left hand side of \eqref{eq:muNM corr} converges as claimed follows directly, while for the right hand side it is enough to use that $\left(J_{\mathcal{X}_m}+L\right)^{-1}\Big|_{\mathcal{X}_m\times\mathcal{X}_m}(x_i,x_j)=\left(J_{\mathcal{X}_m}+L\right)^{-1}(x_i,x_j)$ for each $i,j$ as long as $m>\max\{x_1,\dotsc,x_n\}$ while $J_{\mathcal{X}_m}$ converges to $J_{\mathcal{X}}$ in operator norm and $L$ defines a bounded operator (which can be checked from its definition; we omit the details). 

    The remaining claims follow from Theorem~\ref{thm:L-ensemble empty}.
    \end{proof}

\subsubsection{$L$-ensemble for general initial conditions}

Our goal now is to extend the above construction to the case of general initial configuration.
In this case, in addition to the spaces $\mathcal{X}$ and $\mathcal{V}$ from \eqref{eq:L-ensemble spaces empty}, and for fixed integers $M,N$ and half-space TASEP ordered coordinates $(y_1,\dots,y_M$), we need to introduce the space
\begin{equation}
	\mathcal{U} = \{\vir_{N+1},\dots,\vir_{N+M}\}
\end{equation}
and redefine $\widetilde{\mathcal{X}}$ as
\[\widetilde{\mathcal{X}} = \mathcal{V}\cup\mathcal{U}\cup\mathcal{X}.\]

We state the result corresponding to Theorem \ref{thm:L-ensemble empty} and Corollary \ref{cor:cond prob kernel empty} as a single result:
\begin{thm}[Pfaffian point process for general initial conditions]\label{thm:L-ensemble initial}
    Let $N\geq M\geq 0$ be integers such that $N+M$ is even and fix $y=(y_1,\dotsc,y_M)$ satisfying $y_M>N-M+1$.
    Let $L:\widetilde{\mathcal{X}}\times\widetilde{\mathcal{X}}\to\mathbb{C}^{2\times 2}$ be defined by \eqref{eq:L-ensemble empty} where the domain of $L(\vir_i,\vir_j)$ is extended to $1\leq i,j\leq N+M$. Additionally we require 
    \begin{align}
    \begin{split}
        \label{eq:L-ensemble initial}
        L((i,x),\virst_{N+\ell}) & = \left[\begin{array}{cc}
            0 & 0 \\
            0 & \Xi_{N-\ell} (x)
        \end{array}\right]\mathbbm{1}_{i=N}, \\
        L(\virst_{N+k},(j,x))&\,=\,-L((j,x),\virst_{N+k})^T,
    \end{split}
    \end{align}
    for $1\leq i,j\leq N$, $1\leq k,\ell\leq M$ and $x\in\mathbb{N}$.
    Then
    \begin{equation}
        \mu_{N,M}(A|y) = \frac{\pf\left[L_{\mathcal{V}\cup\mathcal{U}\cup A}\right]}{\pf\left[J_\mathcal{X}+L\right]}.
    \end{equation}
    defines a Pfaffian point process on $\mathcal{X}$ with correlation kernel
    \begin{equation}
        K = J + \left(J_\mathcal{X}+L\right)^{-1}\Big|_{\mathcal{X}\times\mathcal{X}}\label{eq:corr ker initial}
    \end{equation}
    (with $\left(J_\mathcal{X}+L\right)^{-1}$ now computed on the whole space $\widetilde{\mathcal{X}}=\mathcal{V}\cup\mathcal{U}\cup\mathcal{X}$) which is supported on configurations in $\Lambda_N$, and is proportional to $\mathcal{W}_{N,M}(\cdot|y)$ (defined in \eqref{eq:W-measure initial}) there.
\end{thm}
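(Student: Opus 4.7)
The plan is to mimic the proof of Theorem \ref{thm:L-ensemble empty} and Corollary \ref{cor:cond prob kernel empty}, incorporating the additional block structure coming from the initial condition $y$. The argument splits naturally into two tasks: (i) establishing the identity $\Pf[L|_{\mathcal{V}\cup\mathcal{U}\cup A}]=\mathcal{W}_{N,M}(A|y)$ for $A\in\Lambda_N$ together with the vanishing $\Pf[L|_{\mathcal{V}\cup\mathcal{U}\cup A}]=0$ for $A\notin\Lambda_N$; and (ii) deducing the Pfaffian point process structure from Proposition \ref{prop:cond pf L is PPP} by a truncation-and-limit argument.

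For task (i), I would start from the factorized form of $\mathcal{W}_{N,M}$ in \eqref{eq:W-measure initial} and apply the Pfaffian--determinant identity to each of the $N$ determinant factors, exactly as in the empty-case proof, to produce a block Pfaffian whose off-diagonal $V_k$ blocks link successive rows of the triangular array. The new feature is the terminal $(N+M)\times(N+M)$ Pfaffian factor
\begin{equation}
    \pf\begin{pmatrix}\Psi(z_i^N,z_j^N)&\Xi_{N-k}(z_i^N)\\-\Xi_{N-k}(z_j^N)&0\end{pmatrix},
\end{equation}
whose entries I would place in the $(2,2)$ coordinates of the $2\times 2$ blocks indexed by pairs drawn from the $N$-th fiber of $A$ together with $\mathcal{U}$. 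I would then prepend the skew-symmetric block $\Lambda_{N+M}$ of \eqref{eq:U-mat defn} on a duplicated virtual space; this has Pfaffian $1$ because $N+M$ is even, so it does not change the value. Simultaneous row/column permutations analogous to those in the empty-case proof reduce the Pfaffian to the canonical form $\Pf[L|_{\mathcal{V}\cup\mathcal{U}\cup A}]$: the $\Psi$ contributions correspond to $L((N,x),(N,y))$, the $\Xi$ contributions to $L((N,x),\vir_{N+\ell})$ given in \eqref{eq:L-ensemble initial}, the $\phi_k$ entries to the blocks $L(\vir_i,(j,x))$ and $L((i,x_1),(j,x_2))$ from \eqref{eq:L-ensemble empty}, and the $\Lambda_{N+M}$ bookkeeping to $L(\vir_i,\vir_j)$. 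The vanishing claim for $A\notin\Lambda_N$ then follows verbatim from the two arguments of the empty case: an empty physical fiber $i\leq N$ produces a zero row of $L|_{\mathcal{V}\cup\mathcal{U}\cup A}$, while a fiber carrying more than its quota yields a pair of linearly dependent rows, and the $\mathcal{U}$ indices play no role in either observation.

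For task (ii), I would apply Proposition \ref{prop:cond pf L is PPP} on the truncated state space $\mathcal{X}_m=\{1,\dots,N\}\times\{1,\dots,m\}$, obtaining a Pfaffian point process with correlation kernel $J_{\mathcal{X}_m}+(J_{\mathcal{X}_m}+L)^{-1}|_{\mathcal{X}_m\times\mathcal{X}_m}$, where the inverse is now computed on the enlarged finite space $\mathcal{V}\cup\mathcal{U}\cup\mathcal{X}_m$, and then send $m\to\infty$ exactly as in the proof of Corollary \ref{cor:cond prob kernel empty}. Boundedness of $L$ as a bounded operator, needed to justify the operator-norm convergence $J_{\mathcal{X}_m}+L\to J_\mathcal{X}+L$, is verified from the explicit contour integral representations of $\Psi$ and $\Xi$. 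The support on $\Lambda_N$ and the proportionality to $\mathcal{W}_{N,M}(\cdot|y)$ then follow immediately from task (i), the normalization constant $\pf[J_\mathcal{X}+L]$ being identified with $\sum_{\bm z\in\Lambda_N}\mathcal{W}_{N,M}(\bm z|y)$, which by \eqref{eq:TASEP probab WN} equals $\mathrm{e}^{\alpha t}\mathbb{P}(|X_t|=N\mid X_0=y)$.

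The hard part is the sign bookkeeping in task (i): the $M$ extra virtual coordinates in $\mathcal{U}$ alter the combinatorics of the simultaneous row/column permutations relative to the empty case, and one must verify carefully that the accumulated sign still reduces to $+1$. The hypothesis that $N+M$ is even is essential both for $\pf[\Lambda_{N+M}]=1$ and for this final cancellation, whereas the restriction $y_M>N-M+1$ enters only indirectly, via Lemma \ref{lm:F inital coord q to 0}, where it guarantees the specific Pfaffian form of the $\Xi$ block that makes the entire derivation go through.
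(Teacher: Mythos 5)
Your proposal follows essentially the same route as the paper: rewrite $\mathcal{W}_{N,M}$ via the block Pfaffian identity, introduce duplicated virtual coordinates (including two copies of $\mathcal{U}$), prepend $\Lambda_{N+M}$, reorder rows and columns to read off $L$, reuse the empty-case argument (with rows padded by zeros on the $\mathcal{U}$ coordinates) for the vanishing when $A\notin\Lambda_N$, and then invoke Proposition \ref{prop:cond pf L is PPP} on the truncated spaces $\mathcal{X}_m$ and pass to the limit as in Corollary \ref{cor:cond prob kernel empty}.

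One detail in what you call the hard part is off: the accumulated sign from the simultaneous row/column permutations does \emph{not} always reduce to $+1$. The paper's careful count gives a prefactor $(-1)^{\binom{N}{3}}$, and since $N$ may now be odd (when $M$ is odd) this can equal $-1$ (e.g.\ $N=3$), so one only obtains $\Pf\bigl[L\big|_{\mathcal{V}\cup\mathcal{U}\cup A}\bigr]=(-1)^{\binom{N}{3}}\,\mathcal{W}_{N,M}(A|y)$ rather than exact equality; the evenness of $N+M$ guarantees a well-defined Pfaffian and $\pf[\Lambda_{N+M}]=1$, not the sign cancellation you describe. This is harmless for the theorem, which only asserts proportionality (the global sign cancels in the normalized measure and in the correlation kernel), but your claim of equality and your attribution of the sign cancellation to $N+M$ being even should be corrected.
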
 

\begin{proof}
    We will keep the notation employed in the proof of Theorem \ref{thm:L-ensemble empty}.
    We also let $\mathcal{U}'$ and $\mathcal{U}''$ be two copies of the new set of virtual coordinates $\mathcal{U}$ and
    introduce a kernel $\widetilde{\Xi}$ acting on $\left(\mathcal{Z}^{(N)}\right)''\times \mathcal{U}'$ as
    \[\widetilde{\Xi}\left((z_i^N)'',\vir'_{N+\ell}\right) := \Xi_{N-\ell}\left(z_i^N\right),\]
    for each $1\leq i\leq N$ and $1\leq \ell\leq M$, where the function $\Xi_{N-\ell}$ is defined by \eqref{eq:Xi func defn GT pattern} (with explicit dependence on the initial coordinate $y_\ell$).
    
    Now, beginning with the measure \eqref{eq:W-measure initial}, we follow the same procedure as the proof of Theorem \ref{thm:L-ensemble empty}. This leads us to the Pfaffian expression (analogous to \eqref{eq:measure proof 4})
	\begin{equation}
        \label{eq:W-measure initial proof 1}
		\mathcal{W}_{N,M}(\bm{z}|y) = \pf \left[
		\begin{array}{cc|ccccccc|c}
			\Lambda_{N+M} & 0 & 0 & 0 & 0 & \cdots & 0 & 0 & 0 & 0 \\
			0 & 0 & E_0 & 0 & E_1 & \cdots & 0 & E_{N-1} & 0 & 0\\[1pt]
                \hline
                \rule{0pt}{13pt}
			0 & -E_0^T & 0 & 0 & 0 & \cdots & 0 & 0 & 0 & 0\\
			0 & 0 & 0 & 0 & W_1 & \cdots & 0 & 0 & 0 & 0\\
			0 & -E_1^T & 0 & -W_1^T & 0 & \cdots & 0 & 0 & 0 & 0\\
			\vdots & \vdots & \vdots & \vdots & \vdots & \ddots & \vdots & \vdots & \vdots & \vdots\\
			0 & 0 & 0 & 0 & 0 & \cdots & 0 & W_{N-1} & 0 & 0\\
			0 & -E_{N-1}^T & 0 & 0 & 0 & \cdots & -W_{N-1}^T & 0 & 0 & 0\\
			0 & 0 & 0 & 0 & 0 & \cdots & 0 & 0 & {\Psi} & \widetilde{\Xi}\\[1pt]
                \hline
                \rule{0pt}{13pt}
			0 & 0 & 0 & 0 & 0 & \cdots & 0 & 0 & -\widetilde{\Xi}^T & 0
		\end{array}
		\right](\widetilde{\bm z},\widetilde{\bm z}),
	\end{equation}
    whose rows and columns are indexed by the space
    \[\Big(\mathcal{V}'\cup \mathcal{U}'\cup \mathcal{V}''\Big)\cup \Big(\left(\mathcal{Z}^{(1)}\right)'\cup \left(\mathcal{Z}^{(1)}\right)'' \cup \cdots \cup\left(\mathcal{Z}^{(N)}\right)'\cup \left(\mathcal{Z}^{(N)}\right)''\Big)\cup \mathcal{U}''.\]
    Here we are taking $\Lambda_{N+M}$ to be indexed by $\mathcal{V}'\cup \mathcal{U}'$ in the obvious way. Now, we move the $\mathcal{U}''$ rows and columns to end up with a matrix indexed by 
    \begin{equation}\label{eq:tilde X reordered}
        \Big(\mathcal{V}'\cup \mathcal{U}'\cup \mathcal{V}''\cup \mathcal{U}''\Big)\cup \Big(\left(\mathcal{Z}^{(1)}\right)'\cup \left(\mathcal{Z}^{(1)}\right)'' \cup \cdots \cup\left(\mathcal{Z}^{(N)}\right)'\cup \left(\mathcal{Z}^{(N)}\right)''\Big),
    \end{equation}
    leading to 
    \begin{equation}
        \label{eq:W-measure initial proof 2}  
        \mathcal{W}_{N,M}(\bm{z}|y) = \pf\left[\begin{array}{c:cc|cccccccc}
			\Lambda_{N+M} & 0 & 0 & 0 & 0 & 0 & \cdots & 0 & 0 & 0\\
			\hdashline
			     &  &  &  &  &  &  & &  &  & \\[-12pt]
			0 & 0 & 0 & E_0 & 0 & E_1 & \cdots & 0 & E_{N-1} & 0\\
			0 & 0 & 0 & 0 & 0 & 0 &  \cdots & 0 & 0 & -\widetilde{\Xi}^T\\[1pt]
                \hline
                \rule{0pt}{13pt}
			0 & -E_0^T & 0 & 0 & 0 & 0 & \cdots & 0 & 0 & 0\\
			0 & 0 & 0 & 0 & 0 & W_1 & \cdots & 0 & 0 & 0\\
			0 & -E_1^T & 0 & 0 & -W_1^T & 0 & \cdots & 0 & 0 & 0 \\
			\vdots & \vdots  & \vdots & \vdots & \vdots & \vdots & \ddots & \vdots & \vdots & \vdots\\
            0 & 0 & 0 & 0 & 0 & 0 & \cdots & 0 & W_{N-1} & 0 \\
            0 & -E_{N-1}^T & 0 & 0 & 0 & 0 & \cdots & -W_{N-1}^T & 0 & 0 \\
            0 & 0 & \widetilde{\Xi} & 0 & 0 & 0 & \cdots & 0 & 0 & {\Psi} \\
	   \end{array}\right](\widetilde{\bm z},\widetilde{\bm z}).
    \end{equation}
    The arguments and evaluations of the $L$-ensemble \eqref{eq:L-ensemble empty} and \eqref{eq:L-ensemble initial} can be identified from \eqref{eq:W-measure initial proof 2} in the same way as in the proof of Theorem \ref{thm:L-ensemble empty}.
    A careful counting of the number of interchanges of rows and columns gives the same prefactor of $(-1)^{\binom{N}{3}}$, which may now not equal 1 since $N$ may be odd. This shows that, for $A\in\Lambda_N$, 
    \[\Pf\left[L\big|_{\mathcal{V}\cup\mathcal{U}\cup A}\right]=(-1)^{\binom{N}{3}}\mathcal{W}_{N,M}(A|y).\]
    All that is left to prove is that if $A\in2^\mathcal{X}\setminus\Lambda_N$ then $\Pf\left[L\big|_{\mathcal{V}\cup\mathcal{U}\cup A}\right]=0$.
    But this follows from exactly the same argument as the one used in Theorem \ref{thm:L-ensemble empty} (noting that the relevant rows in the argument have been augmented with $0$'s on the $\mathcal{U}'\cup\mathcal{U}''$ coordinates).
\end{proof}

\section{Conditional joint distributions}\label{sec:conditional}

We begin this section by explaining how \eqref{eq:pfaffian gap probab} can be used to extract information about the finite-dimensional distributions of half-space TASEP.
Consider the case of general initial conditions, and fix $1\leq p_1<\dotsm<p_m\leq N$ and $a_1,\dotsc,a_m\geq0$.
The identity \eqref{eq:TASEP probab WN} implies that
\begin{align*}
    \mathbb{P}\left[\bigcap_{k=1}^m \left\{X_{t}(p_k) > a_k\right\},\, \abs{X_t} = N\right]
    &=\mathrm{e}^{-\alpha t}\sum_{x_1>\dotsm>x_N\geq 1: \atop x_{p_k}>a_k,\,k=1,\dotsc,m} \sum_{\bm{z}\in\Lambda_N(x)} \mathcal{W}_{N,M}(\bm{z}|y)\\
    &=\mathrm{e}^{-\alpha t}\sum_{x_1>\dotsm>x_N\geq 1} \sum_{\bm{z}\in\Lambda_N(x)} \prod_{k=1}^m\mathbbm{1}_{z_1^{p_k}>a_k} \mathcal{W}_{N,M}(\bm{z}|y).
\end{align*}
Using Theorem \ref{thm:L-ensemble initial}, this can be rewritten as
\[\mathbb{P}\left[\bigcap_{k=1}^m \left\{X_{t}(p_k) > a_k\right\},\, \abs{X_t} = N\right]
=c_N \sum_{A\in2^\mathcal{X}\setminus\mathcal{B}}\frac{\Pf\big[L\big|_{\mathcal{V}\cup\mathcal{U}\cup A}\big]}{\Pf[J_\mathcal{X}+L]}\bigg|_{v_1=\dotsm=v_N=1}\]
for some $c_N\neq0$ and for $\mathcal{B}=\{B\in2^\mathcal{X}\!:(p_k,x)\in B\text{ for some $k=1,...,m$ and some $x\leq a_k$}\}$.
The summand on the right hand side corresponds to the measure $\mu_{N,M}(A|y)$, which defines a Pfaffian point process thanks to the same theorem, with the correlation kernel $K$ given in \eqref{eq:corr ker initial}.
So \eqref{eq:pfaffian gap probab} implies that
\begin{equation}
        \mathbb{P}\left[\bigcap_{k=1}^m \left\{X_{t}(p_k) > a_k\right\},\, \abs{X_t} = N\right]
        =c_N\Pf\!\left(J-K\right)_{\ell^2(\mathcal{B})}
        =c_N\Pf\!\left(J-\bar\chi_aK\bar\chi_a\right)_{\ell^2(\{p_1,\dotsc,p_k)\times\mathbb{N})},      
  \label{eq: TASEP fd intersect}
\end{equation}
where
\begin{equation}\label{eq: barchi projection}
    \bar\chi_a(p,x)=\mathbbm{1}_{x\leq a_p}.
\end{equation}
Setting $a_k=0$ for all $k$, the identity becomes $\mathbb{P}\left[\abs{X_t} = N\right]=c_N\Pf(J)_{\ell^2(\{p_1,\dotsc,p_k)\times\mathbb{N})}=c_N$\footnote{We note that the inverse of the normalization coefficient $c_N$ coincides with result of Corollary \ref{cor:boundary current schutz pf}}. 
This implies that the Fredholm Pfaffian on the right hand side of \eqref{eq: TASEP fd intersect}, which comes from representing half-space TASEP as a conditional Pfaffian $L$-ensemble, computes the finite-dimensional distributions of the system conditioned on the number of particles at time $t$:
\begin{equation}\label{eq: TASEP gap Pfaffian}
    \mathbb{P}\left[\bigcap_{k=1}^m \left\{X_{t}(p_k) > a_k\right\} \given \abs{X_t} = N\right]
    =\Pf\!\left(J-\bar\chi_aK\bar\chi_a\right)_{\ell^2(\{p_1,\dotsc,p_k)\times\mathbb{N})}.
\end{equation}
The goal of the rest of this section is to obtain an expression for the kernel $K$ appearing on the right hand side of \eqref{eq: TASEP gap Pfaffian}.

\subsection{Preliminaries}

In order to explicitly evaluate the kernel, we must first introduce some notation and some preliminary computations.
Given two kernels $A,B$ we will denote their \emph{half-space convolution} as 
\[(A\conv B)(x,y)=\sum_{\nu\geq1}A(x,\nu)B(\nu,y).\]
Similarly, if $A$ is a kernel and $f$ a function of a single variable, we write
\[(A\conv f)(x)=\sum_{\nu\geq1}A(x,\nu)f(\nu).\]
We will also need to employ a convolution over the whole integers: we will denote the \emph{full-space convolution} as
\[(A\fconv B)(x,y)=\sum_{\nu\in\mathbb{Z}}A(x,\nu)B(\nu,y).\]

We use the half-space convolution to define the following family of kernels on $\mathbb{N}\times\mathbb{N}$: for $1\leq k,\ell,\leq N$,
\begin{equation}
    \label{eq:little phi convolved}
    \phi_{(k,\ell]}(x,y) = \begin{cases}
	(\phi_{k+1} \conv  \cdots \conv  \phi_{\ell})(x,y) & \text{ for }k<\ell, \\
	\id(x,y) & \text{ for } k=\ell, \\
	0 & \text{ for }k>\ell.
	\end{cases}
\end{equation}
These kernels play an important role in the arguments that follow and we will now demonstrate some of their properties. For $k<\ell$ the kernel \eqref{eq:little phi convolved} can be evaluated explicitly as
\begin{equation}
    \label{eq:little phi negative conv proof}
    \phi_{(k,\ell]}(x,y) = \oint_{\gamma_0}\frac{\dd w}{2\pi\ii}\frac{w^{x-y-1}}{(1-w)^{\ell-k}} = \binom{y-x+\ell-k-1}{\ell-k-1} \mathbbm{1}_{x\le y},
\end{equation}
where $\gamma_0$ encloses only the singularity at the origin $w=0$.
This follows from a simple computation which is in fact more general, and which we state as a lemma (see also \cite{matetski_tasep_2023}, Lemma 5.6, for a full space version): 

\begin{lm}
Let $S_1$ and $S_2$ be two kernels defined on $\mathbb{N}\times\mathbb{N}$, which are given by
\[
S_i(x,y)=\frac1{2\pi\ii}\oint_{\gamma}\dd w\,w^{x-y-1}g_i(w), 
\]
where $g_1,g_2$ are complex functions which are both analytic on the disk $\{z\in\mathbb{C}\!:|z|<r\}$ for some $r>0$ and $\gamma$ is any simple closed contour contained in that disk.
Then the sum defining the convolution $S_1\conv S_2$ is absolutely convergent and
\begin{equation}\label{eq: S1S2 conv}
S_1\conv S_2(x,y)=\frac1{2\pi\ii}\oint_{\gamma}\dd w\,w^{x-y-1}g_1(w)g_2(w)
\end{equation}
for all $x,y\in\mathbb{N}$.
\end{lm}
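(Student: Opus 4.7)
The approach is to represent both $S_i$ as contour integrals over two nested deformations of $\gamma$ inside the disk $\{|z|<r\}$, interchange the sum in $\nu$ with the resulting double integral using a geometric-series estimate, and then collapse the double integral via a single residue.

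Concretely, since $g_1,g_2$ are analytic on $\{|z|<r\}$, the integrand $w\mapsto w^{x-\nu-1}g_i(w)$ has at most a pole at $w=0$ in that disk, so I may assume without loss of generality that $\gamma$ encloses the origin (otherwise both $S_i$ vanish identically and the identity is trivial). I would deform $\gamma$ to two simple closed contours $\gamma_1,\gamma_2$, both lying in the disk and enclosing $0$, with $\gamma_2$ strictly inside $\gamma_1$. Writing $r_i=\max_{w\in\gamma_i}|w|$ so that $r_2<r_1$, the standard ML-estimate gives $|S_1(x,\nu)S_2(\nu,y)|\leq C\,r_1^{x-\nu-1}r_2^{\nu-y-1}$ for some constant $C$ depending on $g_1,g_2,x,y$; since $\sum_{\nu\geq 1}(r_2/r_1)^\nu<\infty$, the sum defining the convolution is absolutely convergent, and Fubini justifies exchanging it with the double integral.

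After exchange, the inner geometric series sums uniformly on $\gamma_1\times\gamma_2$ to $\sum_{\nu\geq 1}(u/w)^\nu=u/(w-u)$, giving
\begin{equation*}
(S_1\conv S_2)(x,y)=\frac{1}{(2\pi\ii)^2}\oint_{\gamma_1}\oint_{\gamma_2}dw\,du\,\frac{w^{x-1}u^{-y}}{w-u}g_1(w)g_2(u).
\end{equation*}
The $w$-integrand is analytic in $\{|z|<r\}$ apart from a simple pole at $w=u$ (the factor $w^{x-1}$ is entire because $x\geq 1$, and $g_1$ is analytic in the disk), and this pole lies inside $\gamma_1$ since $|u|=r_2<r_1$. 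The residue there is $u^{x-1}g_1(u)$, so the double integral collapses to $\frac{1}{2\pi\ii}\oint_{\gamma_2}du\,u^{x-y-1}g_1(u)g_2(u)$; as $g_1g_2$ is analytic on the disk, $\gamma_2$ may finally be deformed back to $\gamma$, which is exactly \eqref{eq: S1S2 conv}.

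There is no serious obstacle. The only delicate point is the existence of the nested contours $\gamma_1,\gamma_2$ inside $\{|z|<r\}$ enclosing the origin with $\gamma_2$ strictly inside $\gamma_1$, which is immediate from the topology of the disk together with the analyticity hypothesis on $g_1,g_2$. The hypothesis $x\geq 1$ is used only to ensure that $w^{x-1}$ contributes no pole at the origin, so that the $w$-residue calculation picks up a single contribution from $w=u$.
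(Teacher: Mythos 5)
Your argument is correct and is essentially the paper's own proof: expand the convolution into a double contour integral over nested contours, sum the geometric series to produce the factor $w^{x-1}u^{-y}/(w-u)$, and collapse the double integral via the residue at $w=u$, using $x\geq1$ (and analyticity of $g_1$) to rule out a pole at the origin, then deform back to $\gamma$. The only nit is that your bound $r_1^{x-\nu-1}$ with $r_1=\max_{w\in\gamma_1}|w|$ goes the wrong way once $x-\nu-1<0$; take the nested contours to be circles centered at the origin (equivalently, require $\max_{u\in\gamma_2}|u|<\min_{w\in\gamma_1}|w|$), which is exactly the condition the uniform geometric-series summation needs and matches the paper's choice $|w|>|v|$.
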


\begin{proof}
We have
\begin{align}
	S_1\conv S_2(x,y)&=\sum_{z\geq1}\frac1{(2\pi\ii)^2}\oint_{\gamma_1}\dd w\oint_{\gamma_2}\dd v\,w^{x-z-1}v^{z-y-1}g_1(w)g_2(v)\\
	&=\frac1{(2\pi\ii)^2}\oint_{\gamma_1}\dd w\oint_{\gamma_2}\dd v\,\frac{w^{x-1}v^{-y}}{w-v}g_1(w)g_2(v)
	=\frac1{2\pi\ii}\oint_{\gamma}\dd v\,v^{x-y-1}g_1(v)g_2(v),
\end{align}
where in the second equality the contours are chosen so that $|w|>|v|$ and in the third one we have computed the residue at $w=v$ (note that there is no residue at $w=0$ because $x\geq1$).
\end{proof}

The formula \eqref{eq:little phi negative conv proof} follows from this and the simple fact that $\phi_i(x,y)=\oint_{\gamma_0}\frac{\dd w}{2\pi\ii}\frac{w^{x-y-1}}{1-w}$ (recall that $\phi_i(x,y)$ actually does not depend on $i$, see the footnote in page \pageref{ft:phi i dependence}).

We introduce another family of kernels on $\mathbb{N}\times\mathbb{N}$, defined for $1\leq j\leq m\leq N$ as
\begin{equation}
    \label{eq:little phi negative}
    \phi_{-(j,m]}(x,y) = \oint_{\gamma_0}\frac{\dd w}{2\pi\ii}w^{x-y-1}(1-w)^{m-j} = (-1)^{y-x}\binom{m-j}{y-x} \mathbbm{1}_{x\le y},
\end{equation}
where $\gamma_0$ again encloses only the singularity at the origin $w=0$. Thanks to \eqref{eq: S1S2 conv}, the kernels \eqref{eq:little phi negative} satisfy the following annihilation relation with those defined by \eqref{eq:little phi convolved}:
\begin{equation}
        \label{eq:little phi negative conv}
        \phi_{(\ell,N]}\conv \phi_{-(j,N]} = \begin{cases}
            \phi_{(\ell,j]} & \text{if}\quad \ell\leq j,\\
            \phi_{-(j,\ell]} & \text{if}\quad \ell>j.
        \end{cases}
\end{equation}
We extend the notation $\phi_{(k,\ell]}$ to $\phi_{[k,\ell]}=\phi_{(k-1,\ell]}$ and $\phi_{[k,\ell)}=\phi_{[k,\ell-1]}$, and we also extend $\phi_{-(k,\ell]}$ analogously.

The kernel $\phi_{[k,\ell]}$, which is originally defined on $\mathbb{N}\times\mathbb{N}$, has a natural extension to virtual coordinates: we define it on $\{\vir_k\}\times\mathbb{N}$ by
\begin{equation}\label{eq:pre phi contour virtual}
    \phi_{[k,\ell]}(\vir_{k},x) = \phi_{k}\conv \phi_{(k,\ell]}(\vir_k,x) = \sum_{\nu\geq1}\phi_{(k,\ell]}(\nu,x)
    =\frac{1}{2\pi\ii}\oint_\gamma \dd w\, \frac{w^{-x}}{(1-w)^{\ell-k+1}}\mathbbm{1}_{k\leq\ell},
\end{equation}
where we computed again a geometric sum, with the contour $\gamma$ chosen so that $|w|<1$.

\begin{remark}\label{rem:phi flip}
    The identity \eqref{eq:pre phi contour virtual} is the reason why it is useful to flip the indicator functions in \eqref{eq:GT det identity}.
    In fact, it is straightforward to check that if the $\phi_k(x,y)$'s were defined as $\mathbbm{1}_{x>y}$ instead of $\mathbbm{1}_{x\leq y}$, the convolution after the second equal sign above would be divergent.
    An alternative, though more cumbersome solution in our setting, would be to introduce additional parameters to our measure which, under the right ordering, make the convolutions convergent, and then to extend the final answer analytically to the original situation; see e.g. \cite{borodin_large_2008,matetski_tasep_2023}, where this is done for full-space models, and where the additional parameters play the role of particle speeds.
\end{remark}

For $x\geq1$ and $k\leq\ell$ the integral in \eqref{eq:pre phi contour virtual} can be computed explicitly as $\binom{x+\ell-k-1}{\ell-k}$, which is a polynomial in $x$ on the positive integers of degree $\ell-k$.
It will be convenient for us to extend the definition of $\phi_{[k,\ell]}(\vir_{k},x)$ to a polynomial in all $x\in\mathbb{Z}$, which may be done as follows:
\begin{equation}\label{eq:phi contour virtual}
	\phi_{[k,\ell]}(\vir_{k},x) 
    = \frac{1}{2\pi\ii}\oint_\gamma \dd w\, \frac{(1-w)^{-x}}{w^{\ell-k+1}}=\frac{(x)_{\ell-k}}{\Gamma(\ell-k+1)},
\end{equation}
where $(x)_n=x(x+1)\dotsm(x+n-1)$ is the Pochhammer symbol (with $(x)_0=1$ and $(x)_n=0$ for $n<0$).
Note that this extension coincides with the right hand side of \eqref{eq:pre phi contour virtual} for all $x\geq1$ and all $k,\ell$, that it vanishes for $k>\ell$, and that it defines a polynomial of degree $\ell-k$ for $k\leq \ell$.

One reason why the extension \eqref{eq:phi contour virtual} will be useful is the following full-space convolution identity\footnote{This identity fails in general if the full-space convolution $\diamond$ is replaced by its half-space version $\conv$; for example, one has $\phi_{[N-1,N]}\conv\phi_{-(N-1,N]}(\vir_{N-1},x)=\mathbbm{1}_{x=1}$, while $\phi_{[N,N-1]}(\vir_N,x)=0$ by definition.
It is not strictly necessary to convolve over all integers to get this identity in our setting, where we are only interested in evaluating at $x\geq1$: convolving over $\mathbb{Z}_{>-N}$ would do.
But since it makes no difference and it makes the presentation simpler, we stick to the full-space convolution in \eqref{eq:little phi negative}.
}
\begin{equation}\label{eq:little phi vir negative conv}
    \phi_{[\ell,N]}\fconv\phi_{-(j,N]}(\vir_\ell,x)=\phi_{[\ell,j]}(\vir_\ell,x)
\end{equation}
for every $j,\ell=1,\dotsc,N$ and any $x\in\mathbb{Z}$, and where $\phi_{-(j,N]}(x,y)$ is being extended to a kernel in $\mathbb{Z}\times\mathbb{Z}$ straightforwardly using \eqref{eq:little phi negative}.
To prove \eqref{eq:little phi vir negative conv}, compute the convolution over $\mathbb{N}$ and over $\mathbb{Z}\setminus\mathbb{N}$ separately through geometric sums to get $\frac1{(2\pi\ii)^2}\oint\!\oint_{|u|<|1-w|}\dd w\,\dd u\,\frac{u^{-x}(u-1)^{N-j}}{w^{N-j+1}(w+u-1)}-\frac1{(2\pi\ii)^2}\oint\!\oint_{|u|>|1-w|}\dd w\,\dd u\,\frac{u^{-x}(u-1)^{N-j}}{w^{N-j+1}(w+u-1)}$, and then shrink the $u$ contour in the second term to cancel the first, picking up a residue a $u=1-w$ which gives $\phi_{[\ell,j]}(\vir_\ell,x)$.

The functions \eqref{eq:little phi convolved} also allow for a natural extension of the kernel \eqref{eq:Psi-kernel}:
\begin{equation}
    \label{eq:Psi-phi mat defn}
    \Psi^{N}_{(k,\ell)} = \left(\phi_{(k,N]}\conv \Psi \conv\left(\phi_{(\ell,N]}\right)^T\right),
\end{equation}
where $\left(\phi_{(\ell,N]}\right)^T(x,y) = \phi_{(\ell,N]} (y,x)$. Noting the form of the functions \eqref{eq:little phi negative conv proof}, the convolutions in \eqref{eq:Psi-phi mat defn} may be explicitly calculated as $\sum_{\nu\geq x}\sum_{\nu'\geq y}\phi_{(k,N]}(x,\nu)\Psi(\nu,\nu')\phi_{(\ell,N]}(\nu',y)$, which can be expressed explicitly as
\begin{multline}\label{eq:PsiNkl contour}
\Psi^{N}_{(k,\ell)}(x,y) = \frac{\alpha^2}{(2\pi\ii)^4}\oint\!\!\!\oint\!\!\!\oint\!\!\!\oint\!\dd u\,\dd\tilde u\,\dd w\,\dd\tilde w\, \frac{1}{(1-\tilde w)^{N-k}}\frac{1}{(1-\tilde u)^{N-\ell}}\\
\times\frac{u-w}{1-u-w}\frac {w^{2-x}}{1-w\tilde w}\frac {u^{2-y}}{1-u\tilde u}\frac{\mathrm{e}^{t(w-1)}}{(w-\alpha)(w-1)} \frac{\mathrm{e}^{t(u-1)}}{(u-\alpha)(u-1)},
\end{multline}
whose contours satisfy $|u \tilde u|>1$ and $|w\tilde w|>1$ so that the $\tilde u$ and $\tilde w$ contours include the singularities at $\tilde w=1/w$ and $\tilde u=1/u$ while the $u$ and $w$ contours need to be large enough and include $0$, $1$, $\alpha$ and $1-\alpha$. The simple poles at $\tilde w=1/w$ and $\tilde u=1/u$ can be evaluated to give
\begin{equation}\label{eq:PsiNkl contour2}
\Psi^{N}_{(k,\ell)}(x,y) = \frac{\alpha^2}{(2\pi\ii)^2}\oint\!\!\!\oint\!\dd u\,\dd w\, \frac{u-w}{1-u-w} \frac{w^{N-k+1-x}}{(w-1)^{N-k+1}}\frac{u^{N-l+1-y}}{(u-1)^{N-\ell+1}}  \frac{\mathrm{e}^{t(w-1)}}{(w-\alpha)(w-1)} \frac{\mathrm{e}^{t(u-1)}}{(u-\alpha)(u-1)}.
\end{equation}
The kernel $\Psi^{N}_{(k,\ell)}$ is defined on $\mathbb{N}\times\mathbb{N}$, but we can extend the definition naturally to virtual variables through
\begin{equation}\label{eq:PsiNkl extended}
\begin{gathered}
    \Psi^{N}_{(k,\ell]}(x,\vir_\ell) = \left(\phi_{(k,N]}\conv \Psi \conv\left(\phi_{[\ell,N]}\right)^T\right)(x,\vir_\ell),\quad
    \Psi^{N}_{[k,\ell)}(\vir_k,y) = \left(\phi_{[k,N]}\conv \Psi \conv\left(\phi_{(\ell,N]}\right)^T\right)(\vir_k,y),\\
    \Psi^{N}_{[k,\ell]}(\vir_k,\vir_\ell) = \left(\phi_{[k,N]}\conv \Psi \conv\left(\phi_{[\ell,N]}\right)^T\right)(\vir_k,\vir_\ell).    
\end{gathered}
\end{equation}

\subsection{Empty initial conditions}
\label{se:EmptyIC}

We are ready to state our main result on the conditional multipoint distribution of half-space TASEP.
We do it first for empty initial data.

\begin{thm}[Conditional multipoint distribution for empty initial condition]
    \label{thm:cond prob Fredholm pf empty}
    Let $N>0$ be a fixed even integer and let $\Phi_0(x),\dots,\Phi_{N-1}(x)$ be a family of polynomials where $\Phi_k$ is of degree $k$. Let these polynomials satisfy the following skew-biorthogonality relations:
    \begin{align}
        \begin{split}
            \label{eq:Big Phi skew-biorhtog rels empty}
            \sum_{x,y=1}^\infty \Phi_{2i}(x) \Psi(x,y) \Phi_{2j}(y) & = 0, \\
            \sum_{x,y=1}^\infty \Phi_{2i+1}(x) \Psi(x,y) \Phi_{2j+1}(y) & = 0, \\
            \sum_{x,y=1}^\infty \Phi_{2i}(x) \Psi(x,y) \Phi_{2j+1}(y) & = -\delta_{i,j},
        \end{split}
    \end{align}
    for all $0\leq i,j< N/2$. These polynomials are specified uniquely up to $N$ free parameters which play no role in the sequel (see Remark \ref{rem:skew nonuniq initial}). 
    Then for a fixed integer $m\geq 0$, let $\{p_1,\dots,p_m\}\subseteq\{1,\dots,N\}$ be indices denoting particle labels such that $p_1<p_2<\cdots<p_m$. The half-line TASEP with empty initial conditions has conditional joint distribution given by a Fredholm Pfaffian of the form
    \begin{equation}
        \label{eq:cond joint dist TASEP empty}
        \mathbb{P}\left[\bigcap_{k=1}^m \left\{X_{t}(p_k) > a_k\right\} \given \abs{X_t} = N\right] = \pf\left(J - \bar\chi_a K\bar\chi_a\right)_{\ell^2(\{p_1,\dots,p_m\}\times \mathbb{N})},
    \end{equation}
    where $\bar\chi_a(p,x) = \mathbbm{1}_{x\leq a_p}$ as in \eqref{eq: barchi projection}, where $a_1,\dotsc,a_k\geq0$.
    The kernel $K$ can be expressed as the sum of two terms
    \begin{equation}\label{eq: K K0 KA}
        K = K^0 - K^\mathsf{A}
    \end{equation}
    which have $2\times2$ block matrix entries given as follows:
    \begin{equation}
            \label{eq:cond prob kernel empty 0}
        K^0(i,x_1;j,x_2) = \left[\begin{array}{cc}
        \Psi_{(i,j)} & -\mathbbm{1}_{i<j}\phi_{(i,j]} \\[2pt]
        \mathbbm{1}_{j<i}\phi_{(j,i]}^T & 0
    \end{array}\right](x_1,x_2)
    \end{equation}
    and
    \begin{equation}
        \label{eq:cond prob kernel empty A}
        \text{$K^\mathsf{A}$}(i,x_1;j,x_2) = \left[\begin{array}{cc} \Psi_{(i,N)}\conv\mathcal{T}_N \conv\Psi_{(N,j)} & -\Psi_{(i,N)}\conv\mathcal{T}_N \fconv\phi_{-(j,N]}\\[3pt]
        \phi_{-(i,N]}^T\fconv\mathcal{T}_N\conv\Psi_{(N,j)} & -\phi_{-(i,N]}^T\fconv\mathcal{T}_N\fconv\phi_{-(j,N]} \end{array}\right](x_1,x_2),
    \end{equation}
    with
    \begin{equation}
        \label{eq:cond prob proof empty T defn statement}
        \mathcal{T}_N(\nu_1,\nu_2) = \sum_{k=0}^{N/2-1}\Phi_{2k}(\nu_1)\Phi_{2k+1}(\nu_2) -\sum_{k=0}^{N/2-1}\Phi_{2k+1}(\nu_1)\Phi_{2k}(\nu_2).
    \end{equation}
\end{thm}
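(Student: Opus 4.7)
The starting point is the identity \eqref{eq: TASEP gap Pfaffian}, already established in the preceding discussion, which expresses the conditional joint distribution as a Fredholm Pfaffian of $J-\bar\chi_a K\bar\chi_a$ with $K$ the correlation kernel of the Pfaffian point process $\mu_N$ from Corollary \ref{cor:cond prob kernel empty}. By that corollary, $K = J_\mathcal{X} + (J_\mathcal{X}+L)^{-1}|_{\mathcal{X}\times\mathcal{X}}$, so the task reduces to computing this restricted inverse explicitly and identifying it with $K^0-K^\mathsf{A}$.

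The plan is to propose $K^0 - K^\mathsf{A}$ as an ansatz and verify that it inverts $J_\mathcal{X}+L$ when restricted to $\mathcal{X}\times\mathcal{X}$. The \emph{free} part $K^0$ in \eqref{eq:cond prob kernel empty 0} is dictated by the block-chain structure of $L$ along the fibers of $\mathcal{X}$: the $\phi_i$ entries of $L$ link adjacent fibers, and iterating the convolution identity \eqref{eq:little phi negative conv} produces the propagators $\phi_{(i,j]}$, while conjugating the top-level Pfaffian kernel $\Psi$ by these chains produces $\Psi_{(i,j)}\equiv\Psi^N_{(i,j)}$ as in \eqref{eq:PsiNkl contour}. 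This choice inverts the block structure away from the virtual coordinates but fails to enforce the boundary condition imposed by $\mathcal{V}$.

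The correction $K^\mathsf{A}$ is therefore a rank-$N$ term designed to cancel that failure. The functions $\phi_{[k,N]}(\vir_k,x)$ defined in \eqref{eq:phi contour virtual} are polynomials in $x$ of degree $N-k$, and together for $k=1,\dots,N$ they span the $N$-dimensional space of polynomials of degree less than $N$. To diagonalize the inversion restricted to this space I introduce the skew-biorthogonal family $\Phi_0,\dots,\Phi_{N-1}$ satisfying \eqref{eq:Big Phi skew-biorhtog rels empty}, whose existence will follow from a Pfaffian Gram--Schmidt procedure applied to the monomial basis; the $N$ free parameters correspond to the admissible shifts that preserve both \eqref{eq:Big Phi skew-biorhtog rels empty} and the kernel $\mathcal{T}_N$ from \eqref{eq:cond prob proof empty T defn statement}, as noted in Remark~\ref{rem:skew nonuniq initial}. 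The four blocks of $K^\mathsf{A}$ in \eqref{eq:cond prob kernel empty A} then arise as the four ways of convolving $\mathcal{T}_N$ with either $\Psi^N_{(i,N)}$ or $\phi^T_{-(i,N]}$ on each side, reflecting whether the correlation kernel couples through a physical or a virtual coordinate.

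To conclude, one verifies directly that $(J_\mathcal{X}+L)(K-J_\mathcal{X}) = \id$ on $\mathcal{X}\times\mathcal{X}$. This decomposes into a \emph{bulk} check for $K^0$, which reduces to the convolution identities \eqref{eq:little phi negative conv} and \eqref{eq:little phi vir negative conv} together with the defining convolutions of $\Psi^N_{(i,j)}$, and a \emph{boundary} check, in which the spurious contribution of $\phi_{[k,N]}(\vir_k,\cdot)$ coming from the virtual coordinates must be canceled precisely by $K^\mathsf{A}$; this is exactly what the skew-biorthogonality relations \eqref{eq:Big Phi skew-biorhtog rels empty} encode. The main obstacle is bookkeeping: the polynomial extensions of $\phi_{[k,\ell]}$ to virtual arguments, the switch between half-space convolutions $\conv$ and full-space convolutions $\fconv$ in the four blocks of $K^\mathsf{A}$, and the signs inherited from the antisymmetry of $\mathcal{T}_N$ all require careful tracking. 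Once these are accounted for, the algebraic identities collapse cleanly into the stated form of $K$.
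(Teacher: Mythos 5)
Your setup (starting from \eqref{eq: TASEP gap Pfaffian} and Corollary \ref{cor:cond prob kernel empty}) matches the paper, but the verification step you propose does not prove the statement. The kernel in Corollary \ref{cor:cond prob kernel empty} is $K=J_\mathcal{X}+\left(J_\mathcal{X}+L\right)^{-1}\big|_{\mathcal{X}\times\mathcal{X}}$, i.e.\ the \emph{restriction to $\mathcal{X}\times\mathcal{X}$ of the inverse computed on the whole space} $\widetilde{\mathcal{X}}=\mathcal{V}\cup\mathcal{X}$; this is not the inverse of the restriction $D=(J+L)\big|_{\mathcal{X}\times\mathcal{X}}$. Your check ``$(J_\mathcal{X}+L)(K-J_\mathcal{X})=\id$ on $\mathcal{X}\times\mathcal{X}$'' is exactly the statement $K-J_\mathcal{X}=D^{-1}$, which would force $K^\mathsf{A}=0$; with the claimed kernel one instead gets $D\conv(K-J_\mathcal{X})=\id-B^T\mathcal{H}^{-1}B\conv D^{-1}\neq\id$. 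Since $K-J_\mathcal{X}$ carries no rows indexed by the virtual coordinates, no identity posed purely on $\mathcal{X}\times\mathcal{X}$ can certify that it is the correct block of the full inverse: you must either exhibit all four blocks of $\left(J_\mathcal{X}+L\right)^{-1}$ on $\widetilde{\mathcal{X}}$ or, as the paper does, invoke the block-inversion formula of Lemma \ref{lm:2x2inverse} with Schur complement $\mathcal{H}=A+B\conv D^{-1}\conv B^T$, whose nontrivial part is the matrix $\mathcal{N}$ of \eqref{eq:cond prob proof empty N defn} built from the virtual coordinates. The correction $K^\mathsf{A}=D^{-1}\conv B^T\mathcal{H}^{-1}B\conv D^{-1}$ is precisely the price of having inverted on the enlarged space; your ``boundary check'' gestures at this but gives no mechanism replacing the Schur complement.

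Relatedly, two ingredients that make the paper's argument close are absent from your sketch. First, the identification
\[
\mathcal{T}_N(\nu_1,\nu_2)=\sum_{k,\ell=1}^N\left(\phi_{[k,N]}\right)^T(\nu_1,\vir_k)\left[\mathcal{N}^{-1}\right]_{k,\ell}\phi_{[\ell,N]}(\vir_\ell,\nu_2),
\]
obtained through the skew-Borel decomposition $\mathcal{N}=\mathcal{R}_NJ_N\mathcal{R}_N^T$ (Proposition \ref{prop:skew-borel}) and the definition \eqref{eq:cond prob proof empty Big Phi defn} of the $\Phi_k$'s, is what ties the abstract skew-biorthogonality \eqref{eq:Big Phi skew-biorhtog rels empty} to the specific combination \eqref{eq:cond prob proof empty T defn statement} appearing in $K^\mathsf{A}$; without it the relations alone do not tell you that your ansatz cancels the virtual-coordinate contribution. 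Second, your ``Pfaffian Gram--Schmidt on the monomial basis'' presupposes nondegeneracy of the relevant Pfaffian minors of the moment matrix of $\Psi$; in the paper this is supplied by the invertibility of $\mathcal{N}$, which follows from Corollary \ref{cor:boundary current schutz pf} because $\Pf[\mathcal{N}]$ is proportional to $\mathbb{P}(\abs{X_t}=N)>0$. Incorporating the Schur-complement computation and these two points would essentially reproduce the paper's proof; as written, the proposal has a genuine gap at its central step.
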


\begin{remark}\label{rem:skew nonuniq initial}
    Choosing the family of polynomials $\{\Phi_0,\dotsc,\Phi_{N-1}\}$ corresponds to specifying $\frac12N(N+1)$ parameters, while \eqref{eq:Big Phi skew-biorhtog rels empty} corresponds to solving $\frac12(N-1)N$ equations, which leaves us with $N$ free parameters.
    On the other hand, a skew-biorthogonalization problem of the form \eqref{eq:Big Phi skew-biorhtog rels empty} always has $N/2$ free parameters, because the solution is preserved under the replacements
    \[\Phi_{2k+1}\longmapsto\Phi_{2k+1}+\beta_{k}\Phi_{2k}\]
    for $k=0,\dotsc,N/2-1$ and any choices of $\beta_0\dotsc,\beta_{N/2-1}\in\mathbb{R}$.
    But the final answer \eqref{eq: K K0 KA} for the kernel $K$ is clearly also preserved under these replacements, due to the structure of $K^\mathsf{A}$, so we can indeed free to choose those $N/2$ parameters freely.
    This leaves us with $N/2$ additional free parameters, but they can be accounted for similarly from the structure of $K^\mathsf{A}$, which only depends on the products $\Phi_{2k}(x)\Phi_{2k+1}(y)$ for $k=0,\dotsc,N/2-1$, and not on each function separately\footnote{So, for instance, we could specify the $N$ free parameters by insisting that $\Phi_{2k+1}(x)$ is monic and its $x^{2k}$ coefficient vanishes.}.
\end{remark}

\begin{remark}
    While the $\Phi_k$'s are defined implicitly in the statement of the theorem as the solution of the skew-biorthogonalization problem \eqref{eq:Big Phi skew-biorhtog rels empty}, in the proof they will be constructed somewhat more explicitly in terms of the functions $\phi_{[j,N]}(\vir_j,\cdot)$ and the skew-Borel decomposition of the matrix $\big[\Psi^N_{[k,\ell]}(\vir_k,\vir_\ell)\big]_{k,\ell=1}^N$, see \eqref{eq:cond prob proof empty Big Phi defn}.
    In Appendix \ref{appdx:skew} we provide formulas which can be used in principle to derive explicit formulas for the $\Phi_k$'s (and, hence, for the solution of the skew-biorthogonalization problem) based on that representation.
\end{remark}

Before presenting the proof of the theorem, we need to state the following simple lemma.

\begin{lm}
	\label{lm:2x2inverse}
	The following skew-symmetric matrix written in block form has an inverse given by
	\begin{equation}
		\left[\begin{array}{cc}
			A & B \\
			-B^T & D
		\end{array}\right]^{-1} = 
		\left[\begin{array}{cc}
			\mathcal{H}^{-1} & -\mathcal{H}^{-1} BD^{-1} \\
			D^{-1}B^T\mathcal{H}^{-1} & D^{-1} - D^{-1} B^T \mathcal{H}^{-1}BD^{-1}
		\end{array}\right],\qquad \mathcal{H}=A+B D^{-1}B^T,
	\end{equation}
	assuming that all relevant inverses exist.
\end{lm}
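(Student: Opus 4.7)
The plan is to verify the stated formula by direct matrix multiplication, which is the cleanest route since the right-hand side is already given explicitly. Concretely, I would compute the product
\[
\left[\begin{array}{cc} A & B \\ -B^T & D \end{array}\right]
\left[\begin{array}{cc} \mathcal{H}^{-1} & -\mathcal{H}^{-1} B D^{-1} \\ D^{-1}B^T\mathcal{H}^{-1} & D^{-1} - D^{-1}B^T \mathcal{H}^{-1} B D^{-1} \end{array}\right]
\]
block by block. The $(1,1)$ block is $A\mathcal{H}^{-1}+BD^{-1}B^T\mathcal{H}^{-1}=(A+BD^{-1}B^T)\mathcal{H}^{-1}=\mathcal{H}\mathcal{H}^{-1}=I$ by definition of $\mathcal{H}$. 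The $(1,2)$ block is $-A\mathcal{H}^{-1}BD^{-1}+B(D^{-1}-D^{-1}B^T\mathcal{H}^{-1}BD^{-1})=-(A+BD^{-1}B^T)\mathcal{H}^{-1}BD^{-1}+BD^{-1}=-BD^{-1}+BD^{-1}=0$. The $(2,1)$ block is $-B^T\mathcal{H}^{-1}+DD^{-1}B^T\mathcal{H}^{-1}=0$, and the $(2,2)$ block simplifies similarly to $I$. Checking the product in the reverse order is analogous.

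A slightly more conceptual way to present the same argument, which I would include as a remark or alternative derivation, is via the block $LDU$ factorization
\[
\left[\begin{array}{cc} A & B \\ -B^T & D \end{array}\right]
=
\left[\begin{array}{cc} I & BD^{-1} \\ 0 & I \end{array}\right]
\left[\begin{array}{cc} \mathcal{H} & 0 \\ 0 & D \end{array}\right]
\left[\begin{array}{cc} I & 0 \\ -D^{-1}B^T & I \end{array}\right],
\]
which may be verified by expanding the right-hand side. Inverting each factor (the triangular factors have trivial inverses with $BD^{-1}\mapsto -BD^{-1}$ and $-D^{-1}B^T\mapsto D^{-1}B^T$) and multiplying in the reverse order immediately produces the claimed expression.

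There is essentially no obstacle here: the only hypothesis used is the invertibility of $D$ and of the Schur-type complement $\mathcal{H}=A+BD^{-1}B^T$, which is exactly what the statement of the lemma assumes. I would therefore present the proof as a single short paragraph of direct verification, most likely without even writing out all four block computations.
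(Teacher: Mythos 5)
Your verification is correct, and in fact the paper states this lemma without proof (it is the standard Schur-complement block-inversion formula, adapted to the skew-symmetric case where the off-diagonal blocks are $B$ and $-B^T$, so that the complement appears as $\mathcal{H}=A+BD^{-1}B^T$ rather than $A-BD^{-1}B^T$); your direct block-by-block check supplies exactly the omitted computation. One small remark: in the paper's application the block $D$ is indexed by the infinite set $\mathcal{X}$, so a one-sided product alone does not suffice to conclude invertibility there; your closing sentence that the reverse-order product is analogous covers this, and your block $LDU$ factorization is actually the tidier way to get the two-sided inverse at once, since each triangular factor and the block-diagonal factor are invertible by inspection under the stated hypotheses.
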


\begin{proof}[Proof of Theorem \ref{thm:cond prob Fredholm pf empty}]
    Recall that the kernel $K$ is given as
    \begin{equation}\label{eq:K ker pf initial}
        K = J_\mathcal{X} + \left(J_\mathcal{X}+L\right)^{-1}\Big|_{\mathcal{X}\times\mathcal{X}}
    \end{equation}
    with $L$ as in Theorem \ref{thm:L-ensemble empty}.
    Consider the matrix form of the kernel $L$ indexed by the space $\widetilde{\mathcal{X}}\times\widetilde{\mathcal{X}}$.
    It involves duplicate sets of variables as in the proof of Theorem \ref{thm:L-ensemble empty}, which we may order as in that proof\footnote{As seen in that proof, this reordering does not change the sign in front of the Pfaffian (and, in any case, we will undo it once we get to the final formula for $K$ in this proof).}.
    With this ordering convention, $L$ can be expressed as the matrix on the right hand side of \eqref{eq:measure proof 4}, and then $J_\mathcal{X}+L$ is represented by the matrix
    \begin{equation}
        \label{eq:cond prob proof empty 1}  \left(J_\mathcal{X}+L\right)\Big|_{\widetilde{\mathcal{X}}\times\widetilde{\mathcal{X}}} = \left[\begin{array}{cc|ccccccc}
			\Lambda_{N} & 0 & 0 & 0 & 0 & \cdots & 0 & 0 & 0 \\
			0 & 0 & E_0 & 0 & E_1 & \cdots & 0 & E_{N-1} & 0 \\[1pt]
                \hline
                \rule{0pt}{13pt}
			0 & -E_0^T & 0 & \id & 0 & \cdots & 0 & 0 & 0 \\
			0 & 0 & -\id & 0 & W_1 & \cdots & 0 & 0 & 0 \\
			0 & -E_1^T & 0 & -W_1^T & 0 & \cdots & 0 & 0 & 0 \\
			\vdots & \vdots & \vdots & \vdots & \vdots & \ddots & \vdots & \vdots & \vdots \\
			0 & 0 & 0 & 0 & 0 & \cdots & 0 & W_{N-1} & 0 \\
			0 & -E_{N-1}^T & 0 & 0 & 0 & \cdots & -W_{N-1}^T & 0 & \id \\
			0 & 0 & 0 & 0 & 0 & \cdots & 0 & -\id & \Psi \\
	   \end{array}\right].
    \end{equation}
    
    Let us define the following kernels using the blocks of \eqref{eq:cond prob proof empty 1}\footnote{The last kernel is defined explicitly on $\mathcal{X}\times\mathcal{X}$ as $D=J+L|_{\mathcal{X}\times\mathcal{X}}$.}:
    \begin{align*}
        A(\vir_i,\vir_j) & = L(\vir_i,\vir_j), \\
        B(\vir_i,(j,x_2)) & = L(\vir_i,(j,x_2)), \\
        D((i,x_1),(j,x_2)) & = \left[\begin{array}{cc}
            0 & \id(x_1,x_2) \\
            -\id(x_1,x_2) & 0
        \end{array}\right]\delta_{i,j} +  L((i,x_1),(j,x_2)).
    \end{align*}
    In a similar way to \cite{borodin_eynardmehta_2005}, the matrix inverse of the function $D$ is calculated explicitly as
    \begin{equation}\label{eq:Dinv formulas}
    D^{-1}((i,x_1),(j,x_2))=\left[\begin{array}{cc}
		\Psi_{(i,j)}(x_1,x_2) & -\phi_{(i,j]}(x_1,x_2) \\
		(\phi_{(j,i]})^T(x_1,x_2) & 0
    	\end{array}\right].
    \end{equation}
    In order to invert the matrix \eqref{eq:cond prob proof empty 1} using Lemma \ref{lm:2x2inverse} we are required to take products like $B D^{-1}$.
    These are evaluated as matrix products indexed by $\mathcal{X}$, as 
    \[B\conv D^{-1}(\vir_i,(j,x))=\sum_{(k,\nu)\in\mathcal{X}}B(\vir_i,(k,\nu)) D^{-1}((k,\nu),(j,x)) = B(\vir_i,(i,\cdot))\conv D^{-1}((i,\cdot),(j,x)),\]
    for $1\leq i,j\leq N$ and $x\in\mathbb{N}$. 
    Here, due to the form of the $L$-ensemble \eqref{eq:L-ensemble empty}, the matrix product reduces to a half-space convolution.
    Through a slight abuse of notation, we will denote this and similar products as $\left(B\conv D^{-1}\right)(\vir_i,(j,x))$. 
    So, the matrix may be inverted using Lemma \ref{lm:2x2inverse} as
    \begin{equation}
        \label{eq:cond prob proof empty 2}
        \left[\left(J_\mathcal{X}+L\right)^{-1}\right]((i,x_1),(j,x_2)) = \left[D^{-1}-D^{-1}\conv B^T\mathcal{H}^{-1}B\conv D^{-1}\right]((i,x_1),(j,x_2)),
    \end{equation}
    where $\mathcal{H}$ is the $N\times N$-dimensional skew-symmetric matrix where entries are given by $2\times2$ matrices defined by
    \begin{equation}
        \left[\mathcal{H}\right]_{k,\ell} = \left(A+B\conv D^{-1} \conv B^T\right)(\vir_k,\vir_\ell),
    \end{equation}
    and as long as $\mathcal{H}$ is invertible.
    The matrix entries of $\mathcal{H}$ are explicitly evaluated as 
    \[\left[\mathcal{H}\right]_{k,\ell} = \left[\begin{array}{cc}
        \left[\Lambda_{N}\right]_{k,\ell} & 0 \\
        0 & \left[\mathcal{N}\right]_{k,\ell}
    \end{array}\right]
    \]
    where $\Lambda_{N}$ and $\mathcal{N}$ are the $N\times N$-dimensional skew-symmetric matrices defined by \eqref{eq:U-mat defn} and
    \begin{equation}
        \label{eq:cond prob proof empty N defn}
        \left[\mathcal{N}\right]_{k,\ell} = \Psi^N_{[k,\ell]}(\vir_k,\vir_\ell)
    \end{equation}
    respectively.
    Note that this means $\left[\mathcal{N}\right]_{k,\ell} = \phi_{(k,N]}\conv\Psi^N\conv(\phi_{(\ell,N]})^T(\vir_k,\vir_\ell)$, so \eqref{eq:phi contour virtual} and Lemma \ref{lem:Theta} imply that
    \[\left[\mathcal{N}\right]_{k,\ell}=Q_{N-k+2,N-\ell+2}(1,1).\]
    In particular, $\Pf[\mathcal{N}]=(-1)^{N/2}\Pf[Q_{i+1,j+1}(1,1)]_{i,j=1}^N$ which is non-zero by Corollary \ref{cor:boundary current schutz pf} (it being proportional to the probability of there being $N$ particles within the system at time $t$).
    Hence $\mathcal{N}$ is invertible.

    We now turn our attention to the calculation of the second term from \eqref{eq:cond prob proof empty 2}. This is 
    \begin{multline*}
        \left[D^{-1}\conv B^T\mathcal{H}^{-1}B\conv D^{-1}\right]((i,x_1),(j,x_2)) \\
        = \sum_{k,\ell=1}^N \left[\begin{array}{cc}
            0 & \Psi_{(i,k]}(x_1,\vir_k) \\
            0 & \left(\phi_{[k,i]}\right)^T(x_1,\vir_k)
        \end{array}\right] \left[\mathcal{H}^{-1}\right]_{k,\ell} \left[\begin{array}{cc}
            0 & 0 \\
            \Psi_{[\ell,j)}(\vir_\ell,x_2) & -\phi_{[\ell,j]}(\vir_\ell,x_2)
        \end{array}\right].
    \end{multline*}
    The matrix multiplication is calculated as
    \begin{multline}
        \label{eq:cond prob proof empty 3}
        \left[D^{-1}\conv B^T\mathcal{H}^{-1}B\conv D^{-1}\right]((i,x_1),(j,x_2)) \\
        = \sum_{k,\ell=1}^N \left[\begin{array}{cc}
            \Psi_{(i,k]}(x_1,\vir_k) \left[\mathcal{N}^{-1}\right]_{k,\ell} \Psi_{[\ell,j)}(\vir_\ell,x_2) & -\Psi_{(i,k]}(x_1,\vir_k) \left[\mathcal{N}^{-1}\right]_{k,\ell} \phi_{[\ell,j]}(\vir_\ell,x_2) \\
            \left(\phi_{[k,i]}\right)^T(x_1,\vir_k) \left[\mathcal{N}^{-1}\right]_{k,\ell} \Psi_{[\ell,j)}(\vir_\ell,x_2) & -\left(\phi_{[k,i]}\right)^T(x_1,\vir_k) \left[\mathcal{N}^{-1}\right]_{k,\ell} \phi_{[\ell,j]}(\vir_\ell,x_2) 
        \end{array}\right].
    \end{multline}
    Now define the following skew-symmetric kernel $\mathcal{T}_N$ on $\mathbb{Z}\times \mathbb{Z}$:
    \begin{equation}
        \label{eq:cond prob proof empty T defn}
        \mathcal{T}_N(\nu_1,\nu_2) = \sum_{k,\ell=1}^N\left(\phi_{[k,N]}\right)^T(\nu_1,\vir_k) \left[\mathcal{N}^{-1}\right]_{k,\ell} \phi_{[\ell,N]}(\vir_\ell,\nu_2),
    \end{equation}
    Then using \eqref{eq:little phi vir negative conv}, we may rewrite \eqref{eq:cond prob proof empty 3} as
    \begin{multline}
        \label{eq:cond prob proof empty 4}
        \left[D^{-1}\conv B^T\mathcal{H}^{-1}B\conv D^{-1}\right]((i,x_1),(j,x_2)) \\
        = \left[\begin{array}{cc}
            \left(\Psi_{(i,N)}\conv \mathcal{T}_N\conv \Psi_{(N,j)}\right)(x_1,x_2) & -\left(\Psi_{(i,N)}\conv \mathcal{T}_N\fconv \phi_{-(j,N]}\right)(x_1,x_2) \\
            \left(\left(\phi_{-(i,N]}\right)^T\fconv \mathcal{T}_N\conv \Psi_{(N,j)}\right)(x_1,x_2) & -\left(\left(\phi_{-(i,N]}\right)^T\fconv \mathcal{T}_N\fconv \phi_{-(j,N]}\right)(x_1,x_2)
        \end{array}\right].
    \end{multline} 
    Recall now the skew-Borel decomposition of a skew-symmetric matrix (Proposition \ref{prop:skew-borel}) whereby the matrix $\mathcal{N}$ can be uniquely factorized as
    \begin{equation}
        \label{eq:cond prob proof empty N skew-Borel}
        \mathcal{N} = \mathcal{R}_N J_N \mathcal{R}_N^T,
    \end{equation}
    where $\mathcal{R}_N$ is a uniquely determined upper-triangular $N\times N$ matrix and $J_N$ is the $N\times N$ block-diagonal matrix defined by
    \[J_N=\diag\left\{\left[\begin{array}{cc}
    0 & 1 \\
    -1 & 0
    \end{array}\right],\dots,\left[\begin{array}{cc}
    0 & 1 \\
    -1 & 0
    \end{array}\right]\right\}\]
    (so that there are $N/2$ blocks in total). The inverse of $\mathcal{N}$ can be expressed as 
    \begin{equation}\label{eq:N inv}
    \mathcal{N}^{-1} = -\mathcal{R}_N^{-T} J_N \mathcal{R}_N^{-1},
    \end{equation}
    where $\mathcal{R}_N^{-T}$ is the inverse transpose of $\mathcal{R}_N$.
    With this in mind, let us define a family of polynomials $\Phi_0(x),\dots,\Phi_{N-1}(x)$ by 
    \begin{equation}
        \label{eq:cond prob proof empty Big Phi defn}
        \Phi_{N-k}(x) = \sum_{j=1}^N\left[\mathcal{R}_N^{-1}\right]_{k,j}\phi_{[j,N]}(\vir_j,x).
    \end{equation}
    Recall from \eqref{eq:phi contour virtual} that each $\phi_{[j,N]}(\vir_j,x)$ is a polynomial of degree $N-j$ so, since $\mathcal{R}_N^{-1}$ is upper-triangular, $\Phi_{N-k}(x)$ is a polynomial of degree $N-k$, as desired.
    
    The kernel \eqref{eq:cond prob proof empty T defn} can then be written in terms of these functions as
    \begin{equation*}
        \mathcal{T}_N(\nu_1,\nu_2) = - \sum_{a,b=1}^N \Phi_{N-a}(\nu_1) \left[J_N\right]_{a,b}\Phi_{N-b}(\nu_2) 
        = \sum_{k=0}^{N/2-1} \Phi_{2k}(\nu_1)\Phi_{2k+1}(\nu_2) - \sum_{k=0}^{N/2-1} \Phi_{2k+1}(\nu_1)\Phi_{2k}(\nu_2).
    \end{equation*}
    Substituting into \eqref{eq:cond prob proof empty 4} we obtain the kernel of the Fredholm Pfaffian \eqref{eq:cond joint dist TASEP empty} and using the result together with \eqref{eq:Dinv formulas} and \eqref{eq:cond prob proof empty 2} in \eqref{eq:K ker pf initial} leads to the desired formula \eqref{eq: K K0 KA} for $K$, with $K^0=J+D^{-1}$ and $K^\mathsf{A}=D^{-1}\conv B^T\mathcal{H}^{-1}B\conv D^{-1}$.

    To derive the skew-biorthogonality, recall the definition of $\mathcal{N}$ in \eqref{eq:cond prob proof empty N defn} and the skew-Borel factorization \eqref{eq:cond prob proof empty N skew-Borel}. They yield
    \begin{equation*}
        \label{eq:cond prob proof empty 5}
        \left[J_N\right]_{k,\ell}=\left[\mathcal{R}_N^{-1}\mathcal{N}\mathcal{R}_N^{-T}\right]_{k,\ell} = \sum_{i,j=1}^N \left[\mathcal{R}_N^{-1}\right]_{k,i} \Psi_{[i,j]}(\vir_i,\vir_j)\left[\mathcal{R}_N^{-T}\right]_{j,\ell}.
    \end{equation*}
    Using \eqref{eq:PsiNkl extended} and the definition \eqref{eq:cond prob proof empty Big Phi defn} of the functions $\Phi_k$, this identity can now be expressed using the half-space convolution as
    \begin{equation}
        \left[J_N\right]_{k,\ell} = \Phi_{N-k} \conv \Psi \conv \Phi_{N-\ell}.
    \end{equation}
    so the $\Phi_k$'s solve the skew-biorthogonalization problem \eqref{eq:Big Phi skew-biorhtog rels empty}.
\end{proof}

\subsection{General initial conditions}
\label{se:GeneralIC}

In this section we will present an extension of Theorem \ref{thm:cond prob Fredholm pf empty}. That is, a Fredholm Pfaffian expression for the conditional joint distribution of the half-space TASEP in the case of a restricted subset of general deterministic initial conditions. 
In order to state the result we need to introduce a new family of functions: for fixed $1\leq i \leq N$ and $1\leq k\leq M$, we let
\begin{equation}
    \label{eq:Xi i-N-k defn}
    \Xi^{(i)}_{N-k}(x) =\phi_{(i,N]}\conv\Xi_{N-k}(x),
\end{equation}
where $\Xi_{N-k}$ is given by \eqref{eq:Xi func defn GT pattern} (note in particular that when $i=N$ we have $\Xi^{(N)}_{N-k}(x)=\Xi_{N-k}(x)$).
We can compute the convolution explicitly as $\phi_{(i,N]}\conv\Xi_{N-k}(x)=\frac{(-1)^k}{(2\pi\ii)^2}\oint\!\!\!\oint\!\dd w\,\dd u\,\frac{w^{x-1}(u-1)^{N-k}}{(1-w)^{N-i}u^{N-k+1-y_k}}\frac{\mathrm{e}^{t(u-1)}}{uw-1}$ with contours enclosing $w=0$ and $u=0$ but not $w=1$, and such that $|w||u|>1$; note that if take the contours to be circles centered at the origin, this implies that $|u|>1$.
Since $x\geq1$, the $w$ integral has no singularity at the origin, and computing the remaining residue at $w=1/u$ we get
\begin{equation}
    \label{eq:Xi i N-k}
    \Xi^{(i)}_{N-k}(x) = (-1)^i\oint_{\gamma_{0,1}}\frac{\dd u}{2\pi\ii} \,\frac{(1-u)^{i-k}}{u^{x-y_{k}+i-k+1}}\mathrm{e}^{t(u-1)},
\end{equation}
where the contour $\gamma_{0,1}$ now encircles the singularities at both $u=0$ and $u=1$. 
It is important to emphasize here that this function has explicit dependence on the initial particle coordinate $y_k\in\mathbb{N}$ due to the definition of the function $\Xi_{N-k}$ (to be more precise, $\Xi_\ell$ depends explicitly on $y_{N-\ell}$).
It is useful to think of this as a dependence on the virtual coordinate $\vir_{N+k}$, which we may identify with $y_k$, but in any case we will omit this dependence from the notation.

In the proof of the theorem that follows we will also need to use an extension of $\Xi^{(i)}_{N-k}$ to virtual coordinates:
\begin{equation}
    \label{eq:Xi square definition}
    \Xi^{[i)}_{N-k}(\vir_i)=\phi_i\conv\Xi^{(i)}_{N-k}(\vir_i)
    = (-1)^{i+1}\oint_{\gamma_{0,1}}\frac{\dd u}{2\pi\ii} \,\frac{(1-u)^{i-k-   1}}{u^{-y_{k}+i-k+1}}\mathrm{e}^{t(u-1)}.
\end{equation}
From this it follows that, for fixed $k\in\{1,\dotsc,M\}$,
\begin{equation}
    \label{eq:Xi square zero}
    \Xi^{[i)}_{N-k}(\vir_i)=0\quad\text{for all }i>k\quad\text{if }y_k>i-k.
\end{equation}

\begin{thm}
    \label{thm:cond prob Fredholm pf initial}
    Let $0<N\geq M\geq 0$ be fixed integers such that $N+M$ is even.
    Consider two families of polynomials $\Phi_0(x),\dots,\Phi_{N-M-1}(x)$ and $\Upsilon_{N-M}(x),\dots,\Upsilon_{N-1}(x)$ which are such that $\Phi_k$ is of degree $k$ and $\Upsilon_{\ell}$ is of degree $\ell$. Additionally, for all $1\leq j\leq N,1\leq k\leq M$, define
    \begin{equation}
        \label{eq:Upsilon-j conv defn}
        \Upsilon^{(j)}_{N-k} (x) = \Upsilon_{N-k}\fconv\phi_{-(j,N]}(x),
    \end{equation}
    so that in particular $\Upsilon^{(N)}_{N-k}=\Upsilon_{N-k}$.
    Suppose that the first family of polynomials satisfies the following skew-biorthogonality relations:
    \begin{align}
        \begin{split}
            \label{eq:Big Phi skew-biorhtog rels initial}
            \sum_{x,y=1}^\infty \Phi_{2i}(x) \Psi(x,y) \Phi_{2j}(y) & = 0, \\
            \sum_{x,y=1}^\infty \Phi_{2i+1}(x) \Psi(x,y) \Phi_{2j+1}(y) & = 0, \\
            \sum_{x,y=1}^\infty \Phi_{2i}(x) \Psi(x,y) \Phi_{2j+1}(y) & = -\delta_{i,j},
        \end{split}
    \end{align}
    for all $0\leq i,j< (N-M)/2$.
    Next, suppose that the second family of polynomials satisfy the following biorthogonality relation:
    \begin{equation}
        \label{eq:Upsilon-biorhtog rels initial}
        \sum_{x=1}^\infty \Upsilon_{N-k}(x) \Xi_{N-\ell}(x) = \delta_{k,\ell},
    \end{equation}
    for all $1\leq k,\ell\leq M$. Additionally, suppose that both families satisfy the following orthogonality relation:
    \begin{equation}
        \label{eq:Upsilon-Big Phi-biorhtog rels initial}
        \sum_{x,y=1}^\infty \Upsilon_{N-k}(x) \Psi(x,y) \Phi_{i}(y) = 0
    \end{equation}
    for all $1\leq k\leq M$ and $0\leq i< N-M$. Then for a fixed integer $m\geq 0$, let $\{p_1,\dots,p_m\}\subseteq\{1,\dots,N\}$ be indices denoting particles such that $p_1<p_2<\cdots<p_m$. The half-space TASEP with initial conditions given by $y=(y_1,\dots,y_M)$ where $y_M>N-M+1$ has conditional joint distribution given by the Fredholm Pfaffian 
    \begin{equation}
        \label{eq:cond joint dist TASEP initial}
        \mathbb{P}\left[\bigcap_{k=1}^m \left\{X_{t}(p_k) > a_k\right\} \given \abs{X_t} = N\right] = \pf\left(J - \bar\chi_a K \bar\chi_a\right)_{\ell^2(\{p_1,\dots,p_m\}\times \mathbb{N})},
    \end{equation}
    where $\bar\chi_a(p,x) = \mathbbm{1}_{x\leq a_p}$ and $K$ is the kernel given by
    \begin{equation}\label{eq: K K0 KA KB KC}
        K = K^0 - \left(K^\mathsf{A}+K^\mathsf{B}+K^\mathsf{C}\right)
    \end{equation}
    with the blocks on the right hand side specified as follows.
    Firstly, $K^0$ is given by \eqref{eq:cond prob kernel empty 0}, namely
    \begin{equation}
            \label{eq:cond prob kernel initial 0}
            K^0(i,x_1;j,x_2) = \left[\begin{array}{cc}
        \Psi_{(i,j)} & -\mathbbm{1}_{i<j}\phi_{(i,j]}\\[3pt]
        \mathbbm{1}_{j<i}\phi_{(j,i]}^T & 0
    \end{array}\right](x_1,x_2),
    \end{equation}
    while $K^\mathsf{A}$ is given as in \eqref{eq:cond prob kernel empty A} but with $\mathcal{T}_N$ replaced by $\mathcal{T}_{N-M}$, namely
    \begin{equation}
        \label{eq:cond prob kernel initial A}
        \text{$K^{\mathsf{A}}$}(i,x_1;j,x_2) = \left[\begin{array}{cc} \Psi_{(i,N)}\conv\mathcal{T}_{N-M} \conv\Psi_{(N,j)} & -\Psi_{(i,N)}\conv\mathcal{T}_{N-M} \fconv\phi_{-(j,N]}\\[3pt]
        \phi_{-(i,N]}^T\fconv\mathcal{T}_{N-M}\conv\Psi_{(N,j)} & -\phi_{-(i,N]}^T\fconv\mathcal{T}_{N-M}\fconv\phi_{-(j,N]} \end{array}\right](x_1,x_2),
    \end{equation}
    Next, $K^\mathsf{B}$ is given as
    \begin{equation}
        \label{eq:cond prob kernel initial B}
        K^\mathsf{B}(i,x_1;j,x_2) = \left[\begin{array}{cc}
        K^\mathsf{B}_{11}(i,x_1;j,x_2) & K^\mathsf{B}_{12}(i,x_1;j,x_2) \\
        K^\mathsf{B}_{21}(i,x_1;j,x_2) & K^\mathsf{B}_{22}(i,x_1;j,x_2)
    \end{array}\right]
    \end{equation}
    with 
    \begin{align}
    \begin{split}
    \label{eq:cond prob kernel initial B blocks}
    K^\mathsf{B}_{11}(i,x_1;j,y_2) &= \sum_{k=1}^M \left(\Xi^{(i)}_{N-k}(x_1)\left(\Upsilon_{N-k}\hspace{-1pt}\conv\hspace{-1pt}\Psi_{(N,j)}\right)(x_2)+\left(\Psi_{(i,N)}\hspace{-1pt}\conv\hspace{-1pt}\Upsilon_{N-k}\right)(x_1)\,\,\Xi^{(j)}_{N-k}(x_2)\right), \\
    K^\mathsf{B}_{12}(i,x_1;j,x_2) &= -\sum_{k=1}^M \Xi^{(i)}_{N-k}(x_1)\,\,\Upsilon^{(j)}_{N-k}(x_2),\\
    K^\mathsf{B}_{21}(i,x_1;j,x_2) & = \sum_{k=1}^M \Upsilon^{(i)}_{N-k}(x_1)\,\,\Xi^{(j)}_{N-k}(x_2)\\
    K^\mathsf{B}_{22}(i,x_1;j,x_2) &= 0.
    \end{split}
    \end{align}
    Finally, $K^\mathsf{C}$ is given by
    \begin{equation}
        \label{eq:cond prob kernel initial C}
        K^\mathsf{C}(i,x_1;j,x_2) = 
        \left[\begin{array}{cc}
        \sum_{k,\ell=1}^M  \Xi^{(i)}_{N-k}(x_1)\left(\Upsilon_{N-k} \conv \Psi\conv\Upsilon_{N-\ell} \right)\Xi^{(j)}_{N-\ell} (x_2) & 0\\
        0 & 0
        \end{array}\right].
    \end{equation}
\end{thm}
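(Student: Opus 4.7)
The plan is to adapt the proof of Theorem \ref{thm:cond prob Fredholm pf empty} to the enlarged $L$-ensemble from Theorem \ref{thm:L-ensemble initial}, whose extra virtual block $\mathcal{U}$ encodes the initial data $y$. I would start by writing $J_\mathcal{X}+L$ in block form with $A$ on $(\mathcal{V}\cup\mathcal{U})\times(\mathcal{V}\cup\mathcal{U})$, $B$ on $(\mathcal{V}\cup\mathcal{U})\times\mathcal{X}$, and $D$ on $\mathcal{X}\times\mathcal{X}$. Since the $\mathcal{X}\times\mathcal{X}$ block of $L$ is identical to the empty case, the kernel $D^{-1}$ is still given by \eqref{eq:Dinv formulas}, and its contribution to $K=J+(J_\mathcal{X}+L)^{-1}|_{\mathcal{X}\times\mathcal{X}}$ is precisely the $K^0$ piece \eqref{eq:cond prob kernel initial 0}. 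Applying Lemma \ref{lm:2x2inverse}, the remaining contribution is $-D^{-1}\conv B^T \mathcal{H}^{-1} B\conv D^{-1}$, where the Schur complement $\mathcal{H}=A+B\conv D^{-1}\conv B^T$ is now indexed by $\mathcal{V}\cup\mathcal{U}$.

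Next, I would split $\mathcal{H}$ according to its four $\mathcal{V},\mathcal{U}$ sub-blocks. The $\mathcal{V}\times\mathcal{V}$ part is unchanged from \eqref{eq:cond prob proof empty N defn}, with the $\Lambda_N$ contribution coming from $A$. The $\mathcal{V}\times\mathcal{U}$ coupling is generated entirely by $B\conv D^{-1}\conv B^T$ and, using \eqref{eq:Xi i-N-k defn}--\eqref{eq:Xi square definition}, yields entries built from the $\Xi^{[i)}_{N-k}(\vir_i)$'s; the $\mathcal{U}\times\mathcal{U}$ block is analogously a quadratic form in $\Xi$ convolved against $\Psi$. Applying Lemma \ref{lm:2x2inverse} a second time to invert this block matrix presents $\mathcal{H}^{-1}$ as a sum of three contributions, which after convolving with $B$ and $D^{-1}$ produce three kernels on $\mathcal{X}\times\mathcal{X}$: a $\mathcal{V}\times\mathcal{V}$ inverse piece (generalizing $K^\mathsf{A}$), cross terms ($K^\mathsf{B}$), and a purely $\mathcal{U}\times\mathcal{U}$ term ($K^\mathsf{C}$).

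The main technical obstacle is to identify $\mathcal{H}^{-1}$ in terms of the polynomial families of the statement. My approach is to perform a skew-Borel decomposition analogous to \eqref{eq:cond prob proof empty N skew-Borel}, but now on an $(N-M)$-dimensional subspace of $\mathcal{V}$: the skew-biorthogonal polynomials $\Phi_0,\dots,\Phi_{N-M-1}$ arise from this reduced decomposition exactly as in \eqref{eq:cond prob proof empty Big Phi defn}, producing the kernel $\mathcal{T}_{N-M}$ that appears inside $K^\mathsf{A}$. The biorthogonality \eqref{eq:Upsilon-biorhtog rels initial} is precisely what is needed to diagonalize the step inverting the $\mathcal{U}\times\mathcal{U}$ Schur complement and to yield $K^\mathsf{C}$, while the orthogonality \eqref{eq:Upsilon-Big Phi-biorhtog rels initial} is the condition that decouples the $\Phi$ and $\Upsilon$ sectors of $\mathcal{H}^{-1}$ and forces the cross terms to collapse to the explicit $K^\mathsf{B}$ of \eqref{eq:cond prob kernel initial B blocks}. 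The hypothesis $y_M>N-M+1$ enters through \eqref{eq:Xi square zero}, which triangularizes the $\Xi^{[i)}_{N-k}(\vir_i)$ entries and keeps the various convolutions finite. Once $K=K^0-(K^\mathsf{A}+K^\mathsf{B}+K^\mathsf{C})$ has been identified, the Fredholm Pfaffian formula \eqref{eq:cond joint dist TASEP initial} follows from the derivation \eqref{eq: TASEP fd intersect}--\eqref{eq: TASEP gap Pfaffian} at the start of Section \ref{sec:conditional}, applied to the measure $\mu_{N,M}(\cdot|y)$ from Theorem \ref{thm:L-ensemble initial}.
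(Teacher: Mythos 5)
Your overall architecture matches the paper's proof: write $J_\mathcal{X}+L$ for the enlarged $L$-ensemble of Theorem \ref{thm:L-ensemble initial} in block form, apply Lemma \ref{lm:2x2inverse} so that $K^0=J+D^{-1}$ comes from \eqref{eq:Dinv formulas}, and obtain the remaining terms from $-D^{-1}\conv \widetilde B^T\widetilde{\mathcal H}^{-1}\widetilde B\conv D^{-1}$ with the Schur complement $\widetilde{\mathcal H}$ indexed by $\mathcal{V}\cup\mathcal{U}$, then use a second block inversion together with the skew-Borel decomposition of $\mathcal{N}$ and the triangularity of the $\Xi^{[i)}_{N-k}(\vir_i)$ entries (from \eqref{eq:Xi square zero} and $y_M>N-M+1$). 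However, there is a genuine gap: your plan silently assumes $\mathcal{N}$ (the $\mathcal{V}\times\mathcal{V}$ block, an $N\times N$ skew-symmetric matrix) is invertible and admits a decomposition $\mathcal{R}_NJ_N\mathcal{R}_N^T$, which is false when $N$ is odd; the theorem only assumes $N+M$ even, so the case $N,M$ both odd is included and your nested Schur-complement/skew-Borel computation breaks down there. The paper proves the even case first and then recovers odd $N,M$ by adding an auxiliary initial particle at position $L$, showing the kernels $\hat K^{0},\hat K^{\mathsf A}$ are unchanged and that $\hat K^{\mathsf B},\hat K^{\mathsf C}$ converge to $K^{\mathsf B},K^{\mathsf C}$ as $L\to\infty$ (using $\Xi_{\bar N-1}\to0$), together with convergence strong enough to pass to the limit inside the Fredholm Pfaffian. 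Some such device is needed and is absent from your outline.

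Two further points in your description would not survive the actual computation as stated. First, the $\mathcal{U}\times\mathcal{U}$ block of $\widetilde{\mathcal H}$ is not a quadratic form in $\Xi$ against $\Psi$: because the $\Xi$ rows of $L$ enter only through the second component and the $(2,2)$ entry of $D^{-1}$ vanishes, that block is zero apart from the $\Lambda_{N+M}$ contribution (see \eqref{eq:tilde H inverse} and the matrix preceding it); the combination $\Upsilon_{N-k}\conv\Psi\conv\Upsilon_{N-\ell}$ in $K^{\mathsf C}$ only emerges at the end as the identification of $[\mathcal M^{-1}]_{k,\ell}$ with $\mathcal M=\mathcal P^T\mathcal N^{-1}\mathcal P$. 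Second, $\mathcal{T}_{N-M}$ does not come from a skew-Borel decomposition on an $(N-M)$-dimensional subspace of $\mathcal{V}$: the paper decomposes the full $\mathcal{N}$ and shows that $\mathcal{N}^{-1}-\mathcal{N}^{-1}\mathcal P\mathcal M^{-1}\mathcal P^T\mathcal{N}^{-1}=-\mathcal{R}_N^{-T}\left[\begin{smallmatrix}0&0\\0&J_{N-M}\end{smallmatrix}\right]\mathcal{R}_N^{-1}$ (equation \eqref{eq:cond prob proof initial sum 1 1}), which is what truncates the sum to $\Phi_0,\dots,\Phi_{N-M-1}$; the $\Upsilon$'s are then built from $\mathcal{S}_M^{-1}\widehat{\mathcal{R}}_M^N$ and $\mathcal{R}_N^{-1}$ as in \eqref{eq:cond prob proof initial Upsilon defn}, and one must still verify that these constructed functions satisfy (and are pinned down, up to inconsequential freedom, by) the relations \eqref{eq:Upsilon-biorhtog rels initial} and \eqref{eq:Upsilon-Big Phi-biorhtog rels initial}, rather than invoking those relations as if they were the mechanism of the inversion. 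These are repairable, but as written the middle of your argument would not produce the stated $K^{\mathsf A},K^{\mathsf B},K^{\mathsf C}$.
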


\begin{remark}\label{rem:free params initial}
    Determining the combined family of polynomials $\{\Phi_0,\dots,\Phi_{N-M-1},\Upsilon_{N-M},\dots,\Upsilon_{N-1}\}$ corresponds to specifying $\frac12N(N+1)$ parameters.
    Just as in Theorem \ref{thm:cond prob Fredholm pf empty} (see Remark \ref{rem:skew nonuniq initial}), the family of polynomials $\{\Phi_0,\dots,\Phi_{N-M-1}\}$ is specified uniquely up to $N-M$ parameters which do not affect the Pfaffian since they only appear explicitly in the kernel through the form of $K^\mathsf{A}$ \eqref{eq:cond prob kernel initial A}.
    So it remains to determine the family of polynomials $\{\Upsilon_{N-M},\dots,\Upsilon_{N-1}\}$, a task which ultimately requires specifying $\frac12M(M+1)+(N-M)M$ parameters.
    They are determined partially from the biorthogonality relations \eqref{eq:Upsilon-biorhtog rels initial} for $1\leq k\leq\ell\leq M$, which correspond to $\frac12M(M+1)$ equations.
    
    \noindent The $(N-M)M$ remaining parameters are determined by the orthogonality relations \eqref{eq:Upsilon-Big Phi-biorhtog rels initial}, which yield precisely that number of equations.
    However, we need to make sure that the  outstanding $N-M$ free parameters involved in the choice of the $\Phi_i$ polynomial family do not interfere with the uniqueness of the $\Upsilon_{N-k}$ family (and hence do not affect the $K^\mathsf{B}$ and $K^\mathsf{C}$ parts of the kernel).
    To see this, recall that these $N-M$ free parameters arise from the freedom we have in making the replacements
    \[\Phi_{2k+1}\longmapsto\Phi_{2k+1}+\beta_{k}\Phi_{2k}\]
    for $k=0,\dotsc,(N-M)/2-1$ and any choices of $\beta_0\dotsc,\beta_{(N-M)/2-1}\in\mathbb{R}$ and the simultaneous replacements 
    \[\Phi_{2k+1}\longmapsto\gamma_k\Phi_{2k+1}, \qquad \Phi_{2k}\longmapsto\Phi_{2k}/\gamma_{k}\]
    for $k=0,\dotsc,(N-M)/2-1$ and any choices of $\gamma_0\dotsc,\gamma_{(N-M)/2-1}\in\mathbb{R}\setminus\{0\}$. It is not hard to see that the solution of the $\Upsilon_{N-k}$ polynomials in the orthogonality relations \eqref{eq:Upsilon-Big Phi-biorhtog rels initial} are independent of the values of the free parameters $\beta_i,\gamma_j$ for all $0\leq i,j<(N-M)/2$. 
\end{remark}

\begin{proof}[Proof of Theorem \ref{thm:cond prob Fredholm pf initial}]
    The argument will follow the proof of Theorem \ref{thm:cond prob Fredholm pf empty} very closely. 
    We will assume first for simplicity that $N$ and $M$ are even, and explain later how to extend the proof to the case when $N+M$ remains even but both $N$ and $M$ are odd.
    
    The kernel $K$ is given as
    \[K = J + \left(J_\mathcal{X}+L\right)^{-1}\Big|_{\mathcal{X}\times\mathcal{X}}.\]
    Using \eqref{eq:W-measure initial proof 2}, we may represent $J_\mathcal{X}+L$ as the following matrix
    \begin{equation}
        \label{eq:cond prob proof initial 1}  \left(J_\mathcal{X}+L\right)\Big|_{\widetilde{\mathcal{X}}\times\widetilde{\mathcal{X}}} = \left[\begin{array}{c:cc|ccccccc}
			\Lambda_{N+M} & 0 & 0 &  0 & 0 & 0 & \cdots & 0 & 0 & 0\\
			\hdashline
			     &  &  &  &  &  & &  &  & \\[-12pt]
			0 & 0 & 0  & E_0 & 0 & E_1 & \cdots & 0 & E_{N-1} & 0\\
			0 & 0 & 0  & 0 & 0 & 0 &  \cdots & 0 & 0 & -\widetilde{\Xi}^T\\
			\hline
			     &  &  &  &  &  & &  &  & \\[-12pt]
			0 & -E_0^T & 0 & 0 & \id & 0 & \cdots & 0 & 0 & 0\\
			0 & 0 & 0 & -\id & 0 & W_1 & \cdots & 0 & 0 & 0\\
			0 & -E_1^T & 0 & 0 & -W_1^T & 0 & \cdots & 0 & 0 & 0 \\
			\vdots & \vdots & \vdots & \vdots & \vdots & \vdots & \ddots & \vdots & \vdots & \vdots\\
            0 & 0  & 0 & 0 & 0 & 0 & \cdots & 0 & W_{N-1} & 0 \\
            0 & -E_{N-1}^T & 0 & 0 & 0 & 0 & \cdots & -W_{N-1}^T & 0 & \id \\
            0 & 0 & \widetilde{\Xi} & 0 & 0 & 0 & \cdots & 0 & -\id & \widetilde{\Psi} \\
	   \end{array}\right].
    \end{equation}
    Note that we are presenting the right hand side with rows and columns in the order prescribed by \eqref{eq:tilde X reordered}; this incurs in a sign change in front of the Pfaffian, which will be undone once we get to the expressions for $K$ in the theorem, so for simplicity we omit it.
    
    We define the following kernels as extensions of those that appear in the proof of Theorem \ref{thm:cond prob Fredholm pf empty}:
    \begin{align*}
        \widetilde{A}(\vir_m,\vir_n) & = L(\vir_m,\vir_n),\\
        \widetilde{B}(\vir_m,(j,x)) & = L(\vir_m,(j,x)),
    \end{align*}
    for each $1\leq m,n\leq N+M$ and $1\leq j\leq N,x\in\mathbb{N}$ and where the $L$-ensemble is defined by \eqref{eq:L-ensemble empty} and \eqref{eq:L-ensemble initial}. We emphasize here that the definitions of $\widetilde{A},\widetilde{B}$ agree with the definitions of $A,B$ from the proof of Theorem \ref{thm:cond prob Fredholm pf empty} when $1\leq m,n\leq N$, but are otherwise extended to incorporate the extended virtual alphabet. The kernel denoted by $D$ remains the same as in Theorem \ref{thm:cond prob Fredholm pf empty}.
    
    The $\mathcal{X}\times\mathcal{X}$ block of the inverse of the matrix \eqref{eq:cond prob proof initial 1} can be written, using Lemma \ref{lm:2x2inverse}, as
    \begin{equation}
        \label{eq:cond prob proof initial 2} 
        \left(J_\mathcal{X}+L\right)^{-1}\Big|_{\mathcal{X}\times\mathcal{X}} = D^{-1} - D^{-1}\conv\widetilde{B}^T\widetilde{\mathcal{H}}^{-1}\widetilde{B}\conv D^{-1}
    \end{equation}
    with $\widetilde{\mathcal{H}} = \widetilde{A}+\widetilde{B}\conv D^{-1} \conv \widetilde{B}^{T}$.
    This matrix is represented as a matrix of $2\times2$ blocks which is explicitly calculated to yield
    \begin{equation}
        \left[\widetilde{\mathcal{H}}\right]_{a,b} = \begin{dcases}
                \left[\begin{array}{cc}
                     \left[\Lambda_{N+M}\right]_{a,b} & 0 \\
                    0 & \left[\mathcal{N}\right]_{a,b}
                \end{array}\right] & \text{ if }a,b\leq N, \\
                \left[\begin{array}{cc}
                    \left[\Lambda_{N+M}\right]_{a,b} & 0 \\
                    0 & -\left[\mathcal{P}\right]_{a,b-N}
                \end{array}\right] & \text{ if }a\leq N,b>N, \\        
                \left[\begin{array}{cc}
                     \left[\Lambda_{N+M}\right]_{a,b} & 0\\
                    0 & \left[\mathcal{P}^T\right]_{a-N,b} \\
                \end{array}\right] & \text{ if }a> N,b\leq N, \\ 
                \left[\begin{array}{cc}
                    \left[\Lambda_{N+M}\right]_{a,b} & 0 \\
                    0 & 0
                \end{array}\right] & \text{ if }a,b> N,
        \end{dcases}
    \end{equation}
    where $\mathcal{N}$ is the $N\times N$-dimensional skew-symmetric matrix defined by \eqref{eq:cond prob proof empty N defn} while the $N\times M$-dimensional matrix $\mathcal{P}$ is defined by
    \begin{equation}
        \label{eq:cond prob proof initial P defn}
        \left[\mathcal{P}\right]_{a,b} = \Xi^{[a)}_{N-b}(\vir_a)
    \end{equation}
    for $1\leq a\leq N$, $1\leq b\leq M$, where the function $\Xi^{[a)}_{N-k}(\vir_a)$ is defined by \eqref{eq:Xi square definition}. 
    The inverse of $\widetilde{\mathcal{H}}$ can be calculated using Lemma \ref{lm:2x2inverse} again, after suitably reordering rows and columns, giving
    \begin{equation}\label{eq:tilde H inverse}
        \left[\widetilde{\mathcal{H}}^{-1}\right]_{a,b} = \begin{dcases}
                \left[\begin{array}{cc}
                    \left[\Lambda_{N+M}^{-1}\right]_{a,b} & 0 \\
                    0 & \left[\mathcal{N}^{-1}-\mathcal{N}^{-1} \mathcal{P}\mathcal{M}^{-1}\mathcal{P}^T\mathcal{N}^{-1}\right]_{a,b}
                \end{array}\right] & \text{ if }a,b\leq N \\
                \left[\begin{array}{cc}
                    \left[\Lambda_{N+M}^{-1}\right]_{a,b} & 0 \\
                    0 & \left[\mathcal{N}^{-1}\mathcal{P}\mathcal{M}^{-1}\right]_{a,b-N}
                \end{array}\right] & \text{ if }a\leq N,b>N \\        
                \left[\begin{array}{cc}
                    \left[\Lambda_{N+M}^{-1}\right]_{a,b} & 0 \\
                    0 & -\left[\mathcal{M}^{-1}\mathcal{P}^T\mathcal{N}^{-1}\right]_{a-N,b}
                \end{array}\right] & \text{ if }a> N,b\leq N \\ 
                \left[\begin{array}{cc}
                    \left[\Lambda_{N+M}^{-1}\right]_{a,b} & 0 \\
                    0 & \left[\mathcal{M}^{-1}\right]_{a-N,b-N}
                \end{array}\right] & \text{ if }a,b> N
        \end{dcases},
    \end{equation}
    where $\mathcal{M} = \mathcal{P}^T\mathcal{N}^{-1}\mathcal{P}$.
    $\mathcal{M}$ is $M\times M$-dimensional and skew-symmetric, and we will see shortly that it is in fact invertible (recall also that $\mathcal{N}$ is invertible under our assumptions, as was checked in the proof of Theorem \ref{thm:cond prob Fredholm pf empty}).
    
    And so, the Pfaffian kernel can be identified, using \eqref{eq:cond prob proof initial 2}, as
    \[K = K^0 - \left(K^\mathsf{A}+K^\mathsf{B}+K^\mathsf{C}\right) =J+D^{-1} - D^{-1}\conv\widetilde{B}^T\mathcal{H}^{-1}\widetilde{B}\conv D^{-1}.\]
    The first term in the kernel is identified as $K^0 = J+D^{-1}$.
    This is explicitly calculated as \eqref{eq:cond prob kernel initial 0} using the explicit formula for $D^{-1}$ given by \eqref{eq:Dinv formulas} (and is the same as $K^0$ in the case of empty initial condition).
    
    Ultimately, in order to calculate the remaining terms in the kernel we will identify $K^\mathsf{A}+K^\mathsf{B}+K^\mathsf{C} = D^{-1}\conv \widetilde{B}^T\widetilde{\mathcal{H}}^{-1}\widetilde{B}\conv D^{-1}$.    
    This may be written explicitly as
    \begin{equation}\label{eq:cond prob proof initial 3}
    \begin{aligned}
        &\left[D^{-1}\conv \widetilde{B}^T\widetilde{\mathcal{H}}^{-1}\widetilde{B}\conv D^{-1}\right]((i,x_1),(j,x_2)) \\
        &\qquad= \sum_{a,b=1}^N \left[\begin{array}{cc}
            0 & \Psi_{(i,a]}(x_1,\vir_a) \\[2pt]
            0 & \left(\phi_{[a,i]}\right)^T(x_1,\vir_a)
        \end{array}\right] \left[\widetilde{\mathcal{H}}^{-1}\right]_{a,b} \left[\begin{array}{cc}
            0 & 0 \\
            \Psi_{[b,j)}(\vir_b,x_2) & -\phi_{[b,j]}(\vir_b,x_2)
        \end{array}\right] \\
        &\hskip0.8in+ \sum_{a=1}^N\sum_{d=1}^M \left[\begin{array}{cc}
            0 & \Psi_{(i,a]}(x_1,\vir_a) \\[2pt]
            0 & \left(\phi_{[a,i]}\right)^T(x_1,\vir_a)
        \end{array}\right] \left[\widetilde{\mathcal{H}}^{-1}\right]_{a,N+d} \left[\begin{array}{cc}
            0 & 0\\
            -\Xi^{(j)}_{N-d}(x_2) & 0
        \end{array}\right] \\
        &\hskip0.8in+ \sum_{c=1}^M\sum_{b=1}^N \left[\begin{array}{cc}
           0 & -\Xi^{(i)}_{N-c}(x_1) \\
            0 & 0
        \end{array}\right] \left[\widetilde{\mathcal{H}}^{-1}\right]_{N+c,b} \left[\begin{array}{cc}
            0 & 0 \\
            \Psi_{[b,j)}(\vir_b,x_2) & -\phi_{[b,j]}(\vir_b,x_2)
        \end{array}\right] \\
        &\hskip0.8in+ \sum_{c,d=1}^M \left[\begin{array}{cc}
            0 & -\Xi^{(i)}_{N-c}(x_1) \\
            0 & 0
        \end{array}\right] \left[\widetilde{\mathcal{H}}^{-1}\right]_{N+c,N+d} \left[\begin{array}{cc}
           0 & 0\\
            -\Xi^{(j)}_{N-d}(x_2) & 0
        \end{array}\right].
    \end{aligned}
    \end{equation}        
    Recall from the proof of Theorem \ref{thm:cond prob Fredholm pf empty} that the matrix $\mathcal{N}$ has the skew-Borel decomposition given by \eqref{eq:cond prob proof empty N skew-Borel} whereby the upper-triangular matrix $\mathcal{R}_N$ defines the family of skew-orthogonal polynomials $\Phi_0(x),\dots,\Phi_{N-1}(x)$ by
    \begin{equation}
        \label{eq:cond prob proof initial Big Phi defn}
        \Phi_{N-k}(x) = \sum_{j=1}^N\left[\mathcal{R}_N^{-1}\right]_{k,j}\phi_{[j,N]}(\vir_j,x).
    \end{equation}
    We emphasize here that, while this defines a family of $N$ functions $\Phi_k$, only the first $N-M$ will explicitly appear in final expression for the kernel.
    
    Now consider the matrix $\mathcal{P}$ defined by \eqref{eq:cond prob proof initial P defn}. Using \eqref{eq:Xi square zero}, and due to the restriction on the initial coordinates $y_M>N-M+1$, which implies that $y_k>N-k+1$ for each $1\leq k\leq M$, the matrix $\mathcal{P}$ may be decomposed as
    \begin{equation}
        \label{eq:cond prob proof initial P-factor}
        \mathcal{P} = \left[\begin{array}{c}
           \mathcal{S}_M \\ 0
        \end{array}\right],
    \end{equation}
    where $\mathcal{S}_M$ is an upper-triangular $M\times M$-dimensional matrix.
    Its diagonal entries are calculated using \eqref{eq:Xi square definition} as $\left[\mathcal{S}_M\right]_{k,k}= \Xi^{[k)}_{N-k}(\vir_k) = (-1)^{k+1}\oint_{\gamma_{0,1}}\frac{\dd u}{2\pi\ii} \,\frac{1}{(1-u)u^{-y_{k}+i-k+1}}\mathrm{e}^{t(u-1)}=(-1)^k$ for $1\leq k\leq M$ (again due to our condition on $y_M$), so it follows that this matrix is invertible.
    
    From this and the skew-Borel decomposition of
    $\mathcal{N}$, the skew-symmetric matrix $\mathcal{M}$ is given by 
    \[\mathcal{M} = -\left[\begin{array}{cc}
        \mathcal{S}_M^T & 0 
    \end{array}\right]\hspace{0.005em}\mathcal{R}_N^{-T}J_N\hspace{0.005em}\mathcal{R}_N^{-1} \left[\begin{array}{c}
         \mathcal{S}_M \\
         0
    \end{array}\right].\]
     Since the matrix $\mathcal{R}_N$ in the skew-Borel decomposition of $\mathcal{N}$ is upper-triangular, one checks directly that if $\widehat{\mathcal{R}}_M^N$ is the $M\times M$-dimensional minor of $\mathcal{R}_N$ consisting of only the first $M$ rows and columns\footnote{In general, the minor $\widehat{\mathcal{R}}_M^N$ is not equal to the matrix $\mathcal{R}_M$ which emerges from the skew-Borel decomposition of the analogous matrix $\mathcal{N}$ for a system of size $M$. Rather, $\mathcal{R}_M$ is the minor consisting of the \emph{last} $M$ rows and columns of $\mathcal{R}_N$.}, then the top left $M\times M$ block of $\mathcal{R}_N^{-1}$ is $(\widehat{\mathcal{R}}_M^N)^{-1}$, and thus the above can be written as 
     \[\mathcal{M} = -\mathcal{S}_M^T\hspace{0.005em} (\widehat{\mathcal{R}}_M^N)^{-T} J_M (\widehat{\mathcal{R}}_M^N)^{-1}\mathcal{S}_M.\]
     Thus the inverse of $\mathcal{M}$ can be calculated as 
    \begin{equation}
        \label{eq:cond prob proof initial M-inv}
        \mathcal{M}^{-1} = \mathcal{S}_M^{-1}\widehat{\mathcal{R}}_M^N J_M (\widehat{\mathcal{R}}_M^N)^T\mathcal{S}_M^{-T}.
    \end{equation}
    Let us now define the family of functions $\Upsilon_{N-M}(x),\dots,\Upsilon_{N-1}(x)$ by 
    \begin{equation}
        \label{eq:cond prob proof initial Upsilon defn}
        \Upsilon_{N-k}(x) = \sum_{j=1}^N \left[\mathcal{S}_M^{-1}\widehat{\mathcal{R}}_M^N\left[\begin{array}{cc} I_M & 0 \end{array}\right]\mathcal{R}^{-1}_N\right]_{k,j}\phi_{[j,N]}(\vir_{j},x)
    \end{equation}
    for each $1\leq k\leq M$.
    
    We now proceed to evaluate each of the sums in \eqref{eq:cond prob proof initial 3} individually in order to calculate the terms $K^\mathsf{A},K^\mathsf{B},K^\mathsf{C}$ in the Pfaffian kernel explicitly. 
    \begin{enumerate}[leftmargin=*, labelindent=4pt, itemsep=3pt]
        \item[$(\mathsf{A})$] The function $K^\mathsf{A}(i,x_1;j,x_2)$ is identified with the first sum of \eqref{eq:cond prob proof initial 3} which is over $1\leq a,b\leq N$. The calculation of this sum follows in a similar way to that of the proof of Theorem \ref{thm:cond prob Fredholm pf empty}. 
        We note that, using \eqref{eq:cond prob proof initial M-inv}, we may express the product $\mathcal{N}^{-1} \mathcal{P}\mathcal{M}^{-1}\mathcal{P}^T\mathcal{N}^{-1}$ as
        \begin{multline*}
        \qquad\mathcal{R}_N^{-T} J_N \mathcal{R}_N^{-1}  \left[\begin{array}{c}
               \mathcal{S}_M \\ 0
            \end{array}\right] \mathcal{S}_M^{-1}\widehat{\mathcal{R}}_M^N J_M (\widehat{\mathcal{R}}_M^N)^T\mathcal{S}_M^{-T} \left[\begin{array}{cc}
            \mathcal{S}_M^T & 0 
            \end{array}\right] \mathcal{R}_N^{-T} J_N \mathcal{R}_N^{-1}\\
            = \mathcal{R}_N^{-T} J_N \mathcal{R}_N^{-1} \left[\begin{array}{c}
               \widehat{\mathcal{R}}_M^N \\ 0
            \end{array}\right] J_M \left[\begin{array}{cc}
            (\widehat{\mathcal{R}}_M^N)^T & 0 
             \end{array}\right] \mathcal{R}_N^{-T} J_N \mathcal{R}_N^{-1}
         = -\mathcal{R}_N^{-T}\left[\begin{array}{cc}
                J_M & 0 \\
            0 & 0
        \end{array}\right] \mathcal{R}_N^{-1}.
        \end{multline*}
        In this calculation we have used the fact that $\mathcal{R}_N^{-1}  \left[\begin{smallmatrix}
           \widehat{\mathcal{R}}_M^N \\ 0
        \end{smallmatrix}\right] =  \left[\begin{smallmatrix}
           I_M \\ 0
        \end{smallmatrix}\right]$,
        which follows from the upper-triangularity of $\mathcal{R}_N$. 
        And so, we may express
        \begin{equation}
            \label{eq:cond prob proof initial sum 1 1}
            \mathcal{N}^{-1}-\mathcal{N}^{-1} \mathcal{P}\mathcal{M}^{-1}\mathcal{P}^T\mathcal{N}^{-1} = -\mathcal{R}_N^{-T} \left[\begin{array}{cc}
            0 & 0 \\
            0 & J_{N-M}
        \end{array}\right] \mathcal{R}_N^{-1}.
        \end{equation}
        Now let us define a kernel $\mathcal{T}_\mathsf{A}:\mathbb{Z}\times \mathbb{Z}\to \mathbb{C}$ by
        \begin{equation}
            \label{eq:cond prob proof empty T1 defn}
            \mathcal{T}_\mathsf{A}(\nu_1,\nu_2) = \sum_{k,\ell=1}^N\left(\phi_{[k,N]}\right)^T(\nu_1,\vir_k) \left[\mathcal{N}^{-1}-\mathcal{N}^{-1} \mathcal{P}\mathcal{M}^{-1}\mathcal{P}^T\mathcal{N}^{-1}\right]_{k,\ell} \phi_{[\ell,N]}(\vir_\ell,\nu_2).
        \end{equation}
        Using this kernel, we may write $K^\mathsf{A}$ given by the first sum in \eqref{eq:cond prob proof initial 3} as
        \begin{equation}
            \label{eq:cond prob proof initial sum 1 2}
            \begin{aligned}
            &K^\mathsf{A}((i,x_1),(j,x_2)) \\
            &\qquad= \left[\begin{array}{cc}
                \left(\Psi_{(i,N]}\conv \mathcal{T}_\mathsf{A}\conv \Psi_{[N,j)}\right)(x_1,x_2) & -\left(\Psi_{(i,N]}\conv \mathcal{T}_\mathsf{A}\conv \phi_{-(j,N]}\right)(x_1,x_2) \\
                \left(\left(\phi_{-(i,N]}\right)^T\conv \mathcal{T}_\mathsf{A}\conv \Psi_{[N,j)}\right)(x_1,x_2) & -\left(\left(\phi_{-(i,N]}\right)^T\conv \mathcal{T}_\mathsf{A}\conv \phi_{-(j,N]}\right)(x_1,x_2)
            \end{array}\right],
            \end{aligned}
        \end{equation}
        where, using the matrix decomposition \eqref{eq:cond prob proof initial sum 1 1} and \eqref{eq:cond prob proof initial Big Phi defn},
        the kernel \eqref{eq:cond prob proof empty T1 defn} can be expressed as
        \begin{align*}
            \mathcal{T}_\mathsf{A}(\nu_1,\nu_2) & = - \sum_{a,b=M+1}^N \Phi_{N-a}(\nu_1) \left[J_N\right]_{a,b}\Phi_{N-b}(x) \\
            & = \sum_{k=0}^{(N-M)/2-1} \Phi_{2k}(\nu_1)\Phi_{2k+1}(\nu_2) - \sum_{k=0}^{(N-M)/2-1} \Phi_{2k+1}(\nu_1)\Phi_{2k}(\nu_2),
        \end{align*}
        so that $\mathcal{T}_\mathsf{A}$ coincides with the definition of $\mathcal{T}_{N-M}$.
        After substituting this into \eqref{eq:cond prob proof initial sum 1 2} we may identify the form of the kernel \eqref{eq:cond prob kernel initial A}.
        
        \item[$(\mathsf{B})$] The function $K^\mathsf{B}(i,x_1;j,x_2)$ is identified with the sum of the second and third sums of \eqref{eq:cond prob proof initial 3} which are over $1\leq a\leq N,1\leq d\leq M$ and $1\leq c\leq M,1\leq b\leq N$ respectively. We will explicitly calculate the third sum since the second sum follows in a similar fashion via skew-symmetry.

        We have
        \begin{equation*}
            \mathcal{M}^{-1}\mathcal{P}^T\mathcal{N}^{-1}  
             = - \mathcal{S}_M^{-1}\widehat{\mathcal{R}}_M^N J_M \!\left( \widehat{\mathcal{R}}_M^N\right)^T\!\mathcal{S}_M^{-T} \left[\begin{array}{cc}
            \mathcal{S}_M^T & 0 
            \end{array}\right] \mathcal{R}_N^{-T} J_N \mathcal{R}_N^{-1} 
            = \mathcal{S}_M^{-1}\widehat{\mathcal{R}}_M^N \left[\begin{array}{cc}
                I_M & 0 
            \end{array}\right] \mathcal{R}_N^{-1}.
        \end{equation*}
        We may then calculate the third sum of \eqref{eq:cond prob proof initial 3} as
        \begin{align*}
            &\sum_{c=1}^M\sum_{b=1}^N \left[\begin{array}{cc}
            0 & -\Xi^{(i)}_{N-c}(x_1) \\
            0 & 0
        \end{array}\right] \left[\begin{array}{cc}
            0 & 0 \\
            0 & -\left[\mathcal{S}_M^{-1}\widehat{\mathcal{R}}_M^N \left[\begin{array}{cc}
                I_M & 0 
            \end{array}\right] \mathcal{R}_N^{-1}\right]_{c,b}
        \end{array}\right] \\
        &\hspace{3in}\times
        \left[\begin{array}{cc}
            0 & 0 \\
            \Psi_{[b,j)}(\vir_b,x_2) & -\phi_{[b,j]}(\vir_b,x_2)
        \end{array}\right].
        \end{align*}
        Using the definition of the family of functions $\Upsilon_{N-k}$ \eqref{eq:cond prob proof initial Upsilon defn}, the previous expression may be written as
        \begin{align*}
            &\sum_{c=1}^M\sum_{k=1}^M \left[\begin{array}{cc}
            0 & -\Xi^{(i)}_{N-c}(x_1) \\
            0 & 0
            \end{array}\right] \left[\begin{array}{cc}
            0 & 0 \\
            0 & \left[I_M\right]_{c,k}
            \end{array}\right]  
             \left[\begin{array}{cc}
            0 & 0 \\
            -\left(\Upsilon_{N-k}\conv \Psi_{(N,j)}\right)(x_2) & \left(\Upsilon_{N-k}\fconv \phi_{-(j,N]}\right)(x_2)
        \end{array}\right]\\
        &\hspace{0.8in}=\sum_{k=1}^M \left[\begin{array}{cc}
            \Xi^{(i)}_{N-k}(x_1) \left(\Upsilon_{N-k}\conv \Psi_{(N,j)}\right)(x_2) & -\Xi^{(i)}_{N-k}(x_1)\left(\Upsilon_{N-k}\fconv \phi_{-(j,N]}\right)(x_2) \\
            0 & 0
        \end{array}\right],
        \end{align*}
        in which the function $\Upsilon^{(j)}_{N-k}$ defined by \eqref{eq:Upsilon-j conv defn} may be identified. 
        Now accounting for both the second and third sums of \eqref{eq:cond prob proof initial 3}, we may express the kernel term as the expression \eqref{eq:cond prob kernel initial B} for $K^\mathsf{B}$.
       
        \item[$(\mathsf{C})$] The function $K^\mathsf{C}(i,x_1;j,x_2)$ is identified with the fourth sum of \eqref{eq:cond prob proof initial 3} which is over $1\leq c,d\leq M$. This is explicitly calculated as 
        \begin{equation}
            \label{eq:cond prob proof initial sum C 1}
            K^\mathsf{C}(i,x_1;j,x_2)=\sum_{c,d=1}^M \left[\begin{array}{cc}
                \Xi^{(i)}_{N-c}(x_1)\left[\mathcal{M}^{-1}\right]_{c,d}\Xi^{(j)}_{N-d}(x_2) & 0 \\
                0 & 0
            \end{array}\right].
        \end{equation}
        To see that this coincides with the expression for $K^\mathsf{C}$ given by \eqref{eq:cond prob kernel initial C} we need to show that
        \[\left[\mathcal{M}^{-1}\right]_{c,d}=\Upsilon_{N-c}\conv\Psi\conv\Upsilon_{N-d}.\]
        In order to check this we use the definition of the functions $\Upsilon_k$ to write the right hand side as
        \begin{align}
            \qquad&\sum_{i,j=1}^N \left[\mathcal{S}_M^{-1}\widehat{\mathcal{R}}_M^N\left[\begin{array}{cc} I_M & 0 \end{array}\right]\mathcal{R}^{-1}_N\right]_{c,i}
            \phi_{[i,N]}(\vir_{i},\cdot)
            \conv\Psi\conv(\phi_{[j,N]})^T (\cdot,\vir_{j})\left[\mathcal{R}^{-T}_N\left[\begin{array}{c} I_M \\ 0 \end{array}\right](\widehat{\mathcal{R}}_M^N)^T\mathcal{S}_M^{-T}\right]_{j,d}\\
            &\qquad=\sum_{i,j=1}^N \left[\mathcal{S}_M^{-1}\widehat{\mathcal{R}}_M^N\left[\begin{array}{cc} I_M & 0 \end{array}\right]\mathcal{R}^{-1}_N\right]_{c,i}\left[\mathcal{N}\right]_{i,j}\left[\mathcal{R}^{-T}_N\left[\begin{array}{c} I_M \\ 0 \end{array}\right](\widehat{\mathcal{R}}_M^N)^T\mathcal{S}_M^{-T}\right]_{j,d}\\
            &\qquad=\left[\mathcal{S}_M^{-1} \widehat{\mathcal{R}}_M^N J_M (\widehat{\mathcal{R}}_M^N)^T\mathcal{S}_M^{-T}\right]_{c,d},
        \end{align}
        where we used again the skew-Borel decomposition of $\mathcal{N}$.
        In view of \eqref{eq:cond prob proof initial M-inv}, the right hand side is exactly $[\mathcal{M}^{-1}]_{c,d}$.
    \end{enumerate}
    
    With these calculations, we have explicitly determined the Pfaffian kernel in terms of known functions and the new functions $\Phi_0,\dotsc,\Phi_{N-M-1}$ and $\Upsilon_{N-M},\dotsc,\Upsilon_{N-1}$.
    In order to finish proving that the kernel coincides with the one in the statement of the theorem, we need to prove that these functions are polynomials of the correct degrees and that they satisfy their defining relations \eqref{eq:Big Phi skew-biorhtog rels initial}, \eqref{eq:Upsilon-biorhtog rels initial} and \eqref{eq:Upsilon-Big Phi-biorhtog rels initial}.

    The functions $\Phi_0,\dotsc,\Phi_{N-M-1}$ were defined by \eqref{eq:cond prob proof initial Big Phi defn} in the exact same way as the extended family $\Phi_0,\dots,\Phi_{N-1}$ was in the proof of Theorem \ref{thm:cond prob Fredholm pf empty}, so, the first $N-M$ of these functions are polynomials of the correct degrees and they inherit their skew-orthogonality relations in exactly the same manner.
    Note that the relations \eqref{eq:Big Phi skew-biorhtog rels initial} for the first $N-M$ functions are self-contained, so these functions are determined by these relations (up to $N-M$ free parameters which are not consequential) as in the previous proof.
    
    That $\Upsilon_\ell$ is a polynomial of degree $\ell$ follows from \eqref{eq:cond prob proof initial Upsilon defn} because $\big[\mathcal{S}_M^{-1}\widehat{\mathcal{R}}_M^N\big[I_M \;\, 0\big]\mathcal{R}^{-1}_N\big]_{k,j}=0$ for $j>k$ and each $\phi_{[j,N]}(\vir_j,x)$ is a polynomial of degree $N-j$.
    In order to determine the relation \eqref{eq:Upsilon-biorhtog rels initial} we use the decomposition of the matrix $\mathcal{P}$ from \eqref{eq:cond prob proof initial P-factor} to write $\left(\phi_{[j,N]}\conv\Xi_{N-\ell}\right)(\vir_{j}) = \left[\begin{smallmatrix}
         \mathcal{S}_M \\
         0 
    \end{smallmatrix}\right]_{j,\ell}$, so that
    \begin{align}
        \label{eq:cond prob proof initial 4}
        \sum_{x=1}^\infty \Upsilon_{N-k}(x) \Xi_{N-\ell}(x) &= \sum_{j=1}^N\left[\mathcal{S}_M^{-1}\widehat{\mathcal{R}}_M^N\left[\begin{array}{cc} I_M & 0 \end{array}\right]\mathcal{R}^{-1}_N\right]_{k,j} \left(\phi_{[j,N]}\conv\Xi_{N-\ell}\right)(\vir_{j})\\
        & = \left[\mathcal{S}_M^{-1}\widehat{\mathcal{R}}_M^N\left[\begin{array}{cc} I_M & 0 \end{array}\right]\mathcal{R}^{-1}_N\left[\begin{smallmatrix}
         \mathcal{S}_M \\
         0 
    \end{smallmatrix}\right]\right]_{k,\ell}
    = \left[\left[\begin{array}{cc} \mathcal{S}_M^{-1} & \triangle \end{array}\right]\left[\begin{smallmatrix}
         \mathcal{S}_M \\ 0 
    \end{smallmatrix}\right]\right]_{k,\ell}=\delta_{k,\ell},
    \end{align}
    where in the third equality we used once again the fact the top left $M\times M$ block of $\mathcal{R}_N^{-1}$ is $(\widehat{\mathcal{R}}_M^N)^{-1}$ and where the value of the $\triangle$ block is irrelevant.
    
    To prove the skew-orthogonality relation between the two families of functions, \eqref{eq:Upsilon-Big Phi-biorhtog rels initial}, we write, for $1\leq k\leq M$ and $1\leq i\leq N-M$,
    \begin{multline*}
        \sum_{x_1,x_2=1}^\infty \Upsilon_{N-k}(x_1) \Psi(x_1,x_2)\Phi_i(x_2) = \sum_{j=1}^N \left[\mathcal{S}_M^{-1}\widehat{\mathcal{R}}_M^N\left[\begin{array}{cc} I_M & 0 \end{array}\right]\mathcal{R}^{-1}_N\right]_{k,j} \phi_{[j,N]}\conv\Psi\conv\Phi_i(\vir_j)\\
        = \sum_{\ell=1}^N \left[\mathcal{S}_M^{-1}\widehat{\mathcal{R}}_M^N\left[\begin{array}{cc} I_M & 0 \end{array}\right]\right]_{k,\ell}\Phi_{N-\ell}\conv\Psi\conv\Phi_i 
        = \left[\mathcal{S}_M^{-1}\widehat{\mathcal{R}}_M^N\left[\begin{array}{cc} J_M & 0 \end{array}\right]\right]_{k,N-i}\hspace{0.4in}\mbox{}
    \end{multline*}
    where we have used both the definition and the skew-biorthogonality relation of the entire family of functions $\{\Phi_{0},\dots,\Phi_{N-1}\}$.
    The relation \eqref{eq:Upsilon-Big Phi-biorhtog rels initial} follows by noting that the matrix entries of the final expression above are equal to zero whenever $i\leq N-M$.

    To finish the proof we need to extend the result to the case of odd $N$ and $M$.
    We will do this by comparing with a system with $\bar M=M+1$ initial particles and $\bar N=N+1$ final ones, placing an additional initial particle very far to the right. 
    To this end, given the initial particle positions $y=(y_1,\dotsc,y_M)$, we introduce a new collection of particle positions $\bar y=(\bar y_1,\dotsc,\bar y_{\bar M})$ by
    \[\bar y_1=L,\qquad \bar y_i=y_{i-1},\quad i=2,\dotsm M+1,\]
    where $L$ is an auxiliary parameter which we will ultimately take to infinity.
    Note that, for large $L$, $\bar y$ satisfies $\bar y_k>\bar N-k+1$ thanks to the analogous conditions for $y$.
    Then from the case with $N$ and $M$ even, 
    \begin{equation}
        \label{eq:barN barM}\mathbb{P}_{\bar y}\left[\bigcap_{k=1}^m \left\{X_{t}(\bar p_k) > a_k\right\} \given \abs{X_t} = \bar N\right] = \pf\left(J - \bar\chi_a\bar K \bar\chi_a\right)_{\ell^2(\{\bar p_1,\dots,\bar p_m\}\times \mathbb{N})}
        = \pf\left(J - \bar\chi_a\hat K\bar\chi_a\right)_{\ell^2(\{p_1,\dots,p_m\}\times \mathbb{N})},
    \end{equation}
    where we have included the initial condition $\bar y$ explicitly in the subscript, where $\bar p_i=p_i+1$ with $1\leq p_1<\dotsm<p_m\leq N$ and where $\bar K$ denotes the kernel $K$ computed above, with $N$ and $M$ replaced by $\bar N$ and $\bar M$ and $\hat K(i,\cdot;j,\cdot)=\bar K(i+1,\cdot;j+1,\cdot)$.
    Now for very large $L$, the first particle (which starts at $L$) is very likely to not interact with the rest of the system by time $t$: in fact, the probability that the second particle is blocked by the first one at any moment before time $t$ is bounded by the probability that a Poisson random variable with parameter $t$ is greater than $L-y_1-2$.
    Using this, and after suitably shifting the particle labels, it is not hard to see that the left hand side above converges as $L\to\infty$ to
    \[\mathbb{P}_{y}\left[\bigcap_{k=1}^m \left\{X_{t}(p_k) > a_k\right\} \given \abs{X_t} = N\right].\]
    So what remains is to compute the limit of the Fredholm Pfaffian on the right hand side of \eqref{eq:barN barM}.
    The argument is relatively simple, so we will only sketch it. 
    
    Consider each of the terms making up $\hat K$.
    The first one is $\hat K^0$, which does not depend on $N$, $M$ or $L$, and which clearly satisfies $\hat K^0=K^0$.
    The second one is given by \eqref{eq:cond prob kernel initial A}, and now reads
    \[\text{$\hat K^{\mathsf{A}}$}(i,x_1;j,x_2) = \left[\begin{array}{cc} \Psi_{(i+1,\bar N)}\conv\mathcal{T}_{N-M} \conv\Psi_{(\bar N,j+1)} & -\Psi_{(i+1,\bar N)}\conv\mathcal{T}_{N-M} \fconv\phi_{-(j+1,\bar N]}\\[3pt]
        \phi_{-(i+1,\bar N]}^T\fconv\mathcal{T}_{N-M}\conv\Psi_{(\bar N,j+1)} & -\phi_{-(i+1,\bar N]}^T\fconv\mathcal{T}_{N-M}\fconv\phi_{-(j+1,\bar N]} \end{array}\right](x_1,x_2).\]
    Note that it also does not depend on $L$ and that, in fact, $\hat K^{\mathsf{A}}=K^{\mathsf{A}}$.
    
    For the remaining two terms we need to compute limits in $L$.
    But this is simple once one notes that $\Xi_{\bar N-1}(x)\xrightarrow[L\to\infty]{}0$, as follows easily from \eqref{eq:Xi func defn GT pattern} since $y_1=L$.
    So, for instance,
    \[
    \lim_{L\to\infty}\hat K^\mathsf{B}_{12}(i,x_1;j,x_2) = -\sum_{k=2}^{\bar M} \Xi^{(i+1)}_{\bar N-k}(x_1)\,\,\Upsilon^{(j+1)}_{\bar N-k}(x_2)= K^\mathsf{B}_{12}(i,x_1;j,x_2).\]
    Similar expressions hold for the other limits, and we deduce that $\lim_{L\to\infty}\hat K^\mathsf{B}=K^\mathsf{B}$.
    In a similar way we get $\lim_{L\to\infty}\hat K^\mathsf{C}=K^\mathsf{C}$.
    A bit more work shows that the limit holds in a strong enough sense (e.g. trace class) so that it can be taken inside the Fredholm Pfaffian.
    Using this on the right hand side of \eqref{eq:barN barM} yields the claimed formula.
\end{proof}

\subsection{Reduction to full-space TASEP}

One important consequence of Theorem \ref{thm:cond prob Fredholm pf initial} is that the conditional joint distribution of the half-space TASEP reduces the well-known joint distribution of the full-space TASEP. In this way, Theorem \ref{thm:cond prob Fredholm pf initial} may be regarded as a generalization of the Fredholm determinant expressions obtained in the literature.

\begin{cor}[of Theorem \ref{thm:cond prob Fredholm pf initial}]\label{cor:full space TASEP}
    Let $N>0$ be a fixed integer and let $y=(y_1,\dots,y_N)$ be an ordered set of coordinates, and assume that $y_N>1$.
    Moreover, let $\Upsilon_0(x),\dots,\Upsilon_{N-1}(x)$ be a family of polynomials where $\Upsilon_k$ is of degree $k$. Let these polynomials satisfy the biorthogonality relation \eqref{eq:Upsilon-biorhtog rels initial}. The full-space TASEP with initial conditions given by $y$ has joint distribution given by the Fredholm determinant expression
    \begin{equation}\label{eq:full cond prob Fredholm pf initial}
         \mathbb{P}\left[\bigcap_{k=1}^m \left\{X_{t}(p_k) > a_k\right\}\right] = \det\!\left(I - \bar\chi_a K \bar\chi_a\right)_{\ell^2(\{p_1,\dots,p_m\}\times \mathbb{N})},
    \end{equation}
    whose kernel is given by 
    \begin{equation}\label{eq:full-space TASEP det kernel}
        K(i,x_1;j,x_2) = -\mathbbm{1}_{i<j}\phi_{[i,j)}(x_1,x_2) +\sum_{k=1}^j \Xi^{(i)}_{N-k}(x_1)\Upsilon^{(j)}_{N-k}(x_2).
    \end{equation}
    More generally, if $y=(y_1,\dots,y_N)$ is any ordered set of coordinates in $\mathbb{Z}$, the full-space TASEP joint distribution with initial conditions given by $y$ can be written as (note that the Fredholm determinant is now over $\ell^2(\{p_1,\dots,p_m\}\times \mathbb{Z})$)
    \begin{equation}\label{eq:full cond prob Fredholm pf initial 2}
         \mathbb{P}\left[\bigcap_{k=1}^m \left\{X_{t}(p_k) > a_k\right\}\right] = \det\!\left(I - \bar\chi_a K \bar\chi_a\right)_{\ell^2(\{p_1,\dots,p_m\}\times \mathbb{Z})},
    \end{equation}
    and the kernel $K$ can be expressed as 
    \begin{equation}\label{eq:full-space TASEP det kernel 2}
        K(i,x_1;j,x_2) = -\mathbbm{1}_{i<j}\phi_{[i,j)}(x_1,x_2) +\sum_{k=1}^j f^{i}_{i-k}(x_1)g^{j}_{j-k}(x_2),
    \end{equation}
    with
    \[f^i_{i-k}(x)=(-1)^k\Xi^{(i)}_{N-k}(x),\]
    which we regard as being defined on $\mathbb{Z}$ directly through \eqref{eq:Xi i N-k}, and where the family $\{g^j_0,\dotsc,g^j_{j-1}\}$ is defined as follows: $g^j_\ell$ is a polynomial of degree $\ell$, and these $j$ polynomials satisfy the (full-space) biorthogonality relations
    \begin{equation}
        \label{eq:full-space biorth}
        \sum_{x\in\mathbb{Z}}f^j_k(x)g^j_\ell(x)=\delta_{k,\ell}.
    \end{equation}
    Moreover, the formula \eqref{eq:full cond prob Fredholm pf initial 2} recovers the one derived for this model in Proposition 3.1 of \cite{borodin_large_2008}.
\end{cor}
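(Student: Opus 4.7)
The argument specializes Theorem~\ref{thm:cond prob Fredholm pf initial} to the case $N=M$, in which the Pfaffian degenerates to a Fredholm determinant. Setting $N=M$ makes the index range of the $\Phi$-family empty, so the skew-biorthogonality \eqref{eq:Big Phi skew-biorhtog rels initial} and the mixed orthogonality \eqref{eq:Upsilon-Big Phi-biorhtog rels initial} become vacuous, $\mathcal{T}_{N-M}=\mathcal{T}_0=0$, and $K^{\mathsf{A}}=0$. The restriction $y_M>N-M+1$ collapses to $y_N>1$, matching the corollary's hypothesis, and only the biorthogonality \eqref{eq:Upsilon-biorhtog rels initial} of the $\Upsilon$-family remains.

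The central step is to retrace the proof of Theorem~\ref{thm:cond prob Fredholm pf initial} under this specialization. The matrix $\mathcal{P}$ becomes the square upper-triangular invertible matrix $\mathcal{S}_N$ via \eqref{eq:cond prob proof initial P-factor}, so $\mathcal{M}=\mathcal{S}_N^T\mathcal{N}^{-1}\mathcal{S}_N$, and a direct computation gives $\mathcal{N}^{-1}-\mathcal{N}^{-1}\mathcal{P}\mathcal{M}^{-1}\mathcal{P}^T\mathcal{N}^{-1}=0$, $\mathcal{N}^{-1}\mathcal{P}\mathcal{M}^{-1}=\mathcal{S}_N^{-T}$, and $\mathcal{M}^{-1}\mathcal{P}^T\mathcal{N}^{-1}=-\mathcal{S}_N^{-1}$. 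Propagating these simplifications through \eqref{eq:tilde H inverse} and the block sum \eqref{eq:cond prob proof initial 3} forces the $(2,2)$ entry of the total correlation kernel $K$ to vanish, and produces a cancellation $K^0_{11}=(K^{\mathsf{B}}+K^{\mathsf{C}})_{11}$, so that $K_{11}=0$ as well. The latter cancellation is equivalent to a resolution-of-identity expansion of $\Psi_{(i,j)}$ in the biorthogonal pair $\{(\Xi_{N-k},\Upsilon_{N-k})\}_{k=1}^N$, and I expect it to be the main obstacle of the proof; it should follow from \eqref{eq:Upsilon-biorhtog rels initial} together with the fact that the $N$ functions $\Upsilon_{N-k}$ form a complete basis for polynomials of degree less than $N$.

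With $K_{11}=K_{22}=0$ and the skew-symmetry $K_{21}(x,y)=-K_{12}(y,x)$, the Fredholm Pfaffian collapses to a Fredholm determinant via the standard identity $\pf(J-\bar\chi_a K\bar\chi_a)=\det(I-\bar\chi_a K_{12}\bar\chi_a)$, and $K_{12}=K^0_{12}-K^{\mathsf{B}}_{12}$ is read off directly. The sum in $K^{\mathsf{B}}_{12}$ is a priori $\sum_{k=1}^{N}$, but a short computation using \eqref{eq:little phi negative} identifies $\Upsilon^{(j)}_{N-k}$ as the $(N{-}j)$-fold backward difference of the degree-$(N{-}k)$ polynomial $\Upsilon_{N-k}$, which vanishes identically for $k>j$; the sum therefore truncates to $\sum_{k=1}^j$. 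Using that $\phi_k$ does not depend on $k$ in its physical-variable argument, so that $\phi_{[i,j)}=\phi_{(i,j]}$, this yields exactly \eqref{eq:full-space TASEP det kernel}.

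Finally, to reach the formula \eqref{eq:full cond prob Fredholm pf initial 2} for arbitrary $y\in\mathbb{Z}^N$ I would invoke translation invariance of full-space TASEP: shifting simultaneously $y_k\mapsto y_k+c$ and $a_k\mapsto a_k+c$ with $c$ large enough to ensure $y_N+c>1$ preserves the joint distribution. Extending $\Xi^{(i)}_{N-k}$ to $x\in\mathbb{Z}$ via its contour representation \eqref{eq:Xi i N-k}, absorbing signs to define $f^i_{i-k}(x)=(-1)^k\Xi^{(i)}_{N-k}(x)$, and reinterpreting the biorthogonality on all of $\mathbb{Z}$ then yields \eqref{eq:full-space TASEP det kernel 2} with $g^j_\ell$ the dual family satisfying \eqref{eq:full-space biorth}. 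The final identification with Proposition~3.1 of \cite{borodin_large_2008} reduces to a direct matching of kernels expressed via the same biorthogonal polynomial structure.
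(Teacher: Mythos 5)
There is a genuine gap, and it sits exactly where you flag ``the main obstacle'': your route makes the whole reduction hinge on the cancellation $K_{11}=0$, i.e.\ on the identity $K^0_{11}=(K^{\mathsf B}+K^{\mathsf C})_{11}$, which you do not prove but only expect to follow from a completeness/resolution-of-identity property of the pair $\{(\Xi_{N-k},\Upsilon_{N-k})\}$. Nothing in the hypotheses of Theorem \ref{thm:cond prob Fredholm pf initial} gives you such an expansion of the two-variable kernel $\Psi_{(i,j)}$ in an $N$-dimensional family, and as written this is an unproven (and possibly false) identity carrying the weight of the proof. More importantly, it is unnecessary. The Pfaffian--determinant reduction used in the paper, $\pf(J-K)=\det(I-K_{12})$, requires only $K_{22}\equiv0$: after grouping the two components, each term of the Fredholm Pfaffian expansion is $\pf\left[\begin{smallmatrix} K_{11} & K_{12}\\ -K_{12}^T & 0\end{smallmatrix}\right]$, and by Proposition \ref{prop:block pf eval} (square case) this Pfaffian equals $(-1)^{\binom n2}\det K_{12}$ \emph{independently of} $K_{11}$. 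When $M=N$ one has $\mathcal{T}_{N-M}=\mathcal{T}_0=0$, so $K^{\mathsf A}=0$, and $K^{\mathsf B}_{22}=K^{\mathsf C}_{22}=0$ by definition, so $K_{22}=0$ is immediate from the statement of the theorem; no recomputation of $\mathcal{P},\mathcal{M},\widetilde{\mathcal H}^{-1}$ inside the proof, and no claim about $K_{11}$, is needed. Your truncation of the $k$-sum to $k\le j$ (via $\Upsilon^{(j)}_{N-k}$ being an $(N-j)$-fold difference of a degree-$(N-k)$ polynomial) and the translation-invariance argument for general $y\in\mathbb{Z}^N$ are both fine and match the paper.

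A secondary omission: the corollary is about \emph{full-space, unconditional} TASEP, while the theorem gives the \emph{half-space distribution conditioned on} $\abs{X_t}=N$. You never say why these coincide. The bridge is the specialization $\alpha=0$ (which you do not mention): with $\alpha=0$ no particles enter, so $\abs{X_t}=N$ almost surely and the conditioning is vacuous, and since TASEP particles only jump right and $y_N>1$, the boundary never plays a role, so the half-space process with these data is exactly full-space TASEP. This step should be stated explicitly before invoking the theorem.
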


The formula derived in \cite{borodin_large_2008} is for a more general model in full-space, namely PushASEP with jump rates which depend on time and on the particle; to compare their formula to \eqref{eq:full cond prob Fredholm pf initial 2} one needs to set all their $v_i$'s to $1$ and take $a(t)=1$ and $b(t)=0$.
Similar formulas were derived earlier for full-space TASEP in \cite{sasamoto_spatial_2005,borodin_fluctuation_2007}, under a formalism where the $\phi_i$'s are flipped with respect to our setting.

\begin{proof}[Proof of Corollary \ref{cor:full space TASEP}]
    If $y_N>1$, the probability on the left hand side of \eqref{eq:full cond prob Fredholm pf initial} can be computed from Theorem \ref{thm:cond prob Fredholm pf initial} with $\alpha=0$ and $M=N$, and is given by the Fredholm Pfaffian on the right hand side of \eqref{eq:cond joint dist TASEP initial} satisfying $\abs{X_t}=N$. 
    Now, since $M=N$, we note that the only (bi)orthogonality relation of the $\Upsilon$ polynomials is \eqref{eq:Upsilon-biorhtog rels initial}, while $K^\mathsf{A}$ vanishes.
    In particular, the $K_{22}$ of the kernel completely vanishes. 
    Now recall the following property, found in Section 8 of \cite{rains_correlation_2000}, whereby a Fredholm Pfaffian reduces to a Fredholm determinant:
    \[\pf(J-K)_{L^2(\mathcal{X})} = \det(I-K_{12})_{L^2(\mathcal{X})},\]
    whenever $K_{22}(x_1,x_2)=0$ for all $x_1,x_2\in\mathcal{X}$. This property implies that the right hand side of \eqref{eq:full cond prob Fredholm pf initial} reduces as
    \[\det\left(I-\bar\chi_a K \bar\chi_a\right)_{\ell^2(\{p_1,\dots,p_m\}\times \mathbb{N})},
    \quad\text{with}\quad K=K_{12}^0-K_{12}^\mathsf{B}.\]
    $K$ can be expressed explicitly as
    \begin{equation}
        \label{eq:full-space TASEP det kernel proof 1}
        K(i,x_1;j,x_2) = -\mathbbm{1}_{i<j}\phi_{[i,j)}(x_1,x_2) +\sum_{k=1}^N \Xi^{(i)}_{N-k}(x_1)\Upsilon^{(j)}_{N-k}(x_2).
    \end{equation}
    Now note that since $g\fconv\phi_{-(j,j+1]}(x)=g(x)-g(x-1)$ and since $\Upsilon_{N-k}$ is a polynomial of degree $N-k$, then $\Upsilon_{N-k}\fconv\phi_{-(j,j+1]}$ is a polynomial of degree $N-k-1$, and thus $\Upsilon^{(j)}_{N-k}$ is a polynomial of degree $j-k$ if $k\leq j$, while
    \begin{equation}
        \Upsilon^{(j)}_{N-k}=0\quad\text{if}\;k>j.
    \end{equation}
    In particular, the formula \eqref{eq:full-space TASEP det kernel proof 1} becomes \eqref{eq:full-space TASEP det kernel}.

    Now, assuming still that $y_N>1$, define $f^i_{i-k}$ as in the statement of the corollary and let
    \[g^j_{j-k}(x)=(-1)^k\hspace{0.1em}\Upsilon^{(j)}_{N-k}(x),\]
    so that \eqref{eq:full-space TASEP det kernel} can be expressed as \eqref{eq:full-space TASEP det kernel 2}.
    We know already that $g^j_{\ell}$ is a polynomial of degree $\ell$ for $\ell=0,\dotsc,j-1$, and we claim that the two families of functions satisfy \eqref{eq:full-space biorth}.
    In fact,
    \begin{align}
        \sum_{x\in\mathbb{Z}}f^j_{j-k}(x)g^{j}_{j-\ell}(x)&=\sum_{x,y\in\mathbb{Z}}\Xi^{(j)}_{N-k}(x)\Upsilon_{N-\ell}(y)\phi_{-(j,N]}(y,x)
        =\sum_{x\in\mathbb{Z}}\phi_{-(j,N]}\fconv\Xi^{(j)}_{N-k}(x)\Upsilon_{N-\ell}(x)\\
        &=\sum_{x\in\mathbb{Z}}\Xi_{N-k}(x)\Upsilon_{N-\ell}(x),
    \end{align}
    and the last sum can be restricted to $x\geq1$ because $\Xi_{N-k}(x)=0$ for $x\leq0$ thanks to the condition $y_k\geq y_N+N-k>N-k$, so \eqref{eq:Upsilon-biorhtog rels initial} ensures that it equals $\delta_{k,\ell}$.    

    Consider now a general choice of initial condition $y=(y_1,\dotsc,y_N)$.
    By translation invariance, if we fix $L>0$ and shift all the $y_i$'s and all the $a_i$'s by $L$, the probability on the left hand side of \eqref{eq:full cond prob Fredholm pf initial 2} does not change.
    Then letting $\bar y_i=y_i+L$ and $\bar a_i=a_i+L$, if $L$ is large enough so that $\bar y_N>1$ then previous case yields
    \begin{equation}\label{eq:full det 2}
         \mathbb{P}\left[\bigcap_{k=1}^m \left\{X_{t}(p_k) > a_k\right\}\right] = \det\!\left(I - \bar\chi_{\bar a} \bar K \bar\chi_{\bar a}\right)_{\ell^2(\{p_1,\dots,p_m\}\times \mathbb{N})},
    \end{equation}
    where $\bar K$ is defined as in \eqref{eq:full-space TASEP det kernel 2} but using $\bar y$ instead of $y$.
    But it is easy to see, from the definition of $\phi_{[i,j)}$ and $f^i_{i-k}$, and using the fact that the $g^i_{i-k}$'s are defined through the biorthogonalization problem \eqref{eq:full-space biorth}, that $\bar K(i,x_1+L;j,x_2+L)=K(i,x_1;j,x_2)$ (i.e., shifting the variables in the kernel is equivalent to shifting back the $y_i$'s).
    So performing this change of variables in the kernel inside the Fredholm determinant in \eqref{eq:full det 2} we get
    \begin{equation}
         \mathbb{P}\left[\bigcap_{k=1}^m \left\{X_{t}(p_k) > a_k\right\}\right] = \det\!\left(I - \bar\chi_{a} K \bar\chi_{a}\right)_{\ell^2(\{p_1,\dots,p_m\}\times\mathbb{Z}_{\geq-M})},
    \end{equation}
    Taking $L\to\infty$ yields \eqref{eq:full cond prob Fredholm pf initial 2}. Checking that this formula coincides with the one given in \cite{borodin_large_2008} in the case of full-space TASEP is straightforward.
    \end{proof}
    
\section{Details of the ASEP transition probability proof}
\label{se:ASEPproof}
In this section we present the details of the proof of Theorem \ref{thm:ASEP transition prob} which were omitted from Section \ref{sec:ASEP}, namely the proofs of the eigenvalue relation Lemma \ref{lm: F-eigenvector property} and the initial condition Lemma \ref{lm: orthogonality}.
\subsection{Preliminary results}
\label{sec:F-preliminary results}
Before proceeding to the proofs of the intermediate results of Lemmas \ref{lm: F-eigenvector property} and \ref{lm: orthogonality}, it is useful to present some initial properties of the function \eqref{eq:F initial coord function}.
\begin{prop}
	\label{prop:BCempty sum fac}
	For any fixed integer $N\geq0$ the following factorization holds on any alphabet $(w_1,\dots,w_N)$:
	\begin{equation}
		\label{eq:BCempty sum fac}
		\sum_{\sigma\in\mathcal{B}_N} \sigma\left(\prod_{1\leq i<j\leq N}
		\left[\frac{w_i-qw_j}{w_i-w_j}\frac{1-w_iw_j}{1- qw_iw_j}\right]\right)=\frac{1}{V_N},
	\end{equation}
    where the constant $V_N$ is given by \eqref{eq:V-normalization}.
\end{prop}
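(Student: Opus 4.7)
The plan is to prove the identity by induction on $N$. The base case $N=1$ is immediate: the product inside the symmetrization is empty, so the sum over $\mathcal{B}_1=\{\pm1\}$ equals $2$, matching $1/V_1=2$.

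For the inductive step, write the left-hand side as $F_N(w)$ and denote the product inside the symmetrization by $\Pi_N(w)$. The first step is to argue that $F_N$ is a constant function. It is manifestly $\mathcal{B}_N$-invariant. The apparent poles of each summand $\sigma(\Pi_N)$, as a rational function of $w_1,\dots,w_N$, lie on hyperplanes of the form $w_a=w_b$ or $w_a w_b=1/q$ (with $a,b\in\{1,\dots,N\}$); these are exactly the fixed loci of the reflections in $\mathcal{B}_N$ corresponding to the ``short'' and ``long'' roots of type $C_N$. Since $F_N$ is $\mathcal{B}_N$-invariant and these reflections act as $-1$ on the normal direction to their fixed hyperplanes, the residues of $F_N$ on each such hyperplane must vanish, so $F_N$ is a Laurent polynomial with no poles in $\mathbb{C}^\times$. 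A direct check shows that each summand $\sigma(\Pi_N)$ admits finite limits as any $w_i\to 0$ or $w_i\to\infty$, so $F_N$ has degree zero in each variable, hence is constant.

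To identify the constant, compute $\lim_{w_1\to\infty}F_N$. Decompose $\mathcal{B}_N$ according to the unique position $i_0\in\{1,\dots,N\}$ and sign $\epsilon\in\{\pm\}$ with $\sigma(i_0)=\epsilon\cdot 1$; the remaining part of $\sigma$ is a signed bijection onto $\{\pm2,\dots,\pm N\}$, naturally identified with an element $\tilde\tau\in\mathcal{B}_{N-1}$ acting on the variables $(w_2,\dots,w_N)$. The $N-1$ factors of $\Pi_N$ involving position $i_0$ contribute, in the limit, a scalar depending only on $(i_0,\epsilon)$: pairs $(i_0,j)$ with $j>i_0$ each tend to $1/q$ (resp.\ $q$) when $\epsilon=+$ (resp.\ $\epsilon=-$), while pairs $(i,i_0)$ with $i<i_0$ each tend to $1$, giving
\[
\text{boundary factor} = \begin{cases} q^{-(N-i_0)} & \text{if } \epsilon=+, \\[2pt] q^{N-i_0} & \text{if } \epsilon=-.\end{cases}
\]
The product over pairs avoiding $i_0$ becomes precisely $\tilde\tau(\Pi_{N-1}(w_2,\dots,w_N))$. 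Summing over $\tilde\tau$ using the inductive hypothesis gives $1/V_{N-1}$, and summing the boundary factors over $(i_0,\epsilon)$ gives
\[
\lim_{w_1\to\infty}F_N \;=\; \frac{1}{V_{N-1}}\sum_{i_0=1}^{N}\!\bigl(q^{-(N-i_0)}+q^{N-i_0}\bigr) \;=\; \frac{1}{V_{N-1}}\cdot\frac{(1-q^N)(1+q^{N-1})}{q^{N-1}(1-q)} \;=\; \frac{1}{V_N},
\]
where the last equality is a direct simplification of $V_{N-1}/V_N$ from \eqref{eq:V-normalization}. Since $F_N$ is constant, this limiting value is $F_N(w)$ itself, completing the induction.

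The main obstacle is the residue-pairing argument establishing constancy of $F_N$, especially the careful identification of the ``long-root'' reflections $(w_a,w_b)\mapsto(1/(qw_b),1/(qw_a))$ in $\mathcal{B}_N$ whose fixed hyperplane $w_a w_b=1/q$ is needed to kill the corresponding poles. Once this structural claim is in place, the identification of the constant through the $w_1\to\infty$ limit reduces to elementary combinatorial bookkeeping.
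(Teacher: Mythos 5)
Your proof is correct, but it takes a genuinely different route from the paper. The paper's proof is a two-line reduction: substituting $z_i=q^{1/2}w_i$ turns the summand into the $BC_N$ scattering factor with $z_{-i}=z_i^{-1}$, and the symmetrized sum is then the known evaluation from \cite{venkateswaran_symmetric_2015} (the $BC_N$-symmetric Hall--Littlewood polynomial $K_\emptyset$ equals $1$), whose normalization is exactly $1/V_N$. You instead give a self-contained induction: constancy of $F_N$ via the standard invariance/residue-cancellation argument (the pole loci $w_a=w_b$ and $w_aw_b=1/q$ are fixed loci of reflections in $\mathcal{B}_N$, so the residues of the invariant sum vanish; finiteness at $0$ and $\infty$ then forces degree zero), followed by the $w_1\to\infty$ limit, which yields $\lim F_N=\frac{1}{V_{N-1}}\sum_{j=0}^{N-1}\bigl(q^j+q^{-j}\bigr)$; this matches $V_{N-1}/V_N$, the same elementary identity the paper itself uses in the proof of Proposition \ref{prop:F-recurrence}. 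I checked the boundary factors ($q^{-(N-i_0)}$ for $\epsilon=+$, $q^{N-i_0}$ for $\epsilon=-$, and $1$ for the pairs $(i,i_0)$) and the bijection $\sigma\leftrightarrow(i_0,\epsilon,\tilde\tau)$; they are right. Two small points to tighten: state explicitly that, for generic remaining variables, each summand has at most a \emph{simple} pole along each locus $w_a=w_b$, $w_aw_b=1/q$ (each locus comes from a single factor), since residue vanishing only kills simple poles; and the reflections $(w_a,w_b)\mapsto(1/(qw_b),1/(qw_a))$ correspond to roots $e_a+e_b$, while the genuinely long $C_N$ roots $2e_a$ (i.e.\ $w_a\mapsto 1/(qw_a)$, fixed locus $qw_a^2=1$) play no role here since no factor $1-qw_i^2$ appears in the summand --- a terminological slip only. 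The trade-off: your argument is elementary and self-contained, in effect reproving the special case of the cited $BC_N$ identity, whereas the paper's proof is shorter and situates the constant $V_N$ within Hall--Littlewood theory.
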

\begin{proof}
	Let us define the functions
	\[
	f(w_1,\dots,w_N):=\prod_{1\leq i<j\leq N}		\left[\frac{w_i-qw_j}{w_i-w_j}\frac{1-w_iw_j}{1- qw_iw_j}\right],\quad g(z_1,\dots,z_N) := \prod_{1\leq i<j\leq N}\left[\frac{z_i-qz_j}{z_i-z_j}\frac{1-qz_i^{-1}z_j^{-1}}{1-z_i^{-1}z_j^{-1}}\right],
	\]
	so that $f(w_1,\dots,w_N)$ is the summand of the symmetrization \eqref{eq:BCempty sum fac}. 
	Let us also consider the alphabet $(z_1,\dots,z_N)$ defined by $z_i=q^{1/2}w_i$, so that $f(w_1,\dots,w_N)=q^{-\binom{N}{2}}g(z_1,\dots,z_N)$.
	The standard identification for negative indices under the action of $\mathcal{B}_N$ is $w_{-i}=1/(q w_i)$ which may be written in terms of $z_i$ as $w_{-i}=q^{-1/2}z_i^{-1}$. And so, on the $z$-alphabet we make the identification $z_{-i}=z_i^{-1}$. It follows that the symmetrization \eqref{eq:BCempty sum fac} may be evaluated as
	\[\sum_{\sigma\in\mathcal{B}_N}\sigma(f(w_1,\dots,w_N)) = q^{-\binom{N}{2}}\sum_{\sigma\in \mathcal{B}_N}\sigma(g(z_1,\dots,z_N)),\]
	where the action of the signed permutations $\sigma\in\mathcal{B}_N$ on the $z$-alphabet now uses $z_{-k}=z_k^{-1}$  for negative indices.
	The right hand side symmetrization is equivalent to a known identity from \cite{venkateswaran_symmetric_2015}:
	\begin{equation}
		\sum_{\sigma\in \mathcal{B}_N}\sigma(g(z_1,\dots,z_N)) = \prod_{i=1}^N \frac{\left(1-q^i\right)\left(1+q^{i-1}\right)}{1-q},
	\end{equation}
	which follows from the fact that a $BC_N$-symmetric Hall--Littlewood polynomial indexed by the empty partition is equal to 1, i.e. $K_\emptyset\left(z_1^\pm,\dots,z_N^\pm\right)=1$, with boundary parameters $t_0=t_1=t_2=t_3=0$ and $a=-b=1$.
	Application of this identity yields the result \eqref{eq:BCempty sum fac}.
\end{proof}
Observe that the scattering has the following invariance under $w_j\mapsto w_{-j}=1/(qw_j)$:
\[\frac{w_i-qw_j}{w_i-w_j}\frac{1-w_iw_j}{1- qw_iw_j} \Bigg|_{w_j\mapsto w_{-j}} = \frac{w_i-qw_j}{w_i-w_j}\frac{1-w_iw_j}{1- qw_iw_j}.\]
This observation allows for the  symmetrization identity of Proposition \ref{prop:BCempty sum fac} has the following extension to a partial symmetrization:
\begin{equation}
    \label{eq:BCempty sum fac partial}
    \sum_{\sigma\in\widetilde{\mathcal{B}}_{N-M}} \sigma\left(\prod_{1\leq i<j\leq N}
    \left[\frac{w_i-qw_j}{w_i-w_j}\frac{1-w_iw_j}{1- qw_iw_j}\right]\right)=\frac{1}{V_{N-M}} \prod_{i=1}^M\prod_{j=i+1}^N\left[\frac{w_i-qw_j}{w_i-w_j}\frac{1-w_iw_j}{1- qw_iw_j}\right].
\end{equation}
for any fixed integer $0\leq M<N$ where $\widetilde{\mathcal{B}}_{N-M}\cong\mathcal{B}_{N-M}$ is the subgroup consisting of all signed permutations which act on $w$ by leaving $w_1,\dots,w_M$ fixed whilst permuting  $w_{M+1},\dots,w_N$. 
While Proposition \ref{prop:BCempty sum fac} implies that whenever indexed by an empty state $\mathcal{F}_\emptyset(w_1,\dots,w_N)=\alpha^N$, the following result allows for an alternative expression of the function \eqref{eq:F initial coord function} using a partial symmetrization.
\begin{prop}
	Let $N\geq M\geq 1$ be fixed integers and let $y=(y_1,\dots,y_M)$ be a half-space configuration. Then, the function \eqref{eq:F initial coord function} has the following alternative expression:
	\begin{multline}
		\label{eq:F initial coord function alternative}
		\mathcal{F}_y(w_1,\dots,w_N)=\alpha^{N-M} \sum_{T\subseteq\{1,\dots,N\}\atop\abs{T}=M} \sum_{\sigma\in\mathcal{B}_M} \prod_{1\leq i<j\leq M} \left[\frac{w_{T_{\sigma(i)}}-qw_{T_{\sigma(j)}}}{w_{T_{\sigma(i)}}-w_{T_{\sigma(j)}}}\frac{1-w_{T_{\sigma(i)}}w_{T_{\sigma(j)}}}{1- qw_{T_{\sigma(i)}}w_{T_{\sigma(j)}}}\right] \prod_{i=1}^M\varphi_{y_i}\left(
		w_{T_{\sigma(i)}}\right) \\ \times\prod_{i=1}^M\prod_{j=1\atop j\notin T}^{N} \left[\frac{w_{T_{\sigma(i)}}-qw_{j}}{w_{T_{\sigma(i)}}-w_{j}}\frac{1-w_{T_{\sigma(i)}}w_{j}}{1- qw_{T_{\sigma(i)}}w_{j}}\right].
	\end{multline}
\end{prop}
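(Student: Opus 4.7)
The approach is to exploit the fact that $\prod_{i=1}^M\varphi_{y_i}(w_i)$ involves only the first $M$ alphabet letters, so the full $\mathcal{B}_N$-symmetrization can be collapsed over a suitable subgroup using identity \eqref{eq:BCempty sum fac partial}, leaving exactly a sum over subsets $T$ and signed permutations of $\mathcal{B}_M$.

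First, I would decompose $\mathcal{B}_N$ into left cosets of the subgroup $\widetilde{\mathcal{B}}_{N-M}$ of signed permutations fixing $w_1,\ldots,w_M$. Left cosets $\mathcal{B}_N/\widetilde{\mathcal{B}}_{N-M}$ are in bijection with signed injections $\{1,\ldots,M\}\hookrightarrow\{\pm 1,\ldots,\pm N\}$, and therefore with pairs $(T,\sigma)$: $T\subseteq\{1,\ldots,N\}$ with $|T|=M$ records the unsigned image of $\{1,\ldots,M\}$, while $\sigma\in\mathcal{B}_M$ encodes the signed bijection between $\{1,\ldots,M\}$ and this image. The cardinality check $\binom{N}{M}\cdot|\mathcal{B}_M|=2^M N!/(N-M)!=[\mathcal{B}_N:\widetilde{\mathcal{B}}_{N-M}]$ confirms completeness. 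For each pair I would pick the coset representative $\tau_{T,\sigma}\in\mathcal{B}_N$ that acts as $w_i\mapsto w_{T_{\sigma(i)}}$ for $i\le M$ (using the convention $w_{-k}=1/(qw_k)$) and $w_{M+k}\mapsto w_{(T^c)_k}$ with positive sign for each $k=1,\ldots,N-M$, where $T^c=\{(T^c)_1<\cdots<(T^c)_{N-M}\}$.

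Next, since $\prod_{i=1}^M\varphi_{y_i}(w_i)$ is $\widetilde{\mathcal{B}}_{N-M}$-invariant, it factors out of the inner sum $\sum_{\rho\in\widetilde{\mathcal{B}}_{N-M}}\rho(\,\cdot\,)$, and identity \eqref{eq:BCempty sum fac partial} evaluates the remaining partial symmetrization of the scattering factor, producing exactly $V_{N-M}^{-1}\prod_{i=1}^M\prod_{j=i+1}^N[\cdots]$. The prefactor $V_{N-M}$ in \eqref{eq:F initial coord function} then cancels, leaving only $\alpha^{N-M}$. Applying $\tau_{T,\sigma}$ to the resulting expression sends $\varphi_{y_i}(w_i)\mapsto\varphi_{y_i}(w_{T_{\sigma(i)}})$, maps the factors indexed by $1\le i<j\le M$ to those indexed by $(T_{\sigma(i)},T_{\sigma(j)})$, and (since $\{(T^c)_k\}_{k=1,\ldots,N-M}=T^c$) turns $\prod_{i=1}^M\prod_{j=M+1}^N[\cdots]$ into $\prod_{i=1}^M\prod_{j\notin T}[\cdots]$. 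This matches \eqref{eq:F initial coord function alternative} term by term.

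The main hurdle is the notational bookkeeping around the coset decomposition and the signed-index convention $w_{T_{\sigma(i)}}=w_{\epsilon_iT_{\pi(i)}}$ for $\sigma(i)=\epsilon_i\pi(i)$. One also has to observe that the choice of coset representative is inessential: different choices differ by signs on the last $N-M$ positions, and each scattering factor is invariant under $w_j\mapsto 1/(qw_j)$ (as recorded just above \eqref{eq:BCempty sum fac partial}), so the whole computation is well-defined. This is precisely the invariance that drives \eqref{eq:BCempty sum fac partial}, so no further analytic input is needed beyond that identity and the coset counting.
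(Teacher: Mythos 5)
Your proposal is correct and follows essentially the same route as the paper: the paper's proof writes the $\mathcal{B}_N$-sum as nested sums over $T$, $\sigma\in\mathcal{B}_M$ and $\rho\in\mathcal{B}_{N-M}$ (which is exactly your coset decomposition of $\mathcal{B}_N$ by $\widetilde{\mathcal{B}}_{N-M}$) and then evaluates the inner $\rho$-sum via Proposition \ref{prop:BCempty sum fac}, using the same $w_j\mapsto 1/(qw_j)$ invariance of the cross factors that underlies \eqref{eq:BCempty sum fac partial}, so the $1/V_{N-M}$ cancels the prefactor just as in your argument. The only difference is packaging (explicit coset representatives and \eqref{eq:BCempty sum fac partial} versus the paper's direct triple-sum rewriting), not substance.
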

\begin{proof}
	Consider the symmetrization formula \eqref{eq:F initial coord function} and let us decompose the sum over $\mathcal{B}_N$ as
	\begin{multline}
		\label{eq:F alternative proof 1}
		\mathcal{F}_y(w_1,\dots,w_N)=V_{N-M}\alpha^{N-M} \\
		\times\sum_{T\subseteq\{1,\dots,N\}\atop\abs{T}=M} \sum_{\sigma\in\mathcal{B}_M}\sum_{\rho\in\mathcal{B}_{N-M}} \prod_{1\leq i<j\leq M} \left[\frac{w_{T_{\sigma(i)}}-qw_{T_{\sigma(j)}}}{w_{T_{\sigma(i)}}-w_{T_{\sigma(j)}}}\frac{1-w_{T_{\sigma(i)}}w_{T_{\sigma(j)}}}{1- qw_{T_{\sigma(i)}}w_{T_{\sigma(j)}}}\right] \prod_{i=1}^M\varphi_{y_i}\left(w_{T_{\sigma(i)}}\right )\\
		\times\prod_{i=1}^M\prod_{j=1}^{N-M} \left[\frac{w_{T_{\sigma(i)}}-qw_{T^\mathsf{C}_{\rho(j)}}}{w_{T_{\sigma(i)}}-w_{T^\mathsf{C}_{\rho(j)}}}\frac{1-w_{T_{\sigma(i)}}w_{T^\mathsf{C}_{\rho(j)}}}{1- qw_{T_{\sigma(i)}}w_{T^\mathsf{C}_{\rho(j)}}}\right] \prod_{1\leq i<j\leq N-M}\left[\frac{w_{T^\mathsf{C}_{\rho(i)}}-qw_{T^\mathsf{C}_{\rho(j)}}}{w_{T^\mathsf{C}_{\rho(i)}}-w_{T^\mathsf{C}_{\rho(j)}}}\frac{1-w_{T^\mathsf{C}_{\rho(i)}}w_{T^\mathsf{C}_{\rho(j)}}}{1- qw_{T^\mathsf{C}_{\rho(i)}}w_{T^\mathsf{C}_{\rho(j)}}}\right].
	\end{multline}
	We will proceed by explicitly evaluating the sum over $\rho\in\mathcal{B}_{N-M}$. Now, observe that, for fixed $T$, the first factor on the bottom line of \eqref{eq:F alternative proof 1} is invariant under the choice of $\rho\in\mathcal{B}_{N-M}$. Consequently, we may evaluate the sum over $\rho$ using Proposition \ref{prop:BCempty sum fac} as
	\[\sum_{\rho\in\mathcal{B}_{N-M}}\prod_{1\leq i<j\leq N-M}\left[\frac{w_{T^\mathsf{C}_{\rho(i)}}-qw_{T^\mathsf{C}_{\rho(j)}}}{w_{T^\mathsf{C}_{\rho(i)}}-w_{T^\mathsf{C}_{\rho(j)}}}\frac{1-w_{T^\mathsf{C}_{\rho(i)}}w_{T^\mathsf{C}_{\rho(j)}}}{1- qw_{T^\mathsf{C}_{\rho(i)}}w_{T^\mathsf{C}_{\rho(j)}}}\right] = \frac1{V_{N-M}},\]
	which yields the desired result \eqref{eq:F initial coord function alternative} once substituted into \eqref{eq:F alternative proof 1}.
\end{proof}
\begin{prop}
	\label{prop:F-recurrence}
	Let $N\geq M\geq 1$ be fixed integers and let $w=(w_1,\dots,w_N)$ be a fixed alphabet and let $y_1,\dots,y_M\in\mathbb{Z}$ be fixed so that $y=(y_1,\dots,y_M)$  is a collection of integers\footnote{Typically we consider the case where $y_1\geq \cdots\geq y_M\geq 1$ so that $y$ is a half-space configuration. However we will need the more general statement in the proof of Lemma \ref{lm: F-eigenvector property} later on.}. The function \eqref{eq:F initial coord function} satisfies the following recursion relation for all $k\in\{1,\dots,N\}$:
	\begin{equation}
		\mathcal{F}_y(w_1,\dots,w_N)\Big|_{w_k=0} = \alpha \mathcal{F}_y(w_1,\dots,w_{k-1},w_{k+1},\dots,w_N).
	\end{equation}
\end{prop}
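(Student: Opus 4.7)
The plan is to exploit the alternative symmetrization formula \eqref{eq:F initial coord function alternative}, which writes $\mathcal{F}_y(w_1, \dotsc, w_N)$ as $\alpha^{N-M}$ times a sum over subsets $T \subseteq \{1, \dotsc, N\}$ of cardinality $M$ together with elements $\sigma \in \mathcal{B}_M$. After substituting $w_k = 0$, I would split the sum over $T$ according to whether $k \notin T$ or $k \in T$, and show that the first case produces exactly $\alpha \hspace{0.1em} \mathcal{F}_y(w_1, \dotsc, \hat w_k, \dotsc, w_N)$ while the second case vanishes identically.

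In the case $k \notin T$, the variable $w_k$ enters only through the product $\prod_{i=1}^M \prod_{j \notin T}$ at $j = k$. Each such factor has the form $\frac{w_{T_{\sigma(i)}} - q w_k}{w_{T_{\sigma(i)}} - w_k} \cdot \frac{1 - w_{T_{\sigma(i)}} w_k}{1 - q w_{T_{\sigma(i)}} w_k}$, and at $w_k = 0$ this evaluates to $1$ regardless of whether $w_{T_{\sigma(i)}}$ stands for $w_{T_s}$ or for $1/(q w_{T_s})$ (the latter coming from the negative-index convention of the $\mathcal{B}_M$-action). What remains is $\alpha$ times the same alternative formula applied to the reduced alphabet, since $\alpha^{N-M} = \alpha \cdot \alpha^{(N-1)-M}$.

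For $k \in T$, write $k = T_r$. For each $\sigma \in \mathcal{B}_M$ there is a unique index $i^*$ with $|\sigma(i^*)| = r$, and the corresponding variable $w_{T_{\sigma(i^*)}}$ equals either $w_k$ or $1/(q w_k)$. The factor $\varphi_{y_{i^*}}$ in the summand will force vanishing in both subcases: directly when $\sigma(i^*) = r$, since $\varphi_y(0) = 0$ from the explicit factor of $w$ in $\varphi_y(w)$; and proportionally to $w_k$ when $\sigma(i^*) = -r$, since the leading behavior of $\varphi_y$ at infinity is $\varphi_y(w) \sim \alpha(1-q) q^{y-2}/w$, giving $\varphi_y(1/(q w_k)) = O(w_k)$. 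All other factors involving $w_k$ admit finite limits: the $\sigma(i^*) = r$ subcase is manifestly regular, while the $\sigma(i^*) = -r$ subcase reduces to checking that, with $u := 1/(q w_k) \to \infty$, the kernel $R(a,b) = \frac{a - q b}{a - b} \cdot \frac{1 - a b}{1 - q a b}$ satisfies $R(u, v) \to 1/q$ and $R(v, u) \to 1$, so that each of the $N - 1$ factors containing $u$ (that is, the $M - 1$ from the scattering product and the $N - M$ from the product over $j \notin T$) admits a finite limit.

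The main obstacle is the subcase $\sigma(i^*) = -r$: one must confirm that the accumulation of the $N-1$ factors as $u = 1/(q w_k) \to \infty$ cannot interfere with the linear zero of $\varphi_{y_{i^*}}(u)$, and this requires keeping careful track of each $R$-factor's asymptotic behavior at infinity and verifying that the product of limits is genuinely finite. Once this is established, combining the two cases yields the claimed recurrence.
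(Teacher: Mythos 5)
Your proposal is correct, but it takes a genuinely different route from the paper. The paper works directly on the $\mathcal{B}_N$ symmetrization formula \eqref{eq:F initial coord function}: it uses $\varphi_y(w_k)\big|_{w_k=0}=\varphi_y\big(\tfrac1{qw_k}\big)\big|_{w_k=0}=0$ to kill all terms with $\sigma(\ell)=\pm k$ for $\ell\leq M$, then computes the specialization of the scattering product for $\sigma(\ell)=\pm k$ with $\ell>M$, which produces factors $q^{\pm(N-\ell)}$, and finally collects these using the identity $V_{m-1}/V_m=\sum_{j=0}^{m-1}(q^j+q^{-j})$ for the normalization \eqref{eq:V-normalization} to recover the prefactor $\alpha$. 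You instead start from the partially symmetrized expression \eqref{eq:F initial coord function alternative}, where this $V$-bookkeeping has already been absorbed: the terms with $k\notin T$ reproduce $\alpha\,\mathcal{F}_y$ on the reduced alphabet (each $w_k$-dependent factor tends to $1$), while the terms with $k\in T$ vanish because $\varphi_{y}$ vanishes both at $0$ and like $1/u$ at $u\to\infty$, the remaining $u$-dependent factors having the finite limits $1/q$ and $1$ that you record. Both arguments are sound; yours trades the $V$-ratio identity for the asymptotic analysis at infinity. Two small points you should make explicit: (i) the alternative formula \eqref{eq:F initial coord function alternative} is stated in the paper for half-space configurations, whereas Proposition \ref{prop:F-recurrence} allows arbitrary $y_i\in\mathbb{Z}$; this is harmless since the derivation of \eqref{eq:F initial coord function alternative} (a decomposition of $\mathcal{B}_N$ plus Proposition \ref{prop:BCempty sum fac}) never uses positivity or ordering of $y$, but the extension should be noted; (ii) when $M=N$ every $T$ contains $k$, so your argument gives $\mathcal{F}_y(w)\big|_{w_k=0}=0$, which is consistent with the right-hand side because the paper's convention sets $\mathcal{F}_y=0$ when the number of parts of $y$ exceeds the alphabet length.
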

\begin{proof}
	Observe that, whenever $w_k=0$, the following property holds for any $y\in\mathbb{Z}$ 
	\[\varphi_y(w_k) \Big|_{w_k=0} = \varphi_y\left(\frac{1}{qw_k}\right)\Bigg|_{w_k=0} = 0.\]
	By setting $w_k=0$ within the the symmetrization formula \eqref{eq:F initial coord function} observe that terms corresponding to $\sigma(\ell) = \pm k$ for any $1\leq \ell \leq M$ vanish. Now observe the following property for any $\sigma\in\mathcal{B}_N$ which satisfies $\sigma(\ell) = \pm k$:
	\begin{multline*}
		\left.\prod_{1\leq i\leq j\leq N} \left[\frac{w_{\sigma(i)}-qw_{\sigma(j)}}{w_{\sigma(i)}-w_{\sigma(j)}}\frac{1-w_{\sigma(i)}w_{\sigma(j)}}{1- qw_{\sigma(i)}w_{\sigma(j)}}\right]\right|_{w_k=0} \\ = \prod_{1\leq i\leq j\leq N \atop i,j\neq\ell} \left[\frac{w_{\sigma(i)}-qw_{\sigma(j)}}{w_{\sigma(i)}-w_{\sigma(j)}}\frac{1-w_{\sigma(i)}w_{\sigma(j)}}{1- qw_{\sigma(i)}w_{\sigma(j)}}\right]\times\begin{cases}
			q^{N-\ell} & \text{if }\sigma(\ell)=+k\\
			q^{-(N-\ell)} & \text{if }\sigma(\ell)=-k
		\end{cases}.
	\end{multline*}
	The application of this specialization to the symmetrization formula \eqref{eq:F initial coord function} yields:
	\begin{multline}
		\label{eq:F-recurrence proof 1}
		\mathcal{F}_y(w_1,\dots,w_N)\Big|_{w_k=0} = V_{N-M}\alpha^{N-M} \sum_{\ell=M+1}^N q^{N-\ell}\sum_{\sigma\in\mathcal{B}_N\atop\sigma(\ell)=+k}\sigma\left(\prod_{1\leq i\leq j\leq N\atop i,j\neq\ell}\left[\frac{w_i-qw_j}{w_i-w_j}\frac{1-w_iw_j}{1- qw_iw_j}\right] 	\prod_{i=1}^M \varphi_{y_i}(w_i)\right)\\
		+V_{N-M}\alpha^{N-M} \sum_{\ell=M+1}^N q^{-(N-\ell)}\sum_{\sigma\in\mathcal{B}_N\atop\sigma(\ell)=-k}\sigma\left(\prod_{1\leq i\leq j\leq N\atop i,j\neq\ell}\left[\frac{w_i-qw_j}{w_i-w_j}\frac{1-w_iw_j}{1- qw_iw_j}\right] 	\prod_{i=1}^M \varphi_{y_i}(w_i)\right).
	\end{multline}
	Now, for each fixed value of $\ell$, both restricted sums over signed permutations can be identified as being proportional to $\mathcal{F}_y(w_1\dots,w_{k-1},w_{k+1},\dots,w_N)$. We emphasize that this identification is independent of $\ell$ so that we may re-write \eqref{eq:F-recurrence proof 1} by factoring out the $\mathcal{F}_y$ function as
	\[\mathcal{F}_y(w_1,\dots,w_N)\Big|_{w_k=0} = \frac{V_{N-M}}{V_{N-M-1}}\sum_{\ell=M+1}^N\left(q^{N-\ell}+q^{-(N-\ell)}\right)\cdot\alpha\mathcal{F}_y(w_1\dots,w_{k-1},w_{k+1},\dots,w_N).\]
	Finally, Proposition \eqref{prop:F-recurrence} follows by noting the following property of the normalization factor \eqref{eq:V-normalization}
	\[\frac{V_{m-1}}{V_{m}} = \sum_{j=0}^{m-1}\left(q^j+q^{-j}\right),\]
	which holds for any integer $m>0$.
\end{proof}
\begin{prop}
	\label{prop:F function meromorphic}
	Let $N\geq M\geq 0$ be fixed integers and let $w=(w_1,\dots,w_N)$ be a fixed alphabet and let $y_1,\dots,y_M\in\mathbb{Z}$ be fixed so that $y=(y_1,\dots,y_M)$ is a collection of integers. Then the function $\mathcal{F}_y(w)$, given by the symmetrization formula \eqref{eq:F initial coord function}, is a meromorphic function in each variable $w_1,\dots,w_N$ whose singularities are all located at the points $w_i=1,q^{-1}$ for each $1\leq i\leq N$.
\end{prop}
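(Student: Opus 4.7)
The plan is to identify every potential singular locus of an individual summand of \eqref{eq:F initial coord function} and show that, with the exception of $w_i=1$ and $w_i=q^{-1}$, each such locus is a removable singularity of the full sum over $\mathcal{B}_N$. Recall that under $\sigma\in\mathcal{B}_N$ one substitutes $w_{-k}=1/(qw_k)$, so the sources of poles in a generic summand are: (a) the denominators $w_i-w_j$ and $1-qw_iw_j$ in the scattering factor, which produce poles at $w_k=w_\ell$ and $qw_kw_\ell=1$; (b) the factor $1/(1-qw^2)$ inside $\varphi_{y_i}$, which yields poles at $w_k=\pm q^{-1/2}$; (c) the substitution $w_{-j}=1/(qw_j)$, which introduces poles at $w_k=0$; and (d) the ``genuine'' factors $1/(1-w)^{y_i}$ and $1/(1-qw)^{y_i}$, giving the retained singularities at $w_k=1$ and $w_k=q^{-1}$.

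The cancellation of the spurious poles proceeds by a collection of residue-matching pairings. To remove the pole at $w_k=w_\ell$, I would pair each $\sigma\in\mathcal{B}_N$ with $s_{ab}\sigma$, where $s_{ab}\in S_N\subset\mathcal{B}_N$ is the plain transposition that swaps the two positions $a,b$ with $\{|\sigma(a)|,|\sigma(b)|\}=\{k,\ell\}$; at the locus $w_k=w_\ell$ all other factors of the two summands agree, while the $(a,b)$ scattering factors $\frac{w_k-qw_\ell}{w_k-w_\ell}$ and $\frac{w_\ell-qw_k}{w_\ell-w_k}$ have opposite residues of magnitude $(1-q)w_\ell$. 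The dual pole at $qw_kw_\ell=1$ is handled analogously, using a pairing $\sigma\leftrightarrow\epsilon_b\sigma$ (with $\epsilon_b$ the sign-flip at position $b$) together with the reflected factor $(1-w_iw_j)/(1-qw_iw_j)$, which plays the role of the reflection $R$-matrix. For the pole at $w_k=\pm q^{-1/2}$ coming from $\varphi_{y_i}$, the key observation is that $\pm q^{-1/2}$ are the two fixed points of the involution $w\mapsto 1/(qw)$; pairing $\sigma$ with $\epsilon_a\sigma$ for the $a$ with $|\sigma(a)|=k$, the residue contributions of $\varphi_{y_i}(w_k)$ and $\varphi_{y_i}(1/(qw_k))$ combine with the scattering factors to yield a net cancellation. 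Finally, the apparent pole at $w_k=0$ is ruled out directly by Proposition~\ref{prop:F-recurrence}, which shows that $\mathcal{F}_y$ has the finite limit $\alpha\mathcal{F}_y(w_1,\dotsc,\widehat{w_k},\dotsc,w_N)$ as $w_k\to 0$.

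The main obstacle is the bookkeeping for the pairings (a) and (b) in the presence of the inhomogeneous factor $\prod_{i=1}^M\varphi_{y_i}(w_i)$, which breaks the apparent symmetry of the summand. To manage this I would work with the partially symmetrized form \eqref{eq:F initial coord function alternative}, in which the $\varphi_{y_i}$ factors are attached to a distinguished subset $T\subseteq\{1,\dotsc,N\}$ of size $M$ and the remaining sum is an inner $\mathcal{B}_M$-symmetrization. In that representation the pairings above split into those internal to $T$ (handled essentially as in the full-line ASEP analysis of \cite{tracy_integral_2008}), those internal to $T^{\mathsf{C}}$ (where the summand reduces to the empty-initial-condition case and Proposition~\ref{prop:BCempty sum fac} applies), and those exchanging an element of $T$ with one of $T^{\mathsf{C}}$; the required cancellations can then be verified uniformly over the partition $\{T,T^{\mathsf{C}}\}$.
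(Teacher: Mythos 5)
Your overall route is the same as the paper's: list the candidate singular loci of an individual summand of \eqref{eq:F initial coord function} (namely $w_k=w_j$, $qw_kw_j=1$ and $w_k=\pm q^{-1/2}$, besides the retained $w_k=1,q^{-1}$) and show the spurious ones are removable, treating $\pm q^{-1/2}$ by pairing each signed permutation with the one obtained by flipping the sign of the entry mapped to $\pm k$ and using that $\pm q^{-1/2}$ are the fixed points of $w\mapsto 1/(qw)$. That last part agrees with the paper's computation, where the prefactor $w_k/(1-qw_k^2)$, odd under the involution, multiplies a difference $h(w_k)-h(1/(qw_k))$ that vanishes at the fixed points.

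However, the explicit pairings you propose for the loci $w_k=w_\ell$ and $qw_kw_\ell=1$ do not work as stated. Poles along $w_k=w_\ell$ arise not only from the denominator $w_A-w_B$ in same-sign summands but also from the denominator $1-qw_Aw_B$ in summands where exactly one of the two relevant slots carries a sign flip (since $1-q\,w_k\cdot\tfrac{1}{qw_\ell}=1-w_k/w_\ell$); for those mixed-sign summands the plain position transposition $s_{ab}$ does not leave ``all other factors'' unchanged on the locus (the swapped slot then carries $1/(qw_\ell)\neq w_\ell$), so the claimed residue match of magnitude $(1-q)w_\ell$ only covers part of the contributions. Likewise, for $qw_kw_\ell=1$ the pairing $\sigma\leftrightarrow\epsilon_b\sigma$ alters more than the singular factor: the scattering factor $\frac{w_i-qw_j}{w_i-w_j}\frac{1-w_iw_j}{1-qw_iw_j}$ is invariant under flipping its second variable but not its first, and when $b\leq M$ the factor $\varphi_{y_b}$ changes as well, so the two paired summands do not share their regular parts on the locus and the pairwise cancellation is not established. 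The paper sidesteps this bookkeeping: $\mathcal{F}_y$ is by construction invariant under the full $\mathcal{B}_N$-action, each summand has at most a simple pole along these loci, and a symmetric function with at most a simple pole along $w_k=w_\ell$ has vanishing residue there; the loci $w_k=1/(qw_j)$ then follow by applying the invariance under $w_j\mapsto 1/(qw_j)$. If you insist on pairings, the correct partner swaps the \emph{values} $k$ and $\ell$ while keeping the signs attached to the slots (left multiplication by the transposition of values), not the position transposition. Finally, your treatment of $w_k=0$ via Proposition \ref{prop:F-recurrence} is harmless but unnecessary (every factor of every summand has a finite limit there), and the detour through \eqref{eq:F initial coord function alternative} is likewise not needed.
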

\begin{proof}
	Let us consider the function $\mathcal{F}_y(w_1,\dots,w_N)$ as a function of the variable $w_k$ for some fixed integer $1\leq k\leq N$. Excluding poles at $w_k=1,q^{-1}$, by observing the individual terms of symmetrization formula \eqref{eq:F initial coord function}, the other possible singularities are at the points $w_k=w_j,1/(qw_j)$ for integers $j\neq k$ and also at $w_k=\pm q^{-1/2}$. We will proceed by showing that these singularities are removable, so that the function $\mathcal{F}_y(w)$ may be defined to be analytic at those points. Firstly, the simple poles at $w_k=w_j,1/(qw_j)$ are removable by virtue of the symmetrization over $\mathcal{B}_N$ and is therefore symmetric under the action of any signed permutation.
	
	Let us now consider the apparent singularities at $w_k=\pm q^{-1/2}$, which are due to factors $\left(1-qw_k^2\right)$ which appear in the denominator of $\varphi_{y_\ell}(w_k)$ and $\varphi_{y_\ell}\left(\frac{1}{qw_k}\right)$ for some integer $1\leq\ell\leq M$. Let us consider the terms in \eqref{eq:F initial coord function} associated with signed permutations $\sigma,\sigma'\in\mathcal{B}_N$ satisfying $\sigma(\ell)=+k$ and $\sigma'(\ell)=-k$ but otherwise $\sigma(i)=\sigma'(i)$ for all other integers $i\neq\ell$. It is sufficient to consider the $w_k$-dependent factors of the sum of these two terms:
	\begin{multline}
		\label{eq:F function meromorphic proof 1}
		\prod_{i=1}^{\ell-1}\left[\frac{w_k-q w_{\sigma(i)}}{w_k-w_{\sigma(i)}}\frac{1-w_kw_{\sigma(i)}}{1-qw_kw_{\sigma(i)}}\right]\prod_{i=\ell+1}^N\left[\frac{w_{\sigma(i)}-q w_k}{w_{\sigma(i)}-w_k}\frac{1-w_kw_{\sigma(i)}}{1-qw_kw_{\sigma(i)}}\right]\varphi_{y_\ell}(w_k) \\
		+\prod_{i=1}^{\ell-1}\left[\frac{qw_k-w_{\sigma(i)}}{q(w_k-w_{\sigma(i)})}\frac{1-w_kw_{\sigma(i)}}{1-qw_kw_{\sigma(i)}}\right]\prod_{i=\ell+1}^N\left[\frac{w_{\sigma(i)}-q w_k}{w_{\sigma(i)}-w_k}\frac{1-w_kw_{\sigma(i)}}{1-qw_kw_{\sigma(i)}}\right]\varphi_{y_\ell}\left(\frac{1}{qw_k}\right),
	\end{multline}
	where overall shared factors are neglected. Now, observe that each of these terms both contain the factor $\frac{w_\ell}{1-qw_\ell^2}$ which changes sign under $w_\ell\mapsto 1/(qw_\ell)$. This means we may write \eqref{eq:F function meromorphic proof 1} as 
	\begin{equation}
		\label{eq:F function meromorphic proof 2}
		\frac{w_k}{1-qw_k^2}\left(h(w_k)-h\left(\frac{1}{qw_k}\right)\right)\prod_{i=\ell+1}^N\left[\frac{w_{\sigma(i)}-q w_k}{w_{\sigma(i)}-w_k}\frac{1-w_kw_{\sigma(i)}}{1-qw_kw_{\sigma(i)}}\right],
	\end{equation}
	where the following function is analytic at $w_k=\pm q^{-1/2}$:
	\[h(w_k) = \frac{(1-q)(1-q-\alpha +\alpha w_k)}{(1-w_k)} \left(\frac{1-q w_k}{1-w_k}\right)^{y_\ell}\prod_{i=1}^{\ell-1}\left[\frac{w_k-qw_{\sigma(i)}}{w_k-w_{\sigma(i)}}\frac{1-w_k w_{\sigma(i)}}{1- qw_k w_{\sigma(i)}}\right].\]
	From here, by showing that it has vanishing residue, we observe that the singularities in expression \eqref{eq:F function meromorphic proof 2} at the points $w_k=\pm q^{-1/2}$ are removable.
\end{proof}
The following evaluation of the function \eqref{eq:F initial coord function} is particularly useful.
\begin{lm}
	\label{lm:F-1 evaluation}
	Let $N>0$ be a fixed integer and let $w=(w_1,\dots,w_N)$ be a fixed alphabet. The function \eqref{eq:F initial coord function} indexed by the half-space configuration $y=(1)$ is explicitly given by:
	\begin{equation}
		\label{eq:F-1 evaluation}
		\mathcal{F}_{(1)}(w_1,\dots,w_N) = \alpha^{N-1} \sum_{k=1}^N \frac{(1-q)^2w_k}{(1-w_k)(1-qw_k)}.
	\end{equation}
\end{lm}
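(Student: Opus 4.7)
The plan is to proceed by induction on $N$.

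For the base case $N=1$, using $V_0=1$ one has $\mathcal{F}_{(1)}(w)=\varphi_1(w)+\varphi_1(1/(qw))$, where from the definition $\varphi_1(1/(qw))=\frac{(qw(1-q-\alpha)+\alpha)(1-q)w}{(1-qw^2)(1-qw)}$. Both terms share the prefactor $(1-q)w/(1-qw^2)$, and after placing the remaining fractions $\frac{1-q-\alpha+\alpha w}{1-w}$ and $\frac{qw(1-q-\alpha)+\alpha}{1-qw}$ over a common denominator, the resulting numerator splits into a piece proportional to $(1-q-\alpha)$ and a piece proportional to $\alpha$, each of which factors as a multiple of $(1-qw^2)$; their sum is $(1-q)(1-qw^2)$, and after cancellation against $(1-qw^2)$ one obtains exactly $(1-q)^2 w/[(1-w)(1-qw)]$.

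For the inductive step, assume the formula holds for $N-1$ variables and rewrite $\mathcal{F}_{(1)}(w_1,\dots,w_N)$ via the partial symmetrization \eqref{eq:F initial coord function alternative} with $M=1$, namely
\begin{equation*}
\mathcal{F}_{(1)}(w) = \alpha^{N-1}\sum_{k=1}^{N}\sum_{\sigma\in\{\pm\}}\varphi_1(w_k^\sigma)\prod_{j\neq k}\left[\frac{w_k^\sigma - qw_j}{w_k^\sigma - w_j}\,\frac{1-w_k^\sigma w_j}{1-qw_k^\sigma w_j}\right],
\end{equation*}
with $w_k^+ = w_k$ and $w_k^- = 1/(qw_k)$. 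Viewed as a function of $w_1$ alone (with the other $w_i$ held at generic positions), Proposition \ref{prop:F function meromorphic} ensures that its only singularities lie at $w_1=1$ and $w_1=q^{-1}$. Among the summands, only the two with $T=\{1\}$ can actually produce poles at these points: the $\sigma=+$ summand has a simple pole at $w_1=1$ coming from $\varphi_1(w_1)$, while the $\sigma=-$ summand has a simple pole at $w_1=q^{-1}$ coming from $\varphi_1(1/(qw_1))$. In each case the accompanying product $\prod_{j\neq 1}[\cdots]$ telescopes trivially to $1$ at the pole, so a short residue calculation yields $\mathrm{Res}_{w_1=1}\mathcal{F}_{(1)} = -\alpha^{N-1}(1-q)$ and $\mathrm{Res}_{w_1=q^{-1}}\mathcal{F}_{(1)} = \alpha^{N-1}(1-q)/q$. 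The right-hand side $\alpha^{N-1}\sum_k g(w_k)$, with $g(w) := (1-q)^2 w/[(1-w)(1-qw)]$, produces exactly these residues from the $k=1$ term alone, so
\begin{equation*}
D(w_1):=\mathcal{F}_{(1)}(w_1,\dots,w_N) - \alpha^{N-1}\sum_{k=1}^N g(w_k)
\end{equation*}
extends to an entire function of $w_1$.

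To close the induction, I examine the behavior as $w_1\to\infty$. Both $\varphi_1(w_1)$ and $\varphi_1(1/(qw_1))$ decay like $1/w_1$ while the accompanying products remain bounded, so the two $T=\{1\}$ summands vanish in the limit; for each $T=\{k\}$ with $k\neq 1$ the factor in the product involving $w_1$ tends to $1$, which identifies $\lim_{w_1\to\infty}\mathcal{F}_{(1)}(w_1,\dots,w_N) = \alpha\,\mathcal{F}_{(1)}(w_2,\dots,w_N)$. By the induction hypothesis this equals $\alpha^{N-1}\sum_{k=2}^N g(w_k)$, which is also the limit of the right-hand side (since $g(w_1)\to 0$). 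Thus $D(w_1)\to 0$ at infinity, and Liouville's theorem forces $D\equiv 0$. The main obstacle is the residue bookkeeping: individual partial-symmetrization summands carry numerous spurious poles at $w_1=w_k$, $w_1=1/(qw_k)$ and $w_1=\pm q^{-1/2}$, and one must invoke Proposition \ref{prop:F function meromorphic} to know that these cancel in the sum. Once that is granted, attention reduces to the single genuine contribution from the $T=\{1\}$ summand at each pole, and the remaining computations are routine.
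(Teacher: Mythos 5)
Your proof is correct, and its ingredients overlap heavily with the paper's: the same two residue computations (at $w_1=1$ from the $\sigma=+$ term, giving $\alpha^{N-1}(q-1)$, and at $w_1=q^{-1}$ from the $\sigma=-$ term, giving $\alpha^{N-1}(q^{-1}-1)$, with the scattering products evaluating to $1$ at the pole), and the same reliance on Proposition \ref{prop:F function meromorphic} to dispose of the spurious singularities at $w_1=w_j$, $1/(qw_j)$, $\pm q^{-1/2}$. Where you differ is in how the remaining freedom is pinned down. The paper multiplies both sides by $\prod_i(1-w_i)(1-qw_i)$, argues term-wise that the result is a symmetric polynomial of degree at most $2$ in each variable, and then matches three specializations per variable: the two residues plus the $w_k=0$ recursion of Proposition \ref{prop:F-recurrence} (which is where the reduction in $N$ is hidden). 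You instead run an explicit induction on $N$, work directly with the partially symmetrized expression \eqref{eq:F initial coord function alternative} for $M=1$, and close the induction at the point $w_1=\infty$: after subtracting the right-hand side, the difference is entire in $w_1$ and tends to $0$ (its limit being $\alpha\,\mathcal{F}_{(1)}(w_2,\dots,w_N)$ minus the inductive value), so Liouville's theorem finishes the job. This buys you independence from Proposition \ref{prop:F-recurrence} and from the degree-counting step, at the cost of needing the precise asymptotics as $w_1\to\infty$ (which you correctly extract from \eqref{eq:F initial coord function alternative}, the $T=\{1\}$ terms decaying like $1/w_1$ and the $j=1$ scattering factors tending to $1$ in the other terms) and a verified base case $N=1$, whose algebra you carry out correctly; the same Liouville device is in fact used by the paper later, in the proof of the boundary-scattering identity for $\mathcal{G}$. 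One small point worth making explicit: your analysis of which summands carry poles at $w_1=1,q^{-1}$ is valid for generic $w_2,\dots,w_N$, so the identity is first obtained for generic parameters and then extended by meromorphic continuation; you gesture at this with ``generic positions,'' and it is harmless, but it deserves a sentence.
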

\begin{proof}
	Let us define the function $P(w_1,\dots,w_N)$ as the right hand side of \eqref{eq:F-1 evaluation}. Then observe that 
	\begin{equation}
		\label{eq:F-1 evaluation proof 1}
		P(w_1,\dots,w_N)\cdot\prod_{i=1}^N\left[(1-w_i)(1-qw_i)\right]
	\end{equation}
	is clearly a symmetric polynomial of maximal degree 2 in each variable $w_1,\dots,w_N$. As such, the polynomial \eqref{eq:F-1 evaluation proof 1} is completely determined by three independent specializations. The following are easily verifiable for each $k\in \{1,\dots,N\}$:
	\begin{enumerate}[label=\textbf{(\roman*)},wide=0pt,itemsep=5pt]
		\item The residue at $w_k=1$:
		\[\res_{w_k=1} P(w_1,\dots,w_N) = \alpha^{N-1}(q-1).\]
		\item The residue at $w_k=q^{-1}$:
		\[\res_{w_k=q^{-1}} P(w_1,\dots,w_N) = \alpha^{N-1}\left(q^{-1}-1\right).\]
		\item The recurrence at $w_k=0$:
		\[P(w_1,\dots,w_N)\Big|_{w_k=0} = \alpha P(w_1,\dots,w_{k-1},w_{k+1},\dots,w_N).\]
	\end{enumerate}
	Lemma~\ref{lm:F-1 evaluation} follows by showing that each of these properties holds for the symmetrization formula \eqref{eq:F initial coord function} for the function $\mathcal{F}_{(1)}(w)$. But first we must show that 
	\begin{equation}
		\label{eq:F_1 polynomial}
		\mathcal{F}_{(1)}(w_1,\dots,w_N)\cdot\prod_{i=1}^N\left[(1-w_i)(1-qw_i)\right]
	\end{equation}
	is a polynomial in each $w$-variable of maximal degree 2. By virtue of Proposition \ref{prop:F function meromorphic}, the function \eqref{eq:F_1 polynomial} is a meromorphic function whose only possible poles occur at $w_k=1,q^{-1}$ for each integer $1\leq k \leq N$. By observing individual terms in its symmetrization formula \eqref{eq:F initial coord function}, we observe that $F_{(1)}(w)$ has at most simple poles at $w_k=1,q^{-1}$, so that \eqref{eq:F_1 polynomial} is indeed a polynomial in $w_k$. And so, we may determine its degree by investigating the behavior as $w_k\to\infty$ term-wise for each individual signed permutation $\sigma\in\mathcal{B}_N$. It is easily verifiable that for each $\sigma\in \mathcal{B}_N$:
	\[\prod_{i=1}^N\left[(1-w_i)(1-qw_i)\right]\cdot\sigma\left(\prod_{1\leq i<j\leq N}
	\left[\frac{w_i-qw_j}{w_i-w_j}\frac{1-w_iw_j}{1- qw_iw_j}\right] \varphi_{1}(w_1)\right) = \mathcal{O}\left(w_k^2\right)\]
	as $w_k\to\infty$ for each integer $1\leq k\leq N$. So we conclude that \eqref{eq:F_1 polynomial} has maximal degree 2.  
	
	Let us now show that $\mathcal{F}_{(1)}(w_1,\dots,w_N)$ satisfies the required properties \textbf{(i)--(iii)}. In what follows, let $k\in \{1,\dots,N\}$ be fixed positive integer.
	\begin{enumerate}[label=\textbf{(\roman*)},wide=0pt,itemsep=5pt]
		\item Observe that, in the symmetrization formula \eqref{eq:F initial coord function}, the only terms with singularities at $w_k=1$ are those where $\varphi_y(w_k)$ appears. In our case, these terms will correspond with signed permutations satisfying $\sigma(1)=+k$ where there is a simple pole at $w_k=1$. Then observe that the scattering factor has the following specialization:
		\[\left.\frac{w_k-qw_j}{w_k-w_j}\frac{1-w_kw_j}{1-qw_kw_j}\right|_{w_k=1}=1.\]
		This means that for individual signed permutations satisfying $\sigma(1)=+k$, we may evaluate the residue of the simple pole at $w_k=1$ by evaluating
		\[\res_{w_k=1}\varphi_1(w_k) = q-1.\]
		Meanwhile, on the symmetrization formula we have
		\begin{align*}
			\res_{w_k=1} \mathcal{F}_{(1)}(w_1,\dots,w_N) & = \alpha^{N-1}V_{N-1} \sum_{\sigma\in\mathcal{B}_N\atop\sigma(1)=k}\prod_{2\leq i\leq j\leq N} \sigma\left(\left[\frac{w_i-qw_j}{w_i-w_j}\frac{1-w_iw_j}{1- qw_iw_j}\right]\right) \res_{w_k=1}\varphi_1(w_k) \\
			& = \alpha^{N-1}(q-1),
		\end{align*}
		where we have used Proposition \ref{prop:BCempty sum fac} to simplify the last line. This is the desired result.
		\item It follows by a similar argument that
		\[\res_{w_k=q^{-1}}\mathcal{F}_{(1)}(w_1,\dots,w_N) = \alpha^{N-1}\left(q^{-1}-1\right),\]
		where the residue isolates signed permutations satisfying $\sigma(1)= -k$.
		\item This property follows directly as a consequence of Proposition \ref{prop:F-recurrence}.\qedhere
		
	\end{enumerate}
\end{proof}

\subsection{Proof of eigenvector property}
Let us recall the result of Lemma \ref{lm: F-eigenvector property} where, for a fixed half-space configuration with $M$ particles $y$, the following holds on any alphabet $w=(w_1,\dots,w_N)$:
\begin{equation}
    \label{eq: F-eigenvector property repeat}
    \sum_{y'\in\mathbb{W}} \bra{y}\mathscr{L}\ket{y'} \mathcal{F}_{y'}(w_1,\dots,w_N) = \left(-\alpha + \sum_{i=1}^N\frac{(1-q)^2 w_i}{(1-w_i)(1-qw_i)}\right)\mathcal{F}_y(w_1,\dots,w_N).
\end{equation}
So that the function $\mathcal{F}_y(w)$ from \eqref{eq:F initial coord function} may be regarded as an eigenvector of the transition matrix of the half-space ASEP with $\gamma=0$.
\label{sec:F-eigenvector proof}
\begin{proof}[Proof of Lemma \ref{lm: F-eigenvector property}]
	Let us first examine the left hand side of \eqref{eq: F-eigenvector property repeat} more explicitly. Let us denote $\mathcal{F}_{y}(w_1,\dots,w_N)=f(y)$ for some fixed half-space configuration $y=(y_1,\dots,y_M)$. The left hand side of \eqref{eq: F-eigenvector property repeat} can be written as the following \emph{free-evolution equation}:
	\begin{multline}
		\label{eq: F-eigenvector property proof 2}
		\sum_{y'\in\mathbb{W}} \bra{y}\mathscr{L}\ket{y'} f(y') = q \sum_{i=1}^M f(y_1,\dots,y_i-1,\dots,y_M) +  \sum_{i=1}^Mf(y_1,\dots,y_i+1,\dots,y_M) \\
		+\alpha f(y_1,\dots,y_M,1) - (M(1+q)+\alpha)f(y_1,\dots,y_M),
	\end{multline}
	whose right hand side is subject to the following scattering conditions:
	\begin{enumerate}[label=\textbf{(\roman*)},wide=0pt,itemsep=5pt]
		\item \emph{Bulk scattering:} for each $1< i\leq M$, whenever $y_{i-1}=y_i+1$:
		\begin{multline}
			\label{eq: F-eigenvector property proof 3 bulk scatt}
			q f(y_1,\dots,y_i,y_i,\dots,y_M) + f(y_1,\dots,y_i+1,y_i+1,\dots,y_M) \\ = (1+q)f(y_1,\dots,y_i+1,y_i,\dots,y_M).
		\end{multline}
		\item \emph{Boundary scattering:} whenever $y_M = 1$:
		\begin{equation}
			\label{eq: F-eigenvector property proof 4 bound scatt}
			qf(y_1,\dots,y_{M-1},0) +\alpha f(y_1,\dots,y_{M-1},1,1) = (\alpha+q) f(y_1,\dots,y_{M-1},1).
		\end{equation}
	\end{enumerate}
	Observe that both scattering conditions are relations between functions indexed by both \emph{physical} and \emph{nonphysical} half-space configurations. For a fixed $y\in\mathbb{W}$, we may apply each of the scattering conditions to the right hand side of \eqref{eq: F-eigenvector property proof 2} to remove any nonphysical configurations. In this way, the bulk scattering \eqref{eq: F-eigenvector property proof 3 bulk scatt} enforces the particle exclusion condition and the boundary scattering \eqref{eq: F-eigenvector property proof 4 bound scatt} enforces the boundary dynamics at $\gamma=0$. One can readily check that this is consistent with the multiplication by the transition matrix on the left hand side, thus yielding the time-evolution of the half-space ASEP.
	
	We first prove that the function \eqref{eq:F initial coord function} satisfies the free-evolution equation \eqref{eq: F-eigenvector property proof 2} before showing both scattering conditions \eqref{eq: F-eigenvector property proof 3 bulk scatt}, \eqref{eq: F-eigenvector property proof 4 bound scatt}.
	
	\begin{enumerate}[label=\textbf{(\roman*)},wide=0pt,itemsep=5pt] 
		\item[\textbf{(a)}] \emph{Free-evolution equation:} We begin by showing that $f(y) = \mathcal{F}_y(w)$ satisfies the following:
		\begin{multline}
			\label{eq:free-evolution}
			\left(-\alpha + \sum_{i=1}^N\frac{(1-q)^2 w_i}{(1-w_i)(1-qw_i)}\right)f(y_1,\dots,y_M) \\
			=\alpha f(y_1,\dots,y_M,1) + q \sum_{i=1}^M f(y_1,\dots,y_i-1,\dots,y_M) +  \sum_{i=1}^Mf(y_1,\dots,y_i+1,\dots,y_M) \\ - (M(1+q)+\alpha)f(y_1,\dots,y_M).
		\end{multline}
		Firstly, we may immediately eliminate terms $-\alpha f(y)$ from each side of \eqref{eq:free-evolution}. Then, on what remains, consider the term corresponding to $\sigma\in\mathcal{B}_N$ within the symmetrization formula \eqref{eq:F initial coord function} individually within the remaining terms of \eqref{eq:free-evolution}. The following property will be useful:
		\[q \varphi_{y_i-1}(w_i) + \varphi_{y_i+1}(w_i)-(1+q)\varphi_{y_i}(w_i) = \frac{(1-q)^2w_i}{(1-w_i)(1-qw_i)}\varphi_{y_i}(w_i).\]
		Application of this property within the the right hand side of \eqref{eq:free-evolution} yields the following
		\begin{multline*}
			\sum_{k=1}^N\frac{(1-q)^2w_k}{(1-w_k)(1-qw_k)}\sum_{\sigma\in\mathcal{B}_N}\sigma\left(\prod_{1\leq i<j\leq N} \left[\frac{w_i-qw_j}{w_i-w_j}\frac{1-w_iw_j}{1- qw_iw_j}\right] 	\prod_{i=1}^M \varphi_{y_i}(w_i) \right) \\
			= \sum_{\sigma\in\mathcal{B}_N}\sigma\left( \prod_{1\leq i<j\leq N} \left[\frac{w_i-qw_j}{w_i-w_j}\frac{1-w_iw_j}{1- qw_iw_j}\right] 	\prod_{i=1}^M \varphi_{y_i}(w_i)\left(\frac{V_{N-M-1}}{V_{N-M}}\varphi_1(w_{M+1})+\sum_{k=1}^M\frac{(1-q)^2w_k}{(1-w_k)(1-qw_k)}\right)\right).
		\end{multline*}
		By re-arrangement, we can see that \eqref{eq:free-evolution} holds if the following vanishes under symmetrization:
		\begin{equation}
			\label{eq:free-evolution proof 1}
			\prod_{1\leq i<j\leq N} \left[\frac{w_i-qw_j}{w_i-w_j}\frac{1-w_iw_j}{1- qw_iw_j}\right] 	\prod_{i=1}^M \varphi_{y_i}(w_i)\left(\frac{V_{N-M-1}}{V_{N-M}}\varphi_1(w_{M+1})-\sum_{k=M+1}^N\frac{(1-q)^2w_k}{(1-w_k)(1-qw_k)}\right).
		\end{equation}
		Observe, as a consequence of Lemma \ref{lm:F-1 evaluation}, that \eqref{eq:free-evolution proof 1} vanishes under symmetrization of the variables $w_{M+1},\dots,w_{N}$ whilst leaving $w_1,\dots,w_M$ fixed and the result follows.
		
		\item[(\textbf{bi})] \emph{Bulk scattering:} Consider, for some fixed alphabet $w=(w_1,\dots,w_N)$ and configuration $y=(y_1,\dots,y_M)$ with $y_{k-1}=y_k+1$ for some $1< k\leq M$, the following
		\begin{multline}
			\label{eq:bulk scattering proof 1}
			q\mathcal{F}_{(y_1,\dots,y_k,y_k,\dots,y_M)}(w)+\mathcal{F}_{(y_1,\dots,y_k+1,y_k+1,\dots,y_M)}(w) - (1+q)\mathcal{F}_{(y_1,\dots,y_k+1,y_k,\dots,y_M)}(w) \\
			= V_{N-M} \alpha^{N-M} \sum_{\sigma\in\mathcal{B}_N} \sigma\Bigg(\prod_{1\leq i<j\leq N\atop (i,j)\neq(k-1,k)}
			\left[\frac{w_i-qw_j}{w_i-w_j}\frac{1-w_iw_j}{1- qw_iw_j}\right] \prod_{i=1}^M \varphi_{y_i}(w_i) \\
			\times\frac{w_{k-1}-qw_k}{w_{k-1}-w_k}\left[q+\frac{1-q w_{k-1}}{1-w_{k-1}}\frac{1-q w_k}{1-w_k}-(1+q)\frac{1-q w_{k-1}}{1-w_{k-1}}\right]\Bigg).
		\end{multline}
		The top-line of the summand on the right hand side of \eqref{eq:bulk scattering proof 1} is symmetric under the interchange of $w_{k-1}$ and $w_k$. Whereas the bottom line may be simplified as
		\[\frac{1-q}{(1-w_{k-1})(1-w_k)} \frac{(w_{k-1}-qw_k)(w_k-qw_{k-1})}{w_{k-1}-w_k},\]
		which is skew-symmetric under the same interchange. Therefore, the sum over over all permutations $\sigma\in S_N$ on the right hand side of \eqref{eq:bulk scattering proof 1} vanishes. The bulk scattering condition \eqref{eq: F-eigenvector property proof 3 bulk scatt} follows as a result.
		\item[(\textbf{bii})] \emph{Boundary scattering:}
		Let us, for a fixed alphabet $w=(w_1,\dots,w_N)$, consider the following
		\begin{equation}
			\label{eq:boundary scattering proof 1}
			q \mathcal{F}_{(y_1,\dots,y_{M-1},0)}(w)+\alpha\mathcal{F}_{(y_1,\dots,y_{M-1},1,1)}(w) - (q+\alpha)\mathcal{F}_{(y_1,\dots,y_{M-1},1)}(w),
		\end{equation}
		which will ultimately be shown to vanish. Let us first consider \eqref{eq:boundary scattering proof 1} in the case $N=M$ separately. It is convenient to treat this case separately since the function $\mathcal{F}_y(w)$ from \eqref{eq:F initial coord function} vanishes whenever the half-space configuration $y$ has more parts than its alphabet $w$. This means that the middle term in \eqref{eq:boundary scattering proof 1} does not appear whenever $N=M$. In this case, the symmetrization formula \eqref{eq:F initial coord function} can be compared term-wise, where the following function which appears in the summand
		\begin{equation}
			\label{eq:boundary scattering proof 2}
			q\varphi_0(w_N) - (q+\alpha)\varphi_1(w_N),
		\end{equation}
		while all other factors in the summand are invariant under the action of $w_N\mapsto1/(qw_N)$. Now note the property whereby
		\[\varphi_1(w)+\varphi_1\left(\frac{1}{q w}\right) = \frac{q}{q+\alpha}\left(\varphi_0(w)+\varphi_0\left(\frac{1}{q w}\right)\right) = \frac{(1-q)^2w}{(1-w)(1-qw)}.\]
		This property may be used to show that \eqref{eq:boundary scattering proof 2} changes sign under $w_N\mapsto1/(qw_N)$. And so, whenever $N=M$, \eqref{eq:boundary scattering proof 1} vanishes.
		
		Now let us consider \eqref{eq:boundary scattering proof 1} for $M< N$. We will ultimately show that the following function
		\begin{multline}
			\label{eq:boundary scattering proof 3}
			\qquad \quad f(w_M,\dots,w_N)=\prod_{M\leq i<j\leq N}
			\left[\frac{w_i-qw_j}{w_i-w_j}\frac{1-w_iw_j}{1- qw_iw_j}\right]\\
			\times V_{N-M}\left(q\varphi_0(w_M)+\frac{V_{N-M-1}}{V_{N-M}}\varphi_1(w_M)\varphi_1(w_{M+1})-(q+\alpha)\varphi_1(w_M)\right),
		\end{multline}
		vanishes when summed over signed permutations of the sub-alphabet $(w_M,\dots,w_N)$. The function $f(w_M,\dots,w_N)$ is part of the summand of the combined symmetrization formulas for the functions in \eqref{eq:boundary scattering proof 1}. The fact that the vanishing partial symmetrization enforces the vanishing of the fully-symmetrized functions in \eqref{eq:boundary scattering proof 1} follows in along the same lines as the partial symmetrization formula \eqref{eq:BCempty sum fac partial}. 
		The vanishing of the function \eqref{eq:boundary scattering proof 3} under symmetrization is equivalent to the following condition in the function \eqref{eq:F initial coord function} for any $N\geq 2$:
		\begin{equation}
			\label{eq:boundary scattering proof 4}
			\frac{q}{q+\alpha} \mathcal{F}_{(0)}(w_1,\dots,w_N) + \frac{\alpha}{q+\alpha} \mathcal{F}_{(1,1)}(w_1,\dots,w_N) = \mathcal{F}_{(1)}(w_1,\dots,w_N).
		\end{equation}
		Let us then denote the left hand side of \eqref{eq:boundary scattering proof 4} as the function $\mathcal{G}(w_1,\dots,w_N)$, which is is clearly a symmetric function. Moreover, we claim that the function defined by
		\[\mathcal{G}(w_1,\dots,w_N)\cdot\prod_{i=1}^N\left[(1-w_i)(1-qw_i)\right]\]
		is a symmetric polynomial of maximal degree 2 in each or the variables $w_1,\dots,w_N$. This claim follows from the observation that the function \eqref{eq:F_1 polynomial} is also a polynomial of maximal degree 2 when the configuration $(1)$ is replaced by either $(0)$ or $(1,1)$ by following an analogous argument which we do not explicitly present. 
		
		Before we proceed further, let us come up with an alternative expression for the function $\mathcal{G}(w_1,\dots,w_N)$. 
		First, by invoking the alternative expression \eqref{eq:F initial coord function alternative}, the function $\mathcal{F}_{(0)}(w_1,\dots,w_N)$ has the following expression:
		\begin{equation}
			\label{eq:boundary scattering proof 5}
			\mathcal{F}_{(0)}(w_1,\dots,w_N) = \alpha^{N-1}\sum_{1\leq\abs{i}\leq N} \frac{1-w_i}{1-qw_i}\varphi_1(w_i) \prod_{j=1\atop j\neq\abs{i}}^N\left[\frac{w_i-qw_j}{w_i-w_j}\frac{1-w_iw_j}{1- qw_iw_j}\right], 
		\end{equation}
		where the sum over $i$ is over $\{-N,\dots,1\}\cup\{1,\dots,N\}$ where $w_{-k}=1/(qw_k)$ for each $k>0$. Similarly, we may apply the expression \eqref{eq:F initial coord function alternative} to express
		\begin{multline}
			\label{eq:boundary scattering proof 6}
			\mathcal{F}_{(1,1)}(w_1,\dots,w_N) = \sum_{1\leq\abs{i}\leq N} \varphi_1(w_i)\prod_{j=1\atop j\neq\abs{i}}^N\left[\frac{w_i-qw_j}{w_i-w_j}\frac{1-w_iw_j}{1- qw_iw_j}\right]  \\ \times\alpha^{N-2}\sum_{1\leq\abs{k}\leq N\atop\abs{k}\neq\abs{i}}\varphi_1(w_k)\prod_{j=1\atop j\neq\abs{i},j\neq \abs{k}}^N\left[\frac{w_i-qw_j}{w_i-w_j}\frac{1-w_iw_j}{1- qw_iw_j}\right], 
		\end{multline}
		where we may recognize the bottom line of \eqref{eq:boundary scattering proof 6} as $\mathcal{F}_{(1)}(w_1,\dots,w_{\abs{i}-1},w_{\abs{i}+1},\dots,w_N)$. Then using Lemma \ref{lm:F-1 evaluation}, we may write \eqref{eq:boundary scattering proof 6} as
		\begin{equation}
			\label{eq:boundary scattering proof 7}
			\mathcal{F}_{(1,1)}(w_1,\dots,w_N) = \alpha^{N-2}\sum_{1\leq\abs{i}\leq N} \varphi_1(w_i)\left(\sum_{j=1\atop j\neq \abs{i}}^N\frac{(1-q)^2w_j}{(1-w_j)(1-qw_j)}\right)\prod_{j=1\atop j\neq\abs{i}}^N\left[\frac{w_i-qw_j}{w_i-w_j}\frac{1-w_iw_j}{1- qw_iw_j}\right].
		\end{equation} 
		Now, we claim that the function $\mathcal{G}(w)=(q\mathcal{F}_{(0)}(w)+\alpha\mathcal{F}_{(1,1)}(w))/(q+\alpha)$ can be written using expressions \eqref{eq:boundary scattering proof 5} and \eqref{eq:boundary scattering proof 7} to yield:
		\begin{multline}
			\label{eq:G func sum 1}
			\qquad \quad\mathcal{G}(w_1,\dots,w_N) \\ = \frac{\alpha^{N-1}}{q+\alpha}\sum_{1\leq\abs{i}\leq N} \varphi_1(w_i) \left(\frac{q-w_i}{1-w_i}+\sum_{j=1}^N\frac{(1-q)^2w_j}{(1-w_j)(1-qw_j)}\right)
			\prod_{j=1\atop j\neq \abs{i}}^N\left[\frac{w_i-qw_j}{w_i-w_j}\frac{1-w_iw_j}{1- qw_iw_j}\right],
		\end{multline}
		where, again, the sum over $i$ is over $\{-N,\dots,1\}\cup\{1,\dots,N\}$ where $w_{-k}=1/(qw_k)$ for each $k>0$. Now to complete the argument to prove the identity \eqref{eq:boundary scattering proof 4}, we show that \eqref{eq:G func sum 1} constitutes a suitable expression on which to demonstrate the properties \textbf{(i)--(iii)} from the proof of Lemma \ref{lm:F-1 evaluation}.

        \vspace{5pt}
		\begin{enumerate}[label=\textbf{(\Roman*)},wide=0pt,itemsep=5pt]
			\item The residue at $w_k=1$:
			\[\res_{w_k=1} \mathcal{G}(w_1,\dots,w_N) = \alpha^{N-1}(q-1).\]
			To see this property let us expand \eqref{eq:G func sum 1} to separate the positive and negative sum over $i$:
			\begin{multline}
				\label{eq:G func sum 2}
				\qquad \quad\mathcal{G}(w_1,\dots,w_N) \\ = \frac{\alpha^{N-1}}{q+\alpha}\sum_{i=1}^N\varphi_1(w_i)\left(\frac{q(1-w_i)}{1-q w_i}+\sum_{j=1\atop j\neq i}^N\frac{(1-q)^2w_j}{(1-w_j)(1-qw_j)}\right)\prod_{j=1\atop j\neq i}^N\left[\frac{w_i-qw_j}{w_i-w_j}\frac{1-w_iw_j}{1- qw_iw_j}\right] \\
				+ \frac{\alpha^{N-1}}{q+\alpha} \sum_{i=1}^N\varphi_1\left(\frac{1}{qw_i}\right)\left(\frac{1-qw_i}{1- w_i}+\sum_{j=1\atop j\neq i}^N\frac{(1-q)^2w_j}{(1-w_j)(1-qw_j)}\right)\prod_{j=1\atop j\neq i}^N\left[\frac{qw_i-w_j}{w_i-w_j}\frac{1-q^2w_iw_j}{q(1- qw_iw_j)}\right].
			\end{multline}
			Now, recall that $\varphi_1(w_i)$ has a simple pole at $w_i=1$ while $\varphi_1\left(\frac{1}{qw_i}\right)$ is analytic at $w_i=1$. Hence, we may recognize that each term in both sums in \eqref{eq:G func sum 2} has a simple pole at the point $w_k=1$ for each integer $1\leq k\leq N$. So evaluating the residue of $\mathcal{G}(w_1,\dots,w_N)$ at $w_k=1$ amounts to calculating this residue on each term in both sums. This yields:
			\begin{multline}
				\label{eq:G func sum 3}
				\res_{w_k=1} \mathcal{G}(w_1,\dots,w_N) 
				= \frac{\alpha^{N-1}(q-1)}{q+\alpha} \sum_{j=1\atop j\neq k}^N \frac{(1-q)^2w_j}{(1-w_j)(1-qw_j)} + \alpha^{N-1}(q-1) \prod_{j=1\atop j\neq k}^N\left[\frac{q-w_j}{1-w_j}\frac{1-q^2w_j}{q(1-qw_j)}\right]\\
				+ \frac{\alpha^{N-1}(q-1)}{q+\alpha} \sum_{i=1\atop i\neq k}^N \varphi_1(w_i)\frac{q-w_i}{1-qw_i} \prod_{j=1\atop j\neq i,j\neq k}^N\left[\frac{w_i-qw_j}{w_i-w_j}\frac{1-w_iw_j}{1- qw_iw_j}\right] \\
				+ \frac{\alpha^{N-1}(q-1)}{q+\alpha} \sum_{i=1\atop i\neq k}^N\varphi_1\left(\frac{1}{qw_i}\right)\frac{1-q^2w_i}{q(1-w_i)}\prod_{j=1\atop j\neq i,j\neq k}^N\left[\frac{qw_i-w_j}{w_i-w_j}\frac{1-q^2w_iw_j}{q(1- qw_iw_j)}\right].
			\end{multline}            
			The first two terms in \eqref{eq:G func sum 3} are due to the residues of the terms in each sum of \eqref{eq:G func sum 2} corresponding to $i=k$, while the remaining sums correspond to the terms in \eqref{eq:G func sum 2} which are due to the terms corresponding with $i\neq k$. 
			
			For some arbitrary integer $1\leq\ell\leq N$ satisfying $\ell\neq k$, let us denote the right hand side of \eqref{eq:G func sum 3} as a function of $w_\ell$ by $g(w_\ell)$.
			From here, we claim that $g(w_\ell)$ is in fact a constant. In order to demonstrate this, we first show that $g(w_\ell)$ is an entire function of $w_\ell$. The possible singularities are at the points $w_\ell=w_j,1/(qw_j)$ for $j\neq\ell,j\neq k$ and also $w_\ell=1,q^{-1},\pm q^{-1/2}$. However, by virtue of Proposition \ref{prop:F function meromorphic}, $\mathcal{G}(w_1,\dots,w_N)$ is a meromorphic function in $w_\ell$ whose only singularities occur at $w_\ell=1,q^{-1}$. It then follows that the singularities at $w_\ell= w_j,1/(qw_j)$ present in $g(w_\ell)$ are removable.
			
			It remains to show that the singularities at $w_\ell=1,q^{-1}$ are removable. Let us first consider the apparent singularities of \eqref{eq:G func sum 3} at $w_\ell=1$, where each term has a simple pole. And so, neglecting the overall constants, we compute the residue of the first two terms on the right hand side as
			\begin{multline}
				\label{eq:G func sum 4}
				\res_{w_\ell=1} \left\{\sum_{j=1\atop j\neq k}^N \frac{(1-q)^2w_j}{(1-w_j)(1-qw_j)} + (q+\alpha) \prod_{j=1\atop j\neq k}^N\left[\frac{q-w_j}{1-w_j}\frac{1-q^2w_j}{q(1-qw_j)}\right]\right\} \\ = q-1+\frac{(q+\alpha)\left(1-q^2\right)}{q}\prod_{j=1\atop j\neq k,j\neq k}^N\left[\frac{q-w_j}{1-w_j}\frac{1-q^2w_j}{q(1-qw_j)}\right].
			\end{multline}
			A simple computation shows the contribution of the same residue on the two sums over $i$ in \eqref{eq:G func sum 3} exactly cancels \eqref{eq:G func sum 4}. From this we may conclude
			\[\res_{w_\ell=1} \res_{w_k=1}\mathcal{G}(w_1,\dots,w_N) = 0\]
			so that the $g(w_\ell)$ is analytic at $w_\ell=1$. Although we do not present it, an analogous computation also shows that $g(w_\ell)$ is also analytic at $w_\ell=q^{-1}$, and thus we conclude that it is an entire function of $w_\ell$. 
			
			By looking at each individual rational factor in the terms of the right hand side of \eqref{eq:G func sum 3}, it is straightforward to see that the $g(w_\ell)$ is bounded in the limit $w_\ell\to\infty$.
			
			And so, we may conclude that $g(w_\ell)$ is a bounded entire function in $w_\ell$. Since $\ell$ was arbitrary, we may conclude that it is a bounded entire function in the remaining $w_1,\dots,w_{k-1},w_{k+1},\dots,w_N$ alphabet, and by Liouville's theorem, it is a constant. As such, we may evaluate the right hand side of \eqref{eq:G func sum 3} by taking any specialization of the remaining $w$-alphabet. It is convenient to choose $w_1=\cdots=w_{k-1}=w_{k+1}=\cdots=w_N=0$ so that all terms in \eqref{eq:G func sum 3} vanish except the second one which yields the desired result:
			\begin{equation}
				\res_{w_k=1} \mathcal{G}(w_1,\dots,w_N) =\alpha^{N-1}(q-1) \prod_{j=1\atop j\neq k}^N\left[\frac{q-w_j}{1-w_j}\frac{1-q^2w_j}{q(1-qw_j)}\right]\Bigg|_{w_1=\cdots=w_N=0} = \alpha^{N-1}(q-1).
			\end{equation}
			\item The residue at $w_k=q^{-1}$:
			\[\res_{w_k=q^{-1}} \mathcal{G}(w_1,\dots,w_N) = \alpha^{N-1}\left(q^{-1}-1\right),\]
			which follows by a analogous argument to the one used to calculate the residue at $w_k=1$.
			\item The function $\mathcal{G}$ inherits the property
			\[\mathcal{G}(w_1,\dots,w_N)\Big|_{w_k=0} = \alpha \mathcal{G}(w_1,\dots,w_{k-1},w_{k+1},\dots,w_N)\]
			from Proposition \ref{prop:F-recurrence}.\qedhere
		\end{enumerate}
	\end{enumerate}
\end{proof}
\subsection{Proof of initial condition}
\label{sec:initial cond proof}
Let us recall the result of Lemma \ref{lm: orthogonality}, where the following orthogonality statement on the function $\mathcal{F}_y(w)$ from \eqref{eq:F initial coord function}
\begin{multline}
    \label{eq:orthog t=0 repeat}
    \oint_{\mathcal{C}_1} \frac{\dd w_1}{2\pi\ii} \cdots \oint_{\mathcal{C}_N} \frac{\dd w_N}{2\pi\ii} \prod_{1\leq i<j\leq N}\left[\frac{w_j-w_i}{qw_j-w_i}\frac{1-qw_iw_j}{1- w_iw_j}\right] \\
    \times \prod_{i=1}^N\left[\frac{1-q w_i^2}{w_i(q+\alpha-1-\alpha w_i)(1-qw_i)} \left(\frac{1-w_i}{1-qw_i}\right)^{x_i-1}\right] \mathcal{F}_y(w_1,\dots,w_N) = \delta_{x,y},
\end{multline}
serves as the initial condition of the half-space ASEP transition probability \eqref{eq:ASEP transition prob}. The integration contours of \eqref{eq:orthog t=0 repeat} satisfy the requirements of Definition \ref{defn:ASEP nested contours}.
\begin{proof}[Proof of Lemma \ref{lm: orthogonality}]
	The proof of this result is rather technical, so we will proceed in several steps. For some $\sigma\in\mathcal{B}_N$ and half-space configurations $x=(x_1,\dots,x_N),y=(y_1,\dots,y_M)$, let us define the following integral expression:
	\begin{multline}
		\label{eq:orthog proof I-defn}
		\mathcal{I}_{x,y}(\sigma) := \oint_{\mathcal{C}_1} \frac{\dd w_1}{2\pi\ii} \cdots \oint_{\mathcal{C}_N} \frac{\dd w_N}{2\pi\ii} \prod_{1\leq i<j\leq N}\left[\frac{w_i-w_j}{w_i-qw_j}\frac{1-qw_iw_j}{1- w_iw_j}\frac{w_{\sigma(i)}-qw_{\sigma(j)}}{w_{\sigma(i)}-w_{\sigma(j)}}\frac{1-w_{\sigma(i)}w_{\sigma(j)}}{1-qw_{\sigma(i)}w_{\sigma(j)}}\right] \\
		\times \prod_{i=1}^N\psi_{x_i}(w_i) \prod_{i=1}^M\varphi_{y_i}\left(w_{\sigma(i)}\right),
	\end{multline}
	where the function defined by
	\[\psi_x(w):=\frac{1-q w^2}{w(q+\alpha-1-\alpha w)(1-qw)} \left(\frac{1-w}{1-qw}\right)^{x-1}\]
	is analytic at $w=1$ for all integers $x>0$. It is important to emphasize that the contours in the integral \eqref{eq:orthog proof I-defn} are defined according to Definition \ref{defn:ASEP nested contours} and do not depend on which signed permutation is being considering. The orthogonality statement \eqref{eq:orthog t=0 repeat} can then be restated as:
	\begin{equation}
		V_{N-M}\alpha^{N-M} \sum_{\sigma\in\mathcal{B}_N}\mathcal{I}_{x,y}(\sigma) = \delta_{x,y},
	\end{equation}
	where in particular, the constants outside the sum reduce to 1 whenever $M=N$. Our argument is then split into the following claims which together imply the result.
	\begin{enumerate}[label=\textbf{(\roman*)},wide=0pt,itemsep=5pt]
		\item For any configurations with $0\leq M\leq N$, $\mathcal{I}_{x,y}(\sigma) = 0$ for any $\sigma\in\mathcal{B}_N\backslash S_N$.
		\item For any $\sigma\in S_N$, $\mathcal{I}_{x,y}(\sigma) = 0$ whenever $M<N$.
		\item Whenever $M=N$ the following summation vanishes:
		\[\sum_{\sigma\in S_N\backslash\{\id\}}\mathcal{I}_{x,y}(\sigma) = 0.\]
		\item When $\sigma=\id$ and $M=N$, the integral may be evaluated $\mathcal{I}_{x,y}(\id) = \delta_{x,y}$.
	\end{enumerate}
	We will proceed by proving these claims individually. Before proceeding, it is useful to remark on the strategy within the proof regarding the order of integration of the contour alignments of Definition \ref{defn:ASEP nested contours}. The multiple integral expression \eqref{eq:orthog proof I-defn} may be evaluated by successively taking residues at $w_1=1$, followed by $w_2=1$, and so on. However, we may switch the order of integration by deforming the contour of the $w_2$-integral from $\mathcal{C}_2$ to lie completely within the interior of $\mathcal{C}_1$. Due to the residue theorem, this deformation comes at the expense of the evaluation of residues due to possible poles at the points $w_2 = qw_1,w_1^{-1}$, or any other so-called \emph{dynamic poles} that are crossed by this process. Due to the scattering factor in the integrand, the dynamic poles which are crossed will depend on the signed permutation $\sigma$.
	
	In order to proceed, the following expressions will prove to be useful
	\begin{align*}
		\psi_{x_i}(w_i)\varphi_{y_{\sigma^{-1}(i)}}(w_i) & = (q-1)\frac{1}{1-w_i}\frac{1}{1-q w_i}\left(\frac{1-w_i}{1-qw_i}\right)^{x_i-y_{\sigma^{-1}(i)}},\\
		\psi_{x_i}(w_i)\varphi_{y_{\sigma^{-1}(i)}}\left(\frac{1}{qw_i}\right) & = (q-1)\frac{\alpha+q w_i(1-q-\alpha)}{1-q-\alpha+\alpha w_i}\frac{q^{y_{\sigma^{-1}(i)}-1}}{(1-w_i)^2}\left(\frac{1-w_i}{1-qw_i}\right)^{x_i+y_{\sigma^{-1}(i)}}.
	\end{align*}
	In particular, since all individual coordinates are positive, the second expression has a removable singularity at $w_i=1$. Additionally, whenever $\sigma\in S_N$ is an element of the symmetric group (that is, a signed permutation without any sign flips) then the scattering factor in the integrand of \eqref{eq:orthog proof I-defn} may be presented as product over the inversions of $\sigma$:
	\begin{equation}
		\prod_{1\leq i<j\leq N}\left[\frac{w_i-w_j}{w_i-qw_j}\frac{1-qw_iw_j}{1- w_iw_j}\frac{w_{\sigma(j)}-qw_{\sigma(i)}}{w_{\sigma(i)}-w_{\sigma(j)}}\frac{1-w_{\sigma(i)}w_{\sigma(j)}}{1-qw_{\sigma(i)}w_{\sigma(j)}}\right] = (-1)^{\abs{\sigma}} \prod_{1\leq i<j\leq N\atop \sigma(i)>\sigma(j)} \frac{w_{\sigma(i)}-qw_{\sigma(j)}}{w_{\sigma(j)}-qw_{\sigma(i)}}.
	\end{equation}
	We now proceed with the proof of \textbf{(i)--(iv)}.
	\begin{enumerate}[label=\textbf{(\roman*)},wide=0pt,itemsep=5pt]
		\item Let $\sigma$ be a fixed signed permutation which contains at least one sign-flip, so that $\sigma\notin S_N$. Let $k\in\{1,\dots,N\}$ be the fixed integer that corresponds to the left-most sign flip within $\sigma$. That is, let $\ell\in\{1\dots,N\}$ be the index satisfying $\sigma_\ell = -k$, so that we may represent the signed permutation as
		\[\sigma= (\sigma_1,\dots,\sigma_{\ell-1},-k,\sigma_{\ell+1},\dots,\sigma_N),\]
		where $\sigma_1,\dots,\sigma_{\ell-1}>0$. We now show that $\mathcal{I}_{x,y}(\sigma)=0$. First consider the integral expression \eqref{eq:orthog proof I-defn} for a signed permutation with $k=1$. The $\mathcal{C}_1$ contour is the inner-most contour according to Definition \ref{defn:ASEP nested contours}, and so, we may evaluate the $\mathcal{I}_{x,y}(\sigma)$ by taking the residue at $w_1=1$. The $w_1$-dependence of the integrand is given by
		\begin{multline*}
			\prod_{j=2}^N\left[\frac{w_1-w_j}{w_1-qw_j}\frac{1-qw_1w_j}{1- w_1w_j}\right]\prod_{i=1}^{\ell-1}\frac{w_{\sigma_i}-w_{1}^{-1}}{w_{\sigma_i}-w_{1}^{-1}/q}\prod_{j=\ell+1}^N\frac{w_{1}^{-1}/q-qw_{\sigma_j}}{w_{1}/q-w_{\sigma_j}}\\
			\times \prod_{i\neq\ell}\frac{1-w_{\sigma_i}w_{1}^{-1}/q}{1-w_{\sigma_i}w_{1}^{-1}}\begin{cases}
				\psi_{x_1}(w_1)\varphi_{y_\ell}\left(\frac{1}{qw_1}\right) & \text{ if }\ell\leq M \\
				\psi_{x_1}(w_1) & \text{ if }\ell> M
			\end{cases},
		\end{multline*}
		which is analytic at $w_1=1$. We thus conclude that when $k=1$ the integral expression \eqref{eq:orthog proof I-defn} vanishes.
		
		Now let us assume that $k>1$. We aim to evaluate the $w_k$-integral in $\mathcal{I}_{x,y}(\sigma)$ by first deforming its contour from $\mathcal{C}_k$ to be contained within the interior of $\mathcal{C}_1$, and consequently also within the interiors of $\mathcal{C}_2,\dots,\mathcal{C}_{k-1}$. The possible dynamic poles of $w_k$ in the integrand of \eqref{eq:orthog proof I-defn} are listed below. All of these poles are either simple poles or removable singularities. 
		\vspace{8pt}
		\begin{itemize}
			\item $w_k=q^{-1}w_i$ for all $i<k$
			\item $w_k=qw_j$ for all $j>k$
			\item $w_k=w_i^{-1}$ for all $i\neq k$
			\item $w_k = q^{-1}w_{\sigma(i)}^{-1}$ for all $i<\ell$ (where $\sigma(i)>0$)
			\item $w_k = w_{\abs{\sigma(j)}},q^{-1}w_{\abs{\sigma(j)}}^{-1}$ for all $j>\ell$ 
			\item $w_k = w_{{\sigma(i)}}$ for all $i\neq \ell$ whenever $\sigma(i)>0$
		\end{itemize}
		\vspace{8pt}
		By virtue of the contour alignment in Definition \ref{defn:ASEP nested contours} and the numerator of the integrand of \eqref{eq:orthog proof I-defn}, the only possible singularities crossed by deforming the contour of the $w_k$-integral are those at $w_k=w_i^{\pm1}$ for all $i<k$. Note that the factor $\prod_{i\neq k}(w_i-w_k)$ in the numerator of the integrand implies that poles of the form $w_k=w_i$ are removable, so that we only need to consider the effect of poles of the form $w_k=w_i^{-1}$. We will now show that the contribution of these residues vanishes for all $1\leq i<k$ so that the contour of the $w_k$-integral can be deformed to lie arbitrarily close to 1. 
		
		We first detail how the singularities $w_k=w_i^{-1}$ appear in the integrand of \eqref{eq:orthog proof I-defn}. For a fixed $\sigma\in\mathcal{B}_N$ whose left-most sign flip is $\sigma_{\ell}=-k$, consider all possible values $p\in\{1,\dots,k-1\}$ such that $p\neq\sigma_1,\dots,\sigma_{\ell-1}$. These possible values of $p$ precisely correspond to the simple poles of the form $w_k=w_p^{-1}$ that are present in the integrand. For each such $p$, we also define $m=\sigma^{-1}(\pm p)$, which is the index that maps to either $+p$ or $-p$ under $\sigma$. We note that due to the restrictions on the values of $p$, it follows that $m>\ell$. The residue of all simple poles $w_k=w_p^{-1}$ ultimately do not contribute to the evaluation of $\mathcal{I}_{x,y}(\sigma)$ so that we may deform the contour of the $w_k$-integral. If no such $p$ exists then the prescribed deformation can be performed freely without crossing any singularities.
		
		Denote by $A$ the set of all possible values of $p$ as described above. In order to show that $\mathcal{I}_{x,y}(\sigma)=0$, we will have to show that the any contributions due to residues at $w_k=w_{p}^{-1}$ vanish for all possible combinations of $p$. More explicitly for any non-empty $\{p_1,\dots, p_n\}\subseteq A$ with $p_1>\cdots >p_n$, we will show that the following combination of successive residues vanishes:
		\begin{equation}
			\label{eq:orthog proof sign 1}
			\oint_{\widetilde{\mathcal{C}}} \frac{\dd w_{p_n}}{2\pi\ii} \res_{w_k=w_{p_1}^{-1}}\res_{w_{p_1}=w_{p_2}^{-1}}\cdots\res_{w_{p_{n-1}}=w_{p_n}^{-1}} g_{x,y}(w_1,\dots,w_N) =0,
		\end{equation}
		where $g_{x,y}$ is the integrand of \eqref{eq:orthog proof I-defn} and $\widetilde{\mathcal{C}}$ is a small circular complex contour, enclosing the point 1 with an arbitrarily small radius. For any value $p$ as described above, consider the following residue on the $w_k,w_p$-dependent factors of $g_{x,y}$:
		\begin{equation}
			\label{eq:orthog proof sign 2}
			\res_{w_k=w_p^{-1}} \prod_{1\leq i<j\leq N}\left[\frac{w_i-w_j}{w_i-qw_j}\frac{1-qw_iw_j}{1- w_iw_j}\frac{w_{\sigma_i}-qw_{\sigma_j}}{w_{\sigma_i}-w_{\sigma_j}}\frac{1-w_{\sigma_i}w_{\sigma_j}}{1-qw_{\sigma_i}w_{\sigma_j}}\right] \psi_{x_k}(w_k)\psi_{x_p}(w_p) \widetilde{\varphi}_{\ell}\left(\frac{1}{qw_k}\right)\widetilde{\varphi}_{m}\left(w_{\pm p}\right),
		\end{equation}
		where 
		\[\widetilde{\varphi}_\ell(z) = \begin{cases}
			\varphi_{y_\ell}(z) & \text{ if }\ell\leq M \\
			1 & \text{ if }\ell>M
		\end{cases}.\]
		Should there be a simple pole at $w_k=w_p^{-1}$, then \eqref{eq:orthog proof sign 2} can be evaluated which yields a factor of the form:
		\[
		\psi_{x_k}\left(\frac{1}{w_p}\right)\psi_{x_p}(w_p) \widetilde{\varphi}_{\ell}\left(\frac{w_p}{q}\right)\widetilde{\varphi}_{m}\left(w_{\pm p}\right) \sim
		\begin{cases}
			(1-w_p)^{x_k+x_p-2} & \text{ if }\ell,m>M \\
			(1-w_p)^{x_k+x_p+y_\ell-3} & \text{ if }\ell\leq M,m>M \\
			(1-w_p)^{x_k+x_p+(y_\ell-y_m)-3} & \text{ if }\ell,m<M,\sigma(m)=+p \\
			(1-w_p)^{x_k+x_p+y_\ell+y_m-4} & \text{ if }\ell,m<M,\sigma(m)=-p
		\end{cases}.
		\]
		We note here that since $x_i,y_j\geq1$ for all $1\leq i\leq N,1\leq j\leq M$, all contributions are of the form $(1-w_p)^b$ for some non-negative integer $b\geq0$. The third item can be seen to have a non-negative exponent since $m>\ell$, it follows that $y_\ell-y_m\geq1$. For each successive residue evaluation in the chain $p_1>\cdots>p_n$ in \eqref{eq:orthog proof sign 1}, we obtain a factor of the same form. Evaluating all the residues leads to a factor $\left(1-w_{p_n}\right)^{b_1+\dots+b_n}$ in the $w_{p_n}$ integral of \eqref{eq:orthog proof sign 1} which is analytic at $w_{p_n}=1$ since all $b_i\geq0$. Therefore, the $w_{p_n}$ integral of \eqref{eq:orthog proof sign 1} vanishes. Finally, since this is true for all non-empty sets $A$, the multiple residues at the points $w_k=w_{p_i}^{-1}$ do not contribute to the evaluation of \eqref{eq:orthog proof I-defn}. Thus we can deform the contour of the $w_k$-integral to lie closely around $w_k=1$ without picking up additional residues. The integration over $w_k$ then vanishes and we thus conclude that $\mathcal{I}_{x,y}(\sigma)=0$ whenever $\sigma\in \mathcal{B}_N\backslash S_N$. 
		\item For a fixed $\sigma\in S_N$ and configurations $x,y$ satisfying $M<N$, we will prove here that the multiple integral expression \eqref{eq:orthog proof I-defn} vanishes. Let us define $k\in\{1,\dots,N\}$ such that $\sigma_N=k$. 
		
		Our proof strategy will be similar to the previous part, whereby we aim to deform the contour of the $w_k$-integral from $\mathcal{C}_k$ to be arbitrarily close to 1. Since $M<N$, the integrand of \eqref{eq:orthog proof I-defn} does not contain a $\varphi$ function with $w_k$ as an argument. Hence, due to the form of the $\psi$ function, the integrand is analytic at $w_k=1$. With this in mind, consider the $w_k$-dependent factors in the integrand:
		\[
		(-1)^{\abs{\sigma}} \prod_{1\leq i<N\atop \sigma(i)>k}\frac{w_{\sigma(i)}-qw_k}{w_k-q w_{\sigma(i)}}.
		\]
		From this expression we can conclude that the only possible singularities are of the form $w_k=qw_{\sigma(i)}$ for $\sigma(i)>k$. The alignment of the contours outlined in Definition \ref{defn:ASEP nested contours} ensures that these singularities are not enclosed by the $\mathcal{C}_k$ contour so that the integrand is an analytic function of $w_k$ both on and within the interior of $\mathcal{C}_k$. As a result, the integral \eqref{eq:orthog proof I-defn} vanishes.
		\item For any fixed permutation $\sigma\in S_N$ and configurations $x,y$ both containing $M=N$ particles, the integral expression \eqref{eq:orthog proof I-defn} can be written as
		\begin{multline}
			\label{eq:orthog proof perm 1}
			\mathcal{I}_{x,y}(\sigma) = (q-1)^N \oint_{\widetilde{\mathcal{C}}} \frac{\dd w_1}{2\pi\ii} \cdots \oint_{\widetilde{\mathcal{C}}} \frac{\dd w_N}{2\pi\ii} (-1)^{\abs{\sigma}} \prod_{1\leq i<j\leq N\atop \sigma_i>\sigma_{j}} \frac{w_{\sigma_i}-qw_{\sigma_j}}{w_{\sigma_j}-qw_{\sigma_i}} \\ \times \prod_{i=1}^N\left[\frac{1}{1-w_i}\frac{1}{1-qw_i}\left(\frac{1-w_i}{1-qw_i}\right)^{x_i-y_{\sigma^{-1}(i)}}\right].
		\end{multline}
		We emphasize here that in the absence of sign flips in the signed permutation, the contours have been deformed to coincide, $\mathcal{C}_i\mapsto\widetilde{\mathcal{C}}$, where $\widetilde{\mathcal{C}}$ is a positively-oriented contour enclosing 1 with an arbitrarily small radius. From here, we claim that the expression \eqref{eq:orthog proof perm 1} has the desired property
		\begin{equation}
			\label{eq:orthog proof perm 2}
			\sum_{\sigma\in S_N\backslash\{\id\}}\mathcal{I}_{x,y}(\sigma) = 0,
		\end{equation}
		which is equivalent to a claim within the proof of Theorem 2.1 of \cite{tracy_integral_2008}. We do not replicate the argument needed to prove the statement \eqref{eq:orthog proof perm 2} due to its technical nature. However, the precise details of the matching with \cite{tracy_integral_2008} are provided explicitly in Appendix \ref{sec:TW-match}.
		\item Consider finally the integral associated with the identity signed permutation $\id\in\mathcal{B}_N$ for half-space configurations $x,y$ with $M=N$ :
		\begin{equation}
			\mathcal{I}_{x,y}(\id) = (q-1)^N \oint_{\mathcal{C}_1} \frac{\dd w_1}{2\pi\ii} \cdots \oint_{\mathcal{C}_N} \frac{\dd w_N}{2\pi\ii}\prod_{i=1}^N \left[\frac{1}{1-w_i}\frac{1}{1-q w_i}\left(\frac{1-w_i}{1-qw_i}\right)^{x_i-y_{i}}\right].
		\end{equation}
		This integral is readily evaluated by taking successively taking residues at $w_i=1$ which yields $\mathcal{I}_{x,y}(\id) = \delta_{x,y}$ by the residue theorem.\qedhere
	\end{enumerate}
\end{proof}

\appendix

\section{Properties of Pfaffians}

\subsection{Definition and identities}

A \emph{Pfaffian} is defined for a skew-symmetric $2n\times2n$-dimensional matrix $A$ with entries $a_{i,j}$ as
\begin{equation}
	\pf A = \frac{1}{2^n n!} \sum_{\sigma\in S_n} (-1)^{\abs{\sigma}} \prod_{i=1}^n a_{\sigma(2i-1),\sigma(2i)}.
\end{equation}
The determinant of the same even dimensional skew-symmetric matrix $A$ is the square of a polynomial in its entries. It can be shown that the determinant is exactly the square of the Pfaffian:
\begin{equation}
	\det A = \left(\pf A\right)^2.
\end{equation}
The Pfaffian of a skew-symmetric $2n\times2n$ matrix $A$ changes sign under simultaneous interchange of rows and columns of the same index.
A simple consequence is the following: if $A$ is given and $\hat A$ is the reordered matrix
\[\left[\hat A \right]_{i,j}=A_{\sigma(i),\sigma(j)}\]
with $\sigma\in S_{2n}$ specified as 
\[\sigma(2i-1)=i,\qquad\sigma(2i)=n+i\qquad\text{for each }i=1,\dotsc,n,\]
then
\begin{equation}\label{eq:matrix shuffle pfaffian}
    \Pf\left(\hat A\right)=(-1)^{\binom{n}{2}}\Pf(A).
\end{equation}
Pfaffians may also be thought of as generalizations of determinants: for an arbitrary $m\times m$-dimensional matrix $B$, the following identity also holds
\begin{equation}
	\label{eq:block pfaffian is det}
	\det(B) = (-1)^{\binom{m}{2}} \pf \left[\begin{array}{cc}
		0 & B \\[5pt]
		-B^T & 0
	\end{array}\right].
\end{equation}
The following Pfaffian identity, found in \cite{okada_pfaffian_2019}, is a generalization of \eqref{eq:block pfaffian is det}.
\begin{prop}
    \label{prop:block pf eval}
    Let $A$ be a skew-symmetric $n\times n$-dimensional matrix and let $B$ be a $n\times m$-dimensional matrix. The following Pfaffian evaluation holds: whenever $n+m$ is even
    \begin{equation}
        \pf \left[\begin{array}{cc}
            A & B \\[5pt]
            -B^T & 0
        \end{array}\right] = (-1)^{\binom{n}{2}}\times\begin{dcases}
            \sum_S (-1)^{\sum_i S_i}\pf A_S \det B_{(S^\mathsf{C};[m])} & \text{ if }n>m \\
            \det B & \text{ if }n=m \\
            0 & \text{ if } n<m
        \end{dcases},
    \end{equation}
    and whenever $n+m$ is odd
    \begin{equation}
        \pf \left[\begin{array}{ccc}
            A & \vec{\bm1}_n & B \\[5pt]
            -\vec{\bm1}_n^T & 0 & 0 \\[5pt]
            -B^T & 0 & 0
        \end{array}\right] = (-1)^{\binom{n}{2}+n-m}\times\begin{dcases}
            \sum_S (-1)^{\sum_i S_i}\pf \left[\begin{array}{cc}
                A_S & \vec{\bm1}_{n-m} \\[5pt]
                -\vec{\bm1}^T_{n-m} & 0
            \end{array}\right] \det B_{(S^\mathsf{C};[m])} & \text{ if }n>m \\
            0 & \text{ if } n<m
        \end{dcases},
    \end{equation}
    where the sum is over subsets $S\subseteq\{1,\dots,n\}$ of cardinality $n-m$ and $S^\mathsf{C}$ denotes its complement. We have used the shorthand notation $[m] = \{1,\dots,m\}$.
\end{prop}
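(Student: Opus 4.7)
The plan is to expand the Pfaffian via its combinatorial definition as a signed sum over perfect matchings of the index set, and to exploit the vanishing of the bottom-right $m\times m$ block of zeros.

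For the even case, write
\[
\pf\!\begin{pmatrix} A & B \\ -B^T & 0 \end{pmatrix}=\sum_{\pi}\mathrm{sgn}(\pi)\prod_{\{i,j\}\in\pi,\,i<j}\mathcal{M}_{ij},
\]
where the sum is over perfect matchings $\pi$ of $\{1,\dots,n+m\}$. Because the bottom-right block is zero, every contributing matching pairs each bottom index with a top index through a $B$-entry. Group matchings by the set $S\subseteq\{1,\dots,n\}$ of top indices paired \emph{top-to-top} (via $A$); this forces $|S|=n-m$, which is even, and the case $n<m$ produces no admissible $S$ so yields $0$. For each fixed $S$, apply the simultaneous row/column permutation that moves the indices of $S$ to positions $1,\dots,|S|$ and those of $S^{\mathsf{C}}$ to positions $|S|+1,\dots,n$; the Pfaffian then picks up the sign $(-1)^{\sum_i S_i-\binom{|S|+1}{2}}$, and the rearranged matrix becomes block-diagonal with diagonal blocks $A_S$ and $\begin{pmatrix} 0 & B_{(S^{\mathsf{C}};[m])}\\ -B_{(S^{\mathsf{C}};[m])}^T & 0 \end{pmatrix}$. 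Its Pfaffian factors as $\pf(A_S)\cdot(-1)^{\binom{m}{2}}\det(B_{(S^{\mathsf{C}};[m])})$ by multiplicativity of the Pfaffian across block-diagonal forms together with \eqref{eq:block pfaffian is det}. A direct mod-$2$ check using $|S|=n-m$ even shows $\binom{m}{2}+\binom{|S|+1}{2}\equiv\binom{n}{2}\pmod 2$, so combining the signs gives the claimed prefactor $(-1)^{\binom{n}{2}+\sum_iS_i}$. The $n=m$ subcase is simply $S=\varnothing$, reducing directly to \eqref{eq:block pfaffian is det}.

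For the odd case ($n+m$ odd), apply the even-case identity just established to the augmented $(n+1+m)\times(n+1+m)$ matrix, viewing the extra column $\vec{\bm1}_n$ and row $-\vec{\bm1}_n^T$ as enlarging the top block to size $n+1$. Since the augmented index $n+1$ has $0$'s in the $B$-part, any contributing matching must pair it top-to-top, so the relevant subsets $S'=S\cup\{n+1\}$ of $\{1,\dots,n+1\}$ (of even size $n-m+1$) correspond bijectively to subsets $S\subseteq\{1,\dots,n\}$ of odd cardinality $n-m$, and the $A$-Pfaffian specializes to the bordered form $\pf\!\begin{pmatrix} A_S & \vec{\bm1}_{n-m} \\ -\vec{\bm1}_{n-m}^T & 0 \end{pmatrix}$ displayed in the statement. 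A parallel arithmetic check (using $\binom{n+1}{2}+(n+1)=\binom{n}{2}+n-m+(n+1+m)$ together with $n+m$ odd) turns the prefactor $(-1)^{\binom{n+1}{2}}(-1)^{n+1}$ coming from the even formula into the stated $(-1)^{\binom{n}{2}+n-m}$. The main technical obstacle throughout is the sign bookkeeping, since Pfaffian signs depend delicately on orderings; but because each contributing matching reduces, after an explicit row/column permutation, to the base identity \eqref{eq:block pfaffian is det}, the sign can always be tracked cleanly by composing the sign of that permutation with the known base case.
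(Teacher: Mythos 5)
Your proof is correct, but it takes a partly different route from the paper: the paper simply cites Corollary 2.4 of \cite{okada_pfaffian_2019} for the even case and then obtains the odd case by exactly the augmentation trick you use (border $A$ by the all-ones vector and $B$ by a zero row, so the even identity applies to $\tilde A=\left[\begin{smallmatrix}A & \vec{\bm1}_n\\ -\vec{\bm1}_n^T & 0\end{smallmatrix}\right]$, $\tilde B=\left[\begin{smallmatrix}B\\ 0\end{smallmatrix}\right]$, and the zero row forces $n+1\in S'$), whereas you also prove the even case from scratch by expanding the Pfaffian over perfect matchings, grouping by the set $S$ of top indices matched through $A$, and reducing each group to \eqref{eq:block pfaffian is det} via a shuffle permutation. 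I checked your sign bookkeeping and it is consistent: the shuffle contributes $(-1)^{\sum_i S_i-\binom{|S|+1}{2}}$, the bottom block contributes $(-1)^{\binom{m}{2}}$, and with $|S|=n-m$ even one has $\binom{m}{2}+\binom{n-m+1}{2}\equiv\binom{n}{2}\pmod 2$; likewise $\binom{n+1}{2}+(n+1)\equiv\binom{n}{2}+n-m\pmod2$ when $n+m$ is odd, giving the stated prefactors. One phrasing should be tightened: after moving $S$ to the front the matrix does \emph{not} become block-diagonal (the blocks $A_{S,S^\mathsf{C}}$, $A_{S^\mathsf{C}}$ and $B_{(S;[m])}$ survive); what is true, and what your matching decomposition actually establishes, is that the subsum over matchings pairing $S$ internally and $S^\mathsf{C}$ with the bottom indices equals the sign of the shuffle times $\pf(A_S)$ times $\pf\!\left[\begin{smallmatrix}0 & B_{(S^\mathsf{C};[m])}\\ -B_{(S^\mathsf{C};[m])}^T & 0\end{smallmatrix}\right]$, since those matchings never use the discarded blocks. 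Stated that way (a Laplace-type expansion of the Pfaffian), your argument is complete and self-contained, which is the main thing it buys over the paper's citation; the paper's version is shorter but leans on the external reference for the nontrivial even case.
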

\begin{proof}
    The version of the result where $n+m$ is even is given explicitly as Corollary 2.4 of \cite{okada_pfaffian_2019}. Meanwhile, the odd version can be obtained as a special case of the even version where the last row and column of an extended skew-symmetric matrix $A$ are set to be $\pm 1$ appropriately and the last row of an extended matrix $B$ is set to zero. 
\end{proof}

The following identity, due to Stembridge \cite{stembridge_nonintersecting_1990} is crucial to our work.
\begin{lm}[Stembridge's Pfaffian]
	\label{lm: stembridge pfaffian}
	Let $x_1,\dots,x_m$ be an alphabet of fixed length $m$. The following Pfaffian identities hold: when $m=2n$ is even 
	\begin{equation}
		\label{eq: stembridge pfaffian even}
		\prod_{1\leq i<j\leq 2n} \frac{x_i-x_j}{1-x_ix_j} = \pf \left[S(x_i,x_j)\right]_{1\leq i,j\leq 2n},
	\end{equation}
	and when $m=2n-1$ is odd:
	\begin{equation}
		\label{eq: stembridge pfaffian odd}
		\prod_{1\leq i<j\leq 2n-1} \frac{x_i-x_j}{1-x_ix_j} = \pf\left[\begin{array}{cc}
			S(x_i,x_j)_{1\leq i,j\leq 2n-1} & \vec{\bm1}_{2n-1} \\[8pt]
			-\vec{\bm1}^T_{2n-1} & 0
		\end{array}\right],
	\end{equation}
    where 
    \[S(x_i,x_j) = \frac{x_i-x_j}{1-x_ix_j}.\]
\end{lm}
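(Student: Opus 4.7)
The plan is to derive the even case \eqref{eq: stembridge pfaffian even} from the classical Schur Pfaffian evaluation,
\[
\pf\!\left[\frac{y_i - y_j}{y_i + y_j}\right]_{1 \leq i,j \leq 2n} = \prod_{1 \leq i < j \leq 2n} \frac{y_i - y_j}{y_i + y_j},
\]
via a rational change of variables. Under the M\"obius substitution $y_i = (1+x_i)/(1-x_i)$ one computes directly
\[
\frac{y_i - y_j}{y_i + y_j} = \frac{x_i - x_j}{1 - x_i x_j} = S(x_i, x_j),
\]
so both sides of Schur's identity transform uniformly and produce \eqref{eq: stembridge pfaffian even} without further work. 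An essentially self-contained alternative proof, which is the argument of \cite{stembridge_nonintersecting_1990}, proceeds by induction on $n$: one expands the Pfaffian along its first row using the standard Laplace expansion
\[
\pf(A) = \sum_{j=2}^{2n} (-1)^{j} A_{1,j}\, \pf\!\bigl(A^{\hat 1,\hat j}\bigr),
\]
applies the inductive hypothesis to each $(2n-2)$-dimensional sub-Pfaffian, and recognizes the resulting rational sum as the partial fraction expansion of the product side in the variable $x_1$.

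For the odd case \eqref{eq: stembridge pfaffian odd}, the strategy is to deduce it from the even case by augmenting the alphabet with a single auxiliary variable and taking a limit. Starting from \eqref{eq: stembridge pfaffian even} applied to $(x_1, \dots, x_{2n-1}, x_{2n})$ and letting $x_{2n} \to 1$, one finds $S(x_i, 1) = -1$ and $S(1, x_i) = 1$ for $i < 2n$, so the matrix on the left converges to the bordered matrix
\[
\left[\begin{array}{cc}
S(x_i, x_j)_{1 \leq i,j \leq 2n-1} & -\vec{\bm 1}_{2n-1} \\[4pt]
\vec{\bm 1}_{2n-1}^T & 0
\end{array}\right].
\]
Simultaneously flipping the sign of the final row and column multiplies the Pfaffian by $-1$ and brings the matrix into the form appearing in \eqref{eq: stembridge pfaffian odd}. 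On the product side, the $2n-1$ new boundary factors $\prod_{i=1}^{2n-1}(x_i - x_{2n})/(1 - x_i x_{2n})$ contribute $(-1)^{2n-1} = -1$ in the same limit, and the two signs cancel to yield \eqref{eq: stembridge pfaffian odd}.

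The main obstacle is the careful bookkeeping of signs during the limiting specialization, in particular reconciling the $(-1)$ picked up by flipping the last row/column of the bordered matrix with the $(-1)^{2n-1}$ emerging on the product side, which must be verified against the precise sign convention used in the $\pf$ of a skew-symmetric matrix of odd border. Both sides of \eqref{eq: stembridge pfaffian odd} are rational in the $x_i$'s with no singularity at $x_{2n} = 1$ once common factors are accounted for, so the limit is taken along any path avoiding the diagonal $x_i x_{2n} = 1$ and no analytic difficulty arises.
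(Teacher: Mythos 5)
Your argument is correct, and it differs from the paper only in that the paper offers no proof at all: Lemma \ref{lm: stembridge pfaffian} is quoted directly from \cite{stembridge_nonintersecting_1990}, so your write-up supplies a derivation where the text gives a citation. The even case via the M\"obius substitution $y_i=(1+x_i)/(1-x_i)$ into Schur's classical Pfaffian evaluation is clean: one checks $\frac{y_i-y_j}{y_i+y_j}=\frac{x_i-x_j}{1-x_ix_j}$ exactly as you state, and since both sides are rational functions agreeing on a dense open set, the identity \eqref{eq: stembridge pfaffian even} follows (of course you are trading one classical input, Schur's identity, for another, but that is a legitimate and standard reduction; the inductive Laplace-expansion alternative you sketch is also the route Stembridge takes). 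For the odd case your sign bookkeeping is right and can be verified in one line: setting $x_{2n}=1$ gives $S(x_i,1)=-1$ and $S(1,x_j)=+1$, so the specialized matrix is $D\,\widetilde S\,D^T$ where $\widetilde S$ is the bordered matrix in \eqref{eq: stembridge pfaffian odd} and $D=\diag(1,\dots,1,-1)$, whence $\Pf=\det(D)\Pf[\widetilde S]=-\Pf[\widetilde S]$; on the product side the factors $\prod_{i=1}^{2n-1}\frac{x_i-1}{1-x_i}=(-1)^{2n-1}=-1$, and the two signs cancel. One small simplification: no limiting procedure is needed at all, since for generic $x_1,\dots,x_{2n-1}$ (none equal to $1$) both sides of the even identity are regular at $x_{2n}=1$ and you may substitute directly, then conclude by rationality; your cautionary remarks about paths avoiding $x_ix_{2n}=1$ are therefore superfluous rather than problematic.
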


\subsection{Pfaffian integration formulas}

The following result, due to de Bruijn \cite{de_bruijn_multiple_1955}, is the Pfaffian analogue of the Andr\'{e}ief identity.
\begin{lm}[de Bruijn's integration formula]
	\label{lm:de bruijn formula}
	Let $m>0$ be a fixed integer and let $\mu$ be an appropriate measure on a region $X$. Let $g_1,\dots, g_m$ be functions and let $h$ be an skew-symmetric function of two variables which are all integrable over $X$ with respect to $\mu$. Then the following identities hold: when $m=2n$ is even 
	\begin{equation}
		\label{eq:de bruijn formula even}
		\frac{1}{(2n)!}\int_{X} \dd \mu(w_1) \cdots \int_{X} \dd \mu(w_{2n}) \det[g_j(w_i)]_{1\leq i,j\leq 2n}\pf[h(w_i,w_j)]_{1\leq i,j\leq 2n}
		= \pf(A),
	\end{equation}
	and when $m=2n-1$ is odd
	\begin{multline}
		\label{eq:de bruijn formula odd}
		\frac{1}{(2n-1)!}\int_{X} \dd \mu(w_1) \cdots \int_{X} \dd \mu(w_{2n-1}) \det[g_j(w_i)]_{1\leq i,j\leq 2n-1} 
		\pf\left[
		\begin{array}{cc}
			h(w_i,w_j)_{1\leq i,j\leq 2n-1} &  \vec{\bm1}_{2n-1} \\[8pt]
			-\vec{\bm1}_{2n-1}^T & 0
		\end{array}\right] \\
		= \pf\left[\begin{array}{cc}
			A & \vec{b} \\[5pt]
			-\vec{b}^T & 0
		\end{array}\right].,
	\end{multline}
    where $A$ is a skew-symmetric $m\times m$-dimensional matrix and $\vec{b}$ is a vector of length $m$.
	The matrix elements are given as:
	\begin{align*}
		\left[A\right]_{i,j} & =  \int_{X} \dd \mu(w) \int_{X} \dd \mu(u) g_i(w)g_j(u)h(w,u), \\
		\left[\vec{b}\right]_i & =\int_{X} \dd \mu(w) g_i(w),
	\end{align*}
    for each $1\leq i,j\leq m$.
\end{lm}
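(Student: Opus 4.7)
The plan is to expand both the determinant and the Pfaffian in the integrand as signed sums over permutations, then absorb one of the two permutations using a change of variables and exploit the (skew-)symmetries of the integrand to recognize the result as a Pfaffian.

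First I would treat the even case \eqref{eq:de bruijn formula even}. Expanding
\[
\det[g_j(w_i)]_{1\leq i,j\leq 2n}=\sum_{\sigma\in S_{2n}}(-1)^{|\sigma|}\prod_{i=1}^{2n}g_{\sigma(i)}(w_i),
\]
and
\[
\Pf[h(w_i,w_j)]_{1\leq i,j\leq 2n}=\frac{1}{2^n n!}\sum_{\tau\in S_{2n}}(-1)^{|\tau|}\prod_{k=1}^{n}h(w_{\tau(2k-1)},w_{\tau(2k)}),
\]
I would substitute these into the left-hand side and relabel the integration variables by $w_i\mapsto w_{\sigma^{-1}(i)}$ in each term of the sum over $\sigma$. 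Since the measure factor $\dd\mu(w_1)\cdots\dd\mu(w_{2n})$ is symmetric in the variables, this relabelling turns $\prod_i g_{\sigma(i)}(w_i)$ into $\prod_i g_i(w_i)$ and replaces the Pfaffian summand $\prod_k h(w_{\tau(2k-1)},w_{\tau(2k)})$ by $\prod_k h(w_{\sigma\tau(2k-1)},w_{\sigma\tau(2k)})$. Collecting $(-1)^{|\sigma|}(-1)^{|\tau|}=(-1)^{|\sigma\tau|}$ and reparameterizing by $\rho=\sigma\tau\in S_{2n}$, the sum over $\sigma$ produces a factor $(2n)!$ which cancels the prefactor $1/(2n)!$, leaving precisely
\[
\frac{1}{2^n n!}\sum_{\rho\in S_{2n}}(-1)^{|\rho|}\int\cdots\int \dd\mu(w_1)\cdots\dd\mu(w_{2n})\prod_{i=1}^{2n}g_i(w_i)\prod_{k=1}^{n}h(w_{\rho(2k-1)},w_{\rho(2k)}).
\]
Performing the $2n$ independent integrations inside each term and identifying the result with the definition of $\Pf(A)$ (using the skew-symmetry of $h$, which carries over to $A$) yields \eqref{eq:de bruijn formula even}.

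For the odd case \eqref{eq:de bruijn formula odd} I would reduce to the even case by introducing a virtual point. Extend the alphabet to $(w_1,\dots,w_{2n-1},w_{2n})$, define $g_{2n}(w)\equiv 1$ and extend $h$ to $(2n)\times(2n)$ by $h(w,w_{2n})=1$, $h(w_{2n},w)=-1$ and $h(w_{2n},w_{2n})=0$. Then the determinant and the Pfaffian on the augmented alphabet factor exactly into the odd-case determinant and the augmented Pfaffian appearing in \eqref{eq:de bruijn formula odd} (up to the trivial last row/column of $1$'s), and integration against the Dirac measure at any fixed point for the virtual variable $w_{2n}$ can be bypassed because $g_{2n}$ and $h(\,\cdot\,,w_{2n})$ do not involve the measure $\mu$. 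Applying the already-proved even case gives the bordered Pfaffian on the right-hand side, with the extra row/column being precisely $\vec b$.

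The main technical point, which requires a little care rather than real difficulty, is the bookkeeping of the signs and the $1/(2^n n!)$ factor in the relabelling step: one must check that the overcounting coming from the stabilizer of the pairing structure is exactly compensated, so that the sum over $\rho$ indeed reconstitutes the Pfaffian definition for the integrated matrix $A$ (alternatively, one may restrict $\tau$ from the outset to a set of coset representatives for the hyperoctahedral subgroup of $S_{2n}$ preserving the pairing $\{1,2\},\{3,4\},\dots$, in which case no prefactor appears). The odd case carries no additional obstacle beyond this reduction.
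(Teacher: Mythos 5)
The paper does not contain its own proof of this lemma—it is quoted directly from de Bruijn \cite{de_bruijn_multiple_1955}—so your argument has to stand on its own. Your even case does: expanding the determinant over $S_{2n}$ and the Pfaffian over $S_{2n}$ with the paper's prefactor $1/(2^n n!)$, absorbing $\sigma$ into the integration variables (legitimate because the product measure $\mu^{\otimes 2n}$ is exchangeable), reparametrizing by $\rho=\sigma\tau$ and then factorizing the integral over the perfect matching determined by $\rho$ reproduces exactly the definition of $\Pf(A)$, with skew-symmetry of $h$ transferring to $A$. The "stabilizer bookkeeping" you flag at the end is in fact automatic with this convention for the Pfaffian, so the even half is complete and is the standard argument.

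The odd case, however, has a genuine gap as written. With $g_{2n}\equiv1$ and no condition on the values $g_1(w_{2n}),\dots,g_{2n-1}(w_{2n})$, the augmented determinant $\det[g_j(w_i)]_{1\leq i,j\leq 2n}$ does \emph{not} factor into $\det[g_j(w_i)]_{1\leq i,j\leq 2n-1}$: its last row is $(g_1(w_{2n}),\dots,g_{2n-1}(w_{2n}),1)$, so expanding along the last column of $1$'s produces $2n$ minors, not just the one you want. In addition, the even case you proved requires all $2n$ variables to be integrated against the \emph{same} measure $\mu$ (this is what licenses the symmetrization and the $1/(2n)!$ prefactor), so you are not entitled to integrate the virtual variable against a Dirac mass while keeping $\mu$ for the others. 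Two ways to repair this. (i) Rerun your even-case computation directly on the bordered Pfaffian, treating the index $2n$ as a bordering index rather than a variable: expand the determinant over $\sigma\in S_{2n-1}$ and the Pfaffian over $\tau\in S_{2n}$, extend $\sigma$ to $\hat\sigma\in S_{2n}$ fixing $2n$, set $\rho=\hat\sigma\tau$ (multiplicity $(2n-1)!$, cancelling the prefactor); the unique pair of $\rho$ containing $2n$ then integrates to an entry of $\vec b$ and the remaining pairs to entries of $A$, which is exactly the expansion of the bordered Pfaffian on the right of \eqref{eq:de bruijn formula odd}. (ii) Genuinely augment the space: $X'=X\sqcup\{*\}$ with measure $\mu+\delta_*$, $g_j(*)=0$ for $j\leq 2n-1$, $g_{2n}=\mathbbm{1}_{\{*\}}$, $h(x,*)=1=-h(*,x)$; the determinant then vanishes unless exactly one variable sits at $*$, the $2n$ choices of which one convert $1/(2n)!$ into $1/(2n-1)!$, and the even case applied on $X'$ gives \eqref{eq:de bruijn formula odd}. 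Either route closes the argument; your reduction as stated does not.
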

The following Lemma is generalization of Lemma \ref{lm:de bruijn formula} which appeared in the work \cite{kieburg_derivation_2010}.
\begin{lm}[Generalized de Bruijn's integration formula]
	\label{lm:gen de bruijn formula}	
	Let $\ell\geq m>0$ be fixed integers and let $\mu$ be an appropriate measure on a region $X$. Let $f_1,\dots,f_{\ell-m},g_1,\dots, g_m$ be functions and let $h$ be an skew-symmetric function of two variables which are all integrable over $X$ with respect to $\mu$. Additionally let $S$ be a constant skew-symmetric matrix of dimension $(\ell-m)\times(\ell-m)$. Then the following identities hold: when $\ell$ is even
	\begin{multline}
		\label{eq:gen de bruijn formula even}
		\frac{(-1)^{\binom{\ell-m}{2}}}{m!}\int_{X} \dd \mu(w_1) \cdots \int_{X} \dd\mu(w_{m}) \det \left[g_j(w_i)\right]_{1\leq i,j\leq m}\pf\left[\begin{array}{cc}
			h(w_i,w_j)_{1\leq i,j\leq m}  &  f_k(w_i)_{1\leq i\leq m\atop1\leq k\leq \ell-m} \\[8pt]
			-f_k(w_j)_{1\leq k\leq \ell-m\atop1\leq j\leq m} & S
		\end{array}\right] \\
		= \pf\left[\begin{array}{cc}
			A & C \\[5pt]
			-C^T & S
		\end{array}\right]
	\end{multline}
	and when $\ell$ is odd
	\begin{multline}
		\label{eq:gen de bruijn formula odd}
		\frac{(-1)^{\binom{\ell-m}{2}}}{m!}\int_{X} \dd \mu(w_1) \cdots \int_{X} \dd\mu(w_{m}) \det \left[g_j(w_i)\right]_{1\leq i,j\leq m}
		\pf\left[\begin{array}{ccc}
			h(w_i,w_j)_{1\leq i,j\leq m} & \vec{\bm1}_{m} & f_k(w_i)_{1\leq i\leq m\atop1\leq k\leq \ell-m} \\[8pt]
			-\vec{\bm1}^T_{m} & 0 & 0 \\[8pt]
			-f_k(w_j)_{1\leq k\leq \ell-m\atop1\leq j\leq m} & 0 & S
		\end{array}\right] \\
	=  \pf \left[\begin{array}{ccc}
		A & \vec{b} & C \\[5pt]
		-\vec{b}^T & 0 & 0 \\[5pt]
		-C^T & 0 & S
	\end{array}\right] .
	\end{multline}
    where $A$ is a skew-symmetric $m\times m$-dimensional matrix, $C$ is an $m\times(\ell-m)$-dimensional matrix and $\vec{b}$ is a vector of length $m$.
	The matrix elements are given as:
	\begin{align*}
	\left[A\right]_{i,j} & = \int_X \dd \mu(u) \int_X \dd \mu(w) g_i(w)g_j(u)h(w,u), \\
	\left[C\right]_{i,k} & = \int_X \dd \mu(w) g_i(w)f_k(w), \\
		\left[\vec{b}\right]_{i} & = \int_X \dd \mu(w) g_i(w),
	\end{align*}
    for each $1\leq i,j\leq m$ and $1\leq k\leq \ell-m$.
\end{lm}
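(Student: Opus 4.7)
The plan is to reduce Lemma~\ref{lm:gen de bruijn formula} to the ordinary de Bruijn integration formula (Lemma~\ref{lm:de bruijn formula}) by embedding the constant matrix $S$ and the functions $f_k$ into an enlarged integration space. Concretely, I would set $X' = X \sqcup \{\ast_1,\dots,\ast_{\ell-m}\}$ and extend $\mu$ to a measure $\mu'$ that assigns unit point masses to each $\ast_k$. The family $\{g_1,\dots,g_m\}$ is then augmented by the indicator functions $g_{m+k}(y) := \mathbbm{1}_{y=\ast_k}$ for $1\leq k\leq \ell-m$, and the skew-symmetric kernel $h$ is extended to $X'\times X'$ by declaring $h'(\ast_k,\ast_j) := S_{k,j}$ and $h'(y,\ast_k) := f_k(y)$ for $y\in X$, with $h'(\ast_k,y) := -f_k(y)$ enforced by antisymmetry.

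With these choices, applying the ordinary de Bruijn identity on $(X')^\ell$ to the data $\{g_i'\}_{i=1}^\ell$ and $h'$ yields a Pfaffian of a matrix $A'$ whose $(i,j)$-entry is $\int g_i' g_j' h'\,d\mu' d\mu'$. A direct computation splits $A'$ into four blocks: for $i,j\leq m$ it recovers $A$, for $i\leq m, j=m+k$ the indicator collapses one integration and one obtains $\int g_i(w)f_k(w)\,d\mu(w) = C_{i,k}$, and for $i=m+k,j=m+\ell$ both integrations collapse to a single point yielding $S_{k,\ell}$. Hence the right hand side of \eqref{eq:gen de bruijn formula even} is reproduced.

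For the left hand side of the reduction, I would expand $\det[g_j'(y_i)]_{1\leq i,j\leq \ell}$ over $S_\ell$ and observe that the support structure of the indicator functions forces exactly $\ell-m$ of the coordinates $y_i$ to equal specific virtual atoms; the remaining $m$ coordinates integrate against $d\mu$. Summing over the $\binom{\ell}{\ell-m}$ possible locations of the virtual coordinates, then reordering rows and columns of the $\ell\times \ell$ Pfaffian so that all virtual coordinates are moved to the last $\ell-m$ positions, produces the block Pfaffian appearing on the left of \eqref{eq:gen de bruijn formula even}. The combinatorial factor obtained is $\ell!/m!$, which cancels the $1/\ell!$ from the ordinary de Bruijn formula and leaves exactly the $1/m!$ prefactor in the statement. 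The odd case \eqref{eq:gen de bruijn formula odd} can then be handled in parallel by appending a single further ``ones'' virtual coordinate, exactly as in the Stembridge reduction of Lemma~\ref{lm: stembridge pfaffian}.

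The main obstacle will be the careful sign bookkeeping, which is where the prefactor $(-1)^{\binom{\ell-m}{2}}$ must emerge. Specifically, the signs from the $S_\ell$-expansion of the extended determinant combine with the signs arising when the block permutation is applied to bring the virtual coordinates to the bottom-right of the $\ell\times\ell$ Pfaffian, and this latter rearrangement is of the shuffle type controlled by \eqref{eq:matrix shuffle pfaffian}. Reconciling these two sources of signs with the sign conventions in our extension of $h'$ (in particular the choice $h'(y,\ast_k) = +f_k(y)$ versus $-f_k(y)$) and with the orientation of the block written in the statement is the only delicate point; once the bookkeeping is done, the identity \eqref{eq:gen de bruijn formula even} falls out, and \eqref{eq:gen de bruijn formula odd} follows by the same augmentation argument.
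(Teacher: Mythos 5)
Your reduction to the ordinary de Bruijn identity by adjoining $\ell-m$ unit atoms, extending the kernel by $h'(y,\ast_k)=f_k(y)$, $h'(\ast_k,\ast_{k'})=S_{k,k'}$, and taking $g_{m+k}=\mathbbm{1}_{\,\cdot\,=\ast_k}$ (you must also say that the original $g_j$ are extended by zero to the atoms, otherwise the block computation of $A'$ fails) is sound, and it is necessarily a different route from the paper, which does not prove the lemma but quotes it from \cite{kieburg_derivation_2010}. The determinant does force each atom to be hit exactly once, the $\binom{\ell}{\ell-m}(\ell-m)!=\ell!/m!$ assignments contribute equally, and the block structure of $A'$ reproduces $A$, $C$ and $S$; for the odd case it is cleaner to treat the column of ones as an extra function $f_0\equiv1$ with $S$ padded by a zero row and column (or to invoke the odd case of Lemma \ref{lm:de bruijn formula} directly) than to argue ``as in Stembridge''.

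The genuine problem is your last paragraph: with your conventions no prefactor $(-1)^{\binom{\ell-m}{2}}$ can emerge. The product $\det[g'_j(y_{\rho(i)})]\,\Pf[h'(y_{\rho(i)},y_{\rho(j)})]$ is invariant under any simultaneous relabeling $\rho$ (the two factors of $\sign\rho$ cancel), so every placement of the atoms contributes exactly the canonical term in which the atoms occupy the last $\ell-m$ slots, and that term already has the block order of the statement; the shuffle \eqref{eq:matrix shuffle pfaffian} is never invoked. Hence your construction proves
\begin{equation}
\frac{1}{m!}\int_{X}\!\!\cdots\!\int_{X}\det\left[g_j(w_i)\right]\,\pf\left[\begin{array}{cc} h(w_i,w_j) & f_k(w_i)\\ -f_k(w_j) & S\end{array}\right]\prod_{i=1}^m\dd\mu(w_i)\;=\;\pf\left[\begin{array}{cc} A & C\\ -C^T & S\end{array}\right],
\end{equation}
i.e. the identity \emph{without} the sign. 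This is not a defect of your method: test the printed statement at $m=\ell-m=2$ with $h\equiv0$ and $S=0$, where the left Pfaffian equals $-\det[f_k(w_i)]$ by \eqref{eq:block pfaffian is det}, Andr\'eief gives $-\det C$ for the symmetrized integral, and $\pf\left[\begin{smallmatrix}0&C\\-C^T&0\end{smallmatrix}\right]=-\det C$ as well; so the sign-free version holds there, and the printed prefactor $(-1)^{\binom{\ell-m}{2}}$ cannot be correct for the block layout as written (it presumably reflects a different arrangement of blocks in the cited reference). Carrying out your plan will therefore not make the prefactor ``fall out''; it will expose a sign discrepancy with the statement, which you should flag explicitly and track through the lemma's application to \eqref{eq:Schutz Pfaffian IC proof 1}, rather than try to absorb into the bookkeeping.
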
 
In fact, a more general version of Lemma \ref{lm:gen de bruijn formula}	appeared in \cite{kieburg_derivation_2010} involving a additional constant matrix augmented to the rows of the matrix in the determinant of \eqref{eq:gen de bruijn formula even}. However, this more general result is not needed for our purposes here and we will not present it.

The well known Andr\'{e}ief integration identity can be recovered from \eqref{eq:gen de bruijn formula even} by taking $\ell=2m$ with $h(w,u)=0$ and $[S]_{i,j}=0$ for all $1\leq i,j\leq m$.

\section{Skew-biorthogonal polynomials}\label{appdx:skew}
In this appendix we provide explicit formulas for the skew-biorthogonal polynomials which are characterized by the relations \eqref{eq:Big Phi skew-biorhtog rels empty}. We start with a general result for the Cholesky type skew-Borel factorization of a skew-symmetric matrix.
\begin{prop}
    \label{prop:skew-borel}
    Let $m$ be a fixed integer and let $M$ be a fixed $2m\times2m$-dimensional skew-symmetric matrix such that the minors 
    \[[M]_{k,\ell}, \quad \text{for } 2j-1\leq k,\ell\leq2m\]
    are invertible for all $1\leq j\leq m$. Then $M$ has a \emph{skew-Borel factorization} of the form
    \begin{equation}
        M = R_{2m} J_{2m} R_{2m}^T,
    \end{equation}
    where $R_{2m}$ is an upper-triangular $2m\times2m$-dimensional matrix which is uniquely determined up to setting $\left[R_{2m}\right]_{2j-1,2j}=0$ and $\left[R_{2m}\right]_{2j,2j}=1$ for each $1\leq j\leq m$. Also where
    \[J_{2m} = \diag\left\{\left[\begin{array}{cc}
			0 & 1\\
			-1 & 0
		\end{array}\right],\dots,\left[\begin{array}{cc}
			0 & 1\\
			-1 & 0
		\end{array}\right]\right\}.\]
\end{prop}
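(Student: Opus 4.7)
The plan is to proceed by induction on $m$. The base case $m=1$ is immediate: any invertible $2\times 2$ skew-symmetric $M$ has the form $\left(\begin{smallmatrix} 0 & a \\ -a & 0 \end{smallmatrix}\right)$ with $a\neq 0$, and direct computation shows that the unique upper-triangular $R_2$ with $R_{1,2}=0$, $R_{2,2}=1$ satisfying $R_2 J_2 R_2^T=M$ is $R_2=\mathrm{diag}(a,1)$.

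For the inductive step, I would partition
\[
M=\begin{pmatrix}A & B\\ -B^T & D\end{pmatrix},\qquad R_{2m}=\begin{pmatrix}r & C\\ 0 & R'\end{pmatrix},
\]
where $A$ is the top-left $2\times 2$ block, $D$ is the bottom-right $(2m-2)\times(2m-2)$ block, and $r$, $R'$, $C$ have the corresponding sizes with $r$ and $R'$ upper triangular. Since the hypothesis supplies invertibility of all bottom-right minors $[M]_{k,\ell}$ with $2j-1\leq k,\ell\leq 2m$, applying these for $j\geq 2$ yields invertibility of $D$ and of all its bottom-right minors, so the inductive hypothesis produces a unique upper-triangular $R'$ with $R'_{2j-1,2j}=0$, $R'_{2j,2j}=1$ satisfying $R' J_{2(m-1)} R'^T=D$. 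Expanding $R_{2m} J_{2m} R_{2m}^T$ using the block decomposition $J_{2m}=\mathrm{diag}(j,J_{2(m-1)})$ produces three matching conditions: the $D$-block is automatic, the off-diagonal block equation $C J_{2(m-1)}R'^T=B$ determines $C=-B R'^{-T}J_{2(m-1)}$ uniquely (using $J_{2(m-1)}^{-1}=-J_{2(m-1)}$ and invertibility of $R'$), and the $A$-block reduces to $rjr^T=A-C J_{2(m-1)}C^T$.

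The main obstacle is the last equation. The right-hand side is automatically $2\times 2$ skew-symmetric, say $\left(\begin{smallmatrix} 0 & \alpha \\ -\alpha & 0\end{smallmatrix}\right)$; under the normalization conditions $r_{1,2}=0$, $r_{2,2}=1$ (together with upper-triangularity forcing $r_{2,1}=0$), one computes $rjr^T=\left(\begin{smallmatrix}0 & r_{1,1}\\ -r_{1,1} & 0\end{smallmatrix}\right)$, so $r_{1,1}=\alpha$ and $r$ is uniquely determined provided $\alpha\neq 0$. I would establish $\alpha\neq 0$ from the Pfaffian identity
\[
\mathrm{Pf}(M)=\det(R_{2m})\,\mathrm{Pf}(J_{2m})=\det(r)\det(R')=\alpha\det(R'),
\]
where $\det(R')\neq 0$ because $\det(R')^2=\det(D)\neq 0$ by hypothesis, and $\mathrm{Pf}(M)\neq 0$ because $M$ itself (the $j=1$ case of the minor condition) is invertible. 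Uniqueness of the factorization then follows by tracing through the construction: each block of $R_{2m}$ is forced by the blocks of $M$ together with the stated normalization, completing the induction.
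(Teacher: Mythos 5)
Your argument is correct. The paper itself does not prove this proposition: it defers the proof to the cited reference \cite{benner_cholesky-like_2000} and instead records, in the following proposition, closed-form expressions for $R_{2m}$ and $R_{2m}^{-1}$ as ratios of Pfaffians of bottom-right minors of $M$ (a constructive route whose payoff is the explicit skew-biorthogonal polynomials of Appendix B, at the cost of verifying Pfaffian-minor identities). Your route is a self-contained induction on $m$: you peel off the leading $2\times2$ block, and the block equations for $R_{2m}J_{2m}R_{2m}^T=M$ do exactly what you say — the bottom-right block is handled by the inductive hypothesis applied to $D$ (whose bottom-right minors are precisely the $j\geq 2$ minors in the hypothesis), the off-diagonal equation forces $C=-BR'^{-T}J_{2(m-1)}$ since $\det(R')^2=\det(D)\neq0$, the bottom-left equation is redundant by skew-symmetry, and the normalized $2\times2$ block $r$ is forced by the skew-symmetric residual $A-CJ_{2(m-1)}C^T$; uniqueness then follows by running the same forcing argument on any normalized upper-triangular factor. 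Two small remarks: uniqueness of $r$ does not actually need $\alpha\neq0$ (the normalization gives $r_{11}=\alpha$ in all cases), so that caveat is superfluous, though your identity $\Pf(M)=\det(R_{2m})=\alpha\det(R')$ is worth keeping anyway since it shows $R_{2m}$ is invertible, a fact the paper implicitly relies on when it inverts $\mathcal{R}_N$; and your decomposition correctly peels from the top-left so that the hypothesis (which only concerns bottom-right minors) applies to $D$ — peeling from the other corner would not work with the stated assumptions.
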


A proof of this result can be found in \cite{benner_cholesky-like_2000} with an alternative formulation for the matrix $J_{2m}$. Instead of presenting a proof of Proposition \ref{prop:skew-borel}, we demonstrate that the matrix $R_{2m}$ can be explicitly given in terms of $2\times 2$ blocks of Pfaffian minors. For a $2m\times2m$ matrix, let us denote its construction in terms of $2\times2$ blocks by: \[\langle A\rangle_{i,j} = \left[\begin{array}{cc}
    \left[A\right]_{2i-1,2j-1} & \left[A\right]_{2i-1,2j} \\
    \left[A\right]_{2i,2j-1} & \left[A\right]_{2i,2j}
\end{array}\right],\]
for $1\leq i,j\leq m$.

\begin{prop}
\label{prop:skewBorelfactor}
Let $I\subseteq\{1\dots,2m\}$ be an ordered list and let 
\[\pf M_I:=\pf [M_{k,\ell}]_{k,\ell\in I}\]
denote the Pfaffian of the sub-matrix of $M$ consisting of columns and rows labeled by $I$ with the convention that $M_\emptyset=1$. The matrix $R_{2m}$ of the skew-Borel factorization of $M$ has the explicit form given in terms of $2\times2$ blocks as:
\begin{equation}
\label{eq:skew-Borel R-inv}
\langle R_{2m}\rangle_{i,j} = \left\{
\begin{array}{cl}
\displaystyle
\begin{pmatrix}
\displaystyle \frac{\pf M_{\{2i-1,2j\}\cup [2j+1,2m]}}{\pf M_{[2j+1,2m]}} & - \displaystyle\frac{\pf M_{\{2i-1,2j-1\}\cup [2j+1,2m]}}{\pf M_{[2j-1,2m]}} \\[5mm]
\displaystyle\frac{\pf M_{ \{2i,2j\}\cup [2j+1,2m]}}{\pf M_{[2j+1,2m]}} & - \displaystyle\frac{\pf M_{\{2i,2j-1\}\cup [2j+1,2n\}}}{\pf M_{[2j-1,2m]}} 
\end{pmatrix}, \qquad & 1\le i < j\le m
\\[10mm]
\displaystyle
\begin{pmatrix}
\displaystyle\frac{\pf M_{[2i-1,2m]}}{\pf M_{[2i+1,2m]}} & 0 \\[5mm]
0 & 1
\end{pmatrix}, & i=j \\[8mm]
\begin{pmatrix}
    0 & 0 \\
    0 & 0
\end{pmatrix}, & 1\le j < i\le m
\end{array}\right.,
\end{equation}
for each block $1\leq i,j\leq m$.
The inverse $R_{2m}^{-1}$ similarly may also be written in terms of $2\times 2$ blocks and is given explicitly by
\begin{equation}
\left\langle R_{2m}^{-1}\right\rangle_{i,j} = \left\{
\begin{array}{cl}
\displaystyle
\begin{pmatrix}
\displaystyle- \frac{\pf M_{[2i-1,2m]\backslash\{2i,2j-1\}}}{\pf M_{[2i-1,2m]}} & \displaystyle \frac{\pf M_{[2i-1,2m]\backslash \{2i,2j\}}}{\pf M_{[2i-1,2m]}} \\[5mm]
\displaystyle- \frac{\pf M_{[2i,2m]\backslash \{2j-1\}}}{\pf M_{[2i+1,2m]}} & \displaystyle \frac{\pf M_{[2i,2m]\backslash \{2j\}}}{\pf M_{[2i+1,2m]}} 
\end{pmatrix}, \qquad & 1\le i < j\le m
\\[10mm]
\displaystyle
\begin{pmatrix}
\displaystyle\frac{\pf M_{[2i+1,2m]}}{\pf M_{[2i-1,2m]}} & 0 \\[5mm]
0 & 1
\end{pmatrix}, & i=j \\[8mm]
\begin{pmatrix}
    0 & 0 \\
    0 & 0
\end{pmatrix}, & 1\le j < i\le m
\end{array}\right..
\end{equation}
\end{prop}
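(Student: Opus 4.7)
The plan is to establish the explicit formulas by induction on $m$, using a Schur-type block decomposition of $M$ that peels off the leading $2\times 2$ block at each step. Existence and uniqueness of the normalized factorization are already provided by Proposition~\ref{prop:skew-borel}, so the task reduces to identifying the entries of $R_{2m}$ (and its inverse) with the claimed Pfaffian ratios. The base case $m=1$ is immediate: for $M=\left(\begin{smallmatrix}0 & a\\-a & 0\end{smallmatrix}\right)$ the unique normalized factor is $R_2 = \diag(a,1)$, with inverse $\diag(1/a,1)$, both in agreement with \eqref{eq:skew-Borel R-inv}.

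For the inductive step I would write $M = \left(\begin{smallmatrix}A & B\\-B^T & D\end{smallmatrix}\right)$ with $A$ of size $2\times 2$. The minor hypothesis forces $\Pf D\ne 0$, so $D$ is invertible. Using that $D$ is skew (hence $(D^{-1})^T=-D^{-1}$), a direct block computation gives the symmetric Schur factorization
\begin{equation*}
M = \begin{pmatrix}I & BD^{-1}\\0 & I\end{pmatrix}\begin{pmatrix}A+BD^{-1}B^T & 0\\0 & D\end{pmatrix}\begin{pmatrix}I & 0\\(BD^{-1})^T & I\end{pmatrix}.
\end{equation*}
The Schur complement is a $2\times 2$ skew matrix whose Pfaffian is $\Pf M/\Pf D$ (by the multiplicative Pfaffian identity applied to the block-triangular conjugation), so it factors as $R_1 J_2 R_1^T$ with $R_1=\diag(\Pf M/\Pf D,\,1)$. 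Combining this with $D=R_D J_{2m-2}R_D^T$ from the inductive hypothesis produces
\begin{equation*}
R_{2m} = \begin{pmatrix}R_1 & BD^{-1}R_D\\0 & R_D\end{pmatrix}.
\end{equation*}
The block $R_1$ already matches the stated formula since $\Pf D=\Pf M_{[3,2m]}$, and the lower-right diagonal blocks $\langle R_{2m}\rangle_{i,i}$ for $i\ge 2$ inherit the correct Pfaffian ratios directly from $R_D$ by induction, using that minors of $D$ are minors of $M$ indexed by subsets of $[3,2m]$.

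The main obstacle will be matching the off-diagonal block $BD^{-1}R_D$ with the claimed Pfaffian ratios for $\langle R_{2m}\rangle_{1,j}$, $j\ge 2$. This reduces, via the inductive formula for $R_D$, to expressing each entry of $BD^{-1}$ as a ratio of Pfaffian minors of $M$ through the Pfaffian analog of Cramer's rule: for $k\ne\ell$, $(D^{-1})_{k,\ell}$ equals a signed ratio of $\Pf D_{\{k,\ell\}^c}$ to $\Pf D$. The resulting products of Pfaffian minors then need to be telescoped into the single ratios appearing in \eqref{eq:skew-Borel R-inv}; this is the main technical step, and it will follow from repeated application of the quadratic Plücker-type identity
\begin{equation*}
\Pf M\cdot \Pf M_{\{a,b,c,d\}^c} = \Pf M_{\{a,b\}^c}\Pf M_{\{c,d\}^c} - \Pf M_{\{a,c\}^c}\Pf M_{\{b,d\}^c} + \Pf M_{\{a,d\}^c}\Pf M_{\{b,c\}^c}.
\end{equation*}
The formula for $R_{2m}^{-1}$ will then follow either by inverting the block-triangular $R_{2m}$ explicitly, or, more cleanly, by taking the stated $R_{2m}^{-1}$ as an ansatz and directly verifying $R_{2m}^{-1}MR_{2m}^{-T}=J_{2m}$ block by block: each entry of the product reduces to an instance of the same Plücker-type Pfaffian identity, and the uniqueness in Proposition~\ref{prop:skew-borel} then closes the argument.
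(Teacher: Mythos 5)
The paper states this proposition without giving a proof (existence and uniqueness of the normalized factorization are delegated to \cite{benner_cholesky-like_2000} via Proposition~\ref{prop:skew-borel}), so there is no in-text argument to compare yours against; judged on its own, your skeleton is sound but its core is missing. The Schur-complement induction is set up correctly: with $D=M_{[3,2m]}$ invertible by the minor hypothesis, the factorization $M=U\,\diag\!\left(A+BD^{-1}B^T,\,D\right)U^T$ with $U=\left(\begin{smallmatrix} I & BD^{-1}\\ 0 & I\end{smallmatrix}\right)$ is valid, $\Pf\!\left(A+BD^{-1}B^T\right)=\Pf M/\Pf D$ identifies $\langle R_{2m}\rangle_{1,1}$, the blocks $\langle R_{2m}\rangle_{i,j}$ with $2\le i\le j$ are inherited from $R_D$ exactly as you say (their index sets lie in $[3,2m]$), and the normalization together with the uniqueness in Proposition~\ref{prop:skew-borel} pins the factor down.

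However, the entire content of the proposition is the explicit Pfaffian-minor form of the entries, and precisely that part is not carried out. You reduce $\langle R_{2m}\rangle_{1,j}$, $j\ge2$, to the product $BD^{-1}R_D$ and then assert that the ``telescoping'' into the single ratios of \eqref{eq:skew-Borel R-inv} ``will follow from repeated application'' of the quadratic Pl\"ucker identity; likewise you assert that each entry of $R_{2m}^{-1}MR_{2m}^{-T}$ ``reduces to an instance of the same'' identity. Neither claim is substantiated, and the second is dubious as stated: an entry of $R_{2m}^{-1}MR_{2m}^{-T}$ is a double sum of terms $\Pf(\cdot)\,M_{k\ell}\,\Pf(\cdot)$, and the natural tool there is the \emph{linear} Laplace-type expansion of a Pfaffian along a row, $\sum_{b}\pm M_{ab}\,\Pf M_{J\setminus\{b\}}=\Pf M_{\{a\}\cup J}$, which you never invoke; it is this expansion (combined with the Pfaffian Cramer rule) that shows, for instance, that the entries of $BD^{-1}$ are already single Pfaffian ratios, before any quadratic identity enters. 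Moreover, the delicate part of the statement is the sign pattern (note the isolated minus signs in specific slots of both displayed formulas, whose placement depends on the sign conventions in the Cramer rule and in the Laplace expansion), and your argument neither tracks these signs nor checks a nontrivial case such as $m=2$ against the claimed formulas. Until the identification of $BD^{-1}R_D$ with the stated ratios is actually executed, and the displayed $R_{2m}^{-1}$ is verified either as the inverse of the displayed $R_{2m}$ or directly via $R_{2m}^{-1}MR_{2m}^{-T}=J_{2m}$, what you have is a credible plan for a proof rather than a proof.
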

Recall that $\Psi(x,y)=Q_{1,1}(x,y)$, where $Q_{k,\ell}(x,y)$ is defined in \eqref{eq:Q-kernel} as
\begin{equation}
	Q_{k,\ell}(x,y) = \alpha^2 \oint_\contour \frac{\dd v}{2\pi\ii} \oint_\contour \frac{\dd w}{2\pi\ii} \frac{v-w}{1-v-w} \frac{w^{k-x}\mathrm{e}^{t(w-1)}}{(w-\alpha)(w-1)^k} \frac{v^{\ell-y}\mathrm{e}^{t(v-1)}}{(v-\alpha)(v-1)^\ell},
\end{equation}
The following is a simple application of the generalized binomial identity on the integrand of $Q_{k,\ell}(x,y)$
\begin{lm}\label{lem:Theta}
Let us denote, by $\Theta_i$, the polynomials from \eqref{eq:pre phi contour virtual} so that for a fixed integer $N$
\begin{equation}
   \Theta_i(x) := \phi_{[N-i+1,N]}(\virst_i,x) = \oint_{\gamma} \frac{\dd z}{2\pi\ii} \frac{1}{z^i} \left(\frac{1}{1-z}\right)^x = \binom{x+i-2}{x-1},
\end{equation}
where the contour surrounds $z=0$ while omitting all other singularities of the integrand. Then the following holds for all $1\leq i,j\leq N$:
\begin{equation}
    \label{eq:upsilon_Q}
    \sum_{x=1}^\infty \sum_{y=1}^\infty \Theta_i(x) Q_{1,1}(x,y) \Theta_j(y) =  Q_{i+1,j+1}(1,1).
\end{equation}
\end{lm}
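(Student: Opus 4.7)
The plan is to pull the double contour integral defining $Q_{1,1}(x,y)$ outside the double sum over $x,y$, and then evaluate the resulting generating-function sum of the $\Theta_i$'s explicitly. First I would choose the contour $\beta$ in Definition \ref{defn:TASEP countours} to additionally satisfy $|w|>1$ on $\beta$; this is permitted because $\beta$ only has to surround $0,1,\alpha,1-\alpha$ and may be taken arbitrarily large. On such contours the factor $w^{1-x}$ in the integrand is bounded by $|w|^{1-x}$, and since $\Theta_i(x)=\binom{x+i-2}{x-1}$ grows only polynomially in $x$, one gets absolute convergence of the triple sum/integral, justifying Fubini.

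The computation then reduces to evaluating, for each fixed $w$ on $\beta$,
\begin{equation}
    \sum_{x=1}^{\infty} \Theta_i(x)\, w^{-x} \;=\; \sum_{x=1}^{\infty}\binom{x+i-2}{x-1} w^{-x} \;=\; \frac{w^{-1}}{(1-w^{-1})^i} \;=\; \frac{w^{i-1}}{(w-1)^i},
\end{equation}
by the standard generating function identity $\sum_{k\geq0}\binom{k+i-1}{k}z^k=(1-z)^{-i}$ for $|z|<1$ (here with $z=w^{-1}$, convergent since $|w|>1$). The analogous identity holds in $v$ with index $j$.

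Substituting these two scalar factors back into the integrand of $Q_{1,1}$ transforms $w^{1-x}$ into $w^{i-1}\cdot w = w^i$ divided by $(w-1)^i$, and combining with the existing factor $1/(w-1)$ yields $w^i/(w-1)^{i+1}$; the same happens on the $v$ side, giving $v^j/(v-1)^{j+1}$. The remaining factors, namely $\frac{v-w}{1-v-w}$, $\frac{e^{t(w-1)}}{w-\alpha}$ and $\frac{e^{t(v-1)}}{v-\alpha}$, are unchanged. The result is exactly the integrand of $Q_{i+1,j+1}(x,y)$ evaluated at $x=y=1$, proving \eqref{eq:upsilon_Q}.

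The only nontrivial point is the Fubini justification: one must check that the triple series $\sum_{x,y\geq1}\Theta_i(x)\Theta_j(y)\,w^{-x}v^{-y}$ is absolutely and uniformly convergent on $\beta\times\beta$ with $|w|,|v|>1$, and that the remaining integrand factors are bounded on those contours. Both are elementary given the polynomial growth of $\Theta_i$ and the analyticity of the other factors away from the poles enclosed by $\beta$; no deformation of contours is needed at any stage.
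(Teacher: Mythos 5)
Your proof is correct and is essentially the paper's own argument: the paper proves this lemma simply by "a simple application of the generalized binomial identity on the integrand of $Q_{k,\ell}(x,y)$," which is exactly your computation of $\sum_{x\geq1}\binom{x+i-2}{x-1}w^{-x}=w^{i-1}/(w-1)^{i}$ inside the double contour integral. Your enlargement of $\beta$ to ensure $|w|,|u|>1$ (and hence Fubini) is consistent with Definition \ref{defn:TASEP countours}, which only requires a sufficiently large contour around $0,1,\alpha,1-\alpha$, and with the paper's remark that the apparent pole at $1-u-w=0$ is harmless, so no genuine gap remains.
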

Recall the definition of the skew-symmetric matrix $\mathcal{N}$ from \eqref{eq:cond prob proof empty N defn}. Lemma implies that the entries of this matrix are given by
\begin{equation}
    \left[\mathcal{N}\right]_{k,\ell} = \phi_{[k,N]}\conv \Psi\conv\left(\phi_{[\ell,N}\right)^T(\vir_k,\vir_\ell) = Q_{N-k+2,N-\ell+2}(1,1),
\end{equation}
for each $1\leq k,\ell\leq N$. The following result follows by the application of the explicit skew-Borel factorization of $\mathcal{N}=\mathcal{R}_N J_N\mathcal{R}_N^T$ from \eqref{eq:cond prob proof empty N skew-Borel}, so that $\mathcal{R}_N$ is an upper-triangular $N\times N$ matrix and determined by Proposition~\ref{prop:skewBorelfactor}. Let us also denote, for $I\subseteq [1,N]$ which has an even cardinality, the following sub-Pfaffian of the matrix which appears in Corollary \ref{cor:boundary current schutz pf}:
\begin{equation}
    \pf \widetilde{Q}_I := \pf\left[Q_{i+1,j+1}(1,1)\right]_{i,j\in I}.
\end{equation}
\begin{prop}
\label{prop:Upsilon2Phi}
Let us define the family of polynomials $\widetilde{\Phi}_0,\dots,\widetilde{\Phi}_{N-1}$ which satisfy the skew-biorthogonalization relations from \eqref{eq:Big Phi skew-biorhtog rels empty}, so that $\widetilde{\Phi}_k(x)$ is a polynomial in $x$ of degree $k$:
\begin{equation}
\label{eq:skew-biorhtog polyn appendix}
\sum_{x=1}^\infty \sum_{y=1}^\infty \widetilde{\Phi}_{N-k}(x) \Psi(x,y) \widetilde{\Phi}_{N-\ell}(y) = \left[J_N\right]_{k,\ell}, \qquad 1\le k,\ell \le N.
\end{equation}
These polynomials are constructed explicitly as
\begin{equation}
    \widetilde{\Phi}_{N-k}(x)=\sum_{j=1}^N \left[\mathcal{R}_N^{-1}\right]_{k,j}\Theta_{N-j+1}(x),
\end{equation}
where $\mathcal{R}_N^{-1}$ is the inverse of the upper-triangular skew-Borel matrix of $\mathcal{N}$. Moreover, it is constructed explicitly out of $2\times2$-blocks, where\footnote{The signs have been modified from the conventions of Proposition \ref{prop:skewBorelfactor} since the underlying matrices satisfy $\pf \mathcal{N}=(-1)^{N/2}\pf[Q_{k+1,\ell+1}(1,1)]_{1\leq,k,\ell\leq N}$.}
\begin{equation}
\left\langle\mathcal{R}_{N}^{-1}\right\rangle_{i,j} = \left\{
\begin{array}{cl}
\displaystyle
\begin{pmatrix}
\displaystyle \frac{\pf \widetilde{Q}_{[1,N-2i+2]\backslash\{N-2i+1,N-2j+2\}}}{\pf \widetilde{Q}_{[1,N-2i+2]}} & \displaystyle -\frac{\pf \widetilde{Q}_{[1,N-2i+2]\backslash\{N-2i+1,N-2j+1\}}}{\pf \widetilde{Q}_{[1,N-2i+2]}} \\[5mm]
\displaystyle- \frac{\pf \widetilde{Q}_{[1,N-2i+1]\backslash \{N-2j+2\}}}{\pf \widetilde{Q}_{[1,N-2i]}} & \displaystyle \frac{\pf \widetilde{Q}_{[1,N-2i+1]\backslash \{N-2j+1\}}}{\pf \widetilde{Q}_{[1,N-2i]}} 
\end{pmatrix}, \qquad & 1\le i < j\le \frac{N}2
\\[10mm]
\displaystyle
\begin{pmatrix}
\displaystyle-\frac{\pf \widetilde{Q}_{[1,N-2i]}}{\pf \widetilde{Q}_{[1,N-2i+2]}} & 0 \\[5mm]
0 & 1
\end{pmatrix}, & i=j \\[8mm]
\begin{pmatrix}
    0 & 0 \\
    0 & 0
\end{pmatrix}, & 1\le j < i\le \frac{N}2
\end{array}\right.,
\end{equation}
for each $1\leq i,j\leq N/2$.
\end{prop}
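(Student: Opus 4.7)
The plan is to combine two ingredients: the skew-Borel decomposition machinery of Proposition \ref{prop:skewBorelfactor} applied to the matrix $\mathcal{N}$, and the identification of $[\mathcal{N}]_{k,\ell}$ in terms of the kernels $Q_{i,j}$ supplied by Lemma \ref{lem:Theta}. The skew-biorthogonality itself is already essentially contained in the proof of Theorem \ref{thm:cond prob Fredholm pf empty}, so the new content of Proposition \ref{prop:Upsilon2Phi} is the explicit Pfaffian-ratio formula for $\mathcal{R}_N^{-1}$.

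First I would verify that the proposed $\widetilde{\Phi}_{N-k}(x)$ satisfies \eqref{eq:skew-biorhtog polyn appendix}. A comparison of \eqref{eq:phi contour virtual} with the integral representation of $\Theta_i$ gives $\Theta_{N-j+1}(x) = \phi_{[j,N]}(\virst_j, x)$, so the definition used here coincides with \eqref{eq:cond prob proof empty Big Phi defn}. The computation at the end of the proof of Theorem \ref{thm:cond prob Fredholm pf empty} — namely $[\mathcal{R}_N^{-1}\mathcal{N}\mathcal{R}_N^{-T}]_{k,\ell} = [J_N]_{k,\ell}$, combined with the identification of the result as a half-space convolution via Lemma \ref{lem:Theta} — then yields the claimed skew-biorthogonality without further work.

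Second, I would apply Proposition \ref{prop:skewBorelfactor} with $M = \mathcal{N}$ and $2m = N$ to get each $2\times 2$ block $\langle\mathcal{R}_N^{-1}\rangle_{i,j}$ as a ratio of sub-Pfaffians $\pf\mathcal{N}_S$ for specific subsets $S\subseteq[1,N]$, then translate to the $\widetilde Q$ notation. Lemma \ref{lem:Theta} together with \eqref{eq:cond prob proof empty N defn} gives $[\mathcal{N}]_{k,\ell} = \widetilde{Q}_{N-k+1, N-\ell+1}$, so that for any ordered even-cardinality $S$,
\[
\pf \mathcal{N}_S = (-1)^{\lfloor|S|/2\rfloor}\,\pf \widetilde Q_{\bar S},\qquad \bar S = \{N-s+1:s\in S\},
\]
the prefactor arising because simultaneously reversing the order of $|S|$ rows and columns of a skew-symmetric matrix multiplies its Pfaffian by $(-1)^{\lfloor|S|/2\rfloor}$. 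Each index set appearing in Proposition \ref{prop:skewBorelfactor} — namely $[2j-1,2m]$, $[2j+1,2m]$, and variants like $\{2i-1,2j\}\cup[2j+1,2m]$ — is mapped by $S\mapsto\bar S$ to one of the initial segments $[1,N-2k+2]$ or $[1,N-2k]$ (possibly with two points removed), matching exactly the index sets in Proposition \ref{prop:Upsilon2Phi}.

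The only genuine difficulty will be the sign bookkeeping across the three cases $i<j$, $i=j$, $i>j$. A sanity check on the diagonal is illustrative: Proposition \ref{prop:skewBorelfactor} gives $\pf\mathcal{N}_{[2i+1,N]}/\pf\mathcal{N}_{[2i-1,N]}$ for the upper-left entry of $\langle\mathcal{R}_N^{-1}\rangle_{i,i}$, and the index-reversal formula yields $(-1)^{N/2-i}\pf\widetilde Q_{[1,N-2i]}$ in the numerator and $(-1)^{N/2-i+1}\pf\widetilde Q_{[1,N-2i+2]}$ in the denominator, producing exactly $-\pf\widetilde Q_{[1,N-2i]}/\pf\widetilde Q_{[1,N-2i+2]}$, which is the stated entry. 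This is the overall sign flip anticipated in the footnote, and it arises because $\pf\mathcal{N}=(-1)^{N/2}\pf\widetilde Q_{[1,N]}$. The off-diagonal cases are handled the same way: the parities of the two sub-Pfaffians in each ratio differ by one, so every ratio picks up a single $(-1)$, accounting uniformly for all signs in the final formula. The argument for $\mathcal{R}_N$ proceeds identically using the explicit formula for $R_{2m}$ in Proposition \ref{prop:skewBorelfactor}.
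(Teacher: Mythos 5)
Your route is the same as the paper's: identify $\Theta_{N-j+1}(x)=\phi_{[j,N]}(\vir_j,x)$ so that $\widetilde\Phi_{N-k}$ coincides with \eqref{eq:cond prob proof empty Big Phi defn}, import the skew-biorthogonality from the end of the proof of Theorem \ref{thm:cond prob Fredholm pf empty}, and then translate Proposition \ref{prop:skewBorelfactor} applied to $M=\mathcal{N}$ into the $\widetilde Q$ notation via $[\mathcal{N}]_{k,\ell}=Q_{N-k+2,N-\ell+2}(1,1)$, the index reversal $S\mapsto\bar S$, and $\pf\mathcal{N}_S=(-1)^{|S|/2}\pf\widetilde Q_{\bar S}$. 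All of this, including your diagonal sanity check, is correct, and it is exactly how the paper obtains the formula (the footnote about $\pf\mathcal{N}=(-1)^{N/2}\pf\widetilde Q_{[1,N]}$ records the same mechanism).

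The flaw is your final sign claim: it is not true that ``the parities of the two sub-Pfaffians in each ratio differ by one, so every ratio picks up a single $(-1)$.'' Count the orders. In the first row of a block $\langle\mathcal{R}_N^{-1}\rangle_{i,j}$ (global row $2i-1$) the numerators $\pf\mathcal{N}_{[2i-1,N]\setminus\{2i,\cdot\}}$ have order $N-2i$ while the denominator $\pf\mathcal{N}_{[2i-1,N]}$ has order $N-2i+2$, so the factors $(-1)^{(N-2i)/2}$ and $(-1)^{(N-2i+2)/2}$ do not cancel and the sign flips relative to Proposition \ref{prop:skewBorelfactor}; the same happens for the $(1,1)$ diagonal entry, where the ratio is $\pf\mathcal{N}_{[2i+1,N]}/\pf\mathcal{N}_{[2i-1,N]}$. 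But in the second row (global row $2i$) the numerators $\pf\mathcal{N}_{[2i,N]\setminus\{\cdot\}}$ and the denominator $\pf\mathcal{N}_{[2i+1,N]}$ both have order $N-2i$, so the signs cancel and there is \emph{no} flip. This is precisely what the stated formula records: the top-row entries carry signs $(+,-)$, reversed from the $(-,+)$ of Proposition \ref{prop:skewBorelfactor}, while the bottom-row entries keep $(-,+)$ unchanged. If you applied your uniform rule literally you would flip the bottom-row signs as well and land on a formula contradicting the one you are proving. Replace the blanket statement by the row-dependent one above and your argument goes through. (A minor point worth a sentence: invoking Proposition \ref{prop:skewBorelfactor} requires the minors $\pf\mathcal{N}_{[2j-1,N]}=\pm\,\pf\widetilde Q_{[1,N-2j+2]}$ to be nonzero, which follows from Corollary \ref{cor:boundary current schutz pf} since each such Pfaffian is proportional to the probability of observing the corresponding number of particles, exactly as in the invertibility check of $\mathcal{N}$ in the proof of Theorem \ref{thm:cond prob Fredholm pf empty}.)
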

Recall Remark \ref{rem:skew nonuniq initial}, whereby the skew-biorthogonal polynomials which satisfy \eqref{eq:skew-biorhtog polyn appendix} are defined up to $N$ free parameters. Proposition \ref{prop:Upsilon2Phi} constructs an explicit family of these polynomials, $\widetilde{\Phi}_k$, which corresponds to the choices $\left[\mathcal{R}_{N}\right]_{2j-1,2j}=0$ and $\left[\mathcal{R}_{N}\right]_{2j,2j}=1$ for each $1\leq j\leq N/2$; as is made in Proposition \ref{prop:skew-borel}.

\section{Matching with Tracy--Widom transition probability}
\label{sec:TW-match}
Let us consider the following expression from the proof of the initial condition of Theorem 2.1 of \cite{tracy_integral_2008}. In the aforementioned work, the authors consider ASEP on $\mathbb{Z}$ is considered with a left-hoping rate of $q$ and a right-hoping right of $p$, subject to the normalization $p+q=1$. We will consider a modification of their integral expression where the normalization is relaxed and we set $p=1$. This is given by:
\begin{equation}
    \label{eq:orthog I-TW}
    \mathcal{I}^{\mathsf{TW}}_{x,y}(\sigma): = \oint_{\delta} \frac{\dd \xi_1}{2\pi\ii} \cdots \oint_{\delta} \frac{\dd \xi_N}{2\pi\ii} \prod_{1\leq i<j\leq N\atop \sigma_i>\sigma_{j}} f(\xi_{\sigma(i)},\xi_{\sigma(j)})\prod_{i=1}^N\xi_{\sigma(i)}^{x_{N-i+1}}\xi_i^{-y_{N-i+1}-1},
\end{equation}
where
\[f(\xi_i,\xi_j) = - \frac{1+q\xi_{i}\xi_{j}-(1+q)\xi_{i}}{1+q\xi_{i}\xi_{j}-(1+q)\xi_{j}},\]
and where $\delta$ is a contour surrounding the origin, $\xi_i=0$, with a sufficiently small radius to avoid all other singularities. Let us also emphasize that the convention of labeling individual particle coordinates in \eqref{eq:orthog I-TW} are consistent with the main body of this text. We note that the particle coordinates of \cite{tracy_integral_2008} are the reverse to our conventions. That is, we consider $x_1>\dots>x_N$ as opposed to $x_1<\dots<x_N$, which is the reason for $x_{N-i+1}$ appearing in \eqref{eq:orthog I-TW} rather than $x_i$.

For any configurations $x,y$, it is proven in \cite{tracy_integral_2008} that the following sum vanishes:
\begin{equation}
    \label{eq:orthog I-TW sum}
    \sum_{\sigma\in S_N\backslash\{\id\}} \mathcal{I}^{\mathsf{TW}}_{x,y}(\sigma) = 0.
\end{equation}
The proof of this statement is involved. It is made necessarily more complicated by the fact that individual terms in the sum \eqref{eq:orthog I-TW sum} are in general non-zero after the integrals are evaluated.

Now recall the integral expression from the proof of Lemma \ref{lm: orthogonality} given by \eqref{eq:orthog proof perm 1}. We will show here that this integral expression obeys the vanishing summation property \eqref{eq:orthog proof perm 2} by explicitly matching our expression with \eqref{eq:orthog I-TW}. Let us consider the integral expression \eqref{eq:orthog proof perm 1} under the change of integration coordinates $w_i\mapsto \frac{1-\xi_i}{1-q\xi_i}$. After straightforward algebraic manipulation, for any $\tau\in S_N$, \eqref{eq:orthog proof perm 1} is given by:
\begin{equation}
    \label{eq:orthog I-TW match 1}
    \mathcal{I}_{x,y}(\tau): = \oint_{\delta} \frac{\dd \xi_1}{2\pi\ii} \cdots \oint_{\delta} \frac{\dd \xi_N}{2\pi\ii} \prod_{1\leq i<j\leq N\atop \tau(i)>\tau(j)} f(\xi_{\tau(i)},\xi_{\tau(j)})\prod_{i=1}^N\xi_i^{x_{i}}\xi_{\tau(i)}^{-y_{i}-1}.
\end{equation}
From here, for each individual fixed $\tau$, we may simultaneously relabel the integration coordinates of \eqref{eq:orthog I-TW match 1} by $\xi_i\mapsto \xi_{\rho(i)}$ for the permutation defined by $\rho=\left(r(\tau)\right)^{-1}$, where $r(\tau)=(\tau(N),\dots,\tau(1))$ is permutation which is the reverse ordering of $\tau$. We emphasize here that we are allowed to freely change the order of integration after relabeling since all contours may be deformed to be the same. Under this procedure, the quantity \eqref{eq:orthog I-TW match 1} associated with $\tau$ is 
\begin{equation}
    \label{eq:orthog I-TW match 2}
    \mathcal{I}_{x,y}\left(\tau\right) = \oint_{\delta} \frac{\dd \xi_1}{2\pi\ii} \cdots \oint_{\delta} \frac{\dd \xi_N}{2\pi\ii} \prod_{1\leq i<j\leq N\atop \tau(i)>\tau(j)} f(\xi_{N-i+1},\xi_{N-j+1})\prod_{i=1}^N\xi_{r(\rho)(i)}^{x_{N-i+1}}\xi_{i}^{-y_{N-i+1}-1}.
\end{equation}
We now claim that the integrands of \eqref{eq:orthog I-TW} and \eqref{eq:orthog I-TW match 2} agree whenever their respective permutations are related by the following
\[\tau=r\left((r(\sigma))^{-1}\right).\]
This relationship is precisely equivalent to saying that $\tau$ is the \emph{mirror image} of $\sigma$. That is, if, as expressed a reduced word in simple transpositions $s_i$ which generate $S_N$, we have $\tau=s_{k_1}\cdots s_{k_n}$ for some sequence $k_1,\dots,k_n\in\{1\dots,N-1\}$. Then we have the corresponding expression $\sigma=s_{N-k_1+1}\cdots s_{N-k_n+1}$. Additionally, we note that this correspondence preserves the identity, i.e. $\tau=\id$ corresponds with $\sigma=\id$. Using this correspondence, we may conclude that
\begin{equation}
    \label{eq:orthog I-TW match 3}
     \sum_{\tau\in S_N\backslash\{\id\}} \mathcal{I}_{x,y}\left(\tau\right)=\sum_{\sigma\in S_N\backslash\{\id\}} \mathcal{I}^{\mathsf{TW}}_{x,y}(\sigma) = 0, 
\end{equation}
which is the desired result.

\bibliography{Library}{}
\bibliographystyle{alphamod}

\end{document}